\numberwithin{equation}{section}
\theoremstyle{plain}
\newtheorem{theorem}{Theorem}[section]
\newtheorem{proposition}[theorem]{Proposition}
\newtheorem{lemma}[theorem]{Lemma}
\newtheorem{corollary}[theorem]{Corollary}
\newtheorem{convention}[theorem]{Convention}
\newtheorem{definition}[theorem]{Definition}
\newtheorem{example}[theorem]{Example}
\newtheorem{remark}[theorem]{Remark}
\newenvironment{proof}{{\noindent \textbf{Proof}\,\,}}{\hspace*{\fill}$\Box$\medskip}
\title{On 4-reflective complex analytic planar billiards}
\author{Alexey Glutsyuk
\thanks{ CNRS, France (UMR 5669 (UMPA, ENS de Lyon) and UMI 2615 (Lab. J.-V.Poncelet)). 
Permanent address:  Unit\'e de Math\'ematiques 
Pures et Appliqu\'ees, M.R., \'Ecole Normale Sup\'erieure de Lyon,
46 all\'ee d'Italie, 69364 Lyon 07, France.  \newline Email:
aglutsyu@ens-lyon.fr}
\thanks{National Research University Higher School of Economics (HSE), Moscow, Russia}
 \thanks{Supported by part by RFBR grants 
10-01-00739-a, 13-01-00969-a 
and NTsNIL\_a (RFBR-CNRS)  10-01-93115, by ANR grant ANR-13-JS01-0010.}}
\begin{document}
\maketitle
\begin{abstract} The famous conjecture of V.Ya.Ivrii \cite{Ivrii} says that 
{\it in every billiard with infinitely-smooth boundary in a Euclidean space 
the set of periodic orbits has measure zero}. In the present paper we study its  
complex analytic version for quadrilateral 
orbits in two dimensions, with reflections from  holomorphic curves. 
 We present the complete classification of 4-reflective  complex analytic 
counterexamples: billiards formed by four holomorphic curves in the projective plane that have open set of 
quadrilateral orbits. This extends the  previous 
author's result \cite{alg} classifying 4-reflective complex planar algebraic counterexamples. We provide applications to real 
planar billiards: classification of 4-reflective germs of real planar $C^4$-smooth pseudo-billiards; solutions of 
Tabachnikov's Commuting Billiard Conjecture and the 4-reflective 
case of Plakhov's Invisibility Conjecture (both in two dimensions;  the boundary is required to be piecewise $C^4$-smooth). 
We provide a survey and a small technical result concerning higher number of complex reflections. 
 \end{abstract}
\tableofcontents
\def\cc{\mathbb C}
\def\oc{\overline{\cc}}
\def\cp{\mathbb{CP}}
\def\wt#1{\widetilde#1}
\def\rr{\mathbb R}
\def\var{\varepsilon}
\def\tt{\mathcal T}
\def\var{\varepsilon}
\def\mce{\mathcal E}
\def\mcf{\mathcal F}
\def\mcfnab{\mcf_{ab}|_{N_{ab}}}
\def\ha{\hat a}
\def\hb{\hat b}
\def\hc{\hat c}
\def\hd{\hat d}
\def\nn{\mathbb N}
\def\mcd{\mathcal D}
\def\rp{\mathbb{RP}}
\def\mct{\mathcal T}
\def\zz{\mathbb Z}

\section{Introduction}

The famous V.Ya.Ivrii's conjecture \cite{Ivrii} 
 says that {\it in every billiard with infinitely-smooth 
boundary in a Euclidean space of any dimension the set of periodic orbits 
has measure zero.} As it was shown by V.Ya.Ivrii \cite{Ivrii}, his conjecture implies the famous 
H.Weyl's conjecture  on the two-term 
asymptotics of the spectrum of Laplacian \cite{HWeyl11}. A brief historical survey of 
both conjectures with references is presented in \cite{gk1,gk2}. 

For the proof of Ivrii's conjecture it suffices to show that for every $k\in\mathbb N$ 
the set ot $k$-periodic orbits has measure zero. For $k=3$ this 
was proved in  \cite{bzh, rychlik, stojanov, vorobets, W}. For $k=4$ in dimension two this was proved in 
\cite{gk1, gk2}. 

\begin{remark}
Ivrii's conjecture is open 
already for billiards with piecewise-analytic boundaries, and we believe that this is its principal case. 
In the latter case Ivrii's conjecture is equivalent to the statement saying that for every 
$k\in\mathbb N$ the set of $k$-periodic orbits has empty interior.
\end{remark}

Besides the traditional real billiards, where each ray hitting the boundary is reflected back to the same side, it is interesting to study 
the so-called pseudo-billiards (introduced in Section 5), where some reflections change the side. Pseudo-billiards naturally arise, e.g., in 
the invisibility theory. One can ask the following question analogous to Ivrii's Conjecture: 
{\it classify those pseudo-billiards that have an open (positive measure) set of periodic orbits.} 
This question is closely related, e.g.,  to Plakhov's Invisibility Conjecture \cite[conjecture 8.2]{pl} and Tabachnikov's Commuting Billiard Conjecture 
\cite[p. 58]{tabcom}, which is related to the famous Birkhoff Conjecture on integrable billiards \cite[p. 95]{tab}. 

It appears that planar Ivrii's conjecture and all its analogues for pseudo-billiards have the same complexification stated and partially studied 
in  \cite{alg, odd} and 
recalled below. This is the problem  to classify all the 
so-called {\it $k$-reflective complex planar analytic billiards:} those collections 
of $k$ complex analytic curves  in $\cp^2$ for which the corresponding billiard 
has an open set of $k$-periodic orbits.  Its studying presents 
 a unifying approach to the original Ivrii's conjecture and all its above-mentioned  analogues altogether. 

In the present paper we solve the complex classification problem completely for $k=4$ 
(Theorem \ref{an-class}, the main result).  As an application, we provide  
the complete classification of germs of $C^4$-smooth real planar 
pseudo-billiards having open set of quadrilateral orbits (Subsections 5.1, 5.2).  
As applications of the latter result, we give solutions of Tabachnikov's Commuting Billiard Conjecture and the 4-reflective Plakhov's Invisibility Conjecture, both in two-dimensional piecewise $C^4$-smooth case (Subsections 5.3, 5.4). 

The classification of analytic 4-reflective germs of pseudo-billiards follows almost immediately from the main complex result. The proof of the 
classification of smooth pseudo-billiards is also done by complex methods and is based on 
the theory of Cartan's prolongations of Pfaffian systems. It is analogous to Yu.G.Kudryashov's arguments from \cite[section 2]{gk2} 
reducing the piecewise $C^4$-smooth case of 4-reflective Ivrii's Conjecture to the piecewise-analytic case. 

The state of art of the classification problem of $k$-reflective complex billiards in the general case is presented in Section 6 together with 
small new technical results (Theorem \ref{kmer} and Corollary \ref{ckhmer}). 

Basic definitions and statement of main result are given below.

\subsection{Main result: classification of 4-reflective complex analytic planar billiards}
 To recall the complexified Ivrii's conjecture and state the main result, let us recall some basic 
definitions contained in \cite[section 1]{alg}. 
The complex plane 
$\cc^2$ with affine coordinates $(z_1,z_2)$ is equipped with the complexified Euclidean metric. It is the standard complex-bilinear 
quadratic form $dz_1^2+dz_2^2$. This defines the notion of symmetry with respect to a complex line, 
reflections with respect to complex lines and more 
generally, reflections of complex lines with respect to complex analytic (algebraic) curves. 
The symmetry is defined by the same formula, as in the real case. More details concerning the complex reflection law 
are given  in  Subsection 2.3. 

\begin{remark} The geometry of the complexified Euclidean metric is 
somewhat similar to that of its another real form: the pseudo-Euclidean metric. 
Billiards in pseudo-Euclidean spaces were studied, e.g.,  in \cite{drag, khes}.
\end{remark}

\begin{definition}  A complex projective line $l\subset\cp^2\supset\cc^2$ is {\it isotropic}, 
if either it  coincides with the infinity line, or the complexified Euclidean quadratic 
form vanishes on $l$. Or equivalently, a  line is isotropic, if it passes through some 
of two points with homogeneous coordinates $(1:\pm i:0)$: the so-called {\it isotropic 
points at infinity} (also known as {\it cyclic} (or {\it circular}) points). 
\end{definition}

\begin{convention} \label{conv-curves} Everywhere below by 
an {\bf irreducible analytic curve} in $\cp^n$ we 
mean a non-constant $\cp^n$- valued holomorphic function on a connected Riemann surface. 
\end{convention}

\begin{definition} \label{deforb} \cite[definition 1.3]{alg}  A {\it complex analytic (algebraic) planar billiard} is a finite collection 
of  irreducible complex analytic (algebraic) 
curves $a_1,\dots,a_k\subset\cp^2$ that are not isotropic lines;  set $a_{k+1}=a_1$, $a_0=a_k$. 
A {\it $k$-periodic billiard orbit} is a collection of points $A_j\in a_j$,  $A_{k+1}=A_1$, $A_0=A_k$, such that  for every $j=1,\dots,k$ one has 
$A_{j+1}\neq A_j$, the tangent line $T_{A_j}a_j$ is not isotropic 
and the complex lines $A_{j-1}A_j$ and $A_jA_{j+1}$ are 
symmetric with respect to the line $T_{A_j}a_j$ and are distinct from it. (Properly saying, we have to take points $A_j$ together with 
prescribed branches of curves $a_j$ at $A_j$: this specifies the  line $T_{A_j}a_j$ in unique way, if $A_j$ is a self-intersection point 
of the curve $a_j$.) 
 \end{definition}

\begin{definition} \label{defref} \cite[definition 1.4]{alg}  A complex analytic (algebraic)  billiard $a_1,\dots,a_k$ is {\it $k$-reflective,} if 
it has an open set of $k$-periodic orbits. In more detail, this means that there exists 
an open set of pairs $(A_1,A_2)\in a_1\times a_2$ extendable to $k$-periodic 
orbits $A_1\dots A_k$. (Then the latter property automatically holds for every other 
pair of neighbor mirrors $a_j$, $a_{j+1}$.) 
\end{definition}

{\bf Problem (Complexified  version of Ivrii's conjecture).}  
{\it Classify all the  $k$-reflective complex analytic (algebraic)  billiards.}

%
%
%
%

\begin{theorem} \label{an-class} A complex planar analytic
billiard $a$, $b$, $c$, $d$ is 4-reflective, if and only if 
it has one of the three following types:

1) one of the mirrors, say $a$ is a line,   $c=a$, the 
curves $b$ and $d$ are symmetric with respect to the line $a$ and distinct from it, see Section 5,  Fig.\ref{fig-sym}; 

2) the mirrors are distinct lines through the same point $O\in\cp^2$, the pair of lines $(a,b)$ is transformed to $(d,c)$ 
by complex rotation around $O$, i.e.,  a complex isometry $\cc^2\to\cc^2$ fixing $O$ with unit Jacobian, see Section 5,  Fig.\ref{fig-lines};

3)  $a=c$, $b=d$,  and they are distinct confocal conics, see Section 5, Fig.\ref{fig-ellipses}--\ref{fig-par}. 
\end{theorem}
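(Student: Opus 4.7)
My plan is to reduce the analytic classification to the algebraic classification of \cite{alg} by proving that the mirrors of any 4-reflective analytic billiard must be algebraic curves of low degree. The backward direction --- that each of the three listed configurations is 4-reflective --- is a direct verification: in case (1) the line symmetry in $a$ swaps $b$ and $d$, so mirror-symmetric pairs of incoming/outgoing rays automatically close up into quadrilateral orbits; in case (2) the complex rotation around $O$ maps the pair $(a,b)$ onto $(d,c)$, which immediately produces a two-parameter family of 4-periodic orbits; in case (3) one invokes the complex analogue of the classical Graves string construction for confocal conics.

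For the forward direction, I would start by fixing a generic 4-periodic orbit $A_1A_2A_3A_4$ and locally parametrizing the open family of 4-periodic orbits by the initial pair $(A_1,A_2)\in a\times b$. The reflection law at $A_2$ determines $A_3\in c$ holomorphically, and iterating gives $A_4\in d$; the closure condition then requires that the line $A_4A_1$ reflect off $a$ into the line $A_1A_2$. Differentiating this closure identity along the two-parameter family should yield analytic identities relating the positions of the $A_j$ with the tangent directions $T_{A_j}a_j$ of the mirrors. These identities are most naturally expressed via the complex reflection involutions $\sigma_j$ acting on tangent directions at each $A_j$, the 4-reflectivity condition saying that the composite $\sigma_4\sigma_3\sigma_2\sigma_1$ is the identity on an open set.

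The main obstacle is passing from these local analytic identities to algebraicity of each mirror. My plan is to exploit the fact that each involution $\sigma_j$ is algebraic in the tangent direction (with coefficients depending analytically on $A_j$) and that the isotropic directions at infinity enter the reflection law rationally. Composing four such involutions and imposing the identity relation on a two-parameter open set should yield an overdetermined system whose solutions must be the analytic continuation of algebraic curves of degree at most two. Equivalently, one may try to replay the arguments of \cite{alg} and inspect where algebraicity was used essentially; I expect the only essential uses to be degree and Bezout-type arguments, which can be replaced by local multiplicity counts or Weierstrass preparation. Once each mirror is known to be a line or a conic, the specific incidence and confocality relations forcing types (1)--(3) follow directly from the algebraic classification.
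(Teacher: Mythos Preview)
Your proposal has a genuine gap at its core. The central difficulty in passing from the algebraic classification of \cite{alg} to the analytic one is \emph{not} a matter of replacing Bezout or degree arguments by local multiplicity counts. The paper identifies the essential obstruction explicitly (Subsection 1.2): in \cite{alg} one uses that the projection $U\to\hat a\times\hat b$ from the 4-reflective set to the product of two neighbouring normalized mirrors is \emph{proper}, and this properness is automatic in the algebraic case by Remmert's Proper Mapping Theorem. In the analytic case it is not automatic, and establishing it (Corollary \ref{cepi}) is the heart of Section 3. Your sentence ``composing four such involutions and imposing the identity relation on a two-parameter open set should yield an overdetermined system whose solutions must be the analytic continuation of algebraic curves of degree at most two'' is exactly the missing step; nothing in your outline explains why an analytic solution of such a system must be algebraic.

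The paper's route is substantially different from what you sketch. It studies the restricted Birkhoff distribution $\mathcal D_{ab}$ on a 4-dimensional variety $M_{ab}$ and proves it is non-integrable (Lemma \ref{lnonint}); this forces the union of all integral surfaces to lie in a two-dimensional analytic set, which yields properness of $U\to\hat a\times\hat b$. The non-integrability argument itself relies on the notion of \emph{triangular spirals} (phase curves of certain $SO(2,\mathbb C)$-invariant algebraic line fields, Subsection 2.4) and the key Proposition \ref{pnonalg}: a triangular spiral with two distinct centers is algebraic. With properness in hand, the case of one algebraic mirror (Theorem \ref{onealg}) follows by compactness of fibers; the remaining case where no mirror is algebraic (Theorem \ref{tallalg}) requires a further analysis of a three-dimensional Birkhoff distribution $\mathcal D_a$ using Cartan--Kuranishi--Rashevskii involutivity theory of Pfaffian systems. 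None of these ingredients appears in your plan, and the argument does not go through without them.
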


\begin{remark} \label{remalg} Theorem \ref{an-class} in the algebraic case is given by \cite[theorem 1.11]{alg}, which implies 
the 4-reflectivity of billiards of types 2) and 3). The proof of 4-reflectivity of billiards of type 1) repeats the proof in the algebraic case, see  \cite[example 1.7]{alg}.\end{remark}


\subsection{The plan of the proof of Theorem \ref{an-class}}

 Theorem \ref{an-class} is obviously 
implied by the two following theorems.

\begin{theorem} \label{onealg} Every 4-reflective complex planar  analytic billiard with at least one
algebraic  mirror has one of the above types  1)--3). 
\end{theorem}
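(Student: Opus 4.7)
The plan is to prove Theorem~\ref{onealg} by showing that 4-reflectivity together with at least one algebraic mirror forces all four mirrors to be algebraic; Theorem~\ref{onealg} then follows immediately from the algebraic classification \cite[Theorem~1.11]{alg} recalled in Remark~\ref{remalg}.

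Assume $a$ is algebraic. Fix a generic point $A_2^\circ\in b$. By 4-reflectivity, the set $U:=\{A_1\in a:(A_1,A_2^\circ)\text{ extends to a $4$-periodic orbit}\}$ is a non-empty open subset of $a$, on which the remaining vertices $A_3(A_1)\in c$ and $A_4(A_1)\in d$ of the extended orbit depend holomorphically on $A_1$. Explicitly, $A_3(A_1)$ is an intersection with $c$ of the line $L(A_1)$ obtained by reflecting the line $A_1A_2^\circ$ at $A_2^\circ$ in $T_{A_2^\circ}b$, while $A_4(A_1)$ is an intersection with $d$ of the line $L'(A_1)$ obtained by reflecting the same line at $A_1$ in $T_{A_1}a$. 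Because $A_2^\circ$ is fixed, the reflection at $A_2^\circ$ is a \emph{fixed} projective involution on lines through $A_2^\circ$, so $A_1\mapsto L(A_1)$ is an algebraic morphism from $a$ into the $\mathbb{CP}^1$ of lines through $A_2^\circ$. Because $a$ is algebraic, $T_{A_1}a$ depends algebraically on $A_1$, so $A_1\mapsto L'(A_1)$ is also algebraic into the space of all projective lines in $\mathbb{CP}^2$. An analogous construction, fixing a generic $A_4^\circ\in d$ and varying $A_1\in a$, produces an algebraic family of lines whose intersection with $b$ gives $A_2(A_1)\in b$.

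Let $\tilde a$ denote the normalization of $a$, a compact Riemann surface by algebraicity of $a$. Each of the holomorphic maps $A_2,A_3,A_4$ is a priori multi-valued because of the ambiguity in selecting a branch of the intersection of an algebraic line-family with the analytic mirror. Passing to a finite ramified cover $\widehat a\to\tilde a$ on which these intersections become single-valued, the maps extend to honest holomorphic functions from an open subset of $\widehat a$ to $\mathbb{CP}^2$. The Riemann removable-singularity theorem, applied in the compact target $\mathbb{CP}^2$, extends each of them to a holomorphic map defined on all of $\widehat a$. By Chow's theorem, the image of each such map is an algebraic curve in $\mathbb{CP}^2$; this image contains a non-empty open subset of the corresponding mirror $b$, $c$, or $d$. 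Since an irreducible analytic curve contained in an algebraic curve coincides with an irreducible algebraic component of it (the algebraic and analytic irreducible-component decompositions agree for complex algebraic varieties), each of $b,c,d$ is algebraic, and \cite[Theorem~1.11]{alg} completes the proof.

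The principal technical obstacle is the removable-singularity / monodromy step: one must check that the exceptional locus in $\tilde a$ (points where $L(A_1)$ is tangent to, or contained in, $c$; preimages of singular points of $a$; isotropic tangencies; and analogous degeneracies for $L'$) is discrete, and construct a finite ramified cover $\widehat a$ on which the intersections with $b$, $c$, $d$ are single-valued. Once this is accomplished, Riemann's extension theorem, Chow's theorem and the irreducibility of the mirrors close the argument automatically.
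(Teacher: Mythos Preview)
Your overall strategy is to show that one algebraic mirror forces all four mirrors to be algebraic, and then invoke \cite[Theorem~1.11]{alg}. This strategy cannot succeed, because the conclusion is false: in type~1) billiards the mirror $a=c$ is a line (hence algebraic), but $b$ and $d$ are arbitrary analytic curves symmetric with respect to $a$ and may well be transcendental. So a 4-reflective billiard with one algebraic mirror need not have all mirrors algebraic, and any argument purporting to show this must contain an error.

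The error sits in your extension step. You write that after passing to a finite ramified cover $\widehat a\to\tilde a$ the intersection points $A_3(A_1)\in L(A_1)\cap c$ become single-valued and then extend to all of $\widehat a$ by Riemann's removable-singularity theorem ``applied in the compact target $\mathbb{CP}^2$''. Both parts fail. First, the existence of a \emph{finite} cover trivializing the monodromy of $L(A_1)\cap c$ presupposes that a generic line meets $c$ in finitely many points with finite monodromy---i.e.\ that $c$ is algebraic, which is exactly what you are trying to prove. Second, compactness of the target does not give removability: the map $t\mapsto[1:t:e^t]$ from $\cc$ to $\cp^2$ is holomorphic and has compact target, yet does not extend across $t=\infty$. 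Concretely, if $c$ is the graph of $e^z$ and $L(t)=\{z_1=t\}$, then $A_3(t)=(t,e^t)$ has an essential singularity at infinity.

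The paper's proof avoids this trap. It first establishes (via the non-integrability Lemma~\ref{lnonint} and Corollary~\ref{cepi}) that the projection $U\to\hat a\times\hat b$ from the 4-reflective set is \emph{proper}. With $a$ algebraic this makes the fibers $\nu_b^{-1}(B)\subset U$ compact analytic curves, which immediately forces $c$ to be algebraic. Then one analyzes the projection to $\hat d$: either it too has one-dimensional image (hence $d$ algebraic, and we are in the algebraic case), or it is constant on each fiber, yielding a map $B\mapsto D_B$ with the property that $AB$ and $AD_B$ are symmetric in $T_Aa$ for all $A\in a$; this forces $a$ to be a line and lands us in type~1). The properness step is precisely what replaces your unjustified removable-singularity claim.
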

%

\begin{theorem} \label{tallalg} 
Let in a complex planar analytic 4-reflective billiard no mirror be a line. Then 
all the mirrors are algebraic curves. 
\end{theorem}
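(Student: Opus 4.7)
The strategy is to show that under the no-line hypothesis, 4-reflectivity forces at least one of the mirrors $a,b,c,d$ to be algebraic, and then to invoke Theorem~\ref{onealg} to conclude that all four are algebraic. Since the no-line hypothesis excludes Types 1) and 2) of Theorem~\ref{an-class}, the target picture is the confocal conic case (Type 3); the whole argument can be read as a rigidity statement forcing the analytic mirrors into that algebraic template.

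I would begin by fixing a generic 4-periodic orbit $A_1A_2A_3A_4$ at which each mirror is smooth with non-isotropic tangent. By 4-reflectivity (Definition~\ref{defref}), the map $(P_1,P_2)\mapsto(P_1,P_2,P_3,P_4)$ is a well-defined holomorphic map on an open neighborhood $U\subset a\times b$ of $(A_1,A_2)$, yielding a 2-parameter analytic family of 4-periodic orbits together with an associated 2-parameter family of edge-lines $\ell_i(P_1,P_2)=P_iP_{i+1}$, $i=1,\dots,4$. The complex reflection law at $P_j$ (Subsection~2.3) says that $\ell_{j-1}$ and $\ell_j$ are complex-symmetric across $T_{P_j}a_j$. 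Closing the orbit yields the identity that the composition of four such reflections is trivial---a functional relation satisfied identically on $U$.

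The heart of the argument, and the step I expect to be the main obstacle, is to extract from this closure relation an \emph{algebraic} first integral of the orbit family. In the confocal conic model, the first integral is the 1-parameter family of caustic conics tangent to all four edges of each orbit. My plan would be to show, using the closure identity together with the non-degeneracy provided by the no-line hypothesis (which forces the curvature terms at each $P_j$ into the differential relations), that there exists a 1-parameter analytic family $\{\gamma_t\}$ of curves in $\cp^2$ such that every edge of every orbit is tangent to some $\gamma_t$, and then to show that $\{\gamma_t\}$ must be an algebraic pencil. The second step would use the rigidity of the closure identity: once the tangent-line map $P\mapsto T_Pa\in(\cp^2)^*$ of any mirror is tied through four reflections to the pencil $\{\gamma_t\}$ and back to itself, a degree-counting / Pl\"ucker-type argument, parallel to the one used for the algebraic classification \cite[theorem~1.11]{alg}, should force the image of this map to lie on an algebraic curve of the dual plane, giving algebraicity of the mirror.

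Once one mirror is shown algebraic, Theorem~\ref{onealg} applies and, together with the no-line hypothesis, forces the billiard to be of Type~3), so that $a,b,c,d$ are confocal conics and in particular all algebraic. The key technical difficulty is the analytic-to-algebraic passage in the third step; a plausible toolkit, analogous to \cite[section~2]{gk2}, would be Cartan's prolongation of the Pfaffian system encoding 4-periodicity, whose finite-dimensional integrability would yield the algebraicity of the first integral.
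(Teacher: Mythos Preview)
Your proposal has a genuine gap at exactly the step you flag as ``the main obstacle'': the analytic-to-algebraic passage. The caustic/first-integral idea is circular. In the confocal model the caustic pencil \emph{is} the family of confocal conics, so its existence and algebraicity already presuppose the conclusion. In the purely analytic setting there is no mechanism that produces an \emph{algebraic} pencil of tangent curves from the closure identity alone: that identity is one holomorphic relation on a 2-dimensional analytic germ and carries no degree bound. Likewise, a ``degree-counting / Pl\"ucker-type argument'' only makes sense once the dual curve is already known to lie on an algebraic variety; it cannot bootstrap algebraicity from nothing. Your closing remark that Cartan prolongation ``would yield the algebraicity of the first integral'' is not correct as stated: prolongation gives a finite-type Pfaffian system whose integral surfaces are \emph{analytic}, not automatically algebraic.

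The paper's argument is structurally different and does not pass through caustics. One assumes, for contradiction, that no mirror is algebraic. A preliminary step (Section~3) shows that the two-dimensional restricted Birkhoff distribution $\mcd_{ab}$ is non-integrable; this forces the projection $U\to\hat a\times\hat b$ to be \emph{proper} (Corollary~\ref{cepi}) and yields the key Corollary~\ref{inters}: if non-line neighbor mirrors $a,b$ meet at a non-marked point, then $a=c$. One then studies the three-dimensional Birkhoff distribution $\mcd_a$ on $M_a$ and the minimal analytic set $M\subset M_a$ containing the orbit surface $S$. Whether $\mcd_M=\mcd_a|_M$ has dimension~2 or~3 (the latter case handled via Cartan--Kuranishi--Rashevskii involutivity to manufacture enough integral surfaces), the transcendence of $a$ forces a family of nearby 4-reflective billiards $a,b(x),c(x),d(x)$ with $b(x)$ meeting $a$ at a non-marked point. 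Corollary~\ref{inters} then gives $c(x)=a$ (the alternative ``$b(x)$ is a line'' being excluded), hence $\nu_C(M)\subset a$. But for each $A\in\hat a$ the fiber $W_A=\nu_a^{-1}(A)\cap M$ is a projective algebraic set, so $\nu_C(W_A)\subset\cp^2$ is algebraic (Proper Mapping and Chow) while lying in the transcendental curve $a$; this forces $\nu_C(W_A)$ to be discrete, contradicting Proposition~\ref{focic}. The engine is thus properness plus deformation inside the Birkhoff phase space plus the intersected-mirrors lemma, not an integral of the orbit family.
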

%

Theorems \ref{onealg} and \ref{tallalg} are proved in Subsection 3.4 and Section 4 respectively. 

\begin{remark} \label{remirgerm} 
Theorem \ref{an-class} is local and can be stated  for a {\it germ of 
4-reflective analytic billiard:} a collection of irreducible germs of analytic curves 
$(a,A)$, $(b,B)$, $(c,C)$, $(d,D)$ in $\cp^2$ such that the 
quadrilateral $ABCD$ lies in an open set of quadrilateral  orbits of the corresponding billiard. 
\end{remark}

For the  proof of Theorem \ref{an-class} we study the maximal analytic extensions of the mirrors. These are analytic 
curves parametrized by abstract connected Riemann surfaces, which we will denote by 
$\ha$, $\hb$, $\hc$, $\hd$. The latter   
are called the {\it maximal normalizations,} see the corresponding background material in Subsection 2.2. We represent the 
 open set of quadrilateral orbits as a subset in $\ha\times\hb\times\hc\times\hd$ and will denote it  by $U_0$. Its closure 
 $$U=\overline{U_0}\subset\ha\times\hb\times\hc\times\hd$$
 in the usual topology is an analytic subset with only two-dimensional irreducible components. It will be called the {\it 4-reflective set}, see  \cite[definition 2.13 and proposition 2.14]{alg}. The complement $U\setminus U_0$ consists of the so-called degenerate quadrilateral orbits: 
 quadrilaterals $ABCD$ satisfying the reflection law that have either a pair of coinciding neighbor 
 vertices, or a pair of coinciding adjacent edges, e.g.,  an edge tangent to a mirror through an adjacent vertex,  or an isotropic tangency vertex. 
 
 \def\mcrr{\mathcal R}
 \def\mcp{\mathcal P}
\def\pp{\mathbb P}
\def\mch{\mathcal H}
\def\mcp{\mathcal P}
 
 One of the main ideas of the proof of Theorem \ref{an-class} is similar to that from \cite{gk1, gk2, alg}: to study the degenerate orbit set 
 $U\setminus U_0$. This idea itself together with basic algebraic geometry allowed to treat the algebraic case in \cite{alg}. 
 One of the key facts used in the proof was properness (and hence, epimorphicity)  
 of the projection $U\to\ha\times\hb$ to the position of two neighbor vertices. 
 In the algebraic case the properness is automatic (follows from Remmert's Proper Mapping Theorem \cite[p.34]{griff}), 
 but in the general analytic case under consideration it isn't. We prove that the above projection is indeed proper in the analytic case. 
The most part of the proof of Theorem \ref{an-class}, and in particular, the proof of properness are based on studying restricted versions 
 of Birkhoff distribution, which was introduced in \cite{bzh}. All the Birkhoff distributions are briefly described below; 
 more details are given in Subsection 2.7. 
  
  \begin{definition} Let $M$ be an n-dimensional (real or complex) analytic manifold.
Let $\mcd$ be a $d$-dimensional analytic distribution on $M$, i.e., $\mcd(x)\subset T_xM$ is a $d$-dimensional
subspace for every $x\in M$ and the map $x\mapsto\mcd(x)$ is analytic. Let $l\leq d$. An $l$-dimensional 
surface $S\subset M$ is said to be an {\it integral surface} for the distribution $\mcd$, if $T_xS\subset\mcd(x)$ 
for every $x\in S$. 
\end{definition}

Consider the projectivization of the tangent bundle $T\cp^2$: 
 $$\mcp=\mathbb P(T\cp^2).$$
 It is the space of pairs $(A,L)$: $A\in\cp^2$, $L\subset T_A\cp^2$ is a one-dimensional subspace. 
 The space $\mcp$ is three-dimensional and it carries the standard two-dimensional contact distribution $\mch$: the plane  
$\mch(A,L)\subset T_{(A,L)}\mcp$ is the preimage of the line $L\subset T_A\cp^2$ under the derivative of the bundle projection 
$\mcp\to\cp^2$. The product  $\mcp^k$ carries the product distribution $\mch^k$. Let $\mcrr_{0,k}\subset\mcp^k$ denote the 
subset of  points $((A_1,L_1),\dots,(A_k,L_k))$ such that for every $j$ one has $A_j\in\cc^2\subset\cp^2$, 
$A_{j\pm1}\neq A_j$, the lines $A_jA_{j-1}$, 
$A_jA_{j+1}$ are  symmetric with respect to the  line $L_j$, and  the three latter lines are distinct and non-isotropic. The 
above product distribution induces the so-called {\it Birkhoff distribution} $\mcd^k$ on $\mcrr_{0,k}$, see \cite{bzh}. 
 It is well-known \cite{bzh} that for every 
analytic billiard $a_1,\dots,a_k$ {\it the natural lifting to $\mcp^k$ 
of any analytic family of its $k$-periodic orbits $A_1\dots A_k$ with $L_j=T_{A_j}a_j$ 
lies in $\mcrr_{0,k}$ and is tangent to Birkhoff distribution.}  In particular, 
if the billiard is $k$-reflective, then {\it the lifting to $\mcrr_{0,k}$ of an open set of its $k$-periodic orbits is an integral surface of 
Birkhoff distribution.} 

We will study the following restricted versions $\mcd_a$ and $\mcd_{ab}$ of Birkhoff distribution that correspond respectively to 
4-reflective billiards $a$, $b$, $c$, $d$ with  one given mirror $a$ (or two given mirrors $a$ and $b$). The products $\ha\times\mcp^3$ and 
$\ha\times\hb\times\mcp^2$ admit natural inclusions to $\mcp^4$ induced by parametrizations $\ha\to a$, $\hb\to b$. 
Let 
$M_a\subset\ha\times\mcp^3$, $M_{ab}\subset\ha\times\hb\times\mcp^2$ denote the  closures of the corresponding pullbacks of the set  
$\mcrr_{0,4}$. The distributions $\mcd_a$, $\mcd_{ab}$ are the 
pullbacks of the Birkhoff distribution $\mcd^4$ on $\mcrr_{0,4}$. They are 3- and 2-dimensional 
singular analytic distributions on $M_a$ and $M_{ab}$ in the sense of Subsection 2.6. 
For every billiard as above the 
natural lifting to $\ha\times\mcp^3$ ($\ha\times\hb\times\mcp^2$) of any open set of its quadrilateral orbits lies in $M_a$ ($M_{ab}$) and 
is an integral surface of the corresponding distribution $\mcd_a$ (respectively, $\mcd_{ab}$). 


 The proof of Theorem \ref{an-class} is split into the following steps.
 
 Step 1. Case of two neighbor algebraic mirrors. 
 In this case it is easy to show that all the mirrors are algebraic (Proposition \ref{twoalg} in Subsection 2.1). This together with 
 \cite[theorem 1.11]{alg} implies that the billiard under question is of one of the types 1)--3), see Remark \ref{remalg}. 
 
 From now on we consider that no two neighbor mirrors are algebraic. 
 
 Step 2. Preparatory description of the complement $U\setminus U_0$. In Subsection 2.4 we 
 study degenerate quadrilaterals $ABCD\in U\setminus U_0$ 
 with a pair of coinciding neighbor vertices, say $A=D$, analogously to the arguments from \cite[p.320]{gk2}. 
 Under mild additional assumptions, in particular, $B,C\neq A=D$, 
 we show that the other mirrors $b$ and $c$ are special curves called {\it triangular spirals centered at $A$.} 
 Namely, they are phase curves of algebraic line fields on 
 $\cp^2$: the so-called triangular line fields centered at $A$ 
 introduced in the same subsection (Proposition \ref{intspir}).   
 One of the key arguments used in the 
 proof of Theorem \ref{an-class} is Proposition \ref{pnonalg}, which says that every triangular spiral with at least two distinct centers is algebraic. 
In Subsections 2.3 and 2.5 we recall the results of \cite[subsections 2.1, 2.2]{alg} on 
 partial description of degenerate quadrilaterals in $U\setminus U_0$ with either an isotropic tangency vertex, 
 or an edge tangent to a mirror through an adjacent vertex. 
 
 Step 3. Properness of the projection $U\to\ha\times\hb$ (Section 3, Corollary \ref{cepi}). 
 To prove it, we study the Birkhoff distribution $\mcd_{ab}$ and prove its 
 non-integrability in Subsections 3.1--3.3. Moreover, we show that  {\it the closure in $M_{ab}$ of the union of its  integral surfaces (if any) is a two-dimensional analytic subset in $M_{ab}$} (Lemma \ref{lnonint} and Corollary \ref{cnonint}.)  In the proof of the latter statement and in what follows 
 we use Proposition \ref{anint} from Subsection 2.6. It deals with  an $m$-dimensional singular analytic distribution, a given union of 
 $m$-dimensional integral  surfaces and the minimal analytic set $M$ containing the latter union. 
 Proposition \ref{anint} states that {\it the restriction to $M$ of the distribution is $m$-dimensional and integrable.} Proposition \ref{anint} is 
 a key tool for the whole paper. 
 
 The set $U$ is  identified with either the above two-dimensional analytic subset in $M_{ab}$, or a smaller analytic subset. 
 This together with Proper Mapping Theorem implies properness of the projection $U\to\ha\times\hb$ (Corollary \ref{cepi}). The proof of Lemma \ref{lnonint}  
 is done by contradiction. The contrary would imply the existence of at least three-dimensional invariant 
 irreducible analytic subset $M\subset M_{ab}$ 
 where the distribution $\mcd_{ab}$ is integrable. Then a complement $M^0\subset M$ to a smaller analytic subset 
 is saturated by open sets of quadrilateral orbits of  4-reflective billiards $a$, $b$, $c$, $d$
 with variable mirrors $c=c(x)$ and $d=d(x)$, $x\in M^0$. We treat separately two cases: 
 
 - some of the projections of the set $M$ to the space of triples $(A(x),B(x),G(x))$, $G=C,D$, is not bimeromorphic.
 
 - both latter projections are bimeromorphic. 
 
 The first case will be treated in Subsection 3.2. We show that there exist $x,y\in M^0$ projected to the same vertices 
 $A$, $B$, $D=D_0$ but with distinct tangent lines 
  $T_{D_0}d(x)\neq T_{D_0}d(y)$, $D_0$ being not a cusp\footnote{Everywhere in the paper by {\it cusp} we mean 
the singularity of an arbitrary  irreducible singular germ of analytic curve, not necessarily the one given by equation $x^2=y^3+\dots$ 
in appropriate coordinates.}
of the curves $d(x)$ and $d(y)$, the projection to $(A,B,D)$ being a local submersion at $x$, $y$. 
We then deduce that the billiard 
  $d(y)$, $d(x)$, $c(x)$, $c(y)$ is  4-reflective (as in \cite[proof of lemma 3.1]{alg}), 
 and the mirror $c(x)$ is a triangular spiral with center $D_0$ (Proposition \ref{intspir}, Step 2). 
 Then we slightly deform  $y$  with fixed vertices $A$ and $B$ to a point $y'$ 
 so that the corresponding mirror $d(y')$ intersects $d(x)$ at a point $D_1\neq D_0$. We get analogously that the curve $c(x)$ is a triangular 
 spiral with two distinct centers $D_0$ and $D_1$. 
 This implies that $c(x)$ is algebraic (Proposition \ref{pnonalg}, Step 2). 
 Similarly, we show that $c(y)$ is algebraic, fixing $y$ and deforming $x$. 
 Hence, the mirror $d(x)$ of the 4-reflective billiard $d(y)$, $d(x)$, $c(x)$, $c(y)$ is algebraic, as are $c(x)$ and $c(y)$ 
 (Proposition \ref{twoalg}, Step 1). Similarly, $a$ and $b$ are algebraic, as are $c(x)$ and $d(x)$. The contradiction thus 
 obtained  implies that the first case is impossible.
 
 In the second case we show (in Subsection 3.3) that for an open set of points $x\in M^0$ the mirrors $c(x)$ and $d(x)$ are lines. Hence, the 
 curves $a$ and $b$ are algebraic, by Step 1, -- a contradiction. Finally, none of the above cases is possible. The contradiction thus obtained 
 will prove Lemma \ref{lnonint}. 
%
 
 Step 4. Case of one algebraic mirror, say $a$: proof of  Theorem \ref{onealg} (Subsection 3.4). Properness of the projection $U\to\ha\times\hb$ (Step 3) implies properness of the projection  $U\to\hb$ (algebraicity). Therefore, the preimage in $U$ of every point $B\in \hb$ is a compact 
 holomorphic curve. This immediately implies that the mirror $c$ is algebraic and there are two possibilities:
 
 -  either all the mirrors are algebraic, and we are done; 
 
 - or the projection of the above preimage to the position of the point $D$ is constant for every $B$. 
 
 In the latter case we show that $a=c$ is a line and the mirrors $b,d\neq a$ are symmetric with respect to it: the billiard has type 1). This will 
 prove Theorem \ref{onealg}. 
 
 From now on we consider that no mirror is algebraic. We show that this case is impossible. This will prove Theorem \ref{tallalg} and hence, 
 Theorem \ref{an-class}. 
 
 Step 5.  Case of intersected mirrors, say $a$ and $b$ intersect at a point $A$. Under the additional assumption that $A$ is regular and 
  not an isotropic tangency point for both $a$ and $b$ we show that $a=c$ (Corollary \ref{inters} proved in 
  Subsection 3.5). 
 The set $U$ contains a non-empty at most one-dimensional 
  compact analytic subset of quadrilaterals $AACD$ (properness of projection, Step 3). For the proof of Corollary \ref{inters} we 
  show in Subsection 3.5 that this is a discrete subset in 
  $U$ consisting  of quadrilaterals with all the vertices coinciding with $A$. 
  Indeed, otherwise, if the above subset were one-dimensional, this would immediately imply that 
  some of the mirrors $c$ or $d$ is algebraic,  -- a contradiction.   
  
  Step 6.  Proof of Theorem \ref{tallalg}. To do this, we study the three-dimensional Birkhoff distribution $\mcd_a$ on the 6-dimensional 
  analytic set $M_a$. We fix a connected component of  the open set of quadrilateral orbits of the billiard $a$, $b$, $c$, $d$. 
  It is an integral surface of the distribution $\mcd_a$, which we will denote $S$. 
   We consider the minimal analytic subset $M\subset M_a$ containing $S$, which is   
irreducible and three-dimensional (easy to show), and study the restriction $\mcd_M$ to $M$ of the distribution $\mcd_a$. 
We   treat two separate cases: 1) $dim \mcd_M=2$ (Subsection 4.1); 2) $dim \mcd_M=3$ (Subsection 4.3). 
In the two-dimensional Case 1) we show that there exists an open subset $V\subset M$ saturated by integral surfaces of the distribution $\mcd_M$ 
that correspond to 4-reflective billiards $a$, $b(x)$, $c(x)$, $d(x)$ with $b(x)$ intersecting $a$ (easily follows from transcendence of the 
curve $a$). We then deduce 
that either the mirror $b(x)$ is a line for all $x\in V$ (and hence, for 
 all $x$ regular for both $M$ and $\mcd_M$), or the mirror $c(x)$ coincides 
 with $a$ for all $x$ as above. This basically follows from  Corollary \ref{inters}, Step 5. The first subcase is impossible, since then 
 the mirror $b$ of the initial transcendental billiard would be a line, -- a contradiction.   In the second subcase the projection $\nu_C(M)$ 
 of the whole variety $M$ to the position of the vertex $C$ 
 lies in  $a$. For a generic $A\in\ha$ we consider its preimage 
 $W_A\subset M$ under the projection $\nu_a:M\to\ha$, which is a projective 
 algebraic variety. It follows that the projection $\nu_C(W_A)$ lies in a transcendental curve  $a$, while it 
 should be an algebraic subset in $\cp^2$ (Remmert's Proper Mapping and Chow's  Theorems). Hence, $\nu_C(W_A)$ is discrete. On the other 
 hand, it cannot be discrete, whenever $b$ is neither a line, nor a conic, by \cite[proposition 2.32]{alg}. 
 The contradiction thus obtained  shows that  Case 1) is impossible. The three-dimensional Case 2) is treated analogously, but it is more 
 technical. The existence of two-dimensional integral surfaces as above of a three-dimensional distribution $\mcd_M$ is not automatic. 
 Its proof is based on Cartan--Kuranishi--Rashevskii involutivity theory of Pfaffian systems. The corresponding background material is recalled in 
 Subsection 4.2.

\section{Preliminaries}

\subsection{ Case of two neighbor algebraic mirrors}

\begin{proposition} \label{twoalg}  Let in a 4-reflective billiard $a$, $b$, $c$, $d$ the mirrors $a$ and $b$ be algebraic curves. 
Then all the mirrors are algebraic.
\end{proposition}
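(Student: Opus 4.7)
Let $U \subset a \times b$ be the open set of pairs extending to quadrilateral orbits, and let $\phi,\psi : U \to \cp^2$ denote the holomorphic maps $(A,B)\mapsto C$ and $(A,B)\mapsto D$. The plan is to prove that $d$ is algebraic by giving an explicit rational parametrization of an open subset of $d$ using only $a$, $b$ and the pointed tangent datum $(C_0, T_{C_0}c)$ at a single generic point $C_0 \in c$; the curve $c$ is then obtained by a symmetric construction with a generic $D_0 \in d$ in place of $C_0$.

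Fix a generic $C_0 \in \phi(U)$ and consider the one-dimensional analytic fiber $F = \phi^{-1}(C_0) \subset U$, parametrized locally by $B$ in an open subset of $b$. The reflection law at $B$ says that the line $BA$ is the reflection of $BC_0$ in $T_B b$; since $b$ is algebraic, this reflected line depends algebraically on $B$, and intersecting it with the algebraic curve $a$ gives $A = A(B)$ as an algebraic function of $B$. By construction the reflected ray from $B$ passes through $C_0 \in c$, so $(A(B),B) \in F$.

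Next, I would pin down $D$ as the intersection of two lines, each algebraic in $B$. The reflection law at $A$ puts $D$ on the line $\ell_A(B)$ obtained by reflecting $AB$ in $T_A a$, and this line is algebraic in $B$ since $a$ is algebraic and $A(B)$ is algebraic in $B$. The reflection law at $C_0$ puts $D$ on the line $m(B)$ through $C_0$ whose direction is the reflection of the direction of $C_0 B$ in the \emph{fixed} tangent $T_{C_0}c$; because $C_0$ and $T_{C_0}c$ are constants along $F$, the line $m(B)$ is also algebraic in $B$. Hence $D(B) = \ell_A(B) \cap m(B)$ is a rational map from $b$ to $\cp^2$, its image is an algebraic curve in $\cp^2$, and that image lies in $d$. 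By irreducibility of the one-dimensional mirror $d$, the curve $d$ must coincide with the Zariski closure of this image and is therefore algebraic. A symmetric construction, using a generic $D_0 \in d$ and $(D_0, T_{D_0}d)$ in place of $(C_0, T_{C_0}c)$ and parametrizing $\psi^{-1}(D_0)$ by $A\in a$, shows that $c$ is algebraic as well.

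The only point needing verification is that the rational map $B \mapsto D(B)$ is non-constant, so that its image is genuinely one-dimensional. If $D(B) \equiv D_*$ were constant, the line $m(B)$ would pass through the two fixed points $C_0$ and $D_*$ and hence be a fixed line, forcing the direction of $C_0B$ to be constant in $B$; this is excluded for a generic choice of $C_0 \in c$, since the point $B$ genuinely moves on $b$ and $b$ is not a line through the generic $C_0$. Thus the image is one-dimensional and the algebraicity conclusion follows.
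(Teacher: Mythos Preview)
Your argument is correct and follows essentially the same approach as the paper's proof: fix one of the unknown vertices (you fix $C_0$, the paper fixes $D_0$), use the two algebraic mirrors $a$, $b$ together with the single fixed tangent datum at that vertex to express the opposite unknown vertex as the intersection of two algebraically varying lines, and conclude that the opposite mirror is algebraic. The only cosmetic differences are that the paper parametrizes the one-dimensional family by the line $l=AD_0$ through the fixed vertex rather than by $B\in b$, and the paper does not spell out the non-constancy check you add at the end.
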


\begin{proof} By symmetry, it suffices to prove algebraicity of the mirror $c$. Fix a quadrilateral orbit $A_0B_0C_0D_0$. 
Consider the family of quadrilateral orbits $ABCD$ with fixed $D=D_0$. 
They are locally parametrized by the line $l=AD$, which lies in the space $\cp^1$ 
of lines through $D$. The point $A$ depends algebraically on $l$, since 
$a$ is algebraic. Similarly, the line $AB$, and hence, the point $B$ depend algebraically on $l$, since $a$, $b$ are algebraic and $AB$ is 
symmetric to $l$ with respect to the line $T_Aa$. 
Analogously, the line $BC$, which  is symmetric to $AB$ with respect to the line $T_Bb$, depends algebraically on $l$. 
The line $DC$ also depends algebraically on $l$, being  the reflected image of the line $l$ with respect to the fixed line $T_Dd$. Finally, the 
variable intersection point $C=BC\cap DC$ should also depend algebraically on $l$. Hence, $c$ is algebraic. The proposition is proved. 
\end{proof}

\subsection{Maximal analytic extension}

Recall that a germ $(a,A)\subset\cp^n$ of analytic curve is {\it irreducible}, if it is the image of a germ of analytic mapping 
$(\cc,0)\to\cp^n$. 

\begin{definition} \label{order} \cite[definition 5]{odd} Consider two holomorphic mappings of connected Riemann surfaces 
$S_1$, $S_2$ with base points $s_1\in S_1$ and $s_2\in S_2$ 
to $\cp^n$, $f_j:S_j\to\cp^n$, $j=1,2$, $f_1(s_1)=f_2(s_2)$. We say that 
$f_1\leq f_2$, if there exists a holomorphic 
mapping $h:S_1\to S_2$, $h(s_1)=s_2$, such that $f_1=f_2\circ h$. This defines a partial order on the set of classes of Riemann surface 
mappings to $\cp^n$ up to conformal reparametrization respecting base points. 
\end{definition}

The following proposition is classical, see the proof, e.g., in \cite{odd}. 

\begin{proposition} \label{ext} \cite[proposition 2]{odd}. Every irreducible germ of analytic curve in $\cp^n$ has maximal analytic extension. In more detail, let  $(a,A)\subset\cp^n$ be an irreducible germ of analytic curve. There exists an abstract connected 
Riemann surface $\hat a$ with base 
 point $\hat A\in\hat a$ (which we will call the {\bf maximal normalization} of the germ $a$) 
 and a holomorphic mapping $\pi_a:\ha\to \cp^n$, $\pi_a(\hat A)=A$ with the following properties:
 
 -  the image of germ at $\hat A$ of the mapping $\pi_a$ 
 is contained in $a$; 
 
 -  $\pi_a$ is the maximal mapping with the above property in the sense of Definition \ref{order}.
 
 Moreover, the mapping $\pi_a$ is unique up to composition with conformal isomorphism of Riemann surfaces respecting base points. 
 \end{proposition}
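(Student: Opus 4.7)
The plan is to mimic the standard construction of the maximal Riemann surface of an analytic function, carried out for parametrized germs of curves rather than for germs of functions. The key point forced by the setting of Convention~\ref{conv-curves} is that we must work with \emph{parametrized} germs, so that distinct local branches through a self-intersection or singular point of $a$ are never identified.

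First I would define the set $\mcrr$ of \emph{pointed parametrized germs}: an element of $\mcrr$ is an equivalence class $[\phi]$ of holomorphic maps $\phi\colon(\cc,0)\to\cp^n$ under the relation $\phi_1\sim\phi_2$ iff there is a germ of biholomorphism $\sigma\colon(\cc,0)\to(\cc,0)$ with $\phi_1=\phi_2\circ\sigma$. The base point is the initial germ $[\phi_0]$ parametrizing $(a,A)$. I then let $\mathcal G\subset\mcrr$ consist of those $[\phi]$ that can be connected to $[\phi_0]$ by an analytic continuation chain, i.e. there exist a connected Riemann surface $S$ with points $s_0,s_1\in S$, a holomorphic map $F\colon S\to\cp^n$, and local holomorphic charts $\iota_i\colon(\cc,0)\to(S,s_i)$ such that $F\circ\iota_0$ represents $[\phi_0]$ and $F\circ\iota_1$ represents $[\phi]$.

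Next I would topologize $\mathcal G$: a fundamental neighborhood of $[\phi]$ consists of the germs $[\phi(\cdot+t)]$ for $t$ in a small disk around $0$. A routine check shows that $\mathcal G$ is Hausdorff, that the charts $t\mapsto[\phi(\cdot+t)]$ turn it into a Riemann surface, and that the evaluation $[\phi]\mapsto\phi(0)\in\cp^n$ is holomorphic. Define $\ha$ to be the connected component of $\mathcal G$ through $[\phi_0]$, put $\hat A=[\phi_0]$, and let $\pi_a$ be the evaluation map; by construction the germ of $\pi_a$ at $\hat A$ equals $(a,A)$.

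For maximality, given any pointed holomorphic map $f\colon(S,s)\to(\cp^n,A)$ whose germ at $s$ is contained in $a$, I would define $h\colon S\to\ha$ by sending $t\in S$ to the parametrized germ of $f$ at $t$ (read off through any local chart around $t$); the compatibility with $[\phi_0]$ at $t=s$ is exactly the hypothesis, and $h$ is holomorphic because in local charts it coincides with the identifying chart of $\mathcal G$. Then $\pi_a\circ h=f$ and $h(s)=\hat A$, giving $f\le\pi_a$ in the order of Definition~\ref{order}. Uniqueness of $h$ (and hence of $(\ha,\hat A,\pi_a)$ up to conformal isomorphism respecting base points) is the standard identity principle: two such $h$ agree in a neighborhood of $s$ because both lift $f$ and send $s$ to $\hat A$, so they agree everywhere by connectedness of $S$.

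The main obstacle is not logical but notational: setting up the equivalence relation on germs so that branches at self-intersection and singular points are kept separate, and so that the topology on $\mathcal G$ really defines a Hausdorff Riemann surface structure. Once the parametrized (rather than set-theoretic) viewpoint is adopted, both issues dissolve, and the rest of the argument is the classical Weierstrass/Riemann construction transported verbatim.
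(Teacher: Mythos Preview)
The paper does not contain its own proof of this proposition: it is stated with the citation \cite[proposition 2]{odd} and introduced by the sentence ``The following proposition is classical, see the proof, e.g., in \cite{odd}.'' So there is nothing in the paper to compare against beyond the claim that the result is classical.

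Your sketch is precisely the classical Weierstrass--Riemann sheaf-of-germs construction, correctly adapted to parametrized germs rather than function elements. The key point you flag --- working with equivalence classes of local parametrizations modulo reparametrization, so that distinct branches at a self-intersection remain distinct --- is exactly the right adaptation, and the rest (topology on the space of germs, Hausdorffness, evaluation map, maximality via lifting, uniqueness by the identity principle) is standard and correctly outlined. This is presumably the argument in \cite{odd} as well, since the paper calls it classical; your proposal is consistent with what the paper asserts.
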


 \begin{corollary} \label{clift} Let $M$ be a complex manifold, and let $f:M\to\cp^n$ be a non-constant holomorphic mapping. 
 Let $U\subset M$ be an irreducible analytic subset, and let  the restriction $f|_U$  have rank one on an open  subset. 
 Let $x\in U$ be a regular point, $A=f(x)$; then the image of the germ $f:(U,x)\to\cp^n$ is an irreducible germ $(a,A)$ of analytic curve. 
 Let $\pi_a:\hat a\to a$ be its maximal normalization. 
 Let $\hat U$ be the normalization of the analytic set $U$ (see \cite[p.78]{chirka}), $\pi_U:\hat U\to U$ be the natural projection (which 
is invertible on the regular part of $U$). 
 Then there exists a unique holomorphic lifting $F:\hat U\to\ha$  such that $f\circ\pi_U=\pi_a\circ F$.
 \end{corollary}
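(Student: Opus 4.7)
The plan is to build $F$ in two stages: a local lift near $\hat x$ from Proposition~\ref{ext}, and a global extension by analytic continuation, relying on the maximality of $\pi_a$ for path-independence.

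For the local stage, since $\hat x$ is a regular point of $\hat U$, the projection $\pi_U$ is a local biholomorphism near $\hat x$. The rank-one hypothesis on $f|_U$ at $x$ combined with the constant rank theorem yields a neighborhood $W\ni\hat x$ and a factorization $f\circ\pi_U|_W=\phi\circ p$, where $p\colon W\to\Delta$ is a holomorphic submersion onto a disk and $\phi\colon\Delta\to\cp^n$ parametrizes the germ $(a,A)$ with $\phi(0)=A$. Applying Proposition~\ref{ext} to $\phi$ viewed as a map from the Riemann surface $\Delta$ gives a unique holomorphic lift $\hat\phi\colon\Delta\to\ha$ with $\hat\phi(0)=\hat A$ and $\pi_a\circ\hat\phi=\phi$. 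Then $F_0:=\hat\phi\circ p$ is the desired local lift on $W$, satisfying $F_0(\hat x)=\hat A$. For the global stage, let $\hat U^\circ\subset\hat U$ denote the open dense subset of smooth points of $\hat U$ at which $f\circ\pi_U$ has rank one. The same constant-rank-theorem construction produces a local lift near any point of $\hat U^\circ$; on any connected overlap two such local lifts that agree at one point coincide, because both factor $f\circ\pi_U$ through $\pi_a$ and $\pi_a$ has discrete fibers. Patching the local lifts along any path in $\hat U^\circ$ emanating from $\hat x$ continues $F_0$ along the path.

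The crux of the argument is that this continuation is path-independent, i.e., the monodromy around any loop in $\hat U^\circ$ based at $\hat x$ is trivial. I would deduce this from the maximality clause of Proposition~\ref{ext}: a non-trivial monodromy would let me produce, from a suitable Riemann-surface slice threaded through the loop inside $\hat U^\circ$, a connected pointed Riemann surface $(S,s)$ with a holomorphic map $S\to\cp^n$ whose germ at $s$ lies in $(a,A)$ and which strictly dominates $\pi_a$ in the partial order of Definition~\ref{order}, contradicting maximality. Once single-valuedness is established on $\hat U^\circ$, the extension of $F$ across the thin analytic subset $\hat U\setminus\hat U^\circ$ follows from Riemann's extension theorem on the normal space $\hat U$, since $F$ is locally bounded: the relation $\pi_a\circ F=f\circ\pi_U$ together with the local properness properties of $\pi_a$ built into the construction of $\ha$ prevents escape to infinity near any limit point. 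Uniqueness of $F$ then follows from the identity principle combined with discreteness of $\pi_a$-fibers.

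The main obstacle is the triviality of monodromy, where the abstract maximality of $\pi_a$ must be carefully converted into a concrete Riemann-surface enlargement. A cleaner alternative route is to work directly with the fiber product $\hat U\times_{\cp^n}\ha$: let $V$ be the irreducible component containing $(\hat x,\hat A)$; near this point the projection $V\to\hat U$ is a local biholomorphism, and one shows that $V\to\hat U$ is proper with generic fiber one, whence it is a biholomorphism by normality of $\hat U$, so that $F$ is given by the second projection composed with the inverse of the first. Here again properness of $V\to\hat U$ encodes precisely the maximality of $\pi_a$, since any fiber escape would provide a holomorphic extension of $\pi_a$ that maximality forbids.
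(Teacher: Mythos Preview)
Your approach is correct and rests on the same core ingredient as the paper's---the maximality of $\pi_a$ from Proposition~\ref{ext}---but the paper's argument is considerably more direct. Rather than setting up analytic continuation along paths in $\hat U^\circ$ and then arguing that the monodromy vanishes, the paper simply observes that through any two nearby points $y,z\in\hat U$ one can pass an analytic curve, and applies Proposition~\ref{ext} directly to the restriction of $f\circ\pi_U$ to (the normalization of) that curve: this yields a well-defined single-valued lift to $\hat a$ on the whole curve at once, so the monodromy question never arises. Hartogs' and Osgood's theorems then handle holomorphicity of the resulting map and its extension. Your monodromy step (``a suitable Riemann-surface slice threaded through the loop \dots\ which strictly dominates $\pi_a$'') is where your argument is vaguest: not every real loop in a complex manifold lies inside an analytic curve, and the phrase ``strictly dominates $\pi_a$'' is a slightly awkward way to package the contradiction---what maximality gives you directly is that any pointed Riemann surface mapping into $a$ \emph{already} lifts to $\hat a$, which is exactly the paper's shortcut. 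Your alternative fiber-product route is also sound and is in spirit closer to the paper's one-line proof than your primary continuation argument.
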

 
 \begin{proof} 
 For every point $y\in\hat U$ and any point $z\in \hat U$ close enough to $y$ there exists an analytic curve in $\hat U$ 
 through $y$ and $z$. This  together with Proposition \ref{ext} (applied to the latter curves) and Hartogs' and Osgood's Theorems 
 imply the corollary.
 \end{proof}
 
%

\subsection{Complex reflection law}

\def\mcl{\mathcal L}
The material presented in this subsection is contained in \cite[subsection 2.1]{alg}.

We fix an Euclidean metric on $\rr^2$ and consider its complexification: the 
complex-bilinear quadratic form $dz_1^2+dz_2^2$ on the complex affine plane $\cc^2\subset\cp^2$. 
We denote the infinity line in $\cp^2$ by $\oc_{\infty}=\cp^2\setminus\cc^2$.   

\begin{definition}  The {\it symmetry} $\cc^2\to\cc^2$ with respect to a non-isotropic 
complex line $L\subset\cp^2$  is the unique non-trivial complex-isometric involution 
fixing the points of the line $L$. It extends to a projective transformation of the ambient plane $\cp^2$. 
For every $x\in L$ it acts on the space $\cp^1$ of lines through $x$, and this action is called {\it symmetry at $x$}. 
If $L$ is an isotropic line through a finite point $x$, then a pair of  lines through $x$ is called symmetric with respect to $L$, if 
it is a limit of symmetric pairs of lines with respect to  non-isotropic lines converging to $L$. 
\end{definition}

\begin{lemma} \label{lim-refl} \cite[lemma 2.3]{alg} 
Let $L$ be an isotropic line through a finite point $x$. A pair of lines $(L_1,L_2)$ through $x$ is 
symmetric with respect to $L$, if and only if some of them coincides with $L$.  
\end{lemma}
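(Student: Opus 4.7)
The plan is to pass to the limit in the explicit formula for reflection in a non-isotropic line. Recall that for a non-isotropic line $L^{\varepsilon}$ through $x$ with direction vector $v^{\varepsilon}$, the induced symmetry acts on direction vectors at $x$ by
$$s_{\varepsilon}(w) = -w + 2\frac{\langle w, v^{\varepsilon}\rangle}{\langle v^{\varepsilon}, v^{\varepsilon}\rangle}\, v^{\varepsilon},$$
where $\langle\cdot,\cdot\rangle$ is the complex-bilinear form $dz_1^2+dz_2^2$. I choose a family $v^{\varepsilon}\to v$ with $v$ a generator of the isotropic line $L$, so that $\langle v,v\rangle=0$ while $\langle v^{\varepsilon},v^{\varepsilon}\rangle\neq 0$.

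For the forward implication, suppose $(L_1,L_2)$ is a limit of pairs $(L_1^{\varepsilon},L_2^{\varepsilon})$ symmetric with respect to $L^{\varepsilon}$. Choose generators $w_j^{\varepsilon}\to w_j$ of $L_j^{\varepsilon}$ and $L_j$, with $w_2^{\varepsilon}=s_{\varepsilon}(w_1^{\varepsilon})$. The denominator $\langle v^{\varepsilon},v^{\varepsilon}\rangle$ tends to $0$, so the coefficient in front of $v^{\varepsilon}$ blows up unless $\langle w_1,v\rangle=0$. A direct check (e.g. with $v=(1,i)$) shows that for isotropic $v$ the equation $\langle w_1,v\rangle=0$ forces $w_1$ proportional to $v$, i.e.\ $L_1=L$. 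Otherwise, projectively, $w_2^{\varepsilon}$ is dominated by its component along $v^{\varepsilon}$, and hence $L_2=L$. In either case some of $L_1$, $L_2$ equals $L$.

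For the converse, I exhibit approximating symmetric pairs explicitly. Given $L_2$ arbitrary through $x$ (and $L_1=L$), fix any family of non-isotropic lines $L^{\varepsilon}\to L$ and set $L_1^{\varepsilon}:=s_{\varepsilon}(L_2)$, $L_2^{\varepsilon}:=L_2$. These are symmetric with respect to $L^{\varepsilon}$ by construction, and the forward computation applied to $w_1^{\varepsilon}:=$ (generator of $L_2$, constant) shows $L_1^{\varepsilon}\to L$ since $\langle w_1,v\rangle\neq 0$ whenever $L_2\neq L$. Thus $(L_1^{\varepsilon},L_2^{\varepsilon})\to(L,L_2)$, realising $(L,L_2)$ as a limit symmetric pair. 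The case $L_1=L_2=L$ is trivial (take $L_1^{\varepsilon}=L_2^{\varepsilon}=L^{\varepsilon}$).

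The main thing to handle carefully is the projective bookkeeping: direction vectors are only defined up to scalar, so one must renormalise $w_j^{\varepsilon}$ so that limits exist in the $\mathbb{CP}^1$ of lines through $x$. The cleanest way is to think of the symmetry as a Möbius involution on this $\mathbb{CP}^1$ with two fixed points (the line $L^{\varepsilon}$ and its orthogonal), which in the isotropic limit collide at $L$; the Möbius involution then degenerates to the constant map to $L$ off its fixed point, which encodes exactly the content of the lemma. Beyond this renormalisation, everything is a direct computation.
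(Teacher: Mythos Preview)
The paper does not prove this lemma; it is quoted verbatim from \cite[lemma 2.3]{alg} and used without argument. So there is no in-paper proof to compare against.

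Your argument is correct. The forward direction is the standard dichotomy coming from the blow-up of the denominator $\langle v^{\varepsilon},v^{\varepsilon}\rangle\to 0$: either the numerator $\langle w_1,v\rangle$ vanishes, and then (since $v$ is isotropic) $w_1\parallel v$, i.e.\ $L_1=L$; or the numerator stays bounded away from zero, the coefficient of $v^{\varepsilon}$ in $s_{\varepsilon}(w_1^{\varepsilon})$ diverges, and projectively $L_2^{\varepsilon}\to L$. Your converse construction (keeping $L_2^{\varepsilon}=L_2$ fixed and letting $L_1^{\varepsilon}=s_{\varepsilon}(L_2)$) is the obvious one, and the same blow-up computation shows $L_1^{\varepsilon}\to L$ when $L_2\neq L$.

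The M\"obius picture you sketch at the end is in fact the cleanest packaging: the symmetry at $x$ is an involution of the pencil $\mathbb{CP}^1$ of lines through $x$ fixing $L^{\varepsilon}$ and $(L^{\varepsilon})^{\perp}$; as $L^{\varepsilon}$ becomes isotropic these two fixed points collide at $L$, and a family of M\"obius involutions whose fixed points collide degenerates (in the closure of $PGL_2$ inside $\mathbb P(\mathrm{Mat}_{2\times 2})$) to the rank-one map sending everything to that point. This says at once that every limit symmetric pair has one member equal to $L$, and that every pair $(L,L_2)$ is attained as a limit. That reformulation would let you drop the explicit coordinate computation entirely.
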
 

\begin{convention} \label{conv2} For every irreducible analytic curve $a\subset\cp^2$ and a point $A\in\ha$ the {\bf local branch} $a_A$ of the curve $a$ at $A$ is 
the germ of curve $\pi_a:(\ha,A)\to\cp^2$, which is contained in $a$. By $T_Aa$ we denote the 
tangent line to the local branch $a_A$ at $\pi_a(A)$. Sometimes 
we idendify a point (subset) in $a$ with its preimage in the normalization $\hat a$ and denote 
both subsets by the same symbol. In particular, given a subset in $\cp^2$, 
say a line $l$, we set $\hat a\cap l=\pi_a^{-1}(a\cap l)\subset\hat a$. 
 If $a,b\subset\cp^2$ are two irreducible analytic curves, and $A\in\hat a$, $B\in\hat b$, $\pi_a(A)\neq\pi_b(B)$, 
 then for simplicity we write $A\neq B$ and the line $\pi_a(A)\pi_b(B)$ will be referred to, as $AB$. 
\end{convention}
 
 \begin{definition} \label{def-law} 
 A triple of points $BAD\in(\cp^2)^3$ satisfies the {\it complex reflection law} with respect to a given line $L$ through $A$, 
 if one of the following statements holds:
 
 - either $B,D\neq A$, the line $L$ is non-isotropic and the lines $AB$, $AD$ are symmetric with respect to $L$; 
 
 - or $B,D\neq A$, the line $L$ is isotropic and some of the lines $AB$, $AD$ coincides with $L$;
 
 - or $A$ coincides with some of the points $B$ or $D$. 
 \end{definition}
  
  \def\Int{\operatorname{Int}}
  
\begin{definition} 
 Let $a_1,\dots,a_k\subset\cp^2$ be an analytic (algebraic) billiard, and let $\ha_1,\dots,\ha_k$ be the maximal normalizations of its 
 mirrors. 
Let $P_k\subset\ha_1\times\dots\times \ha_k$ denote the subset  corresponding to $k$-periodic billiard orbits. 
The set $P_k$ is contained in the subset $Q_k\subset\ha_1\times\dots\times\ha_k$ of (not necessarily periodic) $k$-orbits: the $k$-gons 
$A_1\dots A_k$ such that for every $2\leq j\leq k-1$ one has $A_j\neq A_{j\pm1}$, the line $T_{A_j}a_j$ is not isotropic  and 
the lines $A_jA_{j-1}$, $A_jA_{j+1}$ are symmetric with respect to it and distinct from it. 
Let $U_0=\Int(P_k)$ denote the interior of the subset $P_k\subset Q_k$. Set   
 $$U=\overline{U_0}\subset\ha_1\times\dots\ha_k: \text{ the closure is taken in the usual product topology.} $$
 The set $U$ will be called the  {\it $k$-reflective set}. 
 \end{definition}

 \begin{proposition} \label{comp-set} \cite[proposition 2.14]{alg}. 
 The  $k$-reflective  set $U$ is an analytic 
 (algebraic) subset in $\ha_1\times\dots\times\ha_k$. The billiard is $k$-reflective, if and only if the $k$-reflective set $U$ is non-empty; then 
 each its irreducible component is two-dimensional.  If the billiard is $k$-reflective, then 
 for every point 
 $A_1\dots A_k\in U$ each triple $A_{j-1}A_jA_{j+1}$ satisfies the complex reflection law from Definition \ref{def-law} 
 with respect to the line $T_{A_j}a_j$, and each projection $U\to\ha_j\times\ha_{j+1}$ is a submersion on an open dense subset in $U$. 
\end{proposition}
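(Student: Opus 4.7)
The plan is to construct an analytic subset $V\subset\ha_1\times\cdots\times\ha_k$ by encoding the complex reflection law at each cyclic vertex, and then identify $U$ as the union of those 2-dimensional irreducible components of $V$ that meet $U_0$.

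First, for each index $j\in\mathbb{Z}/k\mathbb{Z}$, I would write the condition ``the triple $A_{j-1}A_jA_{j+1}$ satisfies the complex reflection law with respect to $T_{A_j}a_j$'' as a closed analytic condition on $\ha_{j-1}\times\ha_j\times\ha_{j+1}$. On the open locus where $A_{j\pm1}\neq A_j$ and $T_{A_j}a_j$ is non-isotropic, this is the vanishing of a cross-ratio in the pencil of lines through $A_j$, which is manifestly holomorphic; by Lemma \ref{lim-refl} it extends analytically over the degenerate loci in exactly the form of Definition \ref{def-law}. Let $V$ be the analytic subset of $\ha_1\times\cdots\times\ha_k$ cut out by these $k$ cyclic conditions. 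Then $P_k\subset V$, and by closedness of $V$ also $U=\overline{U_0}\subset V$.

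For the analyticity and pure 2-dimensionality of $U$, I would argue locally at each $p=(A_1,\dots,A_k)\in U_0$. Since all open conditions defining $Q_k$ hold at $p$, successive reflection at $A_2,\dots,A_{k-1}$ starting from $(A_1,A_2)$ gives a holomorphic parametrization of the germ $(Q_k,p)$ by $\ha_1\times\ha_2$, so this germ is a smooth 2-dimensional analytic germ $W_p$. Since $U_0$ is open in $Q_k$ and $U_0\subset V$, the defining equations of $V$ vanish on the non-empty open subset $U_0\cap W_p$ of the irreducible germ $W_p$; analytic continuation forces them to vanish on all of $W_p$, giving $W_p\subset V$. Dually, $W_p=\overline{U_0\cap W_p}\subset U$. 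So near each point of $U_0$, $U$ coincides with the 2-dimensional irreducible germ $W_p$, which extends to a 2-dimensional irreducible component of $V$. Since $V$ has locally finitely many irreducible components, $U$ is globally the union of those 2-dimensional components of $V$ meeting $U_0$, hence a pure 2-dimensional analytic subset. Chow's theorem yields the algebraic case.

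The remaining assertions follow directly. ``$k$-reflective $\Leftrightarrow U\neq\emptyset$'' is the definition. The reflection law at every vertex of any $p\in U$ is inherited from the inclusion $U\subset V$. The set $U_0$ is dense in $U$ by definition and open in $U$ by the local identification $U=W_p$; on $U_0$ all $k$ reflection laws hold, so starting from any consecutive pair $(A_j,A_{j+1})$ one can reconstruct all other vertices by a finite chain of reflections, yielding a local biholomorphism $U\to\ha_j\times\ha_{j+1}$ on $U_0$, and in particular a submersion on an open dense subset. The main obstacle is the first step: writing the reflection law as a clean analytic equation that correctly extends over the degenerate locus of coincident adjacent vertices and isotropic tangents; Lemma \ref{lim-refl} is the critical input here, after which the rest is routine analytic continuation and assembly of irreducible components.
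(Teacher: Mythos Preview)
Your proposal is correct and follows the natural approach; the paper itself does not give a proof here but cites \cite[proposition 2.14]{alg}, and your reconstruction matches what that argument must be: encode the extended reflection law of Definition~\ref{def-law} as closed analytic conditions to cut out $V$, verify locally at each $p\in U_0$ that the germ of $V$ coincides with the smooth $2$-dimensional germ $W_p=(Q_k,p)$ (using that near $p$ all non-degeneracy conditions hold, so $(V,p)\subset(Q_k,p)$, while analytic continuation of the two remaining reflection laws over $U_0\cap W_p$ gives the reverse inclusion), and then conclude that $U=\overline{U_0}$ is exactly the union of $2$-dimensional irreducible components of $V$ meeting $U_0$.

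One small imprecision worth noting: the claimed local biholomorphism $U\to\ha_j\times\ha_{j+1}$ on all of $U_0$ is slightly too strong. If some vertex $A_i$ is a cusp of $a_i$, the reconstruction step ``intersect the reflected line with $a_i$'' fails to be a local diffeomorphism onto $\ha_i$. This is exactly the content of the Addendum following the proposition, which restricts the local biholomorphism to orbits whose vertices are not cusps. Since cusps form a discrete subset of each $\ha_i$, the weaker conclusion you actually need (submersion on an open dense subset of $U$) is unaffected.
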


{\bf Addendum.} {\it For every $k$-reflective billiard the latter projections $U\to\ha_j\times\ha_{j+1}$ are  local biholomorphisms on the set of those 
$k$-periodic orbits whose vertices are not cusps of the corresponding mirrors.} 

The addendum follows from definition. 

%
%
%

\subsection{Triangular algebraic line fields and spirals}

Here we deal with a 4-reflective complex analytic billiard $a$, $b$, $c$, $d$ whose $4$-reflective set $U$ contains a quadrilateral $ABCD$ 
with coinciding vertices $A=D$. We show (Proposition \ref{intspir}) that under mild genericity assumptions (implying, e.g., that 
 $ABCD$ is not a single-point quadrilateral) 
either the mirrors $b$ and $c$ are conics, or they are  so-called triangular spirals centered at $A$: phase curves of algebraic line fields 
invariant under the rotations around $A$. 
We show (Proposition \ref{pnonalg}) that every triangular spiral with two distinct centers is algebraic. 

%

To define triangular spirals and state the above-mentioned results, we introduce yet another restricted 
Birkhoff distribution on the space of ``framed 
triangles with fixed vertex''. Let us fix a point $A\in\cc^2$ (take it as the origin) and a non-trivial 
complex isometry $H\in SO(2,\cc)\setminus Id$ fixing $A$. Recall that $\mcp=\mathbb P(T\cp^2)$, and $\mch$ is the standard contact plane field on 
$\mcp$, see Subsection 1.2. Namely, for every $x=(B,L)\in\mcp$, where $B\in\cp^2$, $L\subset T_B\cp^2$ is a one-dimensional subspace, 
the plane $\mch(x)\subset T_x\mcp$ is the preimage of the line $L$ under the differential of the bundle projection $\mcp\to\cp^2$. 
Consider the product $\mcp^2$ equipped with the  four-dimensional algebraic distribution $\mch^2=\mch\oplus\mch$. Let 
$\mct_{A,H}\subset\cp^2\times\cp^2$ denote the subset of pairs $(B,C)$  such that $B,C\neq A$, $B\neq C$, the lines $AB$, $AC$ 
are distinct, non-isotropic and  $AC=H(AB)$. 
Let $M_{A,H}^0\subset\mcp^2$ denote the subset of those pairs $((B,L_B),(C,L_C))$, for which $(B,C)\in\mct_{A,H}$, the lines 
$L_B$, $L_C$ are non-isotropic, the lines $AB$, $BC$ are symmetric with respect to the line $L_B$; $AC$, $BC$ 
are symmetric with respect to the line $L_C$; $AB\neq L_B$, $AC\neq L_C$. 
Set 
$$M_{A,H}=\overline{M_{A,H}^0}\subset\mcp^2: \text{ the closure in the usual topology.}$$
This is a three-dimensional projective algebraic variety, and $M_{A,H}^0\subset M_{A,H}$ is its Zariski open and dense subset. 

\begin{proposition} The variety $M^0_{A,H}$ is smooth and transversal to the distribution $\mch^2$.
\end{proposition}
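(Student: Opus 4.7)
The plan is to verify smoothness and transversality together by a direct chart computation. I would realize $M_{A,H}^0$ as a four-sheeted étale cover of a transparently smooth 3-dimensional base, and then read off transversality to $\mch^2$ from two explicit partial derivatives in this chart.

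For smoothness, consider the auxiliary space $\mct_{A,H}^0\subset\cp^2\times\cp^2$ consisting of those $(B,C)\in\mct_{A,H}$ for which $AB$, $BC$, $AC$ are pairwise distinct and non-isotropic; this is an open condition. On $\mct_{A,H}^0$ one has a global chart $(\ell,t,s)$: $\ell\in\pp(T_A\cp^2)$ is the non-isotropic direction of $AB$, and $B=A+t\ell$, $C=A+sH(\ell)$ with $t,s\in\cc^*$. This presents $\mct_{A,H}^0$ as a smooth complex $3$-fold. The forgetful projection $M_{A,H}^0\to\mct_{A,H}^0$ is a finite cover with four sheets: at $B$ there are two bisectors of the pair $(AB,BC)$ playing the role of $L_B$, and two for $L_C$ at $C$. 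These bisector pairs are genuinely distinct, because $H\in SO(2,\cc)\setminus\{Id\}$ fixes no non-isotropic line through $A$, whence $H(AB)\neq AB$; this forces $C\notin AB$ and hence $AB\neq BC$, so the two bisectors at $B$ are distinct, and likewise at $C$. Thus the cover is étale and $M_{A,H}^0$ inherits a smooth complex $3$-fold structure.

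For transversality, use
\[
\mch^2(x)=\ker\bigl(T_x\mcp^2\longrightarrow (T_B\cp^2/L_B)\oplus(T_C\cp^2/L_C)\bigr),
\]
so that $T_xM_{A,H}^0+\mch^2(x)=T_x\mcp^2$ is equivalent to surjectivity of the composition
\[
T_xM_{A,H}^0\longrightarrow (T_B\cp^2/L_B)\oplus(T_C\cp^2/L_C).
\]
In the chart $(\ell,t,s)$, the vector $\partial_t$ has infinitesimal base displacement $(\dot B,\dot C)=(\ell,0)$, whose image is $(\ell\bmod L_B,\,0)$, nonzero because $L_B\neq AB$ forces $\ell\notin L_B$. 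Similarly $\partial_s$ maps to $(0,\,H(\ell)\bmod L_C)\neq 0$, using $L_C\neq AC$. These two images span the $2$-dimensional target, giving surjectivity and hence transversality.

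The only mild technical point is to confirm that the bisector correspondence $(B,C)\rightrightarrows L_B$ (and similarly for $L_C$) is genuinely unramified over every point of $M_{A,H}^0$, i.e.\ that the two bisectors never coalesce. Such a coalescence would require $AB=BC$ or the isotropic degeneration of the configuration, both of which are excluded by the defining open conditions of $M_{A,H}^0$. Once this is settled, the chart makes both smoothness and transversality immediate by tangent-space inspection; no delicate computation is needed.
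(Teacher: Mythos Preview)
Your argument is correct and is essentially the same as the paper's: both observe that the bundle projection $M_{A,H}^0\to\mct_{A,H}$ is a local diffeomorphism (your \'etale cover), and then verify transversality by noting that the tangent directions along $AB$ at $B$ and along $AC$ at $C$ (your $\partial_t$ and $\partial_s$) are not contained in $L_B$ and $L_C$ respectively, which follows from the defining conditions $AB\neq L_B$, $AC\neq L_C$. The paper phrases this as $AB\oplus AC\subset T_{(B,C)}\mct_{A,H}$ being transversal to $L_B\oplus L_C$, whereas you phrase it dually as surjectivity onto the quotient $(T_B\cp^2/L_B)\oplus(T_C\cp^2/L_C)$; the content is identical.
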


\begin{proof} The smoothness is obvious. The restriction $\nu: M^0_{A,H}\to \mct_{A,H}$ of the bundle projection $\mcp^2\to(\cp^2)^2$ 
  is a local diffeomorphism, by construction. 
For every $x=((B,L_B),(C,L_C))\in M^0_{A,H}$ the subspace $\mch^2(x)\subset T_x(\mcp^2)$ is the preimage of the direct sum 
$L_B\oplus L_C\subset T_{(B,C)}(\cp^2)^2$ under the differential of the bundle projection. Thus, it suffices to show that for every $(B,C)\in\mct_{A,H}$ 
the space $T_{(B,C)}\mct_{A,H}$ is transversal to $L_B\oplus L_C$. Here $L_B$, $L_C$ are arbitrary  lines such that 
$AB$ and $BC$ are symmetric with respect to the line $L_B$ and $AC$, $BC$ are symmetric with respect to the line $L_C$.

For every $(B,C)\in\mct_{A,H}$ finitely punctured lines $AB\times C$ and $B\times AC$ are contained in $\mct_{A,H}$, by definition. We 
identify $AB$ and $AC$ with the corresponding one-dimensional subspaces in $T_B\cp^2$ and $T_C\cp^2$ respectively. Thus, 
$AB\oplus AC\subset T_{(B,C)}\mct_{A,H}$ and $AB\oplus AC$ is transversal to $L_B\oplus L_C$ in $T_{(B,C)}(\cp^2\times\cp^2)$, 
since $AB\neq L_B$ and $AC\neq L_C$ by 
definition. This proves the proposition.  
\end{proof}

\begin{corollary} For all $x\in M^0_{A,H}$ the intersections 
$$\mcd_{A,H}(x)=\mch^2(x)\cap T_xM^0_{A,H}$$
are one-dimensional and form an algebraic line field on $M^0_{A,H}$.
\end{corollary}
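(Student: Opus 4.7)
My plan is to combine the transversality statement from the previous proposition with a routine dimension count, and then to observe that algebraicity is automatic because all the objects involved are algebraic.

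First I would pin down dimensions. The variety $\mcp = \mathbb P(T\cp^2)$ is three-dimensional, so $\mcp^2$ is six-dimensional. Since each contact plane $\mch(y) \subset T_y \mcp$ is two-dimensional (being the preimage of a line under the bundle projection $\mcp \to \cp^2$), the product distribution $\mch^2(x)$ has dimension four at every $x \in \mcp^2$. On the other hand, the restriction of the bundle projection $\nu : M^0_{A,H} \to \mct_{A,H}$ established in the proof of the preceding proposition is a local diffeomorphism, and $\mct_{A,H}$ has complex dimension three (for each $B \in \cc^2 \setminus \{A\}$ the point $C$ is constrained to lie on the single line $H(AB)$); hence $\dim M^0_{A,H} = 3$.

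Next I would invoke the transversality conclusion of the previous proposition, $T_x M^0_{A,H} + \mch^2(x) = T_x \mcp^2$, and apply the elementary formula for intersections of linear subspaces to obtain
$$\dim\bigl( T_x M^0_{A,H} \cap \mch^2(x)\bigr) \;=\; 3 + 4 - 6 \;=\; 1$$
for every $x \in M^0_{A,H}$. So the subspace $\mcd_{A,H}(x)$ is a line at each point of $M^0_{A,H}$.

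Finally, since $M^0_{A,H}$ is smooth quasi-projective, both the tangent bundle $TM^0_{A,H}$ and the restriction of the algebraic distribution $\mch^2$ are algebraic subbundles of $T\mcp^2\big|_{M^0_{A,H}}$. Their intersection has constant rank one by the previous step, so it is itself an algebraic line subbundle of $TM^0_{A,H}$, which is precisely the asserted line field. There is no serious obstacle here: the genuine content sits in the transversality statement of the preceding proposition, and the corollary amounts to a dimension count plus the observation that an intersection of algebraic vector bundles of locally constant rank is an algebraic vector bundle.
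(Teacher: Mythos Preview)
Your proof is correct and follows essentially the same approach as the paper: the paper's argument is simply the dimension count $3 + 4 - 6 = 1$ using the transversality established in the preceding proposition, together with the remark that algebraicity of the resulting line field is obvious. You have merely spelled out these steps in more detail.
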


\begin{proof} The transversal variety $M^0_{A,H}$ and distribution $\mch^2$ in the ambient six-dimensional space $\mcp^2$ have 
dimensions 3 and 4 respectively. Hence, the intersections of their tangent spaces are one-dimensional. The algebraicity of the line field $\mcd_{A,H}$ is obvious.
 \end{proof}


The next proposition shows that the line field $\mcd_{A,H}$ has an algebraic first integral: an appropriate holomorphic branch of squared 
perimeter of triangle. To define the latter branch, let us introduce the following definition. 

\begin{definition} \label{concord} Let $A,B,C\in\cc^2$, $B\neq A,C$, and let the lines $AB$, $BC$ be non-isotropic. Let $L$ be a symmetry line of the pair of 
lines $AB$, $BC$. The symmetry $\sigma: AB\to BC$ with respect to the line $L$ is an isometry with respect to the complexified Euclidean metric. 
For every line $l=AB,BC$ the complex distance function $l\to\cc$, $x\mapsto|Bx|=dist(B,x)$ has two affine branches that differ by sign. 
Let us choose those affine distance functions on the lines $AB$, $BC$ for which $|B\sigma(x)|\equiv-|Bx|$. We then 
say that the distances $|BA|=|AB|$, $|BC|=|CB|$ calculated with respect to the above affine distance functions 
are {\it $L$-concordant.} 
\end{definition}

\begin{remark} The $L$-concordant distances are well-defined up to simultaneous change of sign. Their ratio $\frac{|AB|}{|BC|}$ is 
uniquely defined. 
\end{remark}
 
\begin{example} \label{escort} Let in the above conditions $A,B,C\in\rr^2$, and let $|AB|$, $|BC|$ be the Euclidean distances. 
Then $|AB|$, $|BC|$ are $L$-concordant, if $L$ is the exterior 
bisector of the angle $\angle ABC$. Otherwise $L$ is the interior bisector and $|AB|$, $-|BC|$ are $L$-concordant, see Figure 1. 
\end{example}

Take an arbitrary point $((B,L_B),(C,L_C))\in M_{A,H}^0$. We identify the lines $L_B$ and $L_C$ with the corresponding projective lines 
in $\cp^2$.  Let us normalize the distances $|AB|$, $|BC|$, $|CA|$ to make $|AB|$ and $|BC|$  $L_B$-concordant and $|BC|$, $|CA|$ 
$L_C$-concordant. The perimeter $P=|AB|+|BC|+|CA|$ thus constructed is well-defined up to sign, and its square $P^2$ is a well-defined 
holomorphic function on $M_{A,H}^0$, see Figure 1.  

\begin{figure}[ht]
  \begin{center}
   \epsfig{file=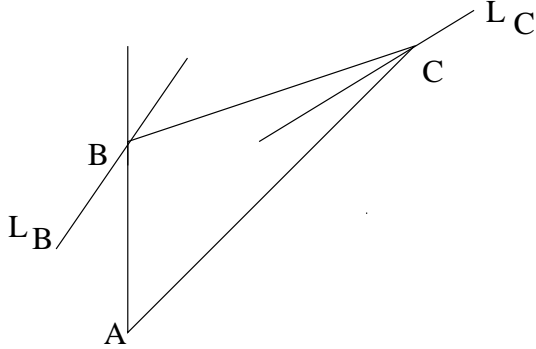}
    \caption{Concordant lengths in the real case: here $P=|AB|+|BC|-|AC|$.}
  \label{fig-length}
  \end{center}
\end{figure}
%

\begin{proposition} The above squared perimeter $P^2$ is a first integral of the line field $\mcd_{A,H}$. The space $M_{A,H}$, the line field 
$\mcd_{A,H}$ and $P^2$ are invariant with respect to the complex rotation group $SO(2,\cc)$ fixing $A$. 
\end{proposition}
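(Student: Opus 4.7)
The two assertions will be treated separately. The $SO(2,\cc)$-invariance is a formal verification built on the fact that every ingredient in the construction (the reflection law, complex distance, the condition $AC = H(AB)$, the contact distribution, and the concordance convention) is preserved by complex isometries fixing $A$. The first-integral claim reduces to a direct differentiation of the three signed distances along a tangent vector to $\mcd_{A,H}$, in which the reflection conditions at $B$ and $C$ produce a telescoping cancellation.

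For invariance, let $R \in SO(2,\cc)$ be a rotation fixing $A$. As a complex isometry, $R$ preserves the form $dz_1^2 + dz_2^2$, hence the reflection law, isotropicity of lines, and signed complex distances (up to a global sign). Since $SO(2,\cc)$ is abelian, $R$ commutes with $H$, so $AC = H(AB)$ implies $R(AC) = H(R(AB))$; therefore $R$ preserves $\mct_{A,H}$ and the conditions defining $M_{A,H}^0$, and hence sends $M_{A,H}^0$ and its closure $M_{A,H}$ to themselves. The distribution $\mch^2$ is invariant under any projective automorphism of $\cp^2$, so $\mcd_{A,H} = \mch^2 \cap TM_{A,H}^0$ is $R$-invariant. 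The concordance conditions of Definition \ref{concord} are $R$-equivariant, so the signed perimeter $P$ is preserved up to a global sign, and $P^2$ is truly $R$-invariant.

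For the first-integral claim, fix $x = ((B, L_B), (C, L_C)) \in M_{A,H}^0$ and a tangent vector $\xi \in \mcd_{A,H}(x)$. By definition $\xi \in \mch^2(x)$, so the projection of $\xi$ to $T_{(B,C)}(\cp^2)^2$ is a pair $(v, w)$ with $v \in L_B$ and $w \in L_C$. Introduce unit vectors $u_{BA}, u_{BC}, u_{CA}$ determined by
\begin{equation*}
A - B = |BA|\, u_{BA}, \qquad C - B = |BC|\, u_{BC}, \qquad A - C = |CA|\, u_{CA},
\end{equation*}
with the signed distances chosen so as to realize the concordance of Definition \ref{concord} at $B$ and $C$. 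A direct differentiation (using $\langle u, u\rangle = 1 \Rightarrow \langle u, \dot u\rangle = 0$) yields
\begin{equation*}
\tfrac{d}{dt}|BA| = -\langle v, u_{BA}\rangle, \quad \tfrac{d}{dt}|BC| = \langle w - v, u_{BC}\rangle, \quad \tfrac{d}{dt}|CA| = -\langle w, u_{CA}\rangle.
\end{equation*}
The $L_B$-concordance is equivalent to the identity $\sigma_{L_B}(u_{BA}) = -u_{BC}$; because $v \in L_B$ is fixed by $\sigma_{L_B}$, this yields $\langle v, u_{BA}\rangle = -\langle v, u_{BC}\rangle$. The analogous identity at $C$, combined with $u_{CB} = -u_{BC}$, gives $\langle w, u_{CA}\rangle = \langle w, u_{BC}\rangle$. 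Substituting, the three derivatives telescope to $dP/dt = 0$. Since $P^2$ is a globally well-defined holomorphic function while $P$ is defined only up to a simultaneous sign, this proves $dP^2|_{\mcd_{A,H}} = 0$.

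The main obstacle I anticipate is the careful bookkeeping of signs: one must verify rigorously that Definition \ref{concord} is encoded by the identities $\sigma_{L_B}(u_{BA}) = -u_{BC}$ and $\sigma_{L_C}(u_{CB}) = -u_{CA}$, and that the translation is valid for either of the two possible choices of bisector at each vertex. Once this is pinned down, the computation is mechanical and, notably, does not invoke the tangency constraint relating $v$ and $w$ through $dH$; the identity $dP = 0$ already holds on the full one-dimensional intersection $\mch^2 \cap TM_{A,H}^0$, which is consistent with — and slightly stronger than — the stated proposition.
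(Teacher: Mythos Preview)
Your proof is correct and follows essentially the same approach as the paper. The paper's own argument is a one-line deferral: the invariance ``follows from construction'' and the first-integral statement is ``a complexification of the classical statement on the real perimeter and the real Birkhoff distribution'', with proof ``analogous to that in the real case''. You carry out precisely that analogous argument explicitly, verifying that the concordance convention of Definition~\ref{concord} encodes $\sigma_{L_B}(u_{BA})=-u_{BC}$ and $\sigma_{L_C}(u_{CB})=-u_{CA}$ and that these identities make the signed-length derivatives telescope. Your closing observation is also correct and worth noting: the vanishing of $dP$ uses only the reflection conditions at $B$ and $C$, not the constraint $AC=H(AB)$; this is exactly the general Birkhoff first-integral statement the paper is invoking, specialized here to triangles with a fixed vertex $A$.
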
 

\def\mct{\mathcal T}
\def\mctah{\mct_{A,H}}

\begin{proof} The  invariance follows from construction. The statement saying that $P^2$ is a  first integral is 
a complexification of the  classical 
statement on the  real perimeter and the real Birkhoff distribution, see, e.g., \cite[section 2]{bzh}. Its 
 proof is analogous to that in the real case. The proposition is proved. 
\end{proof}

\def\mcs{\mathcal S}

\begin{proposition} \label{prosper}  Let $P^2: M_{A,H}^0\to\cc$ be the squared perimeter function from the above proposition. Let 
$p\in\cc$, $\mcs_p$ be an irreducible component of the level set $\{ P^2=p\}$ in $M^0_{A,H}$. 
The projections $\nu_G:\mcs_p\to\cp^2$ to the position of  the vertex $G=B,C$ have discrete preimages, and thus, 
 are submersions on Zariski open 
dense subsets. The restriction to $\mcs_p$ of the line field $\mcd_{A,H}$ is 
sent by each projection to an $SO(2,\cc)$-invariant (multivalued) algebraic line field on $\cp^2$ depending on the choice of $G$ and 
called {\bf triangular line field centered at $A$ with 
parameters $H$, $p$.}  
\end{proposition}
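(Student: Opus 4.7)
The whole statement hinges on computing the fiber of the projection $\nu_B\colon M^0_{A,H}\to\cp^2$ over a generic $B_0\in\cc^2$ and showing that $P^2$ is non-constant along it; the case $G=C$ is completely symmetric. Fix such a $B_0$ with $AB_0$ non-isotropic, and set $\ell=H(AB_0)$. A point $((B_0,L_{B_0}),(C,L_C))\in M^0_{A,H}$ projecting to $B_0$ is parametrized by $C\in\ell$, because the lines $L_{B_0}$ and $L_C$ are determined (up to a finite choice of bisector) by the reflection conditions on the pairs $(AB_0,B_0C)$ and $(AC,B_0C)$ built into the definition of $M^0_{A,H}$. Hence $\nu_B^{-1}(B_0)$ is a one-dimensional algebraic curve, a finite cover of $\ell$.

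\textbf{Non-constancy of $P^2$ on the fiber, discreteness, submersion.} Along this fiber $|AB_0|$ is constant, $|CA|$ is affine in the parameter on $\ell$, and $|B_0C|^2$ is a quadratic polynomial in the same parameter. With the concordance-induced signs the perimeter $P=|AB_0|+|B_0C|+|CA|$ is therefore of the shape ``affine plus square root of a non-degenerate quadratic,'' which is non-constant; consequently $P^2$ is non-constant on the fiber and $\mcs_p\cap\nu_B^{-1}(B_0)$ is discrete. Since $P^2$ is a non-constant holomorphic function on the three-dimensional $M^0_{A,H}$, each irreducible component $\mcs_p$ of a level set is two-dimensional. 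A holomorphic map between equidimensional complex manifolds with discrete fibers is a local biholomorphism off a proper analytic subset, so $\nu_B|_{\mcs_p}$ is a submersion on a Zariski open dense subset, and likewise for $\nu_C|_{\mcs_p}$.

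\textbf{Algebraicity and $SO(2,\cc)$-invariance; main obstacle.} Unwinding the concordance via the reflection of $A$ through $L_B$ shows that the cross-terms $2|AB||BC|$, $2|BC||CA|$, $2|CA||AB|$ of $P^2$ are polynomial in the coordinates on $M_{A,H}$, so $P^2$ is algebraic and each $\mcs_p$ is an algebraic subvariety of $M_{A,H}$. Since $\mcd_{A,H}$ and $\nu_B$ are also algebraic, the push-forward of $\mcd_{A,H}|_{\mcs_p}$ by the generically finite map $\nu_B$ is an algebraic (multivalued) line field on $\cp^2$. The $SO(2,\cc)$-invariance is inherited from the previous proposition: the rotation group fixing $A$ preserves $M_{A,H}$, $P^2$ (hence $\mcs_p$), and $\mcd_{A,H}$, while intertwining $\nu_B$ with its own natural action on $\cp^2$. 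The only non-formal step is the non-constancy of $P^2$ on $\nu_B^{-1}(B_0)$, and the main care needed is to verify that the sign choices imposed by concordance — which themselves depend on the moving symmetry lines $L_{B_0},L_C$ — do not accidentally cancel the square-root contribution to $P$; this is avoided for generic $B_0$, after which the whole proposition follows.
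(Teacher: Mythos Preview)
Your proof is correct and follows essentially the same route as the paper: both argue that a positive-dimensional fiber of $\nu_B|_{\mathcal S_p}$ over $B_0$ would force $P^2$ to be constant on a one-parameter family of triangles $ABC$ with $A,B$ fixed and $C$ moving on the line $\ell=H(AB)$, which is impossible. The paper dismisses this last impossibility as ``obvious'' and takes the algebraicity of $\mathcal S_p$ for granted, whereas you spell out the affine-plus-square-root shape of $P$ and the algebraicity of $P^2$; your caveat about ``generic $B_0$'' is in fact unnecessary, since the definition of $M^0_{A,H}$ already forces the lines $AB_0$ and $\ell$ to be distinct, so $B_0\notin\ell$ and the quadratic $|B_0C|^2$ is never a perfect square.
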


\begin{proof} The contrary to the discreteness of preimages  of the projection, say $\nu_B$ would imply constance of the perimeter on 
a one-parameter family of triangles $ABC$ with fixed 
vertices $A$ and $B$, fixed line $AC=H(AB)=L$  and variable $C\in L$. This is obviously impossible. The algebraicity and invariance 
of the projected line field obviously follow from  the algebraicity and invariance of the surface $\mcs_p$ and submersivity. 
\end{proof}

\begin{definition}
A {\it triangular spiral centered at} $A$ 
is a complex orbit of a triangular line field centered at $A$, see Fig.\ref{fig-spir}a).
\end{definition}

%

\begin{proposition} \label{intspir}\footnote{Real triangular spirals were introduced in \cite[p.320]{gk2}, where a real 
version of Proposition \ref{intspir} was proved. Our proof of Proposition \ref{intspir}  is analogous to arguments from loc. cit.}  Let $(a,A)$, $(b,B)$, $(c,C)$, $(d,D)$ be germs of analytic curves in $\cp^2$ forming a 
 4-reflective analytic planar billiard:  
the quadrilateral $ABCD$ is contained in the 4-reflective set $U$, cf. Remark \ref{remirgerm}. Let 
the mirror germs $a$ and $d$ intersect: $A=D$. Let $B,C\neq A$,  $AB\neq T_Aa, T_Bb$, $AC\neq T_Dd, T_Cc$, 
and let the  lines $AB$, $T_Aa$, $T_Dd$, $T_Bb$, $T_Cc$  be not isotropic.  
If $AB\neq AC$, then the mirrors $b$ and $c$ are triangular spirals centered at $A$. Otherwise, if $AB=AC$, 
then the mirrors $b$ and $c$ are conics: complex circles centered at $A$, see Fig.\ref{fig-spir}. 
\end{proposition}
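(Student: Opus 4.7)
The plan is to realize $b$ (and $c$ by a symmetric argument) as a complex orbit of a triangular line field centered at $A_0$. To do this, I will produce a rotation $H_0\in SO(2,\cc)$ about $A_0$ and, for each $B$ in a neighborhood of $B_0$ on $b$, a companion point $C^*(B)\in\cp^2$ such that the squared perimeter $P^2$ of the triangle $A_0BC^*(B)$ is constant along $b$. By the Addendum to Proposition \ref{comp-set}, the non-tangency and non-isotropy hypotheses ensure the projection $U\to\ha\times\hd$ is a local biholomorphism near $\xi_0=(A_0,B_0,C_0,D_0)$ (with $A_0=D_0$ in $\cp^2$), so I can parametrize nearby orbits by $(A,D)\in\ha\times\hd$ with holomorphic vertex maps $B(A,D),C(A,D)$. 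When $AB\neq AC$ I take $H_0$ to be the complex rotation about $A_0$ sending $A_0B_0$ to $A_0C_0$ (well-defined up to branch since these are distinct non-isotropic lines through $A_0$); when $AB=AC$ I set $H_0=Id$. The candidate companion is $C^*(B)=H_0(A_0B)\cap\rho_{T_Bb}(A_0B)\in\cp^2$, a point determined only by $A_0$ and the tangent pair $(B,T_Bb)$; the billiard reflection $B_0C_0=\rho_{T_{B_0}b}(A_0B_0)$ at $\xi_0$ gives $C^*(B_0)=C_0$, and shows that the lift $((B_0,T_{B_0}b),(C_0,T_{C_0}c))$ lies in $M^0_{A_0,H_0}$.

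The central step will be to prove that $P^2(A_0,B,C^*(B))$ is constant as $B$ varies along $b$ near $B_0$. I plan to exploit the one-parameter family inside $U$ obtained by fixing $A=A_0$ and varying $D\in\hd$, namely $D\mapsto(A_0,B(A_0,D),C(A_0,D),D)$, which passes through $\xi_0$. Along this family, the billiard reflection at $B$ with $AB=A_0B$ forces $BC=\rho_{T_Bb}(A_0B)$, placing the orbit vertex $C(A_0,D)$ on the same ray as $C^*(B(A_0,D))$. Deriving the limiting identities $A_0B_0=\rho_{T_{A_0}a}(T_{A_0}d)$ and $A_0C_0=\rho_{T_{A_0}d}(T_{A_0}a)$ by taking $D\to A_0$ along $d$ in the reflections at $A$ and $D$, and combining these with the reflection laws at all four orbit vertices, I expect to express $C^*(B(A_0,D))$ as the image of $D$ under a fixed composition of reflections across non-isotropic lines followed by an $SO(2,\cc)$-rotation about $A_0$; each of these operations preserves the complex squared distance. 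The first-integral property of $P^2$ along the complex analog of the Birkhoff distribution (cf. \cite[section 2]{bzh}) will then force constancy of $P^2(A_0,B,C^*(B))$ along the family, and hence by analytic continuation along $b$ near $B_0$.

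Once constancy is established, the lifted curve $((B,T_Bb),(C^*(B),L_{C^*(B)}))$ lies in $M^0_{A_0,H_0}\cap\mcs_{p_0}$ with $p_0=P^2(A_0,B_0,C_0)$, and because its $B$-projected tangent is $T_Bb$ the lift is tangent to the contact distribution $\mch^2$ at each point; it is therefore an integral curve of the one-dimensional line field $\mcd_{A_0,H_0}|_{\mcs_{p_0}}$. Its $B$-projection is an open arc of $b$ through $B_0$, showing $b$ is a triangular spiral centered at $A_0$; the symmetric argument with $D=A_0$ fixed and $A$ varying yields the same for $c$. In the degenerate subcase $AB=AC$ the lines $A_0B_0$ and $A_0C_0$ coincide, $H_0=Id$ is not a genuine rotation, and the triangular line field collapses to the infinitesimal generator of the $SO(2,\cc)$-action on $\cp^2$ about $A_0$, whose phase curves are precisely the complex circles centered at $A_0$; hence $b$ and $c$ are complex circles. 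The hardest part will be the middle paragraph: identifying $C^*(B(A_0,D))$ with an explicit holomorphic reflection-composition image of $D$, which is where the complex adaptation of Kudryashov's real-case calculation in \cite[p.320]{gk2} has to be carried out carefully.
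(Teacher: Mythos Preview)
Your central setup fails at the first step: the Addendum to Proposition \ref{comp-set} applies only to genuine $k$-periodic orbits, i.e.\ points of $U_0$ where neighbor vertices are distinct. At $\xi_0$ you have $A_0=D_0$, so $\xi_0\in U\setminus U_0$ and the Addendum says nothing. In fact the projection $U\to\ha\times\hd$ is \emph{not} a local biholomorphism at $\xi_0$: along the one-parameter family in $U$ with $A$ fixed, the vertex $D_t$ is the intersection of $d$ with the line symmetric to $AB_t$ with respect to $T_Aa$; since that line is transverse to $T_Dd$ at $t=0$ (by your hypothesis $AC\neq T_Dd$), the intersection stays at $D_t\equiv A$. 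Hence the whole fiber over $A_0$ projects to a single point of $\hd$, and your parametrization by $(A,D)$ collapses. Everything downstream---the family ``$D\mapsto(A_0,B(A_0,D),C(A_0,D),D)$'', the reflection-composition description of $C^*(B(A_0,D))$---rests on this nonexistent parametrization.

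The paper's proof exploits exactly this collapse rather than fighting it. Once $D_t\equiv A$ is established, the quadrilateral degenerates to a triangle $AB_tC_t$, and the rotation $H$ is \emph{canonically} the composition of the symmetries in $T_Aa$ and $T_Dd$ (so $H(AB_t)=AC_t$ for all $t$, not just at $t=0$); this resolves the branch ambiguity you flagged in your definition of $H_0$. The curve $t\mapsto((B_t,T_{B_t}b),(C_t,T_{C_t}c))$ then lands directly in $M^0_{A,H}$ and is automatically tangent to $\mcd_{A,H}$ by the Birkhoff-distribution property---no separate constancy-of-$P^2$ argument is needed, since $P^2$ is already a first integral of $\mcd_{A,H}$. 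Your treatment of the case $AB=AC$ is also too quick: one must rule out the subcase $B_t\equiv C_t$ (which would force $b=c$ to be the line $AB$, contradicting Remark \ref{remlines}), and then use collinearity of $A,B_t,C_t$ together with the reflection law to get $T_{B_t}b\perp AB_t$, whence $b$ is a complex circle.
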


 \begin{figure}[ht]
  \begin{center}
   \epsfig{file=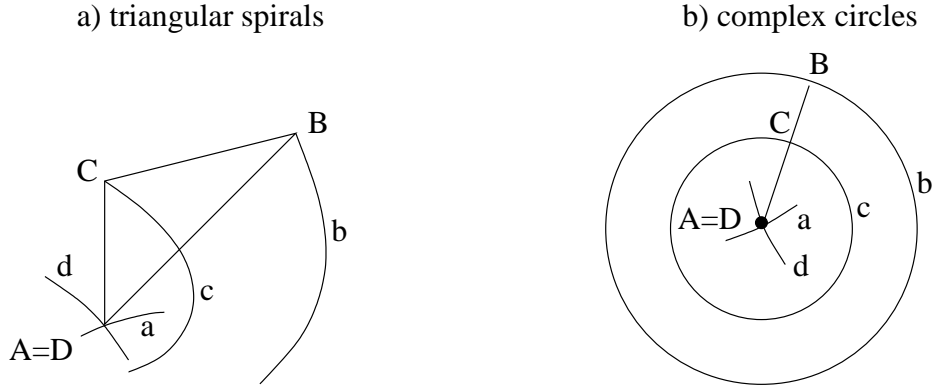}
    \caption{Family of degenerate orbits with $A=D$: the mirrors $b$ and $c$ are either spirals, or conics}
  \label{fig-spir}
  \end{center}
\end{figure}

\begin{proof} There exists an irreducible germ $\Gamma\subset U$ of analytic curve at $ABCD$  parametrized by local small 
complex parameter  $t$ and consisting of quadrilaterals $AB_tC_tD_t$ with fixed vertex $A$: $AB_0C_0D_0=ABCD$. 
 Let us fix it. One has $D_t\equiv D=A$. 
This follows from the fact that $D_t$ is found as a point of intersection of the curve $d$ with the line $L_t$ symmetric to $AB_t$ with 
respect to the tangent line $T_Aa$. Indeed,   the line 
$L_0$  is transverse to $T_Dd$, as is $AC$, 
by assumption and since  $L_0$ and $AC$ are symmetric  with respect to the line $T_Dd$. 
Therefore, the intersection point $D_t\in L_t\cap d$ identically coincides with $D=D_0$. 
 Let $H$ denote the composition of 
symmetries with respect to the tangent lines, first $T_Aa$, then $T_Dd$. Thus, $H\in SO(2,\cc)$ fixes $A$  and  $H(AB_t)=AC_t$
 for every $AB_tC_tD\in\Gamma$. We take $A$ as the origin. 

Case 1): $AB\neq AC$. Then $B\neq C$ and the  germ $\Gamma$ is embedded into 
$M^0_{A,H}$ via the mapping $t\mapsto ((B_t,T_{B_t}b), (C_t,T_{C_t}c))$, by construction and non-isotropicity condition. 
Its image is 
 a phase curve of the line field $\mcd_{A,H}$, analogously to discussions in \cite{bzh} and \cite[p.320]{gk2}. 
 This together with Proposition \ref{prosper} 
 implies that the projection $\Gamma\to\cp^2$ to the position of each one of the vertices $B$ and $C$ sends $\Gamma$ to 
 a triangular spiral centered at $A$, see Fig.\ref{fig-spir}a). Hence, $b$ and $c$ are triangular spirals. 
 
 Case 2): $AB=AC$. Then $H=\pm Id$ and $AB_t\equiv AC_t$. 
 Note that at least one of vertices, either $B_t$, or $C_t$ varies, since $\Gamma$ is a  curve. 
 To treat the case under consideration, we use the following remark.
 
 \begin{remark} \label{remlines} There exist no $k$-reflective analytic planar billiard such that some its two neighbor mirrors coincide with the same line: such a billiard would have no $k$-periodic orbits in the sense of Definition \ref{deforb}  (cf. \cite[proof of corollary 2.19]{alg}).
\end{remark}

 Subcase 2a): $B_t\equiv C_t\not\equiv const$. This implies that $b=c$ and the line $AB_t$ is tangent to $b$ at variable point $B_t$, as in  
 loc. cit. Therefore, $b=c=AB$, which is impossible by the above remark. Hence, this subcase 
 is impossible. 
 
 Subcase 2b): $B_t\not\equiv C_t$. Without loss of generality we consider that $B\neq C$.  Thus, for every $t$ small enough the 
 points  $A$, $B_t$ and $C_t$ are distinct and  lie on the same line. Note that $T_Bb, T_Cc\neq AB=AC$, by the condition of the proposition. 
  Hence, $T_{B_t}b, T_{C_t}c\perp AB_t$ for all $t$. This implies that $B_t,C_t\not\equiv const$ and $b$, $c$ are complex circles 
  centered at $A$, see Fig.\ref{fig-spir}b).   This  proves Proposition \ref{intspir}. 
 \end{proof}
%

\begin{proposition} \label{pnonalg} Let a planar analytic curve be a triangular spiral with respect to two distinct centers. Then it is algebraic.
\end{proposition}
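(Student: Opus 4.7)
The plan is to lift everything to the projectivized tangent bundle $\mcp=\mathbb P(T\cp^2)$. By Proposition \ref{prosper}, a triangular spiral centered at $A$ with parameters $(H,p)$ is characterized by the condition that its tangent lift is contained in the Zariski closure $S_{A,H,p}\subset\mcp$ of the graph of the corresponding multivalued algebraic line field on $\cp^2$---an irreducible algebraic surface of dimension two, invariant under the complex rotation group $SO(2,\cc)$ fixing $A$. For our curve $\gamma$ with two distinct centers $A_1\neq A_2$ and respective parameters $(H_i,p_i)$, set $S_i:=S_{A_i,H_i,p_i}$; then the tangent lift of $\gamma$ lies in $S_1\cap S_2$.

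If $S_1\neq S_2$, then since each $S_i$ is irreducible of dimension two, $S_1\cap S_2$ has dimension at most one and is an algebraic subset of the projective variety $\mcp$. Its image in $\cp^2$ under the proper bundle projection $\mcp\to\cp^2$ is then an algebraic curve containing $\gamma$, so $\gamma$ is algebraic. The main obstacle is thus to rule out the equality $S_1=S_2$.

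I would handle this by contradiction. Suppose $S:=S_1=S_2$, so $S\subset\mcp$ is invariant under rotations around both $A_1$ and $A_2$, hence under the group they generate. Since a composition $R^{A_1}_\theta\circ R^{A_2}_{-\theta}$ is a nontrivial translation for $\theta\neq 0$, this generated group contains all translations of $\cc^2$ and therefore is the full complex Euclidean group $SO(2,\cc)\ltimes\cc^2$. Translation invariance makes the projection $S\to\cp^2$ surjective onto $\cc^2$ and identifies the fibers across translates, so each generic fiber (which is $0$-dimensional by $\dim S=2$) equals a common subset $F\subset\cp^1\cong\mathbb P(T_x\cc^2)$ under the canonical translation trivialization of $T\cc^2$. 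Rotation invariance then forces $F$ to be a finite $SO(2,\cc)$-invariant subset of $\cp^1$; the only such subsets are contained in the two isotropic directions, the fixed points of $SO(2,\cc)$ on $\cp^1$. However, by the construction of $\mcd_{A,H}$ on $M^0_{A,H}$ given earlier in this subsection, the tangent directions $L_B, L_C$ along a triangular spiral are required to be non-isotropic, so a generic fiber of $S_i$ consists of non-isotropic directions---contradiction. The hard part of the whole argument is precisely this invariance-plus-non-isotropicity step; everything else is routine projective algebraic geometry.
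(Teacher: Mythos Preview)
Your proof is correct and follows essentially the same strategy as the paper's: both reduce to showing that a triangular line field (equivalently, its graph surface in $\mcp$) cannot be invariant under the full complex Euclidean group $SO(2,\cc)\ltimes\cc^2$ generated by rotations around two distinct centers. The paper's version is more compressed---it works directly with line fields on $\cp^2$, uses that a non-algebraic curve is Zariski dense to force the two fields to coincide, and then declares full-group invariance ``impossible'' without elaboration; your lift to $\mcp$, dimension count for $S_1\cap S_2$, and explicit non-isotropicity contradiction spell out the same argument in more detail (you should, however, say a word about why $S_{A,H,p}$ is irreducible, e.g.\ as the closure of the image of the irreducible $\mcs_p$ under a rational map to $\mcp$).
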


\begin{proof} A triangular spiral is a phase curve of a triangular algebraic line field. The latter field  is invariant under complex rotations: 
the isometries fixing the center of the spiral with unit Jacobian. 
Suppose the contrary: the spiral under consideration is not algebraic. Then the corresponding line field is uniquely defined:  
two algebraic line fields  
coinciding  on a non-algebraic curve (which is Zariski dense)  coincide everywhere. Thus, 
the latter line field is invariant under complex rotations around two distinct centers. The latter rotations generate the whole group 
of complex isometries of $\cc^2$ with unit Jacobian. 
Thus, the line field is invariant under all of them, which is impossible. The contradiction thus obtained  
proves the proposition.
\end{proof} 


%
%
%

\subsection{Tangencies in $k$-reflective billiards}
Here we recall the results of \cite[subsection 2.4]{alg}. 

  We  deal with  $k$-reflective analytic planar billiards $a_1,\dots,a_k$ in $\cp^2$. 
Let $U\subset\hat a_1\times\dots\times\hat a_k$ be the $k$-reflective set. The results of loc.cit.  
presented below concern  degenerate quadrilaterals in $U\setminus U_0$: limits $A_1\dots A_k$ of $k$-periodic orbits 
 such that for a certain $j$  with $a_j$ being not a line the tangent line $T_{A_j}a_j$ and the adjacent edges 
$A_{j\pm1}A_j$ collide to the same non-isotropic limit. Then the limit vertex $A_j$ will be called a  {\it tangency vertex}. 
Proposition \ref{tang} shows that the latter cannot happen to be the 
only degeneracy of the limit $k$-gon. Its Corollary \ref{connect} presented at the end of the subsection concerns the case, 
when $k=4$. It says that if the tangency vertex is distinct from its neighbor limit vertices, then its opposite vertex 
should be either also  a  tangency vertex, or  a cusp with a non-isotropic tangent line. 
Proposition \ref{neighbmir} extends Proposition \ref{tang} to the case, when  some subsequent mirrors coincide 
and the corresponding subsequent vertices of a limiting orbit collide. 

\begin{definition} \label{defmark} A point of  a planar irreducible analytic curve is {\it marked}, if it is either a cusp, or an isotropic tangency point.  
Given a parametrized curve $\pi_a:\hat a\to a$, a point $A\in\hat a$ is marked, if it corresponds to a marked point 
of the local branch $a_{A}$, see Convention \ref{conv2}. 
\end{definition}

\begin{proposition} \label{tang} \cite[proposition 2.16]{alg} Let $a_1,\dots,a_k$ and $U$ be as above. 
Then $U$ contains no $k$-gon $A_1\dots A_k$ with the following properties:

- each pair of neighbor vertices correspond to distinct points, and no vertex is a marked point;

- there exists a unique $s\in\{1,\dots,k\}$ such that the line $A_sA_{s+1}$ is tangent to the curve $a_s$ at $A_s$, 
and the latter curve is not a line, see Fig.\ref{fig-tang1}. 
\end{proposition}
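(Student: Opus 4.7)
The plan is to derive a contradiction by analyzing a holomorphic arc of genuine orbits in $U_0$ approaching the purported degenerate orbit $A_1\dots A_k$.

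\textbf{Step 1 (both edges at $A_s$ coincide with the tangent).} Since $A_s$ is non-marked, the line $L:=T_{A_s}a_s$ is non-isotropic. The reflection law at $A_s$ (Definition \ref{def-law}) says the edges $A_{s-1}A_s$ and $A_sA_{s+1}$ are symmetric with respect to $L$. Because $L$ is its own image under the symmetry in itself, the hypothesis $A_sA_{s+1}=L$ forces $A_{s-1}A_s=L$ as well. Thus both edges incident to $A_s$ collapse onto $L$, while at every other vertex $A_j$, $j\neq s$, the adjacent edges are distinct, transverse to $a_j$ and non-isotropic.

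\textbf{Step 2 (approximating arc and local coordinates).} By Proposition \ref{comp-set}, the $k$-reflective set $U$ is analytic of local dimension two near $A_1\dots A_k$, and $U_0$ is open dense in $U$. Pick a germ of non-constant holomorphic arc $t\mapsto(A_1(t),\dots,A_k(t))\in U$, $t\in(\cc,0)$, lying in $U_0$ for $t\neq 0$, such that the projection $t\mapsto A_s(t)$ is non-constant (possible since we may choose the arc generically within the two-dimensional set $U$). Fix affine coordinates $(u,v)$ near $A_s=(0,0)$ with $L=\{v=0\}$. Since $a_s$ is not a line and $A_s$ is not a cusp, the local branch parametrizes as $u\mapsto(u,f(u))$ with $f(u)=c_m u^m+O(u^{m+1})$, integer $m\geq 2$, $c_m\neq 0$. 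Let $u(t)$ be the $u$-coordinate of $A_s(t)$, so $u(t)\not\equiv 0$.

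\textbf{Step 3 (two infinitesimal relations).} Let $\beta_\pm(t)$ denote the slopes, in these coordinates, of the edges $A_s(t)A_{s\pm 1}(t)$. The reflection law at $A_s(t)$ in the non-isotropic regime yields the identity
\[
\beta_+(t)+\beta_-(t)=2f'(u(t))+O\bigl(\beta_\pm(t)^2\bigr),
\]
while composing the $k-1$ remaining transverse reflections (each a holomorphic local biholomorphism between the relevant incidence varieties, by the non-degeneracy at $A_j$, $j\neq s$) yields a further holomorphic relation $\beta_-(t)=F(u(t),\beta_+(t))$ with $F(0,0)=0$. Because $U$ is two-dimensional near the degenerate orbit, both relations must persist along a full 2-parameter family of values $(u,\beta_+)$, and hence hold as identities of germs of holomorphic functions of $(u,\beta_+)$.

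\textbf{Step 4 (contradiction).} Combining the two identities gives $F(u,\beta_+)+\beta_+\equiv 2f'(u)+O(\beta_+^2)$. Matching Taylor coefficients, the pure-$\beta_+$ part forces $\partial_{\beta_+}F\equiv -1$, while the pure-$u$ part forces the $u$-dependence of $F$ to reproduce $2f'(u)=2mc_mu^{m-1}+O(u^m)$. Since $F$ is a rigid object computed from the geometry of the mirrors $a_{s+1},\ldots,a_{s-1}$ only (not from $a_s$), such a forced dependence on the intrinsic expansion of $a_s$ contradicts the freedom in the other mirrors; more concretely, one traces through the transverse reflections to express the $u$-dependence of $F$ in terms of transversal data at $A_j$, $j\neq s$, and shows these cannot match the contribution $2mc_mu^{m-1}$ unless $c_m=0$, contradicting $a_s$ being non-linear.

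\textbf{Main obstacle.} The heart of the proof is Step 4: showing that the matching of $F(u,\beta_+)+\beta_+$ with $2f'(u)$ cannot hold identically. One must keep track carefully of the leading-order contributions of each of the $k-1$ transverse reflections to $F$, and exploit the non-linearity of $a_s$ (guaranteed by the hypothesis that $a_s$ is not a line) to prevent a conspiracy among the other mirrors. The reliance on $a_s$ being non-linear is essential: if $a_s$ were a line, $f'\equiv 0$ and both relations would degenerate to the trivially compatible identity $\beta_++\beta_-=0$, so no contradiction would arise.
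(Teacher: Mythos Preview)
The paper itself does not prove this proposition; it is quoted verbatim from \cite[proposition 2.16]{alg}. So there is no in-paper argument to compare against, and your attempt must stand on its own.

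Your Steps 1--2 are fine, and the two relations you isolate in Step 3 are the natural ones. The proof breaks down in Step 4. Your central claim there is that ``$F$ is a rigid object computed from the geometry of the mirrors $a_{s+1},\dots,a_{s-1}$ only (not from $a_s$)''. This is false: your own definition of $F$ traces the orbit starting and ending at the point $A_s=(u,f(u))\in a_s$, so $F(u,\beta_+)$ depends on $a_s$ through the position $(u,f(u))$. Consequently the would-be contradiction --- that the $u$-dependence of $F$ is forced to reproduce $2f'(u)$ while being ``independent'' of $a_s$ --- evaporates. Even if one corrects this by noting that the dependence on $f(u)=O(u^m)$ is higher-order, you still give no argument for why the identity $F(u,\beta_+)+\beta_+\equiv 2f'(u)+O(\beta_+^2)$ cannot hold; you simply assert that tracking leading terms ``shows these cannot match \dots\ unless $c_m=0$''. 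That is the whole content of the proposition, and it is not proved.

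There is also a gap earlier: you assert that ``both relations must persist along a full 2-parameter family of values $(u,\beta_+)$'' because $U$ is two-dimensional. This presumes that $(u,\beta_+)$ --- equivalently, the projection to $\hat a_s\times\hat a_{s+1}$ --- gives local coordinates on $U$ at the \emph{degenerate} orbit. The Addendum to Proposition~\ref{comp-set} only asserts this on the set $U_0$ of genuine periodic orbits, not at points of $U\setminus U_0$, so this step needs a separate argument (or a workaround, e.g.\ working with a different pair of neighbor mirrors).
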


\begin{remark} A  real version of Proposition \ref{tang} is contained in \cite{gk2} 
(lemma 56, p.315 for $k=4$, and its  generalization (lemma 67, p.322) for higher $k$). 
\end{remark}

\vspace{-0.5cm}
\begin{figure}[ht]
  \begin{center}
   \epsfig{file=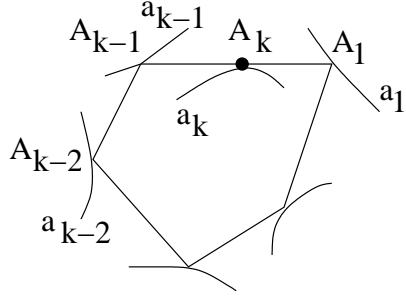}
    \caption{Impossible degeneracy of simple tangency: $s=k$.}
    \label{fig-tang1}
  \end{center}
\end{figure}

 \begin{proposition} \label{neighbmir} \cite[proposition 2.18]{alg} Let $a_1,\dots,a_k$ and $U$ be as at the beginning of the subsection. Then 
 $U$ contains no $k$-gon $A_1\dots A_k$ with the following properties:
 
1) each its vertex is not a marked point of the corresponding mirror;

2) there exist $s,r\in\{1,\dots,k\}$, $s<r$ such that 
$a=a_s=a_{s+1}=\dots=a_r$, $A_s=A_{s+1}=\dots=A_r$, and $a$ is not a line;

3)   For every $j\notin \mathcal R=\{ s,\dots,r\}$ one has 
$A_j\neq A_{j\pm1}$ and the line $A_{j-1}A_j$ is not tangent to $a_j$ at $A_j$, see Fig.\ref{fig-tang2an}.
\end{proposition}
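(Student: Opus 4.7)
The plan is to argue by contradiction and reduce the multi-vertex collision to the simple tangency case forbidden by Proposition \ref{tang}. Suppose a $k$-gon $A_1\dots A_k\in U$ satisfies conditions 1), 2), 3). Since $U=\overline{U_0}$, approximate it by a one-parameter family $A_1(t)\dots A_k(t)\in U_0$ of $k$-periodic orbits with $t\to 0$.

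First I would establish that the two edges external to the collision chain, $A_{s-1}A_*$ and $A_*A_{r+1}$ (where $A_*:=A_s=\dots=A_r$), are tangent to the repeated mirror $a$ at $A_*$. By condition 1), $A_*$ is not marked, so $a$ is regular at $A_*$ with well-defined non-isotropic tangent line $T:=T_{A_*}a$. For each $s\le j<r$, the chord $A_j(t)A_{j+1}(t)$ joins two distinct points of the analytic curve $a$ both converging to $A_*$, hence it converges to $T$. Passing to the limit in the reflection law at $A_s(t)$, the non-degenerate edge $A_{s-1}A_*$ must be symmetric to $T$ with respect to $T$, which forces $A_{s-1}A_*=T$; the symmetric argument at $A_r(t)$ gives $A_*A_{r+1}=T$, so in particular $A_{s-1}, A_*, A_{r+1}$ are collinear on $T$.

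Next I would pass to the reduced billiard with ordered mirror sequence $a_1,\dots,a_{s-1},a,a_{r+1},\dots,a_k$ of length $k':=k-(r-s)$, and to the contracted $k'$-gon $A_1,\dots,A_{s-1},A_*,A_{r+1},\dots,A_k$. By the previous step the reflection law at the new vertex $A_*$ is satisfied (both adjacent edges equal $T$), and the reflection laws at the remaining vertices are inherited from the original orbit. The key technical point is to certify that this reduced $k'$-gon lies in the $k'$-reflective set $U'$ of the reduced billiard. The naive truncation $A_1(t),\dots,A_{s-1}(t),A_s(t),A_{r+1}(t),\dots,A_k(t)$ does not in general form a family of $k'$-periodic orbits, because the composition of the $r-s+1$ original reflections at $A_s(t),\dots,A_r(t)$ is not a single reflection at $A_s(t)$ in the reduced billiard. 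I would instead construct such a family by an implicit function theorem argument at $A_*$: perturb the surviving vertices slightly and solve the single reduced reflection equation for a nearby $\overline{A_*(t)}\in a$, the Jacobian being non-degenerate thanks to regularity of $A_*$ and the non-tangency conditions at the other vertices.

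Finally, the reduced $k'$-gon lies in $U'$, has no marked vertex and pairwise distinct neighboring vertices (conditions 1), 3), plus $A_{s-1},A_{r+1}\neq A_*$), the mirror $a$ at $A_*$ is not a line (condition 2)), and the outgoing edge $A_*A_{r+1}$ is tangent to $a$ at $A_*$ by the first step. For any other vertex $A_j$ of the reduced gon, the outgoing edge at $A_j$ is not tangent to $a_j$ at $A_j$: condition 3) directly gives non-tangency of the incoming edge, and at a non-marked vertex obeying the non-degenerate reflection law incoming and outgoing tangencies are equivalent. Thus the reduced $k'$-gon falls inside the configuration forbidden by Proposition \ref{tang}, giving a contradiction. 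The main obstacle is the reduction step: passing from the original degenerate trajectory to an approximating family of the reduced billiard cannot be done by simple truncation, and one must certify via an implicit function theorem that the reduced reflection system admits solutions near the degenerate $k'$-gon, with the regularity of $A_*$ and the non-tangency part of 3) providing exactly the non-degeneracy needed.
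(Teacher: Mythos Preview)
The paper itself does not prove this proposition; it is quoted from \cite{alg} without proof. So there is no in-paper argument to compare against, but your approach has a genuine gap independent of that.

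Your first step---showing that the external edges $A_{s-1}A_*$ and $A_*A_{r+1}$ both coincide with the tangent line $T=T_{A_*}a$---is correct and cleanly argued, as is the final verification that the contracted $k'$-gon has a unique tangency vertex at $A_*$.

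The reduction step, however, does not go through. For Proposition~\ref{tang} to apply to the contracted $k'$-gon, that $k'$-gon must lie in the $k'$-reflective set $U'$ of the reduced billiard $a_1,\dots,a_{s-1},a,a_{r+1},\dots,a_k$, i.e., in the closure of an \emph{open} set of $k'$-periodic orbits. You have not shown the reduced billiard is $k'$-reflective, and there is no a~priori reason it should be: $k$-reflectivity of the original collection does not descend to the shortened one (the composition of $r-s+1$ reflections from $a$ is not a single reflection from $a$). Your proposed IFT fix does not close this. As stated, you keep the surviving vertices $A_j(t)$ from the original family and solve only the single reflection equation at $\overline{A_*(t)}$; but then the reflection laws at $A_{s-1}(t)$ and $A_{r+1}(t)$ in the reduced billiard involve the \emph{new} edges $A_{s-1}(t)\overline{A_*(t)}$ and $\overline{A_*(t)}A_{r+1}(t)$, not the original edges $A_{s-1}(t)A_s(t)$ and $A_r(t)A_{r+1}(t)$, so those laws are broken. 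If instead you adjust all vertices simultaneously, you are asking precisely whether the reduced billiard admits $k'$-periodic orbits near the degenerate $k'$-gon---and since that $k'$-gon has the edge at $A_*$ coinciding with $T_{A_*}a$, the reflection map there is singular and the implicit function theorem does not apply at this point. That singularity is exactly what makes Proposition~\ref{tang} nontrivial; you cannot bootstrap past it by IFT. The argument in \cite{alg} must either adapt the direct technique behind Proposition~\ref{tang} to the collision case, or use a reduction that genuinely controls the full two-parameter family rather than a one-parameter slice.
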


\begin{figure}[ht]
  \begin{center}
   \epsfig{file=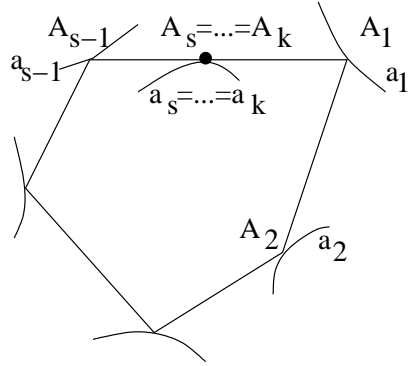}
    \caption{Coincidence of subsequent  vertices and mirrors: $r=k$.}
    \label{fig-tang2an}
  \end{center}
\end{figure}

 \begin{corollary} \label{connect} \cite[corollary 2.20]{alg} 
 Let $a$, $b$, $c$, $d$ be a 4-reflective analytic 
 billiard, and let $b$ be not a line. 
 Let $U\subset\hat a\times\hat b\times\hat c\times\hat d$ be the 4-reflective set. Let $ABCD\in U$ be such that 
 $A\neq B$, $B\neq C$, the line $AB=BC$ is tangent to the curve $b$ at $B$ and is not isotropic. Then 
 
 - either $AD=DC$ is tangent to the curve $d$ at $D$,  $\pi_a(A)=\pi_c(C)$, $a=c$ and the  corresponding local branches  
coincide,  i.e., $a_A=c_C$ (see Convention \ref{conv2}): ``opposite tangency connection'', see Fig.\ref{fig-tang-opp6}a); 

 - or $D$ is a cusp of the local branch $d_D$ and the tangent line $T_Dd$ is not isotropic: ``tangency--cusp connection'', see Fig.\ref{fig-tang-opp6}b). 
  \end{corollary}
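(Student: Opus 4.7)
The plan is to apply Proposition \ref{tang} to the orbit $A_1A_2A_3A_4=ABCD\in U$ with the given tangency at vertex $s=2$ (namely $T_Bb=AB=BC$), and to enumerate the additional degeneracies forced by it. Since $b$ is not a line, Proposition \ref{tang} excludes the configuration in which the tangency at $B$ is the unique degeneracy of an orbit whose neighbor vertices are pairwise distinct and whose vertices are not marked. So, assuming (aiming for case (i)) that $D$ is not a cusp of $d_D$ and that $T_Dd$ is not isotropic, some extra degeneracy must be present: either $A=D$ or $C=D$, or one of $A$, $C$ is a marked point, or there is a further edge-tangency at $A$, $C$, or $D$.

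First I would dispose of the cases $A=D$ and $C=D$. Writing the reflection law at $A=D$ (or $C=D$) and at its opposite vertex, and using the non-isotropicity of the line $AB=T_Bb$, one forces either $B=C$ or $B=A$, contradicting the hypothesis. Next I would rule out an additional tangency at $A$ (so $T_Aa=AB$) or at $C$ (so $T_Cc=BC$): if, say, $T_Aa=AB$, then the edge $AB$ is tangent both to $a$ at $A$ and to $b$ at $B$; propagating the reflection law around the 4-gon, using Proposition \ref{neighbmir} each time the propagation produces coinciding consecutive mirrors and vertices, one deduces that $a$ is a line, contradicting the non-markedness and the residual reflection structure at $D$. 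A marked point at $A$ or $C$ is handled by the same non-isotropicity constraint on $AB$, which is incompatible with the local branch of $a$ (resp.\ $c$) having an isotropic tangent at that vertex, and incompatible with a cusp there via the reflection law at $B$.

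The only remaining possibility is that the further tangency occurs at $D$: $T_Dd=CD=DA$. This is the opposite tangency half of case (i). For the identification $\pi_a(A)=\pi_c(C)$ and the equality of local branches, set $L_1=AB=BC$ and $L_2=AD=DC$; both $A$ and $C$ lie on $L_1\cap L_2$. If $L_1\neq L_2$, this intersection is a single point, giving $\pi_a(A)=\pi_c(C)$. The degenerate case $L_1=L_2$ would put all four vertices on one line and is excluded by Proposition \ref{neighbmir} applied to the subsequent coinciding ``mirror lines'' at $B$, using that $b$ is not a line. The reflection laws at $A$ and at $C$ then identify $T_Aa$ and $T_Cc$ as bisectors of the pair $(L_1,L_2)$ through the common point; a continuity argument along an approximating sequence of genuine 4-periodic orbits $A_nB_nC_nD_n\to ABCD$ in $U_0$ selects the same bisector, giving $T_Aa=T_Cc$. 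Finally, the equality of local branches $a_A=c_C$ (and hence $a=c$) is obtained by applying the uniqueness of the maximal normalization (Proposition \ref{ext}, Corollary \ref{clift}) to the common germ that the two branches define at this shared point.

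The main obstacle is the case analysis of the second paragraph: ruling out an additional tangency or marked point at $A$ or $C$. Proposition \ref{tang} by itself assumes uniqueness of the tangency vertex and so cannot be re-applied naively when two tangencies coexist; the resolution is to propagate the degeneracy around the quadrilateral via the reflection law and to use Proposition \ref{neighbmir} whenever this propagation creates strings of coinciding consecutive mirrors and vertices.
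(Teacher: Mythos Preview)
Your overall strategy (use Proposition~\ref{tang} to force an additional degeneracy, then enumerate) is the right starting idea, but the proposal has a genuine gap at the end, and the middle case analysis is too loose to stand as a proof.

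\textbf{The serious gap: concluding $a_A=c_C$.} From $\pi_a(A)=\pi_c(C)$ and $T_Aa=T_Cc$ you only know the two germs share a $1$-jet. Proposition~\ref{ext} and Corollary~\ref{clift} say that a \emph{given} germ has a unique maximal extension; they do not identify two a priori different germs, so your appeal to them is circular. The way one actually gets $a_A=c_C$ is by varying the degenerate orbit: the tangency condition at $B$ cuts out a one-parameter family $\Gamma\subset U$ of degenerate quadrilaterals $A_tB_tC_tD_t$ (with $A_tB_t=T_{B_t}b$). One shows that along $\Gamma$ the opposite tangency at $D_t$ persists (case~(ii) being closed and generically avoided), hence $\pi_a(A_t)\equiv\pi_c(C_t)$ for all $t$. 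Since $A_t$ genuinely moves on $a$ as $B_t$ varies (because $b$ is not a line), the local branches $a_A$ and $c_C$ are parametrized by the same arc in $\cp^2$, so they coincide, and then $a=c$ by maximality. Your sketch never produces this family argument; at the single orbit $ABCD$ there is no way to conclude the germs agree.

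\textbf{The case analysis.} Several of the exclusions are asserted rather than proved. When you try to rule out an extra tangency at $A$ (i.e.\ $T_Aa=AB$), you invoke Proposition~\ref{neighbmir}, but that proposition requires a block of \emph{coinciding mirrors and coinciding vertices}, not merely coinciding tangent lines along an edge; nothing in your setup produces $a=b$ or $A=B$. Likewise Proposition~\ref{tang} cannot be reapplied once two tangencies are present. Your dismissal of $A=D$ (and $C=D$) is also unsupported: by Definition~\ref{def-law}, when $A=D$ the reflection conditions at $A$ and $D$ are vacuous, so no contradiction of the form ``forces $B=C$ or $B=A$'' is available from those vertices alone. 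You also need to check the cases where $B$ itself is a cusp (the hypothesis does not exclude this) and where $T_Dd$ is isotropic (which is neither case~(i) nor case~(ii) as stated). Finally, the exclusion of $L_1=L_2$ via Proposition~\ref{neighbmir} fails for the same reason as above: four collinear vertices do not give coinciding mirrors.

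In short, the skeleton is right but the two load-bearing steps---excluding adjacent tangencies/marked points and, above all, upgrading $\pi_a(A)=\pi_c(C)$ to $a_A=c_C$---need the one-parameter family of tangency-degenerate orbits in $U$, not pointwise arguments.
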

 
\begin{figure}[ht]
  \begin{center}
   \epsfig{file=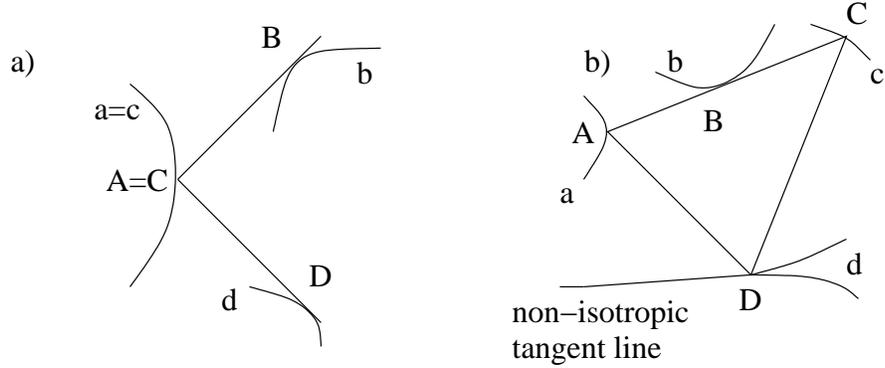}
    \caption{Opposite degeneracy to tangency vertex: tangency or cusp.}
    \label{fig-tang-opp6}
  \end{center}
\end{figure}

\subsection{Singular analytic distributions}
Here we recall definitions and properties of singular analytic distributions.  The author believes that 
the material of the present subsection is known to specialists, but he did not find it in literature.

\begin{definition} Let $W$ be a complex manifold, $n=dim W$, $\Sigma\subset W$ be a nowhere dense closed subset, $m\leq n$. Let 
$\mcd$ be an analytic field of codimension $m$ vector subspaces $\mcd(y)\subset T_yW$, $y\in W\setminus\Sigma$. We say that 
$\mcd$ is a {\it singular analytic distribution} of codimension $m$ (dimension $n-m$) with the singular set $\Sigma=Sing(\mcd)$, 
if it extends analytically to no point in $\Sigma$ and 
each $x\in W$ has  a neighborhood $U$ where there exists a finite collection 
$\Omega$ of  holomorphic 1-forms  such that $\mcd(y)=\{ v\in T_yW \ | \ \Omega(v)=0\}$ for every 
$y\in U\setminus\Sigma$.  (This generalizes the definition of a codimension one singular holomorphic foliation \cite[p.11]{cerveau}. 
A similar definition in smooth case can be found in \cite[p.8]{isid}.) 
 \end{definition}
  \begin{remark} \label{remmer}  Every $k$-dimensional singular analytic distribution on a complex manifold $W$ is  
   defined by a meromorphic\footnote{Recall that a mapping $V\to W$ of complex manifolds (or analytic sets in complex manifolds) is {\it meromorphic}, 
 if it is well-defined and holomorphic on an open and dense subset in $V$, 
 and the closure of its graph is an analytic subset in $V\times W$, see Convention \ref{convan}. It is well-known that if  $W$ is compact and 
 $V$ is irreducible,  then the set of  indeterminacies of every 
 meromorphic mapping $V\to W$ is contained in the union of the singular set of $V$ and an analytic subset in $V$ 
 of codimension at least two. A mapping is {\it bimeromorphic}, if it 
 is meromorphic together with its inverse.} section of the Grassmanian $k$-subspace bundle $Gr_k(TW)$ of the tangent bundle $TW$, and vice versa: 
   each meromorphic section defines a $k$-dimensional singular analytic 
   distribution. Its singular set is an analytic subset in $W$ of codimension 
   at least two, being the indeterminacy locus of a meromorphic section of a bundle with compact fibers. 
   \end{remark}
   

 \begin{example} \label{exrest} 
 Let $M$ be a complex analytic manifold,  $N\subset M$ be a connected complex submanifold, $\mcd$ be a (regular) analytic 
 distribution on $M$. The intersection $\mcd|_N(x)=T_xN\cap\mcd(x)$ with  $x\in N$ has constant and minimal dimension 
 on an open and dense subset $N^0\subset N$. The subspaces $\mcd|_N(x)\subset T_xN$ form 
 a singular analytic distribution $\mcd|_N$ on $N$ that is called 
 the {\it restriction}  to $N$ of the distribution $\mcd$. Its singular set is contained in the complement $N\setminus N^0$: the set of those points 
 $x$, where the above dimension is not minimal. The restriction to $N$ of a singular analytic distribution $\mcd$ on $M$ with 
 $N\not\subset Sing(\mcd)$ is defined analogously; 
 it is also a singular analytic distribution on $N$ whose singular set is contained in the union of the intersection $Sing(\mcd)\cap N$ and 
 the set of those points $x\in N$, where the above dimension $dim(\mcd|_N(x))$ is not minimal. 
 \end{example}
 
  \begin{example} Let $\mcd$ be a singular analytic distribution on a complex manifold $W$. Let $M$ be another connected 
  complex manifold, and let 
$\phi:M\to W$ be  a non-constant holomorphic mapping. For every $x\in M$ set 
$$\phi^*\mcd(x)=(d\phi(x))^{-1}(\mcd(\phi(x))\cap d\phi(x)(T_xM))\subset T_xM.$$
The subspaces $\phi^*\mcd(x)$ form a singular analytic distribution on $M$ called the {\it pullback distribution}. In the case, when $\phi$ is an 
immersion on an open and dense subset, the dimension of the distribution $\phi^*\mcd$ equals the minimal dimension of the above 
intersection. 
%
%
%
\end{example}

\begin{convention} \label{convan} Let $W$ be a complex manifold, $M\subset W$ be an analytic subset. Everywhere below for simplicity we 
say that a {\bf subset $N\subset M$ is analytic,} if it is an analytic subset of the ambient manifold $W$. 
\end{convention}

 \begin{definition} \label{moreg} Let $W$ be a complex manifold, 
 $M\subset W$ be an irreducible analytic subset, and let $\mcd$ be a singular analytic distribution on $W$, $M\not\subset Sing(\mcd)$. 
 There exists an open and dense subset of those points\footnote{Everywhere below for an analytic set $M$ by $M_{reg}$ 
 ($M_{sing}$)  we denote the set of its smooth (respectively, singular) points} $x\in M_{reg}$ regular for $\mcd$,   for which the intersection 
 $\mcd|_M(x)=\mcd(x)\cap T_xM$ has minimal dimension. 
 Then we say that the subspaces $\mcd|_M(x)$ form a {\it singular analytic distribution  $\mcd|_M$ on $M$}. It is regular on an open dense 
 subset $M^0_{reg}\subset M_{reg}$. Its {\it singular set} $M\setminus M^0_{reg}$  is  the 
 union of the set $M_{sing}$ and the set of those points $x\in M_{reg}$ where the distribution $\mcd|_M$ does not extend analytically. 
The distribution $\mcd|_M$  is also called the {\it restriction} to $M$ of the distribution $\mcd$. The 
 restriction of a singular analytic distribution $\mcd|_M$ to an irreducible analytic subset $V\subset M$, $V\not\subset Sing(\mcd|_M)$ 
 is a singular analytic distribution on $V$ 
 defined analogously: it coincides with $\mcd|_V$. In the case, when the analytic set $M$ is a union of several irreducible components,  the 
 restriction of the distribution $\mcd$ to each  component will be referred to, as a singular distribution on $M$ 
 (which may have different dimensions on different components). 
 \end{definition}
 
 \begin{example} \label{ex-sing} The Birkhoff distribution $\mcd^k$ 
 introduced at the end of Section 1 extends to a singular analytic distribution on the closure $\overline{\mcrr_{0,k}}\subset\mcp^k$. 
  \end{example}

 \begin{definition} An {\it integral $l$-surface} of a singular analytic  distribution $\mcd$ on an analytic variety\footnote{Everywhere below by {\it analytic variety} we mean an analytic subset in a complex manifold} $M$
is a holomorphic connected $l$-dimensional surface $S\subset M$ lying outside the singular set of $\mcd$ 
  such that $T_xS\subset\mcd(x)$ for every $x\in S$. An $m$-dimensional singular analytic distribution is {\it integrable,} if there exists an integral $m$-surface through each its regular point. 
 \end{definition}
 
 \begin{remark} The singular set of a singular analytic distribution is always an analytic subset in the ambient variety 
 (see Convention \ref{convan}), as in Remark \ref{remmer}. 
 In general an integral surface is not an analytic set. Indeed, a generic linear vector field on $\cp^2$ has transcendental orbits. Hence, they  
are not analytic subsets in $\cp^2$, by Chow's Theorem \cite[p.167]{griff}.  
 \end{remark}

\begin{definition} \label{defint} Let $M$ be an irreducible analytic subset in a complex manifold $V$. A $p$-dimensional {\it intrinsic} singular 
analytic distribution on $M$ is a meromorphic section $\mcd:M\to Gr_p(TV)|_M$ of the Grassmanian bundle (see Footnote 3) such that 
$\mcd(x)\subset T_xM$ for regular points $x\in M$ where $\mcd$ is holomorphic. 
\end{definition}

\begin{remark} \label{intrinsic} Each singular analytic distribution is an intrinsic one. Conversely, each intrinsic singular analytic distribution 
is transformed to a usual singular analytic distribution by a bimeromorphic mapping. Namely, consider the bundle projection 
$\pi:Gr_p(TV)\to V$ and 
the graph $\Gamma$ of the section $\mcd$ (which is an analytic subset in $Gr_p(TV)$). Let $\mcf$ be the tautological 
distribution on $Gr_p(TV)$: for every $\lambda\in Gr_p(TV)$  the subspace $\mcf(\lambda)\subset T_{\lambda}Gr_p(TV)$ is the 
preimage of the subspace $\lambda\subset T_{\pi(\lambda)}V$ under the differential $d\pi(\lambda)$. The restriction 
$\mcf|_{\Gamma}$ is a singular holomorphic distribution transformed to $\mcd$ by the bimeromorphic projection $\pi:\Gamma\to M$. 
\end{remark}


\begin{proposition} \label{anint} Let an $m$-dimensional  (intrinsic) singular analytic distribution $\mcd$ 
on an analytic subset $N$ in a complex manifold 
have at least one $m$-dimensional integral surface. Given an arbitrary union $S$ of $m$-dimensional integral surfaces, let 
$M\subset N$ denote the minimal analytic subset in $N$ containing $S$. 
Then the restriction $\mcd|_M$ is an integrable $m$-dimensional singular analytic distribution. 
\end{proposition}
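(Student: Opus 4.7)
My plan is to reduce to the case of a usual (non-intrinsic) distribution via Remark \ref{intrinsic}: replacing $M$ by the graph of the meromorphic section $\mcd: M\to Gr_m(TV)|_M$ inside $Gr_m(TV)$, the tautological distribution $\mcf$ restricts to a singular analytic distribution on this graph and is bimeromorphically equivalent to $\mcd|_M$, with integral surfaces going over to integral surfaces. After this reduction I can assume that $\mcd$ is a singular analytic distribution in the ambient manifold and $M$ is an analytic subset. Decompose $M=\bigcup_i M_i$ into irreducible components. The first key observation is that the minimality of $M$ forces $S\cap M_i$ to be \emph{Zariski dense} in each $M_i$: if $S\cap M_i$ were contained in a proper analytic subset $A\subsetneq M_i$, then $(\bigcup_{j\ne i}M_j)\cup A$ would be a strictly smaller analytic set still containing $S$, contradicting minimality.

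Next I would show that $\dim\mcd|_M=m$. Fix a component $M_i$ and a point $x\in S\cap M_i^{reg}\setminus \operatorname{Sing}(\mcd)$; let $S_\alpha\subset S$ be an integral $m$-surface through $x$. Then $T_xS_\alpha\subset T_xM_i\cap\mcd(x)$, and since $T_xS_\alpha$ and $\mcd(x)$ both have dimension $m$ we get $\mcd(x)\subset T_xM_i$. The property ``$\mcd(y)\subset T_yM_i$'' is a closed analytic condition on the smooth part of $M_i$ off $\operatorname{Sing}(\mcd)$ (it is cut out by the vanishing of appropriate Plücker minors in local coordinates). Because this condition holds on the Zariski dense set $S\cap M_i^{reg}\setminus\operatorname{Sing}(\mcd)$ and, by irreducibility of $M_i$, a Zariski dense subset minus a proper analytic set is still Zariski dense, the condition holds on all of $M_i^{reg}\setminus\operatorname{Sing}(\mcd)$. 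Hence $\mcd|_{M_i}(y)=\mcd(y)$ at all such $y$, proving $\dim\mcd|_{M_i}=m$.

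Finally I would establish integrability via Frobenius. At every $x\in S\cap M_i^{reg}\setminus\operatorname{Sing}(\mcd)$ the surface $S_\alpha$ is an integral surface of $\mcd|_{M_i}$ itself (its tangent space coincides with $\mcd|_{M_i}(x)=\mcd(x)$). Writing $\mcd|_{M_i}=\ker(\omega_1,\dots,\omega_{n-m})$ locally near a regular point of $\mcd|_{M_i}$, the Frobenius condition $d\omega_j\wedge\omega_1\wedge\dots\wedge\omega_{n-m}\equiv 0$ is a closed analytic condition which holds wherever an integral $m$-surface exists, hence on the Zariski dense set $S\cap M_i^{reg}$. By the same Zariski-density argument as above, involutivity therefore holds identically on the regular part of $\mcd|_{M_i}$, and the classical holomorphic Frobenius theorem yields an $m$-dimensional integral surface through every regular point, as required.

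The main obstacle I anticipate is the careful bookkeeping of the three different ``bad'' loci (singular points of $M$, singular points of $\mcd$, and indeterminacies arising in the intrinsic setup) and the repeated verification that a proper analytic subset removed from a Zariski dense subset of an irreducible $M_i$ remains Zariski dense; once this is firmly in place, the extension-by-analyticity of the containment and involutivity conditions is routine, and Frobenius does the rest.
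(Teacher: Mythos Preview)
Your proposal is correct and follows essentially the same line as the paper's proof: reduce to the non-intrinsic case via Remark~\ref{intrinsic}, then use that both conditions ``$\mcd(x)\subset T_xM$'' and the Frobenius involutivity condition are closed analytic conditions that hold on $S$, hence on all of $M^0_{reg}$ by minimality of $M$. The paper compresses your Zariski-density-on-each-component argument into the single sentence ``since it contains $S\cap M^0_{reg}$ and its closure is an analytic subset in $M$,'' but the content is identical; your explicit decomposition into irreducible components and the observation that $S\cap M_i$ is Zariski dense in each $M_i$ simply spell out what the paper leaves implicit.
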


\begin{proof} It suffices to prove the proposition for a true singular analytic distribution,  locally defined as   kernel field  
of a system of holomorphic 1-forms (Remark \ref{intrinsic}). 
The set $\{ x\in M^0_{reg} \ | \ \mcd(x)\subset T_xM\}$ coincides with all of $M^0_{reg}$, since it contains 
$S\cap M^0_{reg}$ and its closure 
is an analytic subset in $M$. Similarly, the set of those points in $M^0_{reg}$ where the distribution $\mcd|_M$ satisfies the Frobenius integrability condition coincides with all of  $M^0_{reg}$, since it  contains  
$S$ and its closure is an analytic subset in $M$. 
Thus,    $\mcd|_M$ is an $m$-dimensional integrable distribution. The proposition is proved. 
\end{proof}

\subsection{Birkhoff distributions and periodic orbits}

Here we recall the definition and basic properties of Birkhoff distribution and its restricted versions. Consider  the space 
 $\mcp=\mathbb P(T\cp^2)$, which consists of pairs $(A,L)$, $A\in\cp^2$, $L$ being a one-dimensional subspace in $T_A\cp^2$. 
Its natural projection to $\cp^2$ will be denoted by $\Pi$. The {\it standard contact structure} is the two-dimensional 
analytic distribution $\mch$ on $\mcp$ 
given by the  $d\Pi$-pullbacks of the lines $L$: 
$$\mch(A,L)=(d\Pi(A,L))^{-1}(L)\subset T_{(A,L)}\mcp.$$
The distribution $\mch^k=\oplus_{j=1}^k\mch$ is the $2k$-dimensional 
product distribution on $\mcp^k$. Recall that $\mcrr_{0,k}\subset\mcp^k$ is the subset of  $k$-tuples 
$((A_1,L_1),\dots,(A_k,L_k))$ such that for every $j=1,\dots,k$ one has $A_j\in\cc^2=\cp^2\setminus\oc_{\infty}$, $A_j\neq A_{j\pm 1}$, the lines $A_jA_{j-1}$, 
$A_jA_{j+1}$ are  symmetric with respect to the line $L_j$, and the three latter lines are distinct and non-isotropic. 
This is a $2k$-dimensional smooth quasiprojective variety. The {\it Birkhoff distribution} 
$\mcd^k$ is the restriction to $\mcrr_{0,k}$ of the product distribution $\mch^k$: 
\begin{equation}\mcd^k(x)=T_x\mcrr_{0,k}\cap\mch^k(x) \text{ for every } x\in\mcrr_{0,k}.\label{birkdist}\end{equation}
%
This is a $k$-dimensional analytic distribution. It is the complexification of the real Birkhoff distribution introduced in \cite{bzh}. 
For every two irreducible 
analytic curves $a,b\subset\cp^2$ with maximal normalizations $\pi_a:\ha\to \cp^2$, $\pi_b:\hb\to \cp^2$ we will denote 
$$\mcp_a=\ha\times\mcp^3; \ \mcp_{ab}=\ha\times\hb\times\mcp^2.$$
  We consider the natural embeddings $\eta_a:\mcp_a\to\mcp^4$, $\eta_{ab}:\mcp_{ab}\to\mcp^4$: 
  $$\eta_a(A,(B,L_B),(C,L_C),(D,L_D))=((\pi_a(A),T_Aa), (B,L_B),(C,L_C),(D,L_D)),$$
   $$\eta_{ab}(A,B,(C,L_C),(D,L_D))=((\pi_a(A),T_Aa),  (\pi_b(B),T_Bb), (C,L_C),(D,L_D)).$$
  \begin{remark} \label{remcusps} 
  The critical points of the mappings $\eta_a$ ($\eta_{ab}$) are contained in the sets $Cusp_a\subset\mcp_a$, $Cusp_{ab}\subset\mcp_{ab}$ of 
  those points 
   for which $A$ ($A$ or $B$) is a cusp of the corresponding curve (see Footnote 1 in Section 1). 
   The mappings $\eta_a$ and $\eta_{ab}$ are immersions outside 
   the sets $Cusp_a$ and $Cusp_{ab}$. 
   \end{remark}
   Consider the subsets
  \begin{equation}
  M_a^0=\eta_a^{-1}(\mcrr_{0,4})\setminus Cusp_a\subset\mcp_a, \ M_{ab}^0=\eta_{ab}^{-1}(\mcrr_{0,4})\setminus Cusp_{ab}
  \subset\mcp_{ab},
  \label{mab}\end{equation}
   $$M_a=\overline{M_a^0}\subset\mcp_a; \ M_{ab}=\overline{M_{ab}^0}\subset\mcp_{ab}:$$  
the closures are taken in the usual topology. The subsets $M_a\subset\mcp_a$ and $M_{ab}\subset\mcp_{ab}$ are obviously analytic. The {\it restricted (pullback) Birkhoff distributions} 
  $\mcd_a$ on $M_a^0$ and $\mcd_{ab}$ on $M_{ab}^0$ respectively are  the pullbacks of the Birkhoff distribution $\mcd^4$: 
 \begin{equation}
   \mcd_a(x)=(d\eta_a(x))^{-1}(\mcd^4(\eta_a(x))\cap d\eta_a(x)(T_xM_a^0))\subset T_xM_a^0, \   x\in M_a^0; 
   \label{mcda}\end{equation}
  \begin{equation} \mcd_{ab}(x)=(d\eta_{ab}(x))^{-1}(\mcd^4(\eta_{ab}(x))\cap d\eta_{ab}(x)(T_xM_{ab}^0))\subset T_xM_{ab}^0, \  
 x\in M_{ab}^0.\label{birkrest}\end{equation}
  They extend to singular analytic distributions on $M_a$ and $M_{ab}$ respectively in the sense of Subsection 2.6. For example,  
  $\mcd_a$ is the restriction to $M_a$ of the distribution $T\hat a\oplus\mathcal H^3$ on $\mcp_a=\hat a\times\mcp^3$.  One has 
 $$dim M_a=6, \ dim\mcd_a=3; \ \ dim M_{ab}=4, \ dim\mcd_{ab}=2.$$
 
\begin{definition} \label{defnondeg} (complexification of \cite[definition 14]{gk2}) 
Let $k\in\nn$. A $k$-gon $A_1\dots A_k\in(\cp^2)^k$ is said to be {\it non-degenerate,} if 
for every $j=1,\dots,k$ (we set $A_{k+1}=A_1$, $A_0=A_k$) one has $A_j\in\cc^2=\cp^2\setminus\oc_{\infty}$,  $A_{j+1}\neq A_j$, 
$A_{j-1}A_j\neq A_jA_{j+1}$ and the line $A_jA_{j+1}$ is not isotropic.
We will call the complex lines $A_jA_{j\pm1}$ the {\it edges adjacent to } $A_j$. 
\end{definition}

\begin{remark} \label{thebove} 
The above sets $\mcrr_{0,k}$, $M_a^0$, $M_{ab}^0$  are projected to the sets of non-degenerate $k$-gons (quadrilaterals). 
A periodic billiard orbit  in the sense of Definition \ref{deforb} is non-degenerate, provided that its vertices lie in $\cc^2$, 
its edges are non-isotropic and 
every two adjacent edges are distinct. The $k$-reflective set $U$  of a $k$-reflective billiard contains  an open and dense subset 
$U_1\subset U$ of those non-degenerate orbits whose vertices are not marked points (see Definition \ref{defmark}) 
of the corresponding mirrors. 
\end{remark}

\begin{definition}  (cf. \cite[definition 16]{gk2}) Consider some of the above Birkhoff distributions, let us denote it $\mcd$, and let $M$ denote 
the underlying manifold (e.g., $\mcrr_{0,k}$, $M_{ab}^0$,...) carrying $\mcd$. Consider the projections of the manifold $M$ to the positions of 
vertices of the $k$-gon (quadrilateral). A subspace $E\subset\mcd(x)$, $x\in M$, is said to be 
{\it non-trivial}, if the restriction to $E$ of the differential of each above projection has positive rank. (Then the  rank equals one.) An 
 {\it integral surface} of the same distribution is {\it non-trivial}, if its tangent planes are non-trivial. 
\end{definition}

For every analytic billiard $a$, $b$, $c$, $d$ there exist natural analytic mappings $\Psi_a:\ha\times\hb\times\hc\times\hd\to 
\mcp_a$, $\Psi_{ab}: \ha\times\hb\times\hc\times\hd\to \mcp_{ab}$: 

$$\Psi_a(ABCD) = (A,(B,T_Bb),(C,T_Cc),(D,T_Dd));$$
 \begin{equation}\Psi_{ab}(ABCD)=(A,B,(C,T_Cc), (D,T_Dd)).\label{embed}\end{equation}

\begin{proposition} \label{birktang} Let $a$, $b$, $c$, $d$ be a 4-reflective billiard. 
The mappings $\Psi_a$, $\Psi_{ab}$ send the subset $U_1\subset U$  (see Remark \ref{thebove}) 
to $M_a^0$, $M_{ab}^0$, and the images of its connected components are non-trivial integral surfaces of 
the restricted Birkhoff distributions $\mcd_a$ and $\mcd_{ab}$ respectively. 
Vice versa, each non-trivial integral surface of any of the latter distributions is the image of an open  set of 
quadrilateral orbits of a 4-reflective billiard 
$a$, $b$,  $c$, $d$ with given mirror $a$ (respectively, given mirrors $a$ and $b$). 
\end{proposition}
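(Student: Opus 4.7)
The plan is to establish the two directions separately, using as the key mechanism the tangential lifting of an analytic curve to the contact manifold $\mcp$. For the forward direction I would first check that $\Psi_a$ sends $U_1$ into $M_a^0$ and $\Psi_{ab}$ into $M_{ab}^0$. The conditions defining a point of $U_1$ (a non-degenerate orbit with no marked vertex) match those defining $\mcrr_{0,4}$: finite vertices, distinct consecutive vertices, non-isotropic edges and tangent lines, and the symmetry of consecutive edges with respect to the tangent line. Non-markedness excludes cusps and isotropic tangencies, so, by Remark \ref{remcusps}, the $\Psi_a$- and $\Psi_{ab}$-images also avoid $Cusp_a$, $Cusp_{ab}$. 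Non-triviality on a connected component $V\subset U_1$ then follows from the Addendum to Proposition \ref{comp-set}: on $U_1$ every projection $U\to\ha_j\times\ha_{j+1}$ is a local biholomorphism, so every single-vertex projection of $\Psi_a(V)$ has rank one.

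Next I would establish integrality. By the definition (\ref{mcda}), tangency of $\Psi_a(V)$ to $\mcd_a$ reduces to tangency of its $\eta_a$-image to $\mch^4$ and to $\mcrr_{0,4}$. The second is immediate, since the image lies inside $\mcrr_{0,4}$. The first is the standard Birkhoff contact observation: for any analytic parametrized curve $\pi_a:\ha\to\cp^2$, the tangential lift $A\mapsto(\pi_a(A),T_Aa)$ is automatically tangent to $\mch$, because the $\cp^2$-projection of its tangent vector equals $d\pi_a$, which lies in $T_Aa$ by the very definition of the tangent. Applying this factor by factor yields $T\Psi_a(V)\subset\mch^4$, and combining with the preceding observation gives $T\Psi_a(V)\subset\mcd_a$. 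The same argument works verbatim for $\Psi_{ab}$.

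For the converse, let $S$ be a non-trivial integral surface of $\mcd_a$ contained in $M_a^0$; the $\mcd_{ab}$-case is analogous. Non-triviality implies that every single-vertex projection of $S$ has rank one, so each image is a germ of irreducible analytic curve: for the first vertex it already lies in $\ha$, and for $j=2,3,4$ I invoke Corollary \ref{clift} to lift the germ to a parametrization by the maximal normalization, producing the new mirrors $b$, $c$, $d$. The $\mch^4$-tangency then forces, at each point of $S$, the $L_j$-component to contain the image of any tangent vector under the $j$-th $\cp^2$-projection; by the rank-one statement this image spans the tangent to the projected curve, so $L_j$, being one-dimensional, equals that tangent line. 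Thus the $L_j$-coordinate on $S$ records precisely the tangent line $T_{A_j}a_j$ of the constructed mirror, and membership of $\eta_a(S)$ in $\mcrr_{0,4}$ translates directly to satisfaction of the reflection law along $S$. Consequently $S$ is the $\Psi_a$-image of an open family of quadrilateral orbits of the billiard $a,b,c,d$, which is therefore 4-reflective.

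The main technical point of this plan is the identification, in the converse direction, of the $L_j$-component with the tangent line to the mirror just constructed. That identification would break down only if some vertex projection of $S$ collapsed to a point, which is precisely what the non-triviality hypothesis prevents; the hypothesis is thus essential rather than a convenient technicality. A minor subsidiary issue, in the forward direction, is making the cusp-exclusion match between $U_1$ and $M_a^0$, $M_{ab}^0$, but this is already built into the definition of a marked point combined with Remark \ref{remcusps}.
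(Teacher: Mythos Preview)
Your proposal is correct and reconstructs exactly the standard contact-geometric argument that the paper defers to the references \cite{bzh} and \cite[lemmas 17, 18]{gk2} (the paper gives no proof of its own here beyond citing these). Your handling of both directions---the tangential lift being tangent to $\mch$ for the forward direction, and the identification $L_j=T_{A_j}a_j$ forced by $\mch$-tangency plus rank one for the converse---is the right mechanism, and your explicit justification of the rank-one statement (rank $\ge 1$ from non-triviality, rank $\le 1$ because $d\Pi$ sends $\mch$ into the line $L$) is precisely the point that makes the converse work.
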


The proposition is the direct complexification of an  analogous result from \cite{bzh} and Yu.G.Kudryashov's lemmas 
\cite[section 2, lemmas 17, 18]{gk2}.

Everywhere below for every $x\in M_{ab}^0\subset\mcp_{ab}=\ha\times\hb\times\mcp^2$ we denote 
 $$l_a=l_a(x)=A(x)D(x), \ l_b=l_b(x)=B(x)C(x).$$
 \begin{remark} \label{lab} The lines $l_a$ and $l_b$ depend only on $(A,B)=(A(x),B(x))$: these are the lines symmetric to $AB$ with respect to the 
 lines $T_Aa$ and $T_Bb$ respectively. Sometimes we will write $l_a=l_a(A,B)$, $l_b=l_b(A,B)$. 
 \end{remark}
 For every $x\in M^0_{ab}$ ($x\in M^0_a$) 
 the corresponding lines $L_G$, $G=(B,) C,D$, will be denoted by $L_G(x)$. The projections to the positions of vertices will be denoted by 
 $$\nu_a:\mcp_a\to\ha, \ \nu_{ab}:\mcp_{ab}\to\ha\times\hb,$$ 
 $$\nu_G:\mcp_a, \mcp_{ab}\to\cp^2, \ x\mapsto G(x) \text{ for } G=B,C,D \text{ (respectively, } G=C,D).$$

\begin{remark} \label{rempro} 
The above projections $\nu_a$ and $\nu_{ab}$ are proper and epimorphic. The corresponding preimages of points 
are compact and naturally identified with projective algebraic varieties. 
\end{remark}

 \begin{proposition} \label{pnontriv} The planes of the Birkhoff distribution $\mcd_{ab}$ on $M_{ab}^0$ 
  are non-trivial, and hence, so is each its integral surface. 
\end{proposition}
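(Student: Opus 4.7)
The plan is to exhibit a concrete linear parametrization of the plane $\mcd_{ab}(x)$ by $(\dot A, \dot B)\in T_A\hat a\oplus T_B\hat b$, and then verify that each of the four vertex projections has positive rank on this parametrization.

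First, I would take local coordinates $(A,B,p,q)$ on $M_{ab}^0$ near $x$, where $p$ is an affine parameter along the line $l_a(A,B)$ locating $D$ and $q$ is an analogous affine parameter along $l_b(A,B)$ locating $C$; the tangent lines $L_C$ and $L_D$ are then determined by the reflection conditions at $C$ and $D$. Thus $T_xM_{ab}^0$ is freely parametrized by $(\dot A,\dot B,\dot p,\dot q)$, confirming $\dim M_{ab}^0=4$. Because $\eta_{ab}$ is an immersion outside $Cusp_{ab}$ (Remark \ref{remcusps}), the pullback definition (\ref{birkrest}) of $\mcd_{ab}$ reduces to intersecting $T_xM_{ab}^0$ with the preimage of $\mch^4$; the conditions at the $A$- and $B$-slots are automatic, since $d\pi_a(\dot A)\in T_Aa$ and $d\pi_b(\dot B)\in T_Bb$ by construction, leaving exactly two linear conditions: $\dot C\in L_C$ and $\dot D\in L_D$. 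Since $\partial_pD$ spans $l_a$ and $l_a\neq L_D$ (by the non-degeneracy condition in the definition of $\mcrr_{0,4}$ that $A_jA_{j-1}$, $A_jA_{j+1}$, $L_j$ are distinct), the condition $\dot D\in L_D$ determines $\dot p$ uniquely as a linear function of $(\dot A,\dot B)$; analogously for $\dot q$. Hence the map $(\dot A,\dot B)\mapsto(\dot A,\dot B,\dot p(\dot A,\dot B),\dot q(\dot A,\dot B))$ is a linear isomorphism onto $\mcd_{ab}(x)$, which in particular shows that the projections $d\nu_a|_{\mcd_{ab}(x)}$ and $d\nu_b|_{\mcd_{ab}(x)}$ are surjective, hence of positive rank.

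The main step is to show $d\nu_D|_{\mcd_{ab}(x)}$ is non-zero; the case of $d\nu_C$ will follow by symmetric argument. I would take the tangent vector coming from $\dot A=0$ and an arbitrary non-zero $\dot B\in T_B\hat b$. Since $\mcrr_{0,4}$ requires the line $AB$ to be distinct from $L_B=T_Bb$, the vector $d\pi_b(\dot B)$ is not collinear with the direction of $AB$, so the pencil-point representing $AB$ moves infinitesimally about the fixed point $A$. Reflecting over the fixed line $T_Aa$ shows that $l_a$ likewise rotates about $A$; as $D\in l_a$ is distinct from $A$, this forces a non-zero component of $\dot D$ transverse to $l_a$. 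That transverse component is independent of the choice of $\dot p$, which contributes only motion along $l_a$. Combined with $\dot D\in L_D$ and the non-degeneracy $L_D\neq l_a$, this forces $\dot D$ to be the unique vector in $L_D$ whose component transverse to $l_a$ realizes the prescribed rotational displacement; in particular $\dot D\neq 0$. Non-triviality of each integral surface of $\mcd_{ab}$ then follows at once, since its tangent plane at every point coincides with $\mcd_{ab}(x)$.

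The chief obstacle is bookkeeping: making sure every non-degeneracy invoked in the argument (the distinctness of $A_jA_{j-1}$, $A_jA_{j+1}$, $L_j$; the non-isotropy of these lines; the exclusion of cusps of $\hat a$ and $\hat b$) is part of the definition of $M_{ab}^0$ and is available at every point under consideration. Once those are confirmed, the argument is essentially linear algebra combined with the infinitesimal ``rotation of $l_a$ about $A$'' observation.
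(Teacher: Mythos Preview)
Your argument is correct and complete. The approach differs from the paper's: where you give a direct construction---explicitly parametrizing $\mcd_{ab}(x)$ by $(\dot A,\dot B)$ and then verifying each vertex projection is non-zero on a well-chosen vector---the paper argues by contradiction. It supposes, say, $d\nu_a\equiv 0$ on $\mcd_{ab}(x)$, passes to the (at least one-dimensional) kernel $K(x)$ of $d\nu_b|_{\mcd_{ab}(x)}$, observes that along $K(x)$ both $\dot A$ and $\dot B$ vanish so the lines $l_a,l_b$ are fixed, and then notes that any non-zero $\dot D$ along $K(x)$ would lie in $l_a$ while the distribution forces $\dot D\in L_D\neq l_a$, a contradiction. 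Your route is more explicit and yields slightly more information (the isomorphism $\mcd_{ab}(x)\cong T_A\hat a\oplus T_B\hat b$), at the cost of a bit more bookkeeping with the affine parameters $p,q$; the paper's route is shorter and treats all four projections symmetrically. Both hinge on the same non-degeneracy fact $l_a\neq L_D$, used in opposite directions: you use it to show a transverse component forces $\dot D\neq 0$, the paper uses it to show $\dot D\in l_a\cap L_D$ forces $\dot D=0$.
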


\def\wtmab{M_{ab}}
\begin{proof} Suppose the contrary: there exists an $x\in M_{ab}^0$ such that the differential of some of the above projections, say $\nu_a$ 
vanishes identically on $\mcd_{ab}(x)$. We consider only the case of projection $\nu_a$: the cases of other projections are treated analogously. 
Then the kernel 
$$K(x)=\operatorname{Ker}(d\nu_{b}|_{\mcd_{ab}(x)})\subset\mcd_{ab}(x)$$
is at least one-dimensional subspace. The vertices $A(x)=\nu_a(x)\in \hat a$, $B(x)=\nu_b(x)\in \hat b$ are not cusps, 
since $x\in M_{ab}^0$ by assumption. 
The line functions $l_a$, $l_b$ have zero derivatives along $K(x)$, as do the vertices $A$, $B$. 
The derivative along  $K(x)$ of at least one of the vertices $C$ or $D$, say $D$ is not identically zero. Then $d\nu_D(K(x))=l_a(x)\neq
L_D(x)$, since $D\in l_a$ and $l_a$ has zero derivative. Thus, $d\nu_D(\mcd_{ab}(x))\not\subset L_D(x)$, -- a contradiction to the definition of 
the distribution $\mcd_{ab}$. The proposition is proved. 
\end{proof} 
%
%
%
%
%

\section{Non-integrability of the Birkhoff distribution $\mcd_{ab}$ and corollaries} 

\subsection{Main lemma, corollaries and plan of the proof}

In the present section we prove the following lemma on the non-integrability of the two-dimensional 
Birkhoff distribution $\mcd_{ab}$ and corollaries. 

\begin{lemma} \label{lnonint} For every pair of  analytic curves $a,b\subset\cp^2$ distinct from isotropic lines that are not both lines 
the corresponding Birkhoff distribution $\mcd_{ab}$ is non-integrable. Moreover, there is no 
 three-dimensional irreducible analytic subset $M\subset M_{ab}$ (see Convention \ref{convan}) 
 such that  $M\cap M^0_{ab}\neq\emptyset$ and the restriction $\mcd_{ab}|_M$ is 
 two-dimensional and integrable. 
\end{lemma}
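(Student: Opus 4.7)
I argue by contradiction on the ``moreover'' clause, from which the non-integrability assertion follows by the same method. Assume a three-dimensional irreducible $M \subset M_{ab}$ with $\mcd_{ab}|_M$ two-dimensional and integrable exists. On the regular locus $M^0 \subset M$, $\mcd_{ab}|_M$ then has a one-parameter holomorphic family of two-dimensional integral leaves. By the converse part of Proposition \ref{birktang}, each leaf is the image under $\Psi_{ab}$ of an open set of quadrilateral orbits of some 4-reflective billiard $a$, $b$, $c(x)$, $d(x)$, where the first two mirrors are the fixed $a$, $b$ and the last two are leafwise constant and vary with the transverse parameter. Consider the meromorphic projections $\pi_G : M \dashrightarrow \ha \times \hb \times \cp^2$, $\pi_G(x) = (A(x), B(x), G(x))$, for $G = C, D$, and split into two cases.

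\emph{Case 1: some $\pi_G$, say $\pi_D$, fails to be bimeromorphic onto its image.} Then generic fibers of $\pi_D$ are positive-dimensional. Proposition \ref{pnontriv} (non-triviality of $\mcd_{ab}$) prevents an entire positive-dimensional fiber from lying in a single leaf, so one can select $x, y \in M^0$ in distinct leaves with $(A(x), B(x), D(x)) = (A(y), B(y), D(y)) = (A, B, D_0)$, with $D_0$ not a cusp of either $d(x)$ or $d(y)$ and $T_{D_0}d(x) \neq T_{D_0}d(y)$ (else the two leaves would locally coincide). Following \cite[proof of lemma 3.1]{alg}, the two local orbit families combine into an open family of quadrilateral orbits of the 4-reflective billiard $d(y)$, $d(x)$, $c(x)$, $c(y)$ with coincident neighbor vertices $D_0$ on its first two mirrors. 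Proposition \ref{intspir} then forces $c(x)$ to be a triangular spiral centered at $D_0$; perturbing $y$ along its leaf with $(A, B)$ held fixed moves $D(y)$ to a nearby generic $D_1 \neq D_0$ on $d(y) \cap d(x)$, supplying a second center, whereupon Proposition \ref{pnonalg} makes $c(x)$ algebraic. Symmetrically $c(y)$ is algebraic; two successive applications of Proposition \ref{twoalg} --- first to $d(y)$, $d(x)$, $c(x)$, $c(y)$, then to $a$, $b$, $c(x)$, $d(x)$ --- force $a$ and $b$ algebraic as well. The algebraic classification \cite[theorem 1.11]{alg} then restricts $(a, b)$ to types 1)--3) of Theorem \ref{an-class}; none of them admits, under the hypothesis that $a, b$ are not both lines, a one-parameter family of distinct pairs $(c, d)$ with $a, b$ fixed, contradicting the three-dimensionality of $M$.

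\emph{Case 2: both $\pi_C$ and $\pi_D$ are bimeromorphic onto their images.} Then $c(x), d(x)$ are meromorphic functions of $(A(x), B(x))$, and the rigidity of having a full one-parameter family of 4-reflective billiards $a$, $b$, $c(x)$, $d(x)$ so encoded forces (by a reflection-law analysis deferred to Subsection 3.3) $c(x), d(x)$ to be lines on a dense open subset of $M^0$. Proposition \ref{twoalg} applied to the resulting pair of neighbor algebraic mirrors $c(x), d(x)$ then makes $a, b$ algebraic, and the algebraic classification again excludes a three-dimensional $M$. The main obstacle throughout is Case 1's extraction of $x, y$ in distinct leaves with matching $(A, B, D_0)$ \emph{and} distinct tangents $T_{D_0}d(x), T_{D_0}d(y)$, which rests on combining Proposition \ref{pnontriv} with a careful dimension count for the $\pi_D$-fibers and a transversality argument excluding degenerate configurations; Case 2's reduction of $c(x), d(x)$ to lines is the other technical heart of the argument.
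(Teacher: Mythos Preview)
Your overall architecture matches the paper's: reduce by contradiction to an irreducible $M$ of dimension $\geq 3$ with $\mcd_{ab}|_M$ integrable, split on whether the projection to $(A,B,D)$ (or $(A,B,C)$) is bimeromorphic, and in the non-bimeromorphic case use the 4-reflective billiard $c(x),d(x),d(y),c(y)$ together with the triangular-spiral mechanism and Proposition~\ref{pnonalg} to force algebraicity. The same applies to Case~2. So the strategy is right, but two steps in Case~1 are genuinely broken as written.

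\medskip
\textbf{Gap 1 (nature of non-bimeromorphicity).} Non-bimeromorphic onto the image does \emph{not} imply positive-dimensional generic fibers. Since $D(x)\in l_a(A(x),B(x))$, the image $M_D=\pi_D(M)$ is at most three-dimensional; when $\dim M_D=3$ the fibers are generically finite with more than one point. Your argument for producing $x,y$ in distinct leaves with the same $(A,B,D_0)$ relies on positive-dimensional fibers, so it does not cover this case. The paper's Proposition~\ref{dense-mult} handles both possibilities uniformly, and also supplies the crucial extra conclusion that $\nu_{ab,D}$ is a local \emph{submersion} at both $x$ and $y$; you will need this below. Relatedly, the claim ``else the two leaves would locally coincide'' is incomplete: if $L_D(x)=L_D(y)$ one can only conclude $C(x)=C(y)$ but possibly $L_C(x)\neq L_C(y)$, whence one must swap the roles of $C$ and $D$ (as the paper does).

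\medskip
\textbf{Gap 2 (the deformation producing a second center).} The sentence ``perturbing $y$ along its leaf with $(A,B)$ held fixed moves $D(y)$ to a nearby $D_1\neq D_0$'' is incoherent: the leaf through $y$ is locally the graph of $(C,D)$ as functions of $(A,B)$ (Addendum to Proposition~\ref{comp-set}), so fixing $(A,B)$ on a leaf fixes $y$ entirely. The correct move (Corollary~\ref{coralg} in the paper) is the opposite: deform $y$ \emph{transversally in $M$}, using that $\nu_{ab,D}$ is a local submersion at $y$, to a nearby $y'\in M\setminus\Sigma$ with $D(y')=D_1\in d(x)$, $D_1\neq D_0$, and $L_D(y')\neq T_{D_1}d(x)$; then slide $x$ \emph{along its own leaf} so that $\nu_{ab}(x)=\nu_{ab}(y')$, which forces $D(x)=D_1$ as well. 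Only this order keeps the mirror $c(x)$ unchanged while supplying the second center $D_1$ needed for Proposition~\ref{pnonalg}. Without the submersion property from Gap~1 and this corrected deformation, the heart of Case~1 does not go through.
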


\begin{corollary} \label{cnonint} In the conditions of Lemma \ref{lnonint} the union of all the integral surfaces of the 
Birkhoff distribution $\mcd_{ab}$ in $M^0_{ab}$ is contained in a two-dimensional analytic subset in $M_{ab}$. 
\end{corollary}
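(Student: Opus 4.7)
The plan is to combine Proposition \ref{anint} with both assertions of Lemma \ref{lnonint} in a minimality argument. Let $S \subset M^0_{ab}$ denote the union of all integral surfaces of $\mcd_{ab}$ (which are automatically two-dimensional, since the distribution is two-dimensional and regular on $M^0_{ab}$). If $S = \emptyset$ the statement is trivial, so assume $S\neq\emptyset$. Let $M \subset M_{ab}$ be the minimal analytic subset containing $S$; this exists because $M_{ab}$ is itself analytic and the intersection of any family of analytic subsets containing $S$ is analytic.

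First I would apply Proposition \ref{anint} to $\mcd_{ab}$ (viewed as a singular analytic distribution on $M_{ab}$ in the sense of Subsection 2.6) and the union $S$: this yields that $\mcd_{ab}|_M$ is a two-dimensional integrable singular analytic distribution on $M$. Next, I decompose $M = \bigcup_i M_i$ into irreducible components. By minimality of $M$, each component $M_i$ must be the closure of some subfamily of integral surfaces; in particular $M_i \cap M^0_{ab} \neq \emptyset$ and $\dim M_i \geq 2$. Moreover, by Proposition \ref{anint} applied component-wise, the restriction $\mcd_{ab}|_{M_i}$ is two-dimensional and integrable for every $i$.

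It remains to rule out $\dim M_i \in \{3,4\}$. If $\dim M_i = 3$, the pair $(M_i, \mcd_{ab}|_{M_i})$ would contradict the second assertion of Lemma \ref{lnonint}. If $\dim M_i = 4$, then since $M_{ab}$ is irreducible of dimension $4$ and $M_i \subset M_{ab}$ is irreducible analytic, one has $M_i = M_{ab}$; but then $\mcd_{ab}|_{M_{ab}} = \mcd_{ab}$ would be integrable, contradicting the first assertion of Lemma \ref{lnonint}. Hence every $M_i$ has dimension exactly $2$, so $M$ is a two-dimensional analytic subset of $M_{ab}$ containing $S$, which is the required conclusion.

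The argument is essentially formal once Lemma \ref{lnonint} is in hand, so I do not anticipate a real obstacle here; the only point demanding a bit of care is verifying that each irreducible component of $M$ meets $M^0_{ab}$ (so that the hypothesis $M_i \cap M^0_{ab} \neq \emptyset$ of Lemma \ref{lnonint} applies), which follows from the minimality of $M$ together with the fact that integral surfaces live in $M^0_{ab}$.
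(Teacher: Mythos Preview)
Your proof is correct and follows essentially the same approach as the paper's: take the minimal analytic subset $M\subset M_{ab}$ containing all integral surfaces, apply Proposition \ref{anint} to see that $\mcd_{ab}|_M$ is two-dimensional and integrable, and then invoke Lemma \ref{lnonint} to conclude $\dim M=2$. The paper compresses the last step into a single sentence, while you spell out the case analysis on $\dim M_i\in\{2,3,4\}$ for each irreducible component and verify that $M_i\cap M^0_{ab}\neq\emptyset$; this added care is appropriate and matches how the paper actually applies Lemma \ref{lnonint} in its proof plan (Subsection 3.1), where the three- and four-dimensional cases are treated separately.
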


\begin{proof} The minimal analytic subset $M\subset M_{ab}$ containing all the integral surfaces is tangent to 
$\mcd_{ab}$, and the distribution $\mcd_{ab}$ is integrable there (Proposition \ref{anint}). Hence, $dim M=2$, 
 by Lemma \ref{lnonint}. This proves the corollary. 
\end{proof} 

\begin{remark} In the case, when $a$ and $b$ are lines, the statements of Lemma \ref{lnonint} and the corollary are false. In this case there 
exists a one-parametric family of 4-reflective billiards $a$, $b$, $c$, $d$ of type 2) from Theorem \ref{an-class}. The corresponding open sets of 
quadrilateral orbits form a one-parametric family of integral surfaces of the distribution $\mcd_{ab}$. They saturate an open and dense subset in 
a three-dimensional analytic subset in $M_{ab}$. 
\end{remark}


Let $\Psi=\Psi_{ab}:\ha\times\hb\times\hc\times\hd\to \mcp_{ab}$ be the mapping  from (\ref{embed}). 

\begin{corollary} \label{cepi} Let $a$, $b$, $c$, $d$ be a 4-reflective complex planar analytic billiard, and let  
$U$ be the 4-reflective set. Then the image $\Psi(U)\subset\mcp_{ab}$ is a two-dimensional analytic subset lying in 
$M_{ab}$. The natural projection $U\to\ha\times\hb$ is a proper epimorphic mapping. 
\end{corollary}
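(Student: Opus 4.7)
The plan is to use Corollary \ref{cnonint} together with the Gauss-type lifting structure and Remmert's Proper Mapping Theorem to identify $\Psi(U)$ with a two-dimensional analytic subset of $M_{ab}$, and then deduce properness and epimorphicity of the projection $U \to \ha \times \hb$.

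The case when $a$ and $b$ are both lines lies outside the scope of Corollary \ref{cnonint}, so I would treat it separately: by Theorem \ref{an-class} the billiard must be of type 2), and in that concrete setup the conclusions of the corollary can be verified directly. Assume from now on that $a$ and $b$ are not both lines. Then Corollary \ref{cnonint} produces a two-dimensional analytic subset $X \subset M_{ab}$ containing all integral surfaces of $\mcd_{ab}$ in $M_{ab}^0$. By Proposition \ref{birktang}, the mapping $\Psi = \Psi_{ab}$ sends the open dense subset $U_1 \subset U$ of non-degenerate orbits with unmarked vertices (Remark \ref{thebove}) into $M_{ab}^0$, and each connected component of $\Psi(U_1)$ is a non-trivial integral surface of $\mcd_{ab}$; hence $\Psi(U_1) \subset X$. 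Since $X$ is closed in $\mcp_{ab}$ and $U_1$ is dense in $U$, continuity of $\Psi$ gives $\Psi(U) \subset X$.

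Next I would verify that $\Psi|_U$ is generically injective and proper, so that $\Psi(U)$ inherits the analytic structure of $X$. Injectivity: a point $(A, B, (C^*, L_C), (D^*, L_D))$ in the image determines $(A, B) \in \ha \times \hb$ directly, while the preimages of $C^*$ and $D^*$ in $\hc$ and $\hd$, together with the prescribed tangent directions $L_C$ and $L_D$, pin down the local branches of $c$ and $d$ at those points, hence $C$ and $D$ uniquely. Properness: if $\Psi(A_n, B_n, C_n, D_n)$ converges to a point of $M_{ab}$, then $(A_n, B_n)$ converges in $\ha \times \hb$, the reflected lines $l_a(A_n, B_n)$ and $l_b(A_n, B_n)$ converge, and by the reflection law $D_n \in d \cap l_a(A_n, B_n)$, $C_n \in c \cap l_b(A_n, B_n)$ are among the finitely many moving intersection points of these lines with the curves; the assumed convergence of $(\pi_c(C_n), T_{C_n}c)$ and $(\pi_d(D_n), T_{D_n}d)$ in $\mcp$ then forces convergence of $C_n$, $D_n$ in the normalizations. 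Hence $\Psi|_U$ is proper, so $\Psi(U)$ is analytic by Remmert's theorem. Its complex dimension is $2$, because by Proposition \ref{comp-set} the projection $\pi: U \to \ha \times \hb$ is a submersion on an open dense subset of $U$, and this projection factors through $\Psi$ (so $\Psi|_U$ has generic rank $2$ on each irreducible component of $U$, all of which are two-dimensional). Since $X$ is of pure dimension two, $\Psi(U)$ is a union of irreducible components of $X$, and therefore a two-dimensional analytic subset of $\mcp_{ab}$ lying in $M_{ab}$.

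Finally, properness and epimorphicity of $\pi: U \to \ha \times \hb$ follow immediately. We factor $\pi = \nu_{ab} \circ \Psi|_U$; both $\Psi|_U$ and the restriction $\nu_{ab}|_{\Psi(U)}$ are proper, the latter by Remark \ref{rempro}, so $\pi$ is proper. For epimorphicity: by Definition \ref{defref} the image $\pi(U)$ contains a non-empty open subset of $\ha \times \hb$, and $\pi(U) = \nu_{ab}(\Psi(U))$ is an analytic subset of $\ha \times \hb$ by Remmert; since $\ha$ and $\hb$ are connected Riemann surfaces, $\ha \times \hb$ is irreducible, forcing $\pi(U) = \ha \times \hb$. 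The main obstacle, and the step requiring the most care, is the properness of $\Psi|_U$ when $c$ or $d$ is transcendental: the maximal normalizations $\hc, \hd \to \cp^2$ need not be proper in isolation, so the argument must exploit the reflection law to confine $C_n$ and $D_n$ to finitely many moving intersection points with lines parametrized by the already-convergent data $(A_n, B_n)$.
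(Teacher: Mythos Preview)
Your overall strategy is the right one, and the reduction to properness plus Remmert is exactly what is needed. However, there are two problems, one minor and one serious.

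The minor one: invoking Theorem \ref{an-class} to dispose of the case where $a$ and $b$ are both lines is circular, since Corollary \ref{cepi} is itself a step in the proof of that theorem. The paper handles the case where $a$ and $b$ are both \emph{algebraic} (which includes both lines) directly: then $c$, $d$ are algebraic by Proposition \ref{twoalg}, all four normalizations are compact, and properness is automatic.

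The serious gap is in your argument that $\Psi|_U$ is proper. You claim that if $\Psi(A_n,B_n,C_n,D_n)$ converges in $\mcp_{ab}$ then $C_n$ and $D_n$ converge in $\hc$, $\hd$, because they lie among ``finitely many moving intersection points'' of the reflected lines with $c$, $d$. But when $c$ (or $d$) is transcendental, a projective line may meet it in infinitely many points; more to the point, the Gauss lift $\hc\to\mcp$, $C\mapsto(\pi_c(C),T_Cc)$, is \emph{not} proper, since $\mcp$ is compact and $\hc$ is not. So $C_n$ can escape every compact subset of $\hc$ even while its image in $\mcp$ converges. You flagged this as the delicate step, but the proposed fix does not close it.

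The paper avoids this entirely by \emph{not} proving properness of $\Psi|_U$ directly. Instead it takes the minimal analytic subset $\mcs\subset M_{ab}$ containing $\Psi(U_1)$ (two-dimensional by Corollary \ref{cnonint}), passes to its normalization $\hat\mcs$, and notes that the projections $\nu_C,\nu_D:\mcs\to\cp^2$ have rank one with images in $c$, $d$. By the universal property of the maximal normalization (Corollary \ref{clift}), these lift to holomorphic maps $\hat\mcs\to\hc$, $\hat\mcs\to\hd$, producing a holomorphic ``inverse'' $\hat\mcs\to\ha\times\hb\times\hc\times\hd$ whose image lies in $U$. This gives $\Psi(U)=\mcs$ without any sequential compactness argument; properness and epimorphicity of $U\to\ha\times\hb$ then follow from properness of $\nu_{ab}:\mcp_{ab}\to\ha\times\hb$ (Remark \ref{rempro}) and Remmert.
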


\begin{proof} In the case, when both $a$, $b$ are algebraic, the curves $c$, $d$ are also algebraic (Proposition \ref{twoalg}), and the 
statements of the corollary follow immediately. Thus, without loss of generality we consider that some of the curves $a$, $b$ is not algebraic. 
It suffices to prove  the first statement of the corollary.  Then its second statement, which is equivalent to the 
properness and the epimorphicity of the analytic set projection $\nu_{ab}:\Psi(U)\to\ha\times\hb$, follows from the 
properness of the projection $\mcp_{ab}\to\ha\times\hb$ and 
Remmert's Proper Mapping Theorem \cite[p.34]{griff}. 
The image $\Psi(U)$ lies in $M_{ab}$, which follows from definition. Recall that $U_1\subset U$ denote the open and dense subset of 
non-degenerate orbits whose vertices are not marked points. 
Let $\mcs\subset\mcp_{ab}$ denote the minimal analytic subset containing $\Psi(U_1)$, which obviously contains $\Psi(U)$. 
Each its irreducible component is two-dimensional, as is $U_1$, by   Corollary  \ref{cnonint} 
and since $\Psi(U_1)$ is a union of integral surfaces of the distribution $\mcd_{ab}$, see  Proposition  \ref{birktang}.  
The projections  $\nu_C,\nu_D:\mcs\to\cp^2$ to the positions of 
the vertices $C$ and $D$ have rank one on an open dense subset, and  $\nu_C(\mcs)\subset c$, $\nu_D(\mcs)\subset d$: this holds
 on  $\Psi(U_1)$, and hence, on each irreducible component of the set $\mcs$.  
 Let $\hat\mcs$ denote the normalization of the analytic set $\mcs$, and $\pi_{\mcs}:\hat\mcs\to\mcs$ denote the natural projection. 
 The above projections lift to holomorphic mappings $\nu_{\hat g}:\hat\mcs\to\hat g$, $g=c,d$: 
 $\nu_{G}\circ\pi_{\mcs}=\pi_{g}\circ\nu_{\hat g}$ on $\hat\mcs$ (Corollary \ref{clift}). This yields an ``inverse'' mapping 
 $\Psi^{-1}=\nu_{ab}\times\nu_{\hat c}\times\nu_{\hat d}:\hat\mcs\to\ha\times\hb\times\hc\times\hd$. 
 Its image is contained in $U$, by its analyticity (Proposition \ref{comp-set}) and since  the image $U_1$ of the set $\Psi(U_1)$ (lifted to $\hat\mcs$) lies in $U$. 
 This together with the inclusion $\Psi(U)\subset\mcs$ implies that $\Psi(U)=\mcs$ and proves the corollary.
\end{proof}
%

\begin{corollary} \label{inters} Let $a$, $b$, $c$, $d$ be a 4-reflective planar analytic  billiard, and none of the mirrors $a$, $b$ be a line. 
Let  $a$ and $b$ intersect at a point $A$ 
represented by some non-marked points in  $\ha$ and $\hb$.  Then $a=c$ and $a\neq b$.
\end{corollary}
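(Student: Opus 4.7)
The plan is to exploit the properness of the projection $\nu_{ab}:U\to\ha\times\hb$ supplied by Corollary \ref{cepi}, combined with the spiral rigidity from Proposition \ref{intspir}. Let $A\in\ha$ and $A\in\hb$ (by abuse) denote the non-marked preimages of the intersection point, and put $F:=\nu_{ab}^{-1}(A,A)\subset U$. By properness, $F$ is a non-empty compact analytic subset; pure two-dimensionality of $U$ together with generic submersivity of $\nu_{ab}$ from Proposition \ref{comp-set} gives $\dim F\le 1$.

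The first step is to upgrade this bound to $\dim F=0$. Suppose $F$ had a one-dimensional component $F_0$; since $F_0$ lies in a single fiber of $\nu_{ab}$ yet is positive-dimensional in $\ha\times\hb\times\hc\times\hd$, at least one of the projections $\nu_C|_{F_0}:F_0\to\hc$ or $\nu_D|_{F_0}:F_0\to\hd$ is non-constant. Its image is then a compact positive-dimensional analytic subset of a connected Riemann surface, and therefore exhausts the entire surface. This forces the corresponding mirror $c$ or $d$ to be algebraic, contradicting the standing assumption of Section 3 that no mirror is algebraic. Hence $F$ is a finite set.

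The technical core is to reduce $F$ further to the single point $(A,A,A,A)$. Suppose some $(A,A,C,D)\in F$ has $C\neq A$ or $D\neq A$. After cyclically relabeling positions $(1,2,3,4)\mapsto(2,3,4,1)$, the coincident pair plays the role of ``$A=D$'' required by Proposition \ref{intspir}; the non-markedness of $A$ on both $a$ and $b$ supplies the required non-isotropicity, and any residual edge-tangency or cusp degeneracies in the configuration can be excluded using Corollary \ref{connect}. The proposition produces an irreducible germ $\Gamma\subset U$ of analytic curve through $(A,A,C,D)$, and its central identity ``$D_t\equiv A$'' reads, under the cyclic shift, as the statement that both originally coinciding vertices remain frozen at $A$ for every $t$. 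Thus $\Gamma\subset F$ is a one-parameter family, contradicting the finiteness of $F$. Therefore $F=\{(A,A,A,A)\}$.

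It remains to conclude $a=c$ and $a\neq b$. Finiteness of $F$ makes $\nu_{ab}$ a local biholomorphism on the relevant sheet of $U$ near the collapsed orbit, so in the lift $\Psi_{ab}(U)\subset M_{ab}$ the tangent lines $T_Ac$, $T_Ad$ vary analytically in a neighborhood of $(A,A)\in\ha\times\hb$. Reading off the limits of the four reflection laws at the coalesced vertices yields first $T_Ac=T_Aa$, and propagating the same argument to the 2- and higher-order jets of the mirrors at $A$ forces the germ $c_A$ to coincide with $a_A$; by Proposition \ref{ext} this gives $a=c$ as irreducible analytic curves. The inequality $a\neq b$ is then immediate: if $a=b$ held, combined with $a=c$ three consecutive mirrors would coincide as the same curve, which precludes proper quadrilateral orbits (a direct extension of Remark \ref{remlines}). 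The principal technical obstacle is the rigidity step in the third paragraph, where the argument of Proposition \ref{intspir} must be faithfully translated under the cyclic shift so as to guarantee $\Gamma\subset F$.
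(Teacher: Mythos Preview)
Your outline matches the paper's: reduce (via Theorem \ref{onealg}) to the case of all mirrors transcendental, show the fiber $F=\nu_{ab}^{-1}(A,A)$ is finite, then a single point, then extract $a=c$. The finiteness step is correct. But the last two steps and your $a\neq b$ argument contain genuine gaps.

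The principal gap is the deduction of $a=c$ from $F=\{(A,A,A,A)\}$. ``Reading off the limits of the four reflection laws'' at a fully collapsed orbit yields only that the composition of the four tangent-line symmetries at $A$ acts trivially on the limit edge directions---a single relation among $T_Aa,T_Ab,T_Ac,T_Ad$, not $T_Ac=T_Aa$; and ``propagating to higher jets'' is an assertion with no mechanism. The paper's argument is entirely different: since the fiber of $U\to\hc\times\hd$ over $(A,A)$ is likewise finite (same compactness argument), that projection is open near $AAAA$; one then lifts to $U$ the curve $\{(C',D'):\pi_c(C')\in T_{D'}d\}$, and along the lift the tangency at $D'$ forces, by Corollary \ref{connect}, either $a_{A'}=c_{C'}$ or a cusp of $b$ at $B'$; the latter is excluded because $A$ is non-marked on $b$, whence $a=c$. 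Two secondary problems: Remark \ref{remlines} concerns coinciding \emph{line} mirrors only, so your $a\neq b$ argument fails; the paper obtains $a\neq b$ \emph{first}, via Proposition \ref{neighbmir} applied to the one-parameter family $A'A'C'D'$ that epimorphicity produces over the diagonal of $\ha\times\hb$. And your cyclic-shift application of Proposition \ref{intspir} needs both $C\neq A$ and $D\neq A$ together with the non-tangency/non-isotropicity hypotheses at the degenerate orbit, none of which you verify; the paper bypasses this by using $a\neq b$ to approach $(A,A)$ along two sequences with distinct limit secant directions, giving an elementary incidence contradiction unless $\pi_c(C)=A$.
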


At the end of the section we prove Theorem \ref{onealg} and 
Corollary \ref{inters}. Both of them 
will be used further on in the proof of Theorem \ref{tallalg}. 


{\bf Plan of the proof of Lemma \ref{lnonint}.} Recall that $a$ and $b$ are not both lines. 
In the case, when they are both algebraic curves, there exist at most unique analytic curves $c$ and $d$ such that the billiard 
$a$, $b$, $c$, $d$ is 4-reflective, and if they exist, they are algebraic (Proposition \ref{twoalg} and Theorem \ref{an-class} in the algebraic case, 
see Remark \ref{remalg}). Thus, the only integral 
surfaces of the distribution $\mcd_{ab}$ are  given by the open set of its quadrilateral orbits, by Propositions \ref{birktang} and 
\ref{pnontriv}. Moreover, the latter orbit set is a Zariski open dense subset in a projective algebraic surface. 
This immediately implies the statement of Lemma \ref{lnonint}. 
Everywhere below we consider that some of the curves 
$a$ or $b$ is transcendental and prove the lemma  by contradiction. 
 Suppose the contrary to Lemma \ref{lnonint}: there exists a three- or four-dimensional irreducible analytic subset $M\subset\mcp_{ab}$ contained in 
$M_{ab}$ such that  $M\cap M^0_{ab}\neq\emptyset$ and 
the restriction $\mcd_M$ to $M^0=M\cap M_{ab}^0$ of the distribution $\mcd_{ab}$ is two-dimensional and integrable. (In the second case 
$M=M_{ab}$.) The complement 
\begin{equation} \Sigma^0=M\setminus M^0=M\setminus M^0_{ab}\subset M\label{sigmao}\end{equation}
is an analytic subset of positive codimension in $M$,   and $M^0$ is dense in $M$. 
Thus, every $x\in M^0$ is contained in an integral surface, and the latter is formed by quadrilateral orbits of a 4-reflective billiard 
$a$, $b$, $c(x)$, $d(x)$ (Propositions \ref{birktang} and \ref{pnontriv}). We show that there exists an $x\in M^0$ such that the corresponding mirrors $c(x)$, $d(x)$ are algebraic. This together with Proposition \ref{twoalg} implies that $a$ and $b$ are algebraic. 
The contradiction thus obtained will prove Lemma \ref{lnonint}. 

 For the proof of Lemma \ref{lnonint} we study the projections of the set $M$ to the positions 
of three vertices $(A,B,D)$: set
$$ \nu_{ab,D}:x\mapsto(A(x), B(x), D(x)); \ M_D=\nu_{ab,D}(M)\subset\ha\times\hb\times\cp^2.$$
Analogous projections and spaces are defined with $D$ replaced by $C$. 

  \begin{remark} \label{rproper} 
  The   images $M_C=\nu_{ab,C}(M)$, $M_D=\nu_{ab,D}(M)$ 
  are irreducible analytic subsets in $\ha\times\hb\times\cp^2$, by  
  Remark \ref{rempro}, Remmert's Proper Mapping Theorem and irreducibility of the variety $M$. 
\end{remark}

In Subsections 3.2 and 3.3 respectively we treat  the following cases:

- some of the projections $\nu_{ab,C}$, $\nu_{ab,D}$ is not bimeromorphic (see Footnote 3 in Subsection 2.6); 

- both latter projections are bimeromorphic.

\def\mcr{\mathcal R}

In what follows, we denote $\Sigma^1\subset M^0$ the subset of  points $x$ such that 

- either $x$ is a singular point of the variety $M$, 

- or it is a singular point of the distribution $\mcd_{M}$, 

- or the restriction to $\mcd_M(x)$ of the differential $d\nu_{ab}(x)$ has rank less than two, 

- or $x$ is a critical point of  the projection $\nu_{ab,D}$: a point where the rank of differential is not maximal, 

- or its image under the latter is a singularity of the image, 

- or the differential of the projection $\nu_D:M_{ab}^0\to\cp^2$: $y\mapsto D(y)$ vanishes on the distribution plane $\mcd_{M}(x)$,

- or one of the three latter statements holds with $D$ replaced by $C$. 

Let $\Sigma^0$ be the same, as in (\ref{sigmao}).  Set 
\begin{equation}\Sigma=\Sigma^0\cup\Sigma^1\subset M.\label{defsig}\end{equation}
This is an analytic subset in $\mcp_{ab}$ that has positive codimension in $M$. Its complement in $M$ 
is contained in $M^0$ and dense in $M$.  

\begin{remark} \label{lreg} For every point $x\in M\setminus\Sigma$ the corresponding germs $(g,G(x))$ of mirrors $g=a,b,c(x),d(x)$, $G=A,B,C,D$,  
are regular, and the points $G(x)$ are not isotropic tangency points. This  follows from the definition of the set 
$\Sigma^0\subset\Sigma$ (for the mirrors $a$ and $b$) and from the two last conditions in the definition of the set $\Sigma^1\subset\Sigma$ 
(for the mirrors $c(x)$ and $d(x)$). The projection $\nu_{ab}:S\to\ha\times\hb$ of each integral surface $S$ 
of the distribution $\mcd_M$ in $M\setminus\Sigma$ is 
a local diffeomorphism, by the Addendum to Proposition \ref{comp-set}. 
\end{remark}

\subsection{Case of a non-bimeromorphic projection}

Here we prove Lemma \ref{lnonint} in the case, when some of the projections $\nu_{ab,C}$, $\nu_{ab,D}$, say $\nu_{ab,D}$  is not bimeromorphic. 
Its proof is based on the following proposition.

\begin{proposition}  \label{bil4refl} Let $\Sigma$ be the same, as in (\ref{defsig}). For every two points $x,y\in M\setminus\Sigma$ 
projected to the same $(A,B)\in\ha\times\hb$ such that either 
$(C(x),L_C(x))\neq (C(y),L_C(y))$, or  $(D(x),L_D(x))\neq (D(y),L_D(y))$, the billiard $c(x)$, $d(x)$, $d(y)$, $c(y)$ is 4-reflective. 
\end{proposition}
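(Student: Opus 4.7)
The plan is to produce an open two-parameter family of quadrilateral orbits for the billiard $c(x), d(x), d(y), c(y)$ by splicing together two families of quadrilateral orbits, one for the 4-reflective billiard $a, b, c(x), d(x)$ and one for $a, b, c(y), d(y)$, that share their first two vertices $(A', B')$ near $(A, B)$.

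First I would use the integrability of $\mcd_M$ together with Proposition \ref{anint} to obtain two-dimensional integral surfaces $S_x, S_y \subset M \setminus \Sigma$ through $x$ and $y$ respectively. By Remark \ref{lreg}, $\nu_{ab}$ restricts to a local biholomorphism on each of $S_x, S_y$, so a small neighborhood $W$ of $(A, B)$ in $\ha \times \hb$ parametrizes both surfaces. By Propositions \ref{birktang} and \ref{pnontriv}, $S_x$ (resp.\ $S_y$) is the image under $\Psi_{ab}$ of an open set of quadrilateral orbits of a 4-reflective billiard $a, b, c(x), d(x)$ (resp.\ $a, b, c(y), d(y)$), where $c(x), d(x)$ (resp.\ $c(y), d(y)$) are the mirrors determined by the germ of $S_x$ (resp.\ $S_y$). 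Thus for each $(A', B') \in W$ I obtain quadrilateral orbits $A'B'C'_x D'_x$ and $A'B'C'_y D'_y$, depending holomorphically on $(A', B')$, with $C'_x \in c(x)$, $D'_x \in d(x)$, $C'_y \in c(y)$, $D'_y \in d(y)$.

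Next I would verify that the quadrilateral $C'_x D'_x D'_y C'_y$ satisfies the complex reflection law at each of its four vertices with respect to the mirrors $c(x), d(x), d(y), c(y)$. The crucial observation is that the line $B'C'_x$ equals the line $B'C'_y$, since both arise as the reflection of the line $A'B'$ across $T_{B'} b$; analogously $A'D'_x = A'D'_y$. Consequently line $C'_y C'_x$ coincides with line $B'C'_x$, and line $D'_x D'_y$ coincides with line $A'D'_x$. Combining these identifications with the reflection laws at $C'_x, D'_x$ in the $x$-billiard and at $C'_y, D'_y$ in the $y$-billiard gives the reflection law at all four vertices of $C'_x D'_x D'_y C'_y$ with respect to the appropriate mirrors.

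The hard part will be the openness and non-degeneracy. The assumption $(C(x), L_C(x)) \neq (C(y), L_C(y))$ or $(D(x), L_D(x)) \neq (D(y), L_D(y))$, combined with $\nu_{ab}(x) = \nu_{ab}(y) = (A,B)$, forces $x \neq y$ in $\mcp_{ab}$, so the two holomorphic families $(A', B') \mapsto (C'_x, D'_x, L_{C'_x}, L_{D'_x})$ and $(A', B') \mapsto (C'_y, D'_y, L_{C'_y}, L_{D'_y})$ define genuinely distinct germs. The open conditions for the new quadrilateral to be non-degenerate (distinct adjacent vertices, non-isotropic edges and tangent lines) hold at least on a dense open subset of $W$: away from $\Sigma$ the mirrors and their tangent lines are regular and non-isotropic, and a possible coincidence at the base orbit (e.g.\ $C(x) = C(y)$ with different tangents) is broken under generic perturbation of $(A', B')$. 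Finally, since $x \notin \Sigma$ means $C(x)$ and $D(x)$ are not cusps, the Addendum to Proposition \ref{comp-set} ensures that $(A', B') \mapsto (C'_x, D'_x)$ is a local biholomorphism at $(A, B)$, so the image of this map is an open subset of $c(x) \times d(x)$ whose points extend to 4-periodic orbits of the new billiard. This establishes 4-reflectivity of $c(x), d(x), d(y), c(y)$.
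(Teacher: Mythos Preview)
Your argument is correct and follows the same strategy as the paper: build the two families of orbits over a common $(A',B')$-parameter, splice them into the quadrilateral $C'_xD'_xD'_yC'_y$, and observe that the shared lines $B'C'_x=B'C'_y$ and $A'D'_x=A'D'_y$ yield the reflection law at all four vertices.

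The only place where you are a bit quick is the non-degeneracy verification in the last paragraph. The paper organizes this by an explicit case split: in Case~(i) one already has $C(x)\neq C(y)$ and $D(x)\neq D(y)$, so the base quadrilateral is non-degenerate and the construction applies directly; in Case~(ii), say $D(x)=D(y)=D_0$ but $L_D(x)\neq L_D(y)$, the mirrors $d(x),d(y)$ meet \emph{transversely} at $D_0$, so perturbing the line $l_a(A',B')$ separates the two intersection points $D'_x,D'_y$, and one then checks (using that the line $C'D'$ is determined by $(C',L_C)$) that $C'_x\neq C'_y$ as well, reducing to Case~(i). Your phrase ``broken under generic perturbation'' encodes exactly this transversality step, but it would be worth naming it, since that is the one place where the hypothesis $L_D(x)\neq L_D(y)$ (or $L_C(x)\neq L_C(y)$) is actually used. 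Apart from this, your proof and the paper's coincide.
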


\begin{proof} The proof of the proposition repeats the final argument from  \cite[proof of lemma 3.1]{alg}. 

Case (i):  $C(x)\neq C(y)$ and $D(x)\neq D(y)$. Each billiard $a$, $b$, $c(z)$, $d(z)$, $z=x,y$ has two-dimensional family of 
quadrilateral orbits $A'B'C_zD_z$ close to $ABC(z)D(z)$ where $C_z=C_z(A',B')$, $D_z=D_z(A',B')$ depend analytically 
on parameters $(A',B')\in\ha\times\hb$ (the Addendum to Proposition \ref{comp-set}). The 
corresponding quadrilaterals $C_xD_xD_yC_y$ are periodic orbits of the billiard 
$c(x)$, $d(x)$, $d(y)$, $c(y)$, by definition and reflection law, see Fig.\ref{fig-cddc}, and depend analytically on parameters $(A',B')$. 
Therefore,  the  billiard $c(x)$, $d(x)$, $d(y)$, $c(y)$ is 4-reflective. 

 \begin{figure}[ht]
  \begin{center}
   \epsfig{file=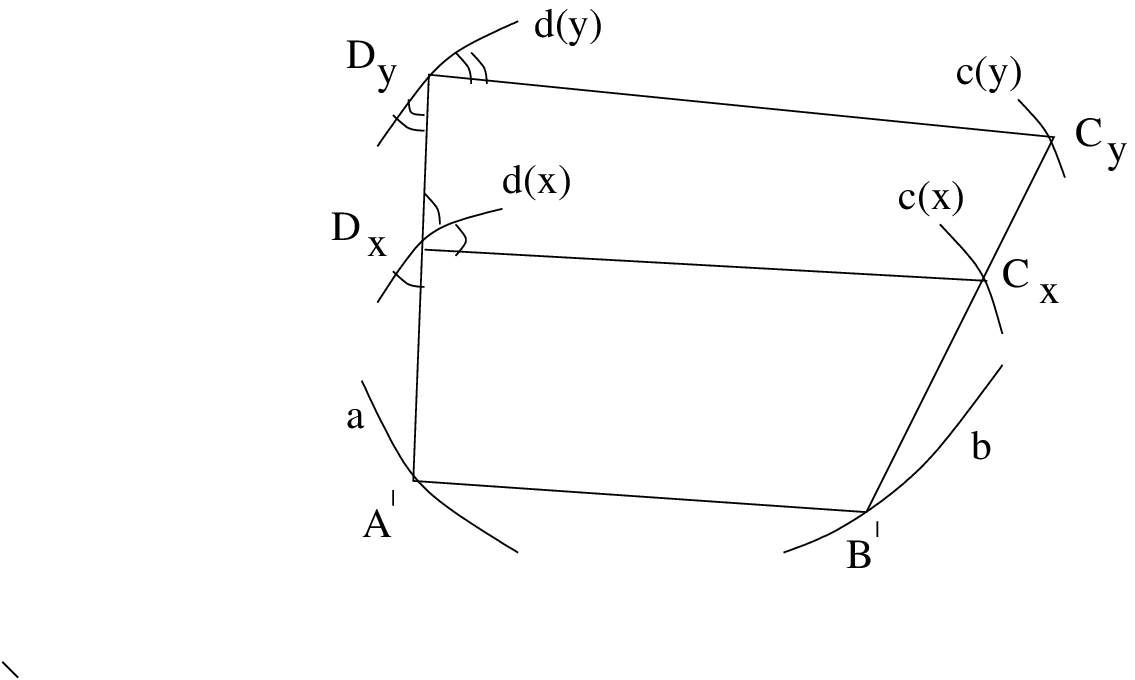}
    \caption{The 4-reflective billiard $c(x)$, $d(x)$, $d(y)$, $c(y)$: open set of quadrilateral orbits $C_xD_xD_yC_y$.}
  \label{fig-cddc}
  \end{center}
\end{figure}

Case (ii): $D(x)=D(y)=D_0$ but $L_D(x)=T_{D(x)}d(x)\neq L_D(y)=T_{D(y)}d(y)$ (the same case with $D$ replaced by $C$ is symmetric).
Let us show that one can achieve the inequalities of 
Case (i) by deforming $x$ and $y$ as orbits of fixed billiards. The mirrors $d(x)$ and $d(y)$ intersect transversely at $D_0$. 
Therefore, deforming $(A,B)$, one can achieve that the line $l_a=l_a(A,B)$ intersects 
$d(x)$ and $d(y)$ at two distinct points close to $D_0$. This lifts to deformation of $x$ and $y$ along their integral surfaces (the 
last statement of Remark \ref{lreg}), and we get new $x$, $y$ with $\nu_{ab}(x)=\nu_{ab}(y)$ and $D(x)\neq D(y)$. 
One has $(C(x),L_C(x))\neq(C(y),L_C(y))$: otherwise, one would have $l=C(x)D(x)=C(y)D(y)$, hence $D(x)=D(y)=l_a\cap l$, -- a contradiction. 
Hence, one can achieve that $C(x)\neq C(y)$ via small deformation, by the above argument. 
This reduces us to Case (i) and proves the proposition.  
\end{proof}

\begin{proposition} \label{dense-mult}  Let the projection $\pi_{ab,D}:M\to M_D$ be not bimeromorphic. Then up to interchanging $C$ and $D$, 
there exists an open and dense subset of points $z\in M_D$ for which there exist  
$$x,y\in M\setminus\Sigma \text{ with }  \nu_{ab,D}(x)=\nu_{ab,D}(y)=z, \ L_D(x)\neq L_D(y)$$
such that the projection $\nu_{ab,D}$ is a local submersion at both $x$ and $y$. 
\end{proposition}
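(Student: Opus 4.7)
The plan is to factor $\nu_{ab,D}$ through a finer projection that remembers the tangent line $L_D$, and then case-analyze which factor fails to be bimeromorphic. Define the enriched projection $\mu_D = (\nu_{ab,D}, L_D): M \to \widetilde{M}_D$, where $\widetilde{M}_D \subset \ha \times \hb \times \mcp$ is its image, and let $p_D: \widetilde{M}_D \to M_D$ be the restriction of the bundle projection $\ha \times \hb \times \mcp \to \ha \times \hb \times \cp^2$ forgetting $L_D$. Both $\mu_D$ and $p_D$ are proper and surjective onto their images, and $\nu_{ab,D} = p_D \circ \mu_D$. Since $\nu_{ab,D}$ is not bimeromorphic, at least one of $\mu_D$ or $p_D$ is not bimeromorphic. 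Define the analogues $\mu_C, p_C, \widetilde{M}_C$ obtained by interchanging $C$ and $D$.

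The main case is when $p_D$ is not bimeromorphic, in which case the conclusion holds directly for $D$. For $z$ in a dense open subset $V \subset M_D$ the fiber $p_D^{-1}(z) \cap \widetilde{M}_D$ contains two distinct points $(z, L_1)$ and $(z, L_2)$ with $L_1 \neq L_2$; if this fiber is positive-dimensional one simply picks two points at which the $L_D$-coordinate differs. Lifting each under the surjection $\mu_D$ produces $x \in \mu_D^{-1}((z, L_1))$ and $y \in \mu_D^{-1}((z, L_2))$ with $\nu_{ab,D}(x) = \nu_{ab,D}(y) = z$ and $L_D(x) = L_1 \neq L_2 = L_D(y)$. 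Shrinking $V$ to remove $\nu_{ab,D}(\Sigma)$ together with the set of critical values of $\nu_{ab,D}$ (both proper analytic subsets of $M_D$) delivers $x, y \in M \setminus \Sigma$ at which $\nu_{ab,D}$ is a local submersion.

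The alternative case is when $p_D$ is bimeromorphic and $\mu_D$ is not. Generic preimages of $\mu_D$ in $M$ then share $(A, B, D, L_D)$ and differ only in $(C, L_C)$, so in particular any two such preimages $x, y$ satisfy $(C(x), L_C(x)) \neq (C(y), L_C(y))$. Running the analogous factorization for $\nu_{ab,C} = p_C \circ \mu_C$: if $p_C$ is non-bimeromorphic, the main-case argument applied with $C$ in place of $D$ yields a pair $x, y$ over a dense open subset of $M_C$ with $L_C(x) \neq L_C(y)$, establishing the conclusion under the ``up to interchanging $C$ and $D$'' clause.

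The main obstacle is the residual sub-case in which both $p_D$ and $p_C$ are bimeromorphic, for then neither factorization directly supplies the required pair. To exclude it, I would apply Proposition \ref{bil4refl} to the preimages $x, y$ above (which have distinct $(C, L_C)$ but identical $(D, L_D)$) to deduce that the billiard $c(x), d(x), d(y), c(y)$ is 4-reflective. Its degenerate orbit $C(x) D(x) D(y) C(y)$ has two coinciding middle vertices $D(x) = D(y) = D_0$, so after cyclic relabeling Proposition \ref{intspir} applies (with $D_0$ playing the role of the coincident $A = D$ there), forcing $c(x)$ and $c(y)$ to be triangular spirals centered at $D_0$ (or complex circles). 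Varying the pair $(x, y)$ along their respective integral surfaces to produce a different shared $D$-vertex $D_1 \neq D_0$ would equip $c(x)$ with two distinct centers, whence Proposition \ref{pnonalg} makes it algebraic; Proposition \ref{twoalg} then forces algebraicity of the neighboring mirrors in the new 4-reflective billiard, and iterating this reasoning yields algebraicity of $a$ and $b$, contradicting the standing assumption (made at the start of the proof of Lemma \ref{lnonint}) that one of $a, b$ is transcendental, thereby ruling out the sub-case.
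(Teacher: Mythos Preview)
Your factorization $\nu_{ab,D}=p_D\circ\mu_D$ is a reasonable setup, and your main case ($p_D$ not bimeromorphic) matches the paper. But the residual sub-case is handled in the paper by a one-line geometric observation that you miss entirely: if $x\neq y$ share $(A,B,D,L_D)$, then they automatically share $C$ as well, because $C$ is the intersection of two lines determined by that data alone --- the line $l_b(A,B)$ (symmetric to $AB$ with respect to $T_Bb$) and the line symmetric to $AD$ with respect to $L_D$. Hence $x$ and $y$ differ only in $L_C$, and the conclusion holds for $C$ immediately. No further case analysis, no triangular spirals.

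Your attempt to exclude the residual sub-case via Propositions~\ref{bil4refl}, \ref{intspir}, \ref{pnonalg} has a genuine gap. In that sub-case you have $L_D(x)=L_D(y)$, so the curves $d(x)$ and $d(y)$ are \emph{tangent} at $D_0$, not transverse. Your step ``varying the pair $(x,y)$ along their integral surfaces to produce a different shared $D$-vertex $D_1\neq D_0$'' is borrowed from the paper's Corollary~\ref{coralg}, but that deformation works precisely because there $L_D(x)\neq L_D(y)$ gives a transverse intersection of $d(x)$ and $d(y)$ at $D_0$, so a small perturbation of the line $l_a$ separates the two $D$-vertices. With tangency you have no such second nearby intersection, and the second-center argument collapses. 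Structurally, you are also importing the machinery the paper deploys \emph{after} Proposition~\ref{dense-mult} (Claim~1 and Corollary~\ref{coralg}) to prove the proposition itself, which is why the mismatch in hypotheses arises.
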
 

\begin{proof} The non-bimeromorphicity implies that there exists an open and dense subset of points $z=(A,B,D)\in M_D$ for which there exist 
$x,y\in \pi_{ab,D}^{-1}(z)\setminus\Sigma$, $x\neq y$ such that the projection $\pi_{ab,D}$ is a local submersion at both $x$ and $y$. 
In the case, when $L_D(x)\neq L_D(y)$ for at least some $x$, $y$, $z$ as above, the statement of the proposition follows immediately. 
Now suppose the contrary: there exists an open and dense subset of points $z\in M_D$ such that for every 
$x,y\in \pi_{ab,D}^{-1}(z)\setminus\Sigma$, $x\neq y$ one has $L_D(x)=L_D(y)=L_D$. Then $C(x)=C(y)$, since the point $C(x)=C(y)$ is found 
as the intersection point of two lines depending only on $z$ and not on $x$ or $ y$: the line $BC$ symmetric to $AB$ with respect to the line 
$T_Bb$; the line $DC$ symmetric to $AD$ with respect to the line $L_D$. Therefore, $\pi_{ab,C}(x)=\pi_{ab,C}(y)$, but 
$L_C(x)\neq L_C(y)$, since $x\neq y$. Hence, the statement of the proposition holds with $D$ replaced by $C$. The proposition is proved.
\end{proof} 

Let us fix arbitrary $x$, $y$ as in Proposition \ref{dense-mult}. Set  

$$D_0=D(x)=D(y), \ (A,B)=\nu_{ab}(x)=\nu_{ab}(y).$$

{\bf Claim 1.} {\it   The curves $c(x)$, $c(y)$ are either both triangular spirals centered at $D_0$, or both conics: complex circles 
centered at $D_0$.} 

\begin{proof} The germ 
at $C(x)D_0D_0C(y)$ of  billiard $c(x)$, $d(x)$, $d(y)$, $c(y)$ is 4-reflective (Proposition \ref{bil4refl}). 
It satisfies the non-isotropicity and line non-coincidence 
conditions of Proposition \ref{intspir}, since $x$, $y$ correspond to non-degenerate quadrilaterals. For example, 
the tangent line to a mirror through each  vertex is non-isotropic and distinct from the adjacent edges, by non-degeneracy. 
This together with Proposition \ref{intspir}  implies the claim.
\end{proof}

\begin{corollary} \label{coralg} The mirrors $c(z)$ and $d(z)$ are algebraic for $z=x,y$.
 \end{corollary}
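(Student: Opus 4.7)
The plan is to combine Claim 1 with Proposition \ref{pnonalg} (algebraicity of triangular spirals having two distinct centers) and Proposition \ref{twoalg} (algebraicity propagation across two neighbor mirrors). In the cyclic 4-reflective billiard $c(x), d(x), d(y), c(y)$ supplied by Proposition \ref{bil4refl}, the mirrors $c(y)$ and $c(x)$ are neighbors. Hence once algebraicity of both $c(x)$ and $c(y)$ is established, Proposition \ref{twoalg} will automatically yield algebraicity of $d(x)$ and $d(y)$ as well. It thus suffices to prove that $c(x)$ and $c(y)$ are algebraic.

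Claim 1 presents two cases. In the circle case both $c(x), c(y)$ are conics and hence immediately algebraic. The nontrivial case is the triangular-spiral case centered at $D_0$; there I shall invoke Proposition \ref{pnonalg} by producing a second distinct center $D_1$ for each of $c(x)$ and $c(y)$, the arguments for the two being symmetric, so I focus on $c(x)$.

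To produce the second center for $c(x)$, I will deform $y$ within the fiber $\nu_{ab}^{-1}(A,B) \cap M$, which is at least one-dimensional since $\dim M = 3$ and $\dim(\ha\times\hb)=2$, obtaining $y' \in M\setminus\Sigma$ with the same $(A,B)$ but on a different integral surface. This new integral surface corresponds to a 4-reflective billiard $a, b, c(y'), d(y')$ with $d(y')$ a small perturbation of $d(y)$. Since $L_D(x) \neq L_D(y)$ and $x,y \notin \Sigma$ (so the germs $d(x),d(y)$ are regular at $D_0$, by Remark \ref{lreg}), the curves $d(x)$ and $d(y)$ meet transversely at $D_0$; for generic $y'$ the perturbed mirror $d(y')$ still meets $d(x)$ transversely at a unique nearby point $D_1$, and $D_1 \neq D_0$. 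Next I will select parameters $(A', B')$ close to $(A,B)$ so that the line $l_a(A', B')$ passes through $D_1$ -- a single codimension-one condition on a two-parameter family, hence solvable along a curve. Lifting $(A', B')$ along the integral surfaces through $x$ and through $y'$, I obtain points $x', y'' \in M\setminus\Sigma$ with $c(x') = c(x)$, $c(y'') = c(y')$, $\nu_{ab,D}(x') = \nu_{ab,D}(y'') = (A', B', D_1)$, and $L_D(x') \neq L_D(y'')$ (the latter by transversality of $d(x), d(y')$ at $D_1$). Applying Proposition \ref{bil4refl} and then Proposition \ref{intspir} to the pair $(x', y'')$ in the 4-reflective billiard $c(x), d(x), d(y'), c(y')$ yields that $c(x)$ is a triangular spiral centered at $D_1$ (or a complex circle centered at $D_1$, in which case $c(x)$ is already a conic and hence algebraic). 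Thus $c(x)$ is a triangular spiral with two distinct centers $D_0, D_1$, and Proposition \ref{pnonalg} gives its algebraicity; the symmetric deformation (of $x$ rather than $y$) yields algebraicity of $c(y)$, completing the argument.

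The main technical obstacle I anticipate is verifying that the deformations $y'$ and $(A', B')$ can be chosen while preserving all the non-degeneracy, regularity and non-isotropicity conditions that guarantee $x', y'' \in M\setminus\Sigma$ and that Proposition \ref{intspir} continues to apply---these are open conditions, but I want to ensure the codimension-one locus $\{(A', B') : D_1 \in l_a(A', B')\}$ actually meets that open set, so that my produced $x', y''$ are genuinely ``good'' points with distinct $L_D$-tangents at $D_1$.
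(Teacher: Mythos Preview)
Your proposal is correct and follows essentially the same strategy as the paper: reduce to algebraicity of $c(x)$ and $c(y)$ via Proposition~\ref{twoalg}, dispose of the conic case immediately, and in the spiral case manufacture a second center $D_1\neq D_0$ for $c(x)$ so that Proposition~\ref{pnonalg} applies (and symmetrically for $c(y)$).

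The only difference is in the bookkeeping of the deformation. You first perturb $y$ inside the fiber $\nu_{ab}^{-1}(A,B)$ to get a new mirror $d(y')$, take $D_1=d(y')\cap d(x)$, and then search for $(A',B')$ with $D_1\in l_a(A',B')$ before sliding both $x$ and $y'$ along their integral surfaces. The paper collapses these steps: it uses directly that $\nu_{ab,D}$ is a local submersion at $y$ (this is part of the data supplied by Proposition~\ref{dense-mult}) to pick $y'\in M\setminus\Sigma$ with $D(y')=D_1\in d(x)\setminus\{D_0\}$ and $L_D(y')\neq T_{D_1}d(x)$ in one move, and then slides only $x$ along its own integral surface to match $\nu_{ab}(x)=\nu_{ab}(y')$, which automatically forces $D(x)=D_1$. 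This bypasses the genericity question you flag at the end (namely, why $D_1\neq D_0$ after your fiber perturbation, and why the codimension-one locus $\{D_1\in l_a(A',B')\}$ meets the good open set): the submersion hypothesis already guarantees you can hit any prescribed nearby $D_1\in d(x)$. Your route works, but the paper's ordering of the deformation is shorter and makes the existence of $y'$ with the required properties immediate from the hypotheses already in hand.
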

 
 \begin{proof} It suffices to prove that 
 the mirrors $c(z)$, $z=x,y$, are both algebraic: then so are $d(z)$, by 4-reflectivity of the billiard 
 $c(x)$, $d(x)$, $d(y)$, $c(y)$ and Proposition  \ref{twoalg}.
 Suppose the contrary: say $c(x)$ is not algebraic. Take an arbitrary $y'\in M\setminus\Sigma$ close to $y$ with 
$D_1=D(y')\in d(x)$, $D_1\neq D_0$, $L_D(y')=T_{D_1}d(y')\neq T_{D_1}d(x)$. 
 It exists, since the projection $\nu_{ab,D}$ is a local submersion at $y$ and $L_D(y)\neq L_D(x)$. Deforming $x$ along its integral 
 surface of the distribution $\mcd_M$ one can achieve that $(A,B)=\nu_{ab}(x)=\nu_{ab}(y')$ (Remark \ref{lreg}), and 
 then $D(x)=D_1$.
 The billiard $c(x)$, $d(x)$, $d(y')$, $c(y')$ is 
 4-reflective, by Proposition \ref{bil4refl}, and its 4-reflective set contains a degenerate quadrilateral  
 $C_1D_1D_1C_2$ close to $C(x)D_0D_0C(y)$. This together with 
 Proposition \ref{intspir} implies that $c(x)$ is a triangular spiral centered at $D_1$, as in Claim 1. Thus, $c(x)$ is a triangular 
 spiral with two distinct centers $D_0$ and $D_1$. Hence, it is algebraic, by  
 Proposition \ref{pnonalg}. The contradiction thus obtained proves the corollary.
 \end{proof}

 Thus, the 4-reflective billiard $a$, $b$, $c(x)$, $d(x)$ has two neighbor algebraic mirrors  $c(x)$ and $d(x)$. Hence, 
 $a$ and $b$ are also algebraic, by Proposition \ref{twoalg}.  This contradicts the non-algebraicity assumption and 
 proves Lemma \ref{lnonint}.

 \subsection{Case of bimeromorphic projections}
 
 Here we prove Lemma \ref{lnonint} in the case, when both projections $\pi_{ab,C}$, $\pi_{ab,D}$ are bimeromorphic. 
 To do this, we show that for every $x\in M\setminus\Sigma$ both mirrors $d(x)$ and $c(x)$ are lines. We then get a contradiction as above. 

 It suffices to prove the above statement for the mirrors $d(x)$ only.   
 We first show (the next proposition) that the lines $L_D(x)\subset T_{D(x)}\cp^2$ locally depend 
 only on $D(x)$ and form a holomorphic line field. Afterwards we show (Proposition \ref{ppencil}) that the restriction of the latter line 
 field to each projective line is tangent to a pencil of lines through the same point.
  This easily implies that its integral curves are lines (Proposition \ref{proplines}). 
 

There exists an open and dense subset of  points $x\in M\setminus\Sigma$ where the germ of projection $\nu_{D}:y\mapsto D(y)$ 
is a submersion. Indeed, the contrary would imply that $d=\nu_D(M\setminus\Sigma)$ is a curve. Therefore, $D(y)$ is locally  
determined by $(A,B)=(A(y),B(y))$ as a point of intersection $l_a(A,B)\cap d$, hence $dim M_D=2$. But 
$dim M_D=dim M\geq3$, by assumption and bimeromorphicity, -- a contradiction. Note that $dim M_D=3=dim M$, 
by the latter inequality and since $D\in l_a(A,B)$ for every $(A,B,D)\in M_D$. 
Increasing the "exceptional set" $\Sigma$, we will assume that $\nu_{D}$ is a submersion on 
all of $M\setminus\Sigma$. Its level sets in $M\setminus\Sigma$ are thus holomorphic curves.  
 \begin{proposition} \label{propla} Every $x\in M\setminus\Sigma$ has  a neighborhood $V=V(x)\subset M\setminus\Sigma$ such that the lines 
 $L_D(y)$, $y\in V$ depend only on $D(y)$ and thus, form a holomorphic line field $\Lambda_D$ on a neighborhood $W\subset\cp^2$ 
 of the point $D(x)$. 
 \end{proposition}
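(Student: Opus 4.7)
The plan is to show that the line $L_D(y)$ is locally constant on the fibers of the submersion $\nu_D:M\setminus\Sigma\to\cp^2$; once this is established, the submersion theorem immediately yields the desired holomorphic line field $\Lambda_D$ on $W=\nu_D(V)$.

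First I would analyze the rank of $\nu_D$ restricted to the leaves of $\mcd_M$. Through $x\in M\setminus\Sigma$ there passes a unique two-dimensional integral surface $S_x$ of $\mcd_M$ (the distribution is integrable by assumption and $x$ lies outside its singular set by the definition of $\Sigma^1$). By Proposition \ref{birktang}, $S_x$ corresponds to an open set of quadrilateral orbits of some 4-reflective billiard $a$, $b$, $c(x)$, $d(x)$, and $L_D(y)=T_{D(y)}d(x)$ for every $y\in S_x$. In particular $\nu_D(S_x)\subset d(x)$ is at most one-dimensional, so $\nu_D|_{S_x}$ has rank at most one; and the condition in the definition of $\Sigma^1$ ruling out the vanishing of $d\nu_D$ on $\mcd_M(x)$ forces the rank to be exactly one.

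The crucial step is to show that the one-dimensional fiber germ $F_x:=\nu_D^{-1}(D(x))$ at $x$ is entirely contained in the leaf $S_x$. Since $\nu_D|_{S_x}$ has rank one and $S_x$ is two-dimensional, the fiber of $\nu_D|_{S_x}$ at $x$, namely $(S_x\cap F_x,x)$, is one-dimensional. On the other hand, $\nu_D$ being a submersion on $M\setminus\Sigma$, $F_x$ itself is a smooth, hence irreducible, one-dimensional germ at $x$. A one-dimensional analytic subgerm of an irreducible smooth one-dimensional germ coincides with it, so $(F_x,x)\subset S_x$. Consequently, for any $y\in F_x$ close to $x$ one has $y\in S_x$, and therefore $L_D(y)=T_{D(y)}d(x)=T_{D(x)}d(x)=L_D(x)$.

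Applying this argument at every point of a small neighborhood $V\subset M\setminus\Sigma$ of $x$ shows that $L_D$ is constant on each local fiber of $\nu_D|_V$. Since $\nu_D|_V:V\to W:=\nu_D(V)$ is a surjective submersion, this constancy implies that $L_D$ descends to a holomorphic map $\Lambda_D:W\to\mcp$ with $\Pi\circ\Lambda_D=\mathrm{id}_W$, i.e., a holomorphic line field on $W$ satisfying $L_D(y)=\Lambda_D(D(y))$ for all $y\in V$. The main obstacle is the dimension comparison underlying the crucial step: it is essential that both ranks (of $\nu_D$ and of $d\nu_D|_{\mcd_M}$) take the values $2$ and $1$ respectively, which is precisely what the definition of $\Sigma$ guarantees; the remaining steps are routine applications of the submersion theorem and Proposition \ref{birktang}.
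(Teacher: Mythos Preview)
Your proof is correct and follows essentially the same approach as the paper. Both arguments identify the fiber $\nu_D^{-1}(D(y))$ with the curve in the integral surface $S_y$ consisting of orbits with fixed $D$-vertex, and use that $L_D=T_{D}d(y)$ is constant along the latter; the paper states this coincidence more tersely (``the curve $\gamma(y)$ should obviously coincide with the level curve''), while you spell out the underlying dimension comparison explicitly.
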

 
 \begin{proof} Let $V$ be a neighborhood of the point $x$ regularly fibered by local level curves of the submersion $\nu_D$. 
 For every $y\in V$ there exists a holomorphic curve  $\gamma(y)\subset V$ that corresponds exactly to the family of 
 quadrilateral orbits of the billiard 
 $a$, $b$, $c(y)$, $d(y)$ with fixed vertex $D=D(y)$. The line $L_D=L_D(y)= T_{D(y)}d(y)$ obviously remains constant along the curve $\gamma(y)$. 
 On the other hand, the curve $\gamma(y)$ should obviously coincide with the level curve $\{\nu_D=D(y)\}$. 
 This proves the proposition.
 \end{proof}
  
  \begin{proposition} \label{ppencil}  The restriction to each projective line of the field $\Lambda_D$ from Proposition \ref{propla} 
  is tangent to a pencil of lines through the same point.
  \end{proposition}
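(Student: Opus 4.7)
The plan is to exploit the bimeromorphicity of $\pi_{ab,D}$ and $\pi_{ab,C}$ together with the reflection law at $D$ and $C$ in order to express $\Lambda_D$ along specific projective lines as a solution of a rigid projective functional equation; the solutions of this equation will have to be pencils.

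First, I will reduce the statement to projective lines of the form $\ell=l_a(A,B)$ with $(A,B)$ running over an open subset of $\ha\times\hb$. Just before Proposition \ref{propla} it was shown that (after enlarging $\Sigma$) $\nu_D$ is a submersion on $M\setminus\Sigma$; hence the map $(A,B,D)\mapsto l_a(A,B)$, or rather the assignment $x\mapsto l_a(A(x),B(x))$, yields a two-parameter family of lines that fills out an open subset of the Grassmannian of projective lines in $\cp^2$. The condition ``$\Lambda_D|_\ell$ is a pencil through a common point of $\cp^2$'' is a closed algebraic condition on $\ell$ in the Grassmannian for a fixed holomorphic line field $\Lambda_D$ (it is a finite system of vanishing cross-ratio relations among values of $\Lambda_D$ at points of $\ell$). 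So once it holds on an open family of lines, it holds on every projective line meeting $W$.

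Second, I will fix $(A,B)$ and analyze $\ell=l_a(A,B)$, $\ell'=l_b(A,B)$. Bimeromorphicity of $\pi_{ab,D}$ means that for $D$ in an open subset of $\ell$ there is a unique $x=x(D)\in M\setminus\Sigma$ with $A(x)=A$, $B(x)=B$, $D(x)=D$; by Proposition \ref{propla}, $L_D(x)=\Lambda_D(D)$. The reflection law at $D$ then forces $DC(x)$ to be the image of $\ell$ under symmetry in $\Lambda_D(D)$, and so $C(x)=DC(x)\cap\ell'$. Bimeromorphicity of $\pi_{ab,C}$ makes the resulting map $\varphi\colon\ell\dashrightarrow\ell'$, $D\mapsto C(x(D))$, a bimeromorphic (hence, after extension, birational projective) map. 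The reflection law at $C(x)$ pins down $L_C(C(x))=\Lambda_C(C(x))$ as the symmetry line of $\{\ell',DC(x)\}$, so by the symmetric analogue of Proposition \ref{propla} the field $\Lambda_C$ on $\ell'$ is completely determined by $\Lambda_D|_\ell$ and the projective map $\varphi$.

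Third, I will translate this into a projective constraint on $\Lambda_D|_\ell$ alone. Identifying the pencil of lines through any point of $\cp^2$ with $\cp^1$ via the two cyclic directions, the complex reflection law becomes an involution of $\cp^1$ with fixed points at the isotropic directions. The composition ``reflect in $T_A a$, reflect in $L_D$, reflect in $L_C$, reflect in $T_B b$'' must carry the pencil of lines through $A$ bijectively back to itself (closure of the quadrilateral). Since two of these reflections are fixed and the other two are controlled by $\Lambda_D$ and by $\Lambda_C=\Lambda_C\circ\varphi$, this closure identity becomes a functional equation relating the Möbius transformation $D\mapsto\Lambda_D(D)$ along $\ell$ to the Möbius transformation $\varphi$. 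Analyzing the degrees and fixed points of this equation (using that $\varphi$ is Möbius and that $\Lambda_D$ is holomorphic), one concludes that $D\mapsto\Lambda_D(D)$, viewed as a map from $\ell$ to the dual $\cp^1$ of lines through a fixed auxiliary point, must be a Möbius map of lowest possible degree — equivalently, all lines $\Lambda_D(D)$, $D\in\ell$, pass through a common point $O(\ell)$.

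The step I expect to be the main obstacle is the third one: isolating, from the four-fold closure identity, exactly the combinatorial/projective input that rules out higher-degree dependence of $\Lambda_D$ on $D\in\ell$ and forces the pencil structure, rather than e.g.\ a conic or higher-degree envelope. This is where the specifics of the complexified Euclidean reflection law (its behavior at the cyclic points and the involutive nature of each individual reflection) will have to be used in an essential way.
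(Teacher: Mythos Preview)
Your reduction in Step 1 and the setup in Step 2 are sound and in fact parallel the paper's own argument: the paper also restricts to lines $\ell=l_a(A,B)$ for generic $(A,B)$, and uses bimeromorphicity to produce the M\"obius map $T_{A,B}\colon l_a\to l_b$ (your $\varphi$).

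Step 3, however, has a genuine gap. For fixed $(A,B)$, the ``closure identity'' you invoke does not constrain $\Lambda_D|_\ell$ at all: given \emph{any} holomorphic line field along $\ell=l_a$, the edge $DC$ is the reflection of $l_a$ in $\Lambda_D(D)$, the point $C=DC\cap l_b$ is then determined, and $\Lambda_C(C)$ is simply declared to be the symmetry line of $\{l_b,DC\}$. Every quadrilateral $ABCD$ built this way \emph{automatically} closes up, whatever $\Lambda_D$ is; closure gives you $\Lambda_C$ as output, not a restriction on $\Lambda_D$ as input. The genuine constraint is that $\varphi$ be M\"obius (this is what bimeromorphicity of $\pi_{ab,C}$ buys), but your Step 3 never extracts what that forces on $\Lambda_D$. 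The composition of four reflections you write down does not act on a single $\cp^1$ (the reflections live in pencils at four different base points), so there is no ``functional equation'' in the form you describe, and ``analyzing the degrees and fixed points'' is a placeholder, not an argument.

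The paper's argument is concrete and quite different from what you sketch. It studies the rational curve $\lambda\subset\cp^{2*}$ parametrized by $D\mapsto\lambda(D):=D\,\varphi(D)$ (the edge $CD$ as a point of the dual plane). On one hand $\deg\lambda\le 2$: for generic $C\in l_b$ the dual line $C^*$ meets $\lambda$ only at $\lambda(\varphi^{-1}(C))$ and possibly at $\lambda(l_a\cap l_b)$ if the latter equals $l_b$. On the other hand, if the curve $\Gamma\subset\cp^{2*}$ traced by $D\mapsto\Lambda_D(D)$ had degree $\ge 2$, then $\deg\lambda\ge 3$: for a generic $P\in\overline\cc_\infty$, the two symmetry directions $E_1,E_2$ between $P$ and the direction of $l_a$ are each attained by $\Lambda_D$ at $\ge 2$ values of $D$ (since $\deg\Gamma\ge 2$), yielding at least three distinct $D$ with $P\in\lambda(D)$. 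The contradiction forces $\deg\Gamma=1$, i.e.\ $\Lambda_D|_\ell$ is a pencil. So the missing idea is precisely this degree comparison on the dual curve $\lambda$; the reflection law enters only through the elementary observation that the direction of $CD$ is governed by the direction of $\Lambda_D(D)$ via a $2$-to-$1$ map on $\overline\cc_\infty$.
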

  
  \begin{proof} For every $(A,B)\in\ha\times\hb$ denote by $F^D_{A,B}=A\times B\times l_a(A,B)\simeq\oc$ the fiber over $(A,B)$ of the set $M_D$. 
  The fiber $F^C_{A,B}$ of the set $M_C$ is defined analogously. The projections $\nu_{ab,G}$, $G=C,D$ and 
  the mappings $\nu_{ab,D}^{-1}:M_D\to M$, $R_{DC}=\nu_{ab,C}\circ\nu_{ab,D}^{-1}:M_D\to M_C$ 
  are bimeromorphic. Hence, the  indeterminacies of the mapping $\nu_{ab,D}^{-1}$ (which include those of $R_{DC}$) 
  form an analytic subset of codimension at least two,  
  i.e., dimension at most one (Footnote 3 in Subsection 2.6). The image of its projection to $\ha\times\hb$ is an analytic subset $Ind$ 
  of dimension at most one (Proper Mapping Theorem). Let $Reg\subset\ha\times\hb\setminus Ind$
  denote the open and dense subset of those $(A,B)$ for which 
  $A\neq B$ and the lines $l_a(A,B)$, $l_b(A,B)$, $T_Aa$, $T_Bb$ are not isotropic and distinct. Then for every 
  $(A,B)\in Reg$ the following statements hold: 
  
  - there exists an invertible holomorphic (hence M\"obius) mapping $T_{A,B}:l_a(A,B)\to l_b(A,B)$ for which 
  \begin{equation} R_{DC}(A,B,D)\equiv(A,B,T_{A,B}(D));\label{rcd}\end{equation}

- the restriction to $l_a(A,B)$ of the line field $\Lambda_D$ is holomorphic   and hence, the projective lines 
$\Lambda_D(y)$, $y\in l_a(A,B)$, form a rational curve $\Gamma_{A,B}\subset\cp^{2*}$. 
  
  The former statement follows from bimeromorphicity by definition. The global holomorphic family of lines $\Lambda_D(y)$ 
  depending on $y\in l_a(A,B)\simeq F^D_{A,B}\subset M_D$ from the 
  latter statement is given by the holomorphic inverse $\nu_{ab,D}^{-1}:F^D_{A,B}\to M$: 
  $y\mapsto(A,B,(C(y),\Lambda_C(y)), (D(y),\Lambda_D(y)))$.
  
%
  
       It suffices to prove the statement of Proposition \ref{ppencil} 
       for an open set of projective lines, e.g., for all the lines $l_a(A,B)$, $(A,B)\in Reg$. 
  Suppose the contrary: for some $(A,B)\in Reg$ the restriction to $l_a=l_a(A,B)$ of the line field $\Lambda_D$ 
  is not tangent to a pencil of lines, i.e., the curve $\Gamma=\Gamma_{A,B}$ has degree at least two. Note that  the family of lines 
  $$\lambda(D)=DT_{A,B}(D), \ D\in l_a=l_a(A,B)$$
is a rational curve  $\lambda\subset\cp^{2*}$ parametrized by $D\in l_a$. 
  
  {\bf Claim 1.} {\it The curve $\lambda$ has degree at least three.}
  
  \begin{proof} Let $Q$ denote the intersection point of the infinity line $\oc_{\infty}\subset\cp^2$ with non-isotropic line $l_a$; thus, 
   $Q\neq I_{1,2}$.  Let us take  a  point 
  $P\in\oc_{\infty}\setminus\{ Q, I_1,I_2\}$ and a finite line $l$ through $P$.  There are two symmetry lines of the pair of lines 
  $l_a$, $l$. They intersect the infinity line at distinct points  $E_1, E_2\in\oc_{\infty}$ depending only on $P$: $E_j=E_j(P)$. For 
  a  generic $P\in\oc_{\infty}$ for every $j=1,2$ there are at least two distinct points $D_j,D_j'\in l_a$ such that the projective lines  
  $\Lambda_D(D_j)$, $\Lambda_D(D_j')$ pass through $E_j=E_j(P)$, since $deg \Gamma\geq2$. Either the four points $D_1$, $D_1'$, 
  $D_2$, $D_2'$ are distinct, or at most two of them coincide: the latter happens exactly in the case, when,
  $\Lambda_D(Q)=\oc_{\infty}\supset\{ E_1,E_2\}$; then $D_1=D_2=Q$. 
  This implies that there are always at least three distinct points $D^1,D^2,D^3\in l_a$ 
  such that each line $\Lambda_D(D^i)$, $i=1,2,3$, passes through some of the points $E_j$; thus, $P\in\lambda(D^i)$. 
  The corresponding points 
  $\lambda(D^i)\in\cp^{2*}$ of the curve $\lambda$  lie in the projective line $P^*$, by construction. This proves the claim. 
  \end{proof}
  
  {\bf Claim 2.} {\it The curve $\lambda$ has degree at most two.}
  
  \begin{proof} Let $X$ denote the point of intersection of the lines $l_a$ and $l_b$, $Y=T_{A,B}(X)\in l_b$. For every point $C\in l_b\setminus \{ Y\}$ 
  the curve $\lambda$ intersects the dual line $C^*$ in at most two points with multiplicity 1: at the points $\lambda(T_{A,B}^{-1}(C))$ and 
  may be  $\lambda(X)$ (if $\lambda(X)=l_b$). The fact that in the latter case the multiplicity of the intersection point $\lambda(X)$ 
  equals one follows from the assumption that $C\neq Y$: the lines $\lambda(D)$ with $D$ close to $X$ asymptotically focus at 
  $Y\neq C$ and are transverse to $l_a$.   This proves the claim.
  \end{proof}
  
  Claims 1 and 2 contradict each other. This proves Proposition \ref{ppencil}.
  \end{proof}
  
  \begin{proposition} \label{proplines} The phase curves of the line field $\Lambda_D$ form a pencil of projective lines through the same point.
  \end{proposition}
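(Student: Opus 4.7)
The plan is to interpret Proposition~\ref{ppencil} concretely: for each projective line $\ell$ meeting $W$ it supplies a point $p_\ell\in\cp^2$ (the center of the pencil) such that $\Lambda_D(y)$ passes through $p_\ell$ for every $y\in\ell\cap W$. Equivalently, whenever $y\neq p_\ell$ one has $\Lambda_D(y)=\overline{yp_\ell}$. From this I will deduce, in two steps, that (i) every integral curve of $\Lambda_D$ is an arc of a projective line in $\cp^2$, and (ii) all these lines pass through one common point.

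For step (i), fix $y_0\in W$ and set $\ell_0:=\Lambda_D(y_0)$. Applying Proposition~\ref{ppencil} to $\ell=\ell_0$ itself, the center $p_{\ell_0}$ must lie on $\Lambda_D(y_0)=\ell_0$. Hence for every $y\in(\ell_0\cap W)\setminus\{p_{\ell_0}\}$ the line $\Lambda_D(y)$ contains both $y$ and $p_{\ell_0}$, so $\Lambda_D(y)=\ell_0$. By holomorphy this identity extends over $\ell_0\cap W$, so $\ell_0\cap W$ is an integral curve of $\Lambda_D$. Every integral curve arises this way, proving that each phase curve is an arc of a projective line.

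For step (ii), shrink $W$ to a connected neighbourhood and choose $y_1\in W$ with $\ell_1:=\Lambda_D(y_1)\neq\ell_0$. Such $y_1$ exists because otherwise every phase curve would be contained in the single line $\ell_0$, which is impossible since $\dim W=2$. Set $p:=\ell_0\cap\ell_1\in\cp^2$. For a generic auxiliary line $\ell$ through $y_1$ the intersection $\ell\cap\ell_0$ is a single point $z_0$, lying in $W$ after possibly shrinking further. Applying Proposition~\ref{ppencil} to this $\ell$, the pencil center $p_\ell$ must lie simultaneously on $\Lambda_D(y_1)=\ell_1$ and on $\Lambda_D(z_0)=\ell_0$, forcing $p_\ell=p$. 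Hence $p\in\Lambda_D(z)$ for every $z\in\ell\cap W$. As the auxiliary line $\ell$ varies through $y_1$, the union of the sets $\ell\cap W$ fills a full neighbourhood of $y_1$ in $W$, so the holomorphic condition ``$\Lambda_D(z)\ni p$'' holds on an open subset of $W$, and hence on all of $W$ by analytic continuation. Consequently every phase curve of $\Lambda_D$ is an arc of a projective line through $p$, which is exactly the claim.

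The only delicate point is a general-position verification: the existence of $y_1\in W$ with $\Lambda_D(y_1)\neq\ell_0$ (handled by the dimension count above) and the existence of enough auxiliary lines $\ell$ through $y_1$ meeting $\ell_0\cap W$. Both are routinely secured by working in a sufficiently small connected polydisc neighbourhood of a generic base point. Once these are in place, the argument is essentially forced by the elementary fact that the pencil center $p_\ell$ is determined by any two distinct lines $\Lambda_D(z),\Lambda_D(z')$ through points $z,z'\in\ell\cap W$.
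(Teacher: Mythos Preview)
Your argument is correct and in fact cleaner than the paper's. The paper proves the proposition by contradiction: assuming some phase curve $S$ is not a line, it takes the tangent line $L=\Lambda_D(P)$ at a point $P\in S$ and observes that for nearby $Q,R\in L$ distinct from $P$ the lines $\Lambda_D(Q),\Lambda_D(R)$ are distinct from $L$ and meet off $L$, so the restriction $\Lambda_D|_L$ cannot be a pencil, contradicting Proposition~\ref{ppencil}. Your step~(i) replaces this reductio by the direct move of applying Proposition~\ref{ppencil} to the line $\ell_0=\Lambda_D(y_0)$ itself, which immediately forces $\Lambda_D|_{\ell_0\cap W}\equiv\ell_0$; this is shorter and avoids the auxiliary analysis of where $\Lambda_D(Q)\cap\Lambda_D(R)$ lies. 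Your step~(ii) then supplies something the paper actually leaves implicit: the paper's proof only derives a contradiction from ``some phase curve is not a line'' and does not spell out why all the resulting lines share a common point (the application immediately after the proposition only uses that the mirrors $c(x),d(x)$ are lines). Your pencil-center argument with an auxiliary line $\ell$ through $y_1$ meeting $\ell_0$ is the natural way to close this, and the general-position checks you flag are indeed routine. So both approaches rest on Proposition~\ref{ppencil}, but yours is constructive and complete for the full statement, while the paper's is a brief contradiction argument tailored to what is actually needed downstream.
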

  
  \begin{proof} Suppose the contrary: some phase curve $S$ is not a line. Consider a projective line $L=\Lambda_D(P)$ 
  tangent to $S$ at some point $P$. Then for every $Q,R\in L$, $Q\neq R$, close to $P$ and distinct from it  the projective 
  lines $\Lambda_D(Q)$ and $\Lambda_D(R)$ are distinct from the line $L=\Lambda_D(P)$ and intersect each other at a point outside the line $L$. 
  Therefore, the lines of the restriction to $L$ of the field $\Lambda_D$  are not tangent to a pencil of lines through the same point.
  The contradiction thus obtained to Proposition \ref{ppencil} proves Proposition \ref{proplines}. 
\end{proof}

Proposition \ref{proplines} applied to both $\Lambda_C$ and $\Lambda_D$  implies that for every $x\in M\setminus\Sigma$ the mirrors 
$c(x)$ and $d(x)$ are lines. Hence, $a$ and $b$ are algebraic (Proposition \ref{twoalg}), -- a contradiction.  Lemma \ref{lnonint} is proved.

\subsection{Case of one algebraic mirror. Proof of Theorem \ref{onealg}}

In the present subsection we consider that  $a$, $b$, $c$,  $d$ is a 4-reflective analytic planar billiard, and the mirror $a$ is algebraic.
 Without loss of generality we consider that the curves $b$ and $d$ are transcendental: in the opposite case Theorem \ref{onealg} follows 
 immediately from Proposition \ref{twoalg} and \cite[theorem 1.11]{alg}. 
As it is shown below, Theorem \ref{onealg} is implied by the following proposition.

\begin{proposition}  In the above conditions the mirror $c$ is also algebraic. 
\end{proposition}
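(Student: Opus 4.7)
My plan is to exploit the properness of the projection $U\to\ha\times\hb$ from Corollary \ref{cepi} together with compactness of $\ha$ to force the normalization $\hc$ to be compact. Since $a$ is algebraic, $\ha$ is a compact Riemann surface, so the composition $U\to\ha\times\hb\to\hb$ is proper and epimorphic. Because $U$ is pure two-dimensional (Proposition \ref{comp-set}) and $\hb$ is one-dimensional, for generic $B\in\hb$ the fiber $U_B\subset U$ of this projection is a compact pure one-dimensional analytic curve.

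I would fix such a $B$ and an irreducible component $F\subset U_B$; then $F\subset\ha\times\{B\}\times\hc\times\hd$ is a compact irreducible analytic curve. The key tool will be the following Riemann-surface principle: any non-constant holomorphic map from a compact irreducible analytic curve to a connected Riemann surface has open compact image, which by connectedness equals the whole target, forcing the target to be compact. Since $d$ is transcendental by the case reduction stated above the proposition, $\hd$ is non-compact, and so the principle shows that the projection $F\to\hd$ is constant; write $D\equiv D_0$ on $F$.

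I would then split into cases according to whether the projection $F\to\ha$ is constant. In the first case, $A\equiv A_0$ on $F$, so $F\subset\{A_0\}\times\{B\}\times\hc\times\{D_0\}$ is a compact one-dimensional analytic subset of $\hc$, and the Riemann-surface principle applied to $\hc$ gives compactness of $\hc$, i.e., algebraicity of $c$. In the second case, $F\to\ha$ is non-constant and $F$ is a branched cover of the compact $\ha$, so $A$ varies non-trivially along $F$ while $B$ and $D_0$ remain fixed. The reflection law at the fixed vertex $B$ then gives $C=l_b(A,B)\cap c$, where $l_b(A,B)$ is the reflection of the line $AB$ across the fixed tangent $T_Bb$; as $A$ varies, the line $AB$ moves non-trivially in the pencil of lines through $B$, hence so does $l_b(A,B)$, forcing $C$ to vary non-trivially on $c$. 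Thus $F\to\hc$ is non-constant, and the Riemann-surface principle again yields that $\hc$ is compact, i.e., $c$ is algebraic.

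The main obstacle I foresee is verifying the non-triviality of $A\mapsto C$ in the second case: this could fail only if the reflected family $l_b(A,B)$, $A\in\ha$, met $c$ always at a single common point, an incidence that would impose an unusual tangency between $b$ and $c$ through $B$. I expect to rule this out using the non-triviality of the restricted Birkhoff distribution (Proposition \ref{pnontriv}) together with the fact, from Corollary \ref{cepi}, that $(A,B)$ ranges over an open subset of $\ha\times\hb$.
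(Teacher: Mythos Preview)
Your proposal is correct and follows the paper's line: the fiber $\nu_b^{-1}(B)$ is compact (properness from Corollary~\ref{cepi} together with compactness of $\ha$) and has non-constant projection to $\hc$, forcing $\hc$ to be compact. Your detour through $D\equiv D_0$ and the case-split on $A$ are correct but unnecessary, and the obstacle you anticipate disappears if you simply invoke Corollary~\ref{cepi} for the neighbor pair $(b,c)$: the projection $U\to\hb\times\hc$ is epimorphic, so the fiber over any $B$ already surjects onto $\hc$.
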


\begin{proof} The projection $\nu_b:U\to\hb$ of the 4-reflective set $U$ is proper and epimorphic, 
by Corollary \ref{cepi} and since $a$ is algebraic. This implies that  for 
an open and dense set of points $B\in \hb$ the preimage $\nu_b^{-1}(B)\subset U$ is a compact analytic curve with non-constant holomorphic 
projection to $\hc$. Hence, $\hc$ is compact and $c$ is algebraic.  The proposition is proved.
 \end{proof}
 
 Now let us prove Theorem \ref{onealg}.  Fix a non-marked point 
 $B\in\hb$  as in the above proof and a one-dimensional irreducible component 
 $\Gamma_B$ of the compact analytic curve $\nu_b^{-1}(B)\subset U$. Its image $\nu_D(\Gamma_B)\subset d\subset\cp^2$ 
is either an algebraic curve, or a single point (Proper Mapping and Chow Theorems). 
The former case is impossible, since $d$ is non-algebraic. 
Hence, for an open and dense set of points $B\in\hb$ the projection of the curve $\Gamma_B$ to the 
 position of the vertex $D$ is constant and is determined by $B$. Thus, there exists a mapping $\hb\to\hd$, $B\mapsto D_B$, 
 defined on an open set $V\subset \hb$ such that for every fixed $B\in V$ and variable $A\in\ha$ the lines $AB$ and $AD_B$ are 
 symmetric with respect to the tangent line $T_Aa$. This implies that either $a$ is a line and $B$, $D_B$ are symmetric with 
 respect to $a$ for every $B\in\hb$, or $a$ is a conic with one-dimensional family of foci pairs $(B,D_B)$, see \cite[proposition 2.32]{alg}. 
 The latter case being  obviously 
 impossible,  the curves $b$, $d$ are symmetric with respect to the line $a$. Applying the above 
 argument to the algebraic mirror $c$ instead of $a$, we get that $B$ and $D_B$ are symmetric with respect to the line $c$. Thus, the above pairs $(B,D_B)$ are symmetric with respect to both lines $a$ and $c$, by construction. 
 Therefore, $a=c\neq b,d$, and the billiard is of type 1) from 
 Theorem \ref{an-class}. Theorem \ref{onealg} is proved. 
 
 \subsection{Intersected neighbor mirrors. Proof of Corollary \ref{inters}}
 
 In the conditions of Corollary \ref{inters} no mirror is a line, by Theorem \ref{onealg} and since $a$, $b$ are not lines.  
 Without loss of generality we consider that each mirror is transcendental, since otherwise,  $a=c$, by Theorem \ref{onealg}. 
Let $U$ be the 4-reflective set. Its projection to $\ha\times \hb$ is proper and epimorphic, by Corollary 
\ref{cepi}. 

\medskip

{\bf Claim 1.} {\it $a\neq b$.}

\begin{proof} Suppose the contrary: $a=b$. Then $U$ contains a one-parametric analytic family $\mct$ of quadrilaterals $AACD$ with variable 
$A$, $C$, $D$, by the above epimorphicity statement. A generic quadrilateral in $\mct$ is forbidden by Proposition \ref{neighbmir}. 
The contradiction thus obtained proves the claim. 
\end{proof} 

The projection preimage  in $U$ of the pair $(A,A)\in\ha\times\hb$ is a non-empty compact 
analytic subset $\Gamma\subset U$ of dimension at most one. 

Case 1): $dim \Gamma=1$. Then at least one of the curves $\hc$, $\hd$, say $\hc$ 
is  a compact Riemann surface, -- a contradiction to our non-algebraicity assumption. Thus, this case is impossible. 

Case 2): $dim \Gamma=0$: $\Gamma$ is a finite set. 

\medskip
{\bf Claim 2.} {\it Every quadrilateral $AACD\in\Gamma$ is a single-point quadrilateral: the mirrors $c$ and $d$ pass through the same point 
$A$; $\pi_c(C)=\pi_d(D)=\pi_a(A)$.}

\begin{proof} Suppose the contrary: say, $\pi_c(C)\neq \pi_a(A)$. The projection $U\to\ha\times\hb$ is open on a neighborhood of the point 
$AACD$, since it contracts no curve to $(A,A)$ by assumption. Therefore, each converging 
sequence $(A^k,B^k)\to(A,A)$ lifts to a converging sequence $A^kB^kC^kD^k\to AACD$ in $U$. Let us take two  sequences 
$(A^k_j,B^k_j)\to(A,A)$, 
$j=1,2$, with lines  $A^k_jB^k_j$ converging to different limits for $j=1,2$; this is possible, since $a\neq b$. 
We get two sequences of quadrilaterals 
$A^k_jB^k_jC^k_jD^k_j$ converging to the same quadrilateral $AACD$. On the other hand, the lines $B^k_jC^k_j$ 
symmetric to $A_j^kB_j^k$ with respect to 
the tangent lines $T_{B_j^k}b$ converge to two distinct limits $H_j$, $j=1,2$, by assumption and since $A$ is not 
a marked point of the curve $b$. The   lines $H_1\neq H_2$ pass through the same two  points $\pi_a(A)\neq \pi_c(C)$, by 
construction.  The contradiction thus obtained proves the claim. 
\end{proof} 

Thus, $\Gamma$ is a finite set of points corresponding to the single-point quadrilateral $AAAA$. Fix one of them and denote it 
$AAAA$: the corresponding vertices $C\in\pi_c^{-1}(A)$ and $D\in\pi_d^{-1}(A)$ will be denoted by $A$. One has $c\neq d$, as in Claim 1. 
 The projection $U\to\hc\times\hd$
is open on a neighborhood of the point $AAAA$, as in the above discussion. 
Let $\gamma\subset \hc\times\hd$ be an irreducible germ at $(A,A)$ of analytic curve consisting of 
pairs $(C',D')$ with variable $C'$ and 
$D'$ for which $C'\in T_{D'}d$. The germ $\gamma$ lifts to an irreducible germ $\wt\gamma$ of analytic 
curve through $AAAA$ in $U$. The curve  $\wt\gamma$ consists of quadrilaterals $A'B'C'D'\in U$ for which $B',D'\not\equiv C'$ 
(Proposition \ref{neighbmir}). Therefore,  $A'\equiv C'$, by 
 Corollary \ref{connect} and since $A$ is not a marked point of the mirror $b$. Hence, $a=c$. Corollary \ref{inters} is proved.

\section{Algebraicity: proof of Theorem \ref{tallalg}}

Theorem \ref{an-class}, and thus, Theorem \ref{tallalg} are already proved in the case, when at least one mirror is algebraic (Theorem  
\ref{onealg}). Here we prove  Theorem \ref{tallalg} in the general case by contradiction. 
Suppose the contrary: there exists a 4-reflective billiard $a$, $b$, $c$, $d$ with no algebraic mirrors. 
We  study Birkhoff distribution $\mcd_a$ on the space $M_a$ and consider its non-trivial integral surface $S$ formed by a connected 
open set of quadrilateral orbits of the billiard. Recall that  $M_a\subset\mcp_a$ is a six-dimensional analytic subset, and $\mcd_a$ is a 
singular three-dimensional distribution on $M_a$, see Subsection 2.7.  Set 
$$M=\text{ the minimal analytic subset in } M_a \text{ containing } S.$$
This is an irreducible analytic subset in $\mcp_a$, by definition, see Convention \ref{convan}. The intersections 
$$\mcd_M(x)=\mcd_a(x)\cap T_xM, \ x \text{ being a smooth point of the variety } M,$$
induce a singular analytic distribution $\mcd_M$ on $M$, for which $S$ is an integral surface. 
This is either two- or three-dimensional distribution, since $dim S=2$ and $dim\mcd_a=3$. 
The cases, when $dim \mcd_M=2, 3$, will be treated separately in Subsections 4.1 and 4.3 respectively. 

%
%
 
 The methods of proof in both cases are similar. We show that 
  an open set of  points $x\in M$ lie in integral surfaces corresponding to 4-reflective 
 billiards $a$, $b(x)$, $c(x)$, $d(x)$ with regularly intersected mirrors 
 $a$ and $b(x)$. Then  either $b(x)$ is a line, or $c(x)=a$, by  Corollary \ref{inters}. We then show that either 
 $\nu_C(M)\subset a$, or the mirror $b$ of the initial billiard is a line. We get a contradiction in both subcases.  In the case, when 
 $dim\mcd_M=3$, the proof uses Cartan--Kuranishi--Rashevskii involutivity theory of Pfaffian systems. 
 The corresponding background material will be recalled in Subsection 4.2. 

 The proof of the existence of the above integral surfaces is based on the following key proposition and corollary. 
 They deal with the natural projection 
 $\nu_a:\mcp_a\to\hat a$ and its restriction to $M$, which are proper and epimorphic. For every $A\in\ha$ the projection preimage 
 $$W_A=\nu_a^{-1}(A)\cap M$$
  is a projective algebraic set.

\begin{proposition} \label{projb} There exists a complement $\ha_0\subset\ha$ to a discrete subset in $\ha$ such that for every $A\in\ha_0$ 
the projection $\nu_B:W_A\to\cp^2$ is epimorphic. 
\end{proposition}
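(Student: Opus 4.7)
Set $\phi := \nu_a \times \nu_B : \mcp_a \to \ha \times \cp^2$, which is a proper holomorphic map (its fibers are compact copies of $\cp^1 \times \mcp^2$). Hence $\tilde M := \phi(M) \subset \ha \times \cp^2$ is an analytic subset by Remmert's Proper Mapping Theorem. I would prove Proposition \ref{projb} by showing that $\tilde M = \ha \times \cp^2$; together with properness and surjectivity of $\phi|_M : M \to \tilde M$ this gives $\nu_B(W_A) = \cp^2$ for \emph{every} $A \in \ha$ (so one may take $\ha_0 = \ha$, with the discrete exceptional set being vacuous).

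The first step is to establish the inclusion $\tilde M \supset \ha \times b$. By Proposition \ref{birktang}, $S = \Psi_a(U_0)$ for some connected open subset $U_0 \subset U$ of quadrilateral orbits, and by the Addendum to Proposition \ref{comp-set} the projection $U_0 \to \ha \times \hb$, $(A,B,C,D) \mapsto (A,B)$, is a local biholomorphism on an open dense subset. Composing with $\mathrm{id} \times \pi_b : \ha \times \hb \to \ha \times \cp^2$ shows that $\phi(S)$ contains $(\mathrm{id} \times \pi_b)(V)$ for some nonempty open $V \subset \ha \times \hb$. Therefore $(\mathrm{id} \times \pi_b)^{-1}(\tilde M) \subset \ha \times \hb$ is an analytic subset with nonempty interior in the smooth connected two-manifold $\ha \times \hb$, so it must equal the whole manifold, giving $\tilde M \supset (\mathrm{id}\times\pi_b)(\ha\times\hb) = \ha \times b$.

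The second step is a dichotomy on $\dim \tilde M$. The projection $\tilde M \to \ha$ is proper, so its generic fibers $\tilde M_A := \tilde M \cap (\{A\} \times \cp^2)$ have dimension $\dim \tilde M - 1$. If $\dim \tilde M \leq 2$, then $\tilde M_A$ is a compact analytic subset of $\cp^2$ of dimension at most one, hence algebraic by Chow's Theorem. The preceding step gives $\tilde M_A \supset b$, so the irreducible one-dimensional analytic curve $b$ is contained in some irreducible component of $\tilde M_A$; since that component and $b$ share a one-dimensional subset, they coincide as sets, and $b$ is algebraic. This contradicts the standing assumption that no mirror is algebraic. Hence $\dim \tilde M = 3$ and $\tilde M = \ha \times \cp^2$, which concludes the proof.

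The main obstacle is the first step: one must exploit that $S$ lifts analytically to $\ha \times \hb$ via the normalization $\pi_b$, because the transcendental curve $b$ itself is not an analytic subset of $\cp^2$ and a naive analytic-continuation argument at the level of $\ha \times \cp^2$ is not available. Once the inclusion $\tilde M \supset \ha \times b$ is secured, the transcendence of $b$ immediately rules out the low-dimensional case in the dichotomy and delivers the conclusion.
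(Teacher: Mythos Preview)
Your proof is correct and rests on the same contradiction as the paper's: if the $B$-projection fails to be surjective, then the transcendental curve $b$ would be trapped inside an algebraic curve in $\cp^2$ (via Remmert's Proper Mapping and Chow's Theorems), which is impossible. The paper argues this fiberwise, invoking a dichotomy on $\dim\nu_B(W_A)$ ``by analyticity'' and then observing that for a non-marked $A\in\nu_a(S)$ the set $\nu_B(W_A\cap S)$ is an open piece of $b$ sitting inside the algebraic set $\nu_B(W_A)$; you instead package everything into the global image $\tilde M=\phi(M)\subset\ha\times\cp^2$, pull back to $\ha\times\hb$ to secure $\tilde M\supset\ha\times b$, and run the dichotomy on $\dim\tilde M$. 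Your organization is a bit cleaner (the analytic-continuation step in $\ha\times\hb$ makes the inclusion $\tilde M\supset\ha\times b$ completely explicit) and it yields the marginally stronger statement that $\nu_B(W_A)=\cp^2$ for \emph{every} $A\in\ha$, not merely for $A$ outside a discrete set.
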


\begin{proof} For every $A\in\ha$ the image of the projection $\nu_B:W_A\to\cp^2$ is either the whole projective plane, or an 
algebraic subset of dimension at most one (Remmert's Proper Mapping and Chow's Theorems). Either $\nu_B(W_A)=\cp^2$ 
 for all but a discrete set of points $A$ (and then 
the statement of the proposition obviously holds), or it is at most one-dimensional algebraic set for an open and dense set $Q$ of 
points  $A\in a$, by analyticity. 
The latter case cannot happen, since otherwise for every non-marked $A\in\nu_a(S)\cap Q$  the 
set  $\nu_B(W_A\cap S)$ would be an open subset in the non-algebraic curve $b$ that simultaneously 
lies in at most one-dimensional algebraic set $\nu_B(W_A)$, -- a contradiction. This proves  the proposition. 
\end{proof}

\begin{corollary} \label{corx} For every open dense subset $N\subset M$ whose complement $M\setminus N\subset M$ is an analytic subset 
there exists an open  dense subset $\hat a_N\subset\hat a$ such that for every $A\in\ha_N$ the intersection $\nu_B(W_A\cap N)\cap a$ 
contains  the $\pi_a$- image of an open dense subset in $\ha$: a complement to a discrete subset. 
\end{corollary}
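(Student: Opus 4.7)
The idea is to combine Proposition \ref{projb} with a dimension count on the exceptional analytic subset $M\setminus N$ and then to invoke transcendence of the mirror $a$. By Proposition \ref{projb}, there is an open dense $\ha_0\subset\ha$ (complement of a discrete set) with $\nu_B(W_A)=\cp^2$ for every $A\in\ha_0$. The containment
$$\cp^2=\nu_B(W_A)\subset\nu_B(W_A\cap N)\cup\nu_B(W_A\cap(M\setminus N))$$
reduces the problem to showing, for generic $A$, that $V_A:=\nu_B(W_A\cap(M\setminus N))$ is a proper algebraic subset of $\cp^2$ of dimension at most one; after that, intersecting $\cp^2\setminus V_A$ with the transcendental curve $a$ will give the conclusion.

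To control $W_A\cap(M\setminus N)$, I would bound the generic fiber dimension of $\nu_a|_{M\setminus N}$. The set $M\setminus N$ is a closed analytic subset of $M$ of dimension at most two, and hence admits a locally finite decomposition into irreducible components $Z_k$. By properness of $\nu_a\colon M\to\ha$, each compact $K\subset\ha$ meets the image of only finitely many $Z_k$. Each image $\nu_a(Z_k)$ is analytic in $\ha$ by Remmert's Proper Mapping Theorem, hence either discrete or all of $\ha$; in the latter case the generic fiber of $\nu_a|_{Z_k}$ has dimension at most one. Let $E\subset\ha$ be the union of the images $\nu_a(Z_k)$ taken over those components whose image is a finite set; by local finiteness $E$ is discrete. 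Set $\ha_N:=\ha_0\setminus E$, which is open and dense in $\ha$. For every $A\in\ha_N$ the intersection $W_A\cap(M\setminus N)$ is analytic of dimension at most one, and since $W_A$ is a projective algebraic variety on which $\nu_B$ is algebraic, the image $V_A$ is an algebraic subset of $\cp^2$ of dimension at most one, by the theorems of Remmert and Chow.

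For $A\in\ha_N$ we thus obtain $\nu_B(W_A\cap N)\cap a\supset a\setminus V_A$. The preimage $\pi_a^{-1}(V_A)$ is a closed analytic subset of the Riemann surface $\ha$; if it were not discrete, it would equal all of $\ha$, forcing the transcendental curve $a$ to lie inside the algebraic curve $V_A$ --- a contradiction with the standing assumption of Section 4 that no mirror is algebraic. Hence $\pi_a^{-1}(V_A)$ is discrete, and $\pi_a^{-1}(\nu_B(W_A\cap N)\cap a)$ contains a complement of a discrete subset of $\ha$, which is exactly the required conclusion. The only delicate point --- really bookkeeping rather than a genuine obstacle --- is the local finiteness of $\{Z_k\}$ and discreteness of $E$, both of which follow directly from properness of $\nu_a$ together with the locally finite decomposition of analytic sets into irreducible components.
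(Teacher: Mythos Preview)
Your argument tracks the paper's proof closely up to the point where you control the ``bad'' fibre $W_A\cap(M\setminus N)$: both you and the paper decompose $M\setminus N$ into irreducible components, separate those with discrete $\nu_a$-image from those surjecting onto $\ha$, and throw away the resulting discrete set of bad values of $A$. The divergence is in the last step, and there your version carries an unjustified hypothesis.

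You assert that $M\setminus N$ has dimension at most two and deduce that $W_A\cap(M\setminus N)$ has dimension at most one, hence $V_A=\nu_B(W_A\cap(M\setminus N))$ is a proper algebraic curve in $\cp^2$. This requires $\dim M\le 3$. The paper's plan-of-proof does announce that $M$ is three-dimensional, but this is never actually established in Section~4, and the paper's own proof of the corollary deliberately avoids using it. Without that bound, $W_A\cap(M\setminus N)$ could have dimension $\ge 2$ and $V_A$ could be all of $\cp^2$, so your inclusion $\nu_B(W_A\cap N)\supset\cp^2\setminus V_A$ would become vacuous.

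The paper bypasses this by working with the \emph{open} part instead of the closed one: for generic $A$ the set $W_A^N=W_A\cap N$ is Zariski-open and dense in the projective variety $W_A$ (this only needs that no component of $W_A$ lies in $M\setminus N$, which your own dimension count already gives without the bound $\dim(M\setminus N)\le 2$). Then $\nu_B(W_A^N)$ is constructible by Chevalley--Remmert and Zariski-dense in $\cp^2$ since $\nu_B(W_A)=\cp^2$; hence its complement lies in a proper algebraic subset. Either justify $\dim M\le 3$, or replace your last step with this constructibility argument.
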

 
 \begin{proof} Let $\ha_0$ be the same, as in Proposition \ref{projb}. There exists an open dense subset $\ha_N\subset\ha_0$ such that for every $A\in\ha_N$ the subset  $W_A^N=W_A\cap N\subset W_A$ is open and dense, 
 since the complement $M\setminus N$ is an analytic subset and all the $W_A$ are 
 hypersurfaces. Then for the same $A$ 
 the subset $W_A\setminus W_A^N\subset W_A$ 
 is  algebraic, since it is analytic (as is $M\setminus N$)  and by Chow's Theorem. Therefore, the complement 
 $\cp^2\setminus\nu_B(W_A^N)$ is contained in an algebraic subset in $\cp^2$ of positive codimension (Proposition \ref{projb} and 
 Chevalley--Remmert and Chow's Theorems). Its $\pi_a$-preimage is at most discrete, since $a$ is non-algebraic. This implies the statement 
 of the corollary.
 \end{proof}
\def\mcw{\mathcal W}

\subsection{Case of two-dimensional distribution}
In the present section we consider that $dim\mcd_M=2$. Then the distribution $\mcd_M$ is integrable, by Proposition \ref{anint}. 

We keep the previous notations $\nu_a$, $\nu_G$, $G=B,C,D$ for the projections to the positions of vertices. Let 
$M^0_{reg}\subset M\cap M^0_a$ denote the subset of points regular for both $M$ and $\mcd_M$, cf. Definition \ref{moreg}. 
Set
\begin{equation} M'=\{ x\in M^0_{reg} \ | \ 
d\nu_a(x),d\nu_G(x)\not\equiv0 \text{ on } \mcd_M(x) \text{ for } G=B,C,D\}.
\label{defmo}\end{equation} 
The set $M'$ contains  the integral surface $S$, since $S$ is non-trivial. 
It is  an open and dense subset  in $M$, and its complement 
$\Sigma=M\setminus M'$ is analytic. 

\begin{remark} \label{rems}  The integral surface of the distribution $\mcd_M$ through each point $x\in M'$ is non-trivial, 
by (\ref{defmo}). The germ of its image 
under each one of the projections $\nu_G$, $G=B,C,D$, is a germ of analytic curve at its non-marked point. The regularity of germ follows by 
definition from the inequalities in (\ref{defmo}). The non-isotropicity of tangent line to the germ follows from the 
definition of the distribution $\mcd_M$ and non-degeneracy of the quadrilateral corresponding to $x$. Thus, the above integral surface corresponds to 
an open set of quadrilateral orbits of a 4-reflective billiard $a$, $b(x)$, $c(x)$, $d(x)$, and the above germs are  germs of mirrors at 
non-marked points. 
\end{remark}

For every $A\in\ha$ set $W_A^0=W_A\cap M'$. 

\begin{proposition} \label{prop-inters} There exists an open subset $V\subset M'$ of those $x$ for which the mirrors $a$ and $b(x)$ intersect at some  point non-marked for both their local branches.
\end{proposition}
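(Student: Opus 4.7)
The plan is to apply Corollary \ref{corx} to produce a $2$-dimensional locus $T' \subset M'$ of points $x$ for which $B(x)$ already lies on $a$ at a non-marked point, and then to exploit the fact that the target property depends only on the integral surface through $x$ to saturate $T'$ into an open subset of $M$.

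First I would apply Corollary \ref{corx} with $N = M'$, producing an open dense subset $\ha_{M'} \subset \ha$ such that for every $A \in \ha_{M'}$ the set $\nu_B(W_A \cap M') \cap a$ covers the $\pi_a$-image of a cofinite subset of $\ha$. Since the marked points of $a$ form a discrete subset of $\ha$, for each such $A$ there exists $x \in W_A \cap M'$ with $B(x) \in a$ at a non-marked point of $a$. Let $T' \subset M'$ denote the collection of all such $x$; it is a $2$-dimensional analytic subset of the $3$-dimensional $M$ (cut out by the codimension-one condition $\nu_B \in a$, off a discrete exceptional locus). For every $x \in T'$, Remark \ref{rems} guarantees that $B(x)$ is also a non-marked point of $b(x)$, so $a$ and $b(x)$ meet at $B(x)$ at a point non-marked for both.

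Next I would observe that the target property depends only on the integral surface $L_x$ of $\mcd_M$ through $x$, since $b(x)$ is determined by $L_x$. Hence the saturation $V := \bigcup_{x \in T'} L_x$ is contained in the desired set, and it suffices to show that $V$ contains an open subset of $M$. Given $\dim M = 3$ and $\dim T' = \dim L_x = 2$, the only obstruction is leaf-invariance of $T'$ (i.e.\ $L_x \subset T'$ for all $x \in T'$); if this fails, then at a generic smooth point $x \in T'$ the leaf $L_x$ meets $T'$ transversally in a curve, and the local product structure of the foliation produces an open neighborhood of $x$ in $M$ contained in $V$.

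The hard part, and the main obstacle, is to exclude leaf-invariance of $T'$. I would argue by contradiction: if $T'$ were leaf-invariant, then for every $x \in T'$ the image $\nu_B(L_x)$ would be contained in the irreducible curve $a$; since this image is $1$-dimensional (as $d\nu_B|_{\mcd_M(x)} \neq 0$ by the defining condition \eqref{defmo} of $M'$), containment in $a$ forces $b(x) = a$. The resulting 4-reflective billiard $a, a, c(x), d(x)$ would, by Corollary \ref{cepi}, have its 4-reflective set projecting properly and epimorphically onto $\ha \times \ha$; in particular, the diagonal $\{A_1 = A_2\}$ would lie in the image, yielding a $1$-parameter analytic family of degenerate quadrilateral orbits $AACD$. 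A generic such orbit is forbidden by Proposition \ref{neighbmir} (with $s=1$, $r=2$, $a_1 = a_2 = a$, which is not a line since $a$ is transcendental by our standing assumption). This contradiction rules out leaf-invariance of $T'$ and yields the required open subset $V$.
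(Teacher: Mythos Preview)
Your proof is correct, and the crucial ingredient is the same as in the paper: one must rule out the possibility $b(x)=a$, which you do via Corollary~\ref{cepi} and Proposition~\ref{neighbmir} (this is exactly Claim~1 in the proof of Corollary~\ref{inters}). The paper invokes Corollary~\ref{inters} directly for the same purpose.

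The difference is in how openness is extracted. The paper argues pointwise: having found a single $x_0\in M'$ with $\nu_B(x_0)\in\alpha\subset a$, it observes that the local branches of $b(x_0)$ and $a$ at $\nu_B(x_0)$ are \emph{distinct} (otherwise $b(x_0)=a$, forbidden). Since the germ of $b(x)$ depends analytically on the leaf through~$x$, the intersection $b(x)\cap\alpha$ persists for all $x$ in a neighborhood $V$ of $x_0$, and this $V$ is the desired open set. Your approach instead builds the codimension-one locus $T'=\nu_B^{-1}(a)\cap M'$, checks that it cannot be saturated by leaves (again because that would force $b(x)=a$), and deduces that its leaf-saturation is open. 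Both routes work; the paper's is a bit shorter because it bypasses the saturation machinery.

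One minor imprecision: you assert $\dim M=3$, but only $\dim M\geq 3$ is available (from Proposition~\ref{projb}). This does not affect your argument: $T'$ is codimension one in $M$ regardless, and once you know $\mcd_M(x)\not\subset T_xT'$ at some smooth point $x\in T'$, the local product structure of the two-dimensional foliation shows that the projection of $T'$ to the local leaf space is a submersion at~$x$, so the saturation is open near~$x$. You should phrase it this way rather than relying on the specific equality $\dim T'=\dim L_x=2$.
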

\begin{proof} 
There exists an open dense subset $\ha'\subset\ha$ such that for every $A\in\ha'$ the intersection $\nu_B(W_A^0)\cap a$ contains a regular  
disk $\alpha\subset a$ without marked points for $\alpha$ (Corollary \ref{corx}). Fix $\alpha$ and an $x_0\in M'\cap\nu_B^{-1}(\alpha)$. The point $\nu_B(x_0)\in 
b(x_0)\cap\alpha$ is a non-marked 
point of the corresponding local branches of the curves $b(x_0)$ and $\alpha$ (Remark \ref{rems}). The latter  branches are distinct, by 
Corollary \ref{inters}. Therefore, there exists a neighborhood $V=V(x_0)\subset M'$ such that for every $x\in V$ a regular branch of the curve $b(x)$ 
intersects $\alpha$ at a non-marked point for both curves (Remark \ref{rems} and analyticity of the foliation by integral surfaces). 
This proves the proposition. 
\end{proof}

In Proposition \ref{prop-inters} for every $x\in V$ either $c(x)=a$, or $b(x)$ is a line, by Corollary  \ref{inters}. Hence some of the 
 latter statements holds for all $x\in M'$, by analyticity. The mirror $b(x)$ cannot be a line for all $x$, since the mirror $b$ of the 
initial billiard is not algebraic by assumption. Hence, $\nu_C(M)\subset a$. Let us show that this is impossible. To do this, we use 
the next proposition.

\begin{proposition} \label{focic} For every analytic billiard $a$, $b$, $c$, $d$ with a non-algebraic mirror $b$ in every one-parametric 
family of quadrilateral orbits $ABCD$ with  fixed non-isotropic vertex $A\neq I_1, I_2$ 
the vertex $C$ is non-constant. 
\end{proposition}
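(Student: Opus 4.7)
The plan is to argue by contradiction. Suppose that $C \equiv C_0$ along some genuinely one-parameter family $A B_t C_0 D_t$ of quadrilateral orbits with fixed non-isotropic vertex $A$. Since $A$ and $C_0$ are both fixed and the family is one-dimensional, at least one of $B_t$, $D_t$ must vary non-trivially with $t$. I would split into two cases depending on whether $B_t$ varies.

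In the main case, when $B_t$ varies, the analytic curve $t \mapsto B_t \in \hat b$ satisfies the property that at each $B_t$ the reflection law sends the line $B_t A$ to the line $B_t C_0$, with the pair of ``foci'' $(A, C_0)$ held fixed. This is exactly the classical optical focus condition for the curve $b$. I would then invoke \cite[proposition 2.32]{alg} in precisely the same way it was used in the proof of Theorem \ref{onealg} in Subsection 3.4: a one-parameter family of optical focus relations with two fixed foci forces the reflecting curve to be either a line (with $A$ and $C_0$ symmetric with respect to it) or a conic with foci $A$, $C_0$. In both subcases $b$ is algebraic, contradicting the hypothesis.

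The remaining case is $B_t \equiv B_0$ with $D_t$ varying. Then $A$, $B_0$, $C_0$ are all fixed, so the lines $A B_0$, $B_0 C_0$ and the tangent lines $T_A a$, $T_{C_0} c$ are all fixed. The reflection law at $A$ (applied to the adjacent edges $D_t A$ and $A B_0$) would then force $A D_t$ to equal a fixed line $\ell_A$, and symmetrically the reflection law at $C_0$ would force $C_0 D_t$ to equal a fixed line $\ell_C$. For $D_t$ to vary continuously along the irreducible curve $d$, we would need $d \subset \ell_A$ and $d \subset \ell_C$, hence $d = \ell_A = \ell_C$ is the line $A C_0$. But then $T_{D_t} d = d = A D_t = D_t C_0$, which violates the non-degeneracy clause of Definition \ref{deforb} requiring each adjacent edge to be distinct from the tangent line at the vertex. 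This excludes the secondary case.

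The main obstacle is really the second case: one must combine the reflection laws at both adjacent vertices $A$ and $C_0$ to pin down the line containing $D_t$, and then use the precise non-degeneracy condition built into the definition of a billiard orbit to exclude the residual configuration $d = A C_0$. The first case reduces almost immediately to the focus/optical-property classification from \cite[proposition 2.32]{alg} already exploited elsewhere in the paper.
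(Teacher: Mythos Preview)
Your argument is correct and follows the same route as the paper: the heart of the proof is the appeal to \cite[proposition 2.32]{alg}, which forces $b$ to be a line or a conic whenever the pair $(A,C_0)$ acts as a fixed pair of optical foci for a one-parameter family of points $B_t\in b$. The paper's own proof is a single line doing exactly this.

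The one point where you are more careful than the paper is your Case~2 ($B_t\equiv B_0$, $D_t$ varying). The paper does not mention this case at all; presumably it is regarded as automatically excluded by the Addendum to Proposition~\ref{comp-set}, which says that the projection $U\to\hat a\times\hat b$ is a local biholomorphism at orbits with non-cusp vertices. A one-parameter family with both $A$ and $B$ fixed would then collapse to a point under this projection, which is impossible once a generic point of the family has non-cusp vertices. Your direct argument via the reflection laws at $A$ and $C_0$ is a correct alternative way to rule this case out and leads to the same contradiction with the non-degeneracy clause of Definition~\ref{deforb}.
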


\begin{proof} If $C\equiv const$, then $b$ would be either a line, or a conic, by \cite[proposition 2.32]{alg}, -- a contradiction.
\end{proof}


Suppose, by contradiction,  that $\nu_C(M)\subset a$. Fix an  $A\in\ha$ such that $\pi_a(A)$ is not an isotropic point at infinity and there exists a 
one-parametric family of quadrilateral orbits $ABCD$ of the initial billiard $a$, $b$, $c$, $d$ with the given vertex $A$. The  
 subset $\nu_C(W_A)\subset\cp^2$ is non-discrete, by Proposition \ref{focic}. On the other hand, it is an algebraic subset in $\cp^2$ (Remmert's Proper Mapping and Chow's Theorems). It lies in a transcendental curve $a$. Hence, it is discrete. The contradiction thus obtained 
proves Theorem \ref{tallalg}.

\subsection{Background material:  Phaffian systems and involutivity}

Everywhere below in the present subsection  whenever the contrary is not specified, $\mcf$ is a $k$-dimensional  analytic 
distribution on an analytic manifold $M$, $\mcf(x)\subset T_xM$ are the corresponding subspaces. 

\begin{definition} \cite[p.290]{rash} Let $k,l\in\mathbb N$, $k\geq l$, and let $\mcf$ be as above. 
A {\it Pfaffian system} $\mcf_{k,l}$ is the problem to find $l$-dimensional 
analytic integral surfaces of the $k$-dimensional distribution $\mcf$. 
\end{definition}

\begin{definition} \label{int-el} \cite[p.298]{rash} An $m$-dimensional  {\it integral element} 
of the distribution $\mcf$ is an $m$-dimensional vector subspace $E_m(x)\subset\mcf(x)$ such that for 
every 1-form $\omega$ on the ambient manifold vanishing on the subspaces of the distribution $\mcf$ 
its differential $d\omega$ vanishes on $E_m(x)$. 
\end{definition}

\begin{definition} \label{definv} \cite[p.300]{rash} A Pfaffian system $\mcf_{k,l}$ is {\it in involution} (or briefly, 
involutive), if for every  $x\in M$, $p<l$  each $p$-dimensional 
integral element in $T_xM$ is contained in some $(p+1)$-dimensional 
integral element.  
\end{definition}

\begin{example} A tangent subspace to an integral surface is an integral subspace. Every Pfaffian system defined by a 
Frobenius integrable distribution is involutive. 
\end{example}

\begin{remark} \label{rpencil} 
For every $p$-dimensional  integral element $E_p(x)$ the set of ambient $(p+1)$-dimensional 
integral elements  $E_{p+1}(x)\supset E_p(x)$ either is empty, or 
consists of a unique integral element, or is a projective space: a pencil of $(p+1)$-dimensional subspaces through  
$E_p(x)$, which saturate a linear subspace containing $E_p(x)$,  see \cite[formula (58.13), p.299]{rash}. 
\end{remark}

\begin{definition} \label{defnonsing} \cite[p.306]{rash} Let $\mcf$ be an analytic distribution on a connected manifold. A 
$p$-dimensional integral element 
$E_p(x)$ is said to be {\it nonsingular}, if the  space of ambient $(p+1)$-dimensional integral elements 
$E_{p+1}(x)\supset E_p(x)$ has  minimal dimension. 
\end{definition}

\def\omcp{\overline{\mathcal I_p}}
\def\omcpp{\overline{\mathcal J_p}}

\begin{theorem} \label{ck} (a version of Cauchy--Kovalevskaya Theorem; implicitly contained in \cite[section 60]{rash}). 
 Let an analytic Pfaffian system $\mcf_{k,l}$ 
on a manifold $M$ be involutive. Let $p\leq l$, $\Gamma\subset M$ be a 
$(p-1)$-dimensional analytic integral surface such that all its tangent spaces be 
nonsingular $(p-1)$-dimensional integral elements. Then for every $x\in\Gamma$ there 
exists a germ of $p$-dimensional integral surface through $x$ that contains the germ 
of $\Gamma$ at $x$. 
\end{theorem}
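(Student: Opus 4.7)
The plan is to carry out one step of the Cartan--K\"ahler inductive construction: since we only extend $\Gamma$ from dimension $p-1$ to dimension $p$, a single application of Cauchy--Kovalevskaya in adapted coordinates should suffice.

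First, I would fix $x \in \Gamma$ and choose analytic coordinates $(y_1,\ldots,y_n)$ on $M$ centered at $x$ for which $\Gamma = \{y_p = y_{p+1} = \cdots = y_n = 0\}$ locally, with $(y_1,\ldots,y_{p-1})$ as its parameters. By the nonsingularity hypothesis (Definition \ref{defnonsing}) together with Remark \ref{rpencil}, the set of $p$-dimensional integral elements $E_p(y) \supset T_y\Gamma$ forms a nonempty analytic family of constant dimension over $y \in \Gamma$ near $x$. I would pick an analytic section $E_p^0(y) \supset T_y\Gamma$, equivalently an analytic vector field $v(y)$ along $\Gamma$, transverse to $\Gamma$ and completing $T_y\Gamma$ to $E_p^0(y)$.

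Next, I would look for the desired $p$-dimensional integral surface as a graph $y_j = f_j(y_1,\ldots,y_p)$ for $j = p+1,\ldots,n$, with initial conditions $f_j|_{y_p = 0} \equiv 0$ (which recovers $\Gamma$) and $\partial f_j/\partial y_p|_{y_p = 0}$ prescribed so that the tangent plane to the graph along $\Gamma$ equals $E_p^0(y)$. The tangency condition to $\mcf$ becomes a quasilinear first-order PDE system for $(f_{p+1},\ldots,f_n)$. The step of the plan is to put this system in Cauchy--Kovalevskaya normal form in the distinguished variable $y_p$ and verify that the hypersurface $\{y_p = 0\}$, carrying $\Gamma$, is noncharacteristic for it; an analytic solution then exists in a neighborhood of $0$ in $\cc^p$ by the analytic Cauchy--Kovalevskaya theorem, and its graph is the sought germ of integral $p$-surface containing $(\Gamma,x)$.

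The main obstacle is showing that involutivity (Definition \ref{definv}) implies compatibility of the PDE system: that a solution of the normal-form subsystem automatically satisfies all of the original tangency conditions defining $\mcf$. This is Cartan's test in disguise. Concretely, one uses that for every defining 1-form $\omega$ of $\mcf$ and every $p$-dimensional integral element $E_p(y) \supset T_y\Gamma$ one has $d\omega|_{E_p(y)} = 0$ (Definition \ref{int-el}), while involutivity together with nonsingularity guarantee that the space of admissible extensions has exactly the dimension needed to absorb the Cauchy data without over-determining the system. The detailed bookkeeping is precisely the computation of \cite[section 60]{rash}, which I would adapt to the present complex analytic setting.
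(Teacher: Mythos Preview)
The paper does not prove Theorem \ref{ck} at all: it is stated as background material in Subsection~4.2 with the parenthetical remark that it is ``implicitly contained in \cite[section 60]{rash}'', and the text moves on immediately to Proposition~\ref{propan2}. So there is nothing to compare your argument against.

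Your sketch is the standard Cartan--K\"ahler step and is essentially correct. A couple of small points worth tightening. First, nonsingularity alone (Definition~\ref{defnonsing}) only says the dimension of the space of extending $p$-elements is \emph{minimal}; you need involutivity to guarantee it is also nonempty, so that minimal dimension is $\geq 0$. You implicitly use this but should say so. Second, your claim that the tangency system can be put into Cauchy--Kovalevskaya normal form in the variable $y_p$ with $\{y_p=0\}$ noncharacteristic is exactly where the nonsingularity assumption does its work: it guarantees that the relevant coefficient matrix (expressing $\partial f_j/\partial y_p$ in terms of the data) has the right rank. This is the heart of the argument and deserves more than ``the detailed bookkeeping is in \cite{rash}'' if you want a self-contained proof; but as a proof \emph{sketch} pointing to the classical source, it is adequate and matches what the paper itself does.
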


\begin{definition} \cite[p.188]{hajto}. A subset $N$ of a complex manifold $V$ is called {\it analytically constructible,} if each point of the 
manifold $V$ has a neighborhood $U$ such that the intersection $N\cap U$ is a finite union of subsets defined by  finite systems of 
equations $f_j=0$ and inequalities $g_i\neq0$; $f_j$ and $g_i$ are holomorphic functions on $U$. 
\end{definition}

Recall that for a singular analytic distribution $\mcd_M$ on an irreducible analytic subset $M$ in a complex manifold $V$ 
by $M^0_{reg}\subset M$ we denote the open 
and dense subset of points regular both for $M$ and $\mcd_M$; the complement $M\setminus M^0_{reg}\subset V$ is an analytic subset. 

\begin{proposition} \label{propan2} Let $\mcd_M$ be a singular analytic distribution on an irreducible analytic subset $M$ in a complex 
manifold $V$. Its nonsingular one-dimensional integral elements form an  open dense subset in the projectivized bundle 
$\mathbb P(TM_{reg})$ that is an analytically constructible subset in  $\mathbb P(TV)$. 
\end{proposition}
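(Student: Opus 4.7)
The plan is to exhibit the set of nonsingular $1$-dimensional integral elements as the locus where an explicit rank function attains its maximum, and then to unwind the definitions of $M_{reg}^0$ and $\mcd_M$ to conclude analytic constructibility in $\mathbb P(TV)$.

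First I pass to the regular locus $M_{reg}^0\subset M$ of points smooth for both $M$ and $\mcd_M$: there $\mcd_M$ is a genuine holomorphic distribution, locally cut out by $1$-forms $\omega_1,\dots,\omega_N$ on an open set of $V$. Since each $d\omega_i$ is a $2$-form, it vanishes on every $1$-dimensional subspace, so Definition \ref{int-el} is automatically satisfied and every line $\ell\subset\mcd_M(x)$ is a $1$-dimensional integral element. Hence the set of all $1$-dimensional integral elements is exactly $\mathbb P(\mcd_M|_{M_{reg}^0})$, an analytic projective subbundle of $\mathbb P(TV)|_{M_{reg}^0}$.

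Next I make nonsingularity concrete via the polar space. For $\ell=\mathbb C v$, set $P(\ell)=\{w\in\mcd_M(x):d\omega_i(v,w)=0 \text{ for all } i\}$. Antisymmetry of $d\omega_i$ forces $\ell\subset P(\ell)$, and by Remark \ref{rpencil} the ambient $2$-dimensional integral elements containing $\ell$ form $\mathbb P(P(\ell)/\ell)$ (possibly a point or empty). Their dimension equals $\dim P(\ell)-2$. Writing $A(\ell)$ for the holomorphic linear map $w\mapsto(d\omega_i(v,w))_i$ on $\mcd_M(x)$, this is $\dim\mcd_M-2-\mathrm{rank}\, A(\ell)$. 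The rank of $A(\ell)$ is lower semi-continuous in $\ell$, so the open locus where it is maximal has analytic complement (the common vanishing locus of appropriate minors); density on each connected component of the irreducible $\mathbb P(\mcd_M|_{M_{reg}^0})$ follows immediately.

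Finally, analytic constructibility in $\mathbb P(TV)$ follows by assembling the pieces. The regular locus $M_{reg}^0$ is $M$ minus the analytic subsets $M_{sing}$ and $Sing(\mcd_M)$; hence $\mathbb P(TV)|_{M_{reg}^0}$ is the analytic set $\mathbb P(TV)|_M$ with an analytic subset removed. Inside it, $\mathbb P(\mcd_M|_{M_{reg}^0})$ is locally cut out by the analytic conditions $v\in T_xM$ and $\omega_i(v)=0$, and the nonsingular subset is the further complement of an analytic minors locus. The main technical point to watch will be coherence: the locally defined $\omega_i$ and the polar-rank minors must patch globally so that the ``rank is not maximal'' locus is a genuine analytic subset of $\mathbb P(TV)|_{M_{reg}^0}$. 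This follows from the standard fact that the annihilator of $\mcd_M$ is a coherent subsheaf of the cotangent sheaf on $M_{reg}^0$.
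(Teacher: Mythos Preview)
Your proof is correct and follows essentially the same approach as the paper's: both characterize nonsingularity of a line $\ell=\mathbb C v$ as a maximal-rank condition on the interior products $i_v d\omega_j$, from which openness, density (via irreducibility), and analytic constructibility follow directly. The only cosmetic difference is that the paper computes the rank on $T_xV$ by adjoining the defining forms $df_i$ of $M$, whereas you compute it intrinsically on $\mcd_M(x)$; these differ by a constant on $M_{reg}^0$ and yield the same locus.
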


\begin{proof} Fix a point $x_0\in M$. Let $f_1,\dots,f_r$ be holomorphic functions on a neighborhood $U=U(x_0)\subset V$ that define $M$: 
$M\cap U=\{ f_1=\dots=f_r=0\}$, $T_xM=\operatorname{Ker}(df_1(x),\dots,df_r(x))$ for every $x\in M_{reg}\cap U$. 
Let $\omega_1,\dots,\omega_s$ be holomorphic 1-forms on $U$ that 
define a singular analytic distribution  whose restriction to $M\cap U$ coincides with $\mcd_M$. A non-zero vector $v\in TV$ 
generates a non-singular integral element, if and only if its base point $x=x(v)$ lies in $M^0_{reg}$, $(df_i)(v)=0$ for $i=1,\dots,r$, 
$\omega_j(v)=0$ for  $j=1,\dots,s$   
and the system of $(r+s)$ 1-forms $df_i(x)$, $i_v(d\omega_j)(x)$, $i=1,\dots,r$, $j=1,\dots,s$ has maximal rank. This implies the 
statements of   the proposition.
\end{proof}

\begin{remark} \label{int-const}  For every singular analytic distribution $\mcf$ on an analytic set $M$ in a complex manifold $V$ 
the set of its two-dimensional integral elements (integral planes) is an analytically constructible subset in $Gr_2(TV)|_M$. 
The proof of this statement is analogous to the above proof of Proposition \ref{propan2}. Therefore, its image under the  projection to 
$M$ is also analytically constructible (Chevalley--Remmert Theorem). 
\end{remark}

\begin{proposition} \label{altern} Every three-dimensional singular analytic distribution $\mcf$ 
on an irreducible analytic variety $M$ satisfies one of the two following incompatible statements:

1) either the corresponding Pfaffian system $\mcf_{3,2}$ on $M^0_{reg}$ is involutive;

2) or there exists a proper analytic subset $\Sigma\subset M$ such that 

a) either for every $x\in M\setminus\Sigma$ the space $\mcf(x)$ contains 
no integral plane;

b) or for each $x\in M\setminus\Sigma$  the space $\mcf(x)$ contains a unique integral plane. 
 \end{proposition}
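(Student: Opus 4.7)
The plan is to analyze the fiberwise structure of the analytic variety $\mathcal{E}\subset Gr_2(\mcf)|_{M^0_{reg}}$ of two-dimensional integral elements of $\mcf$, and to carry out a case analysis driven by upper semi-continuity of its fiber dimension. First I would work pointwise. For $x\in M^0_{reg}$ choose local holomorphic $1$-forms $\omega_1,\dots,\omega_s$ generating the annihilator ideal of $\mcf$; a $2$-plane $E\subset\mcf(x)$ is an integral element exactly when every $d\omega_j|_{\mcf(x)}$ vanishes on it. Each nonzero $d\omega_j|_{\mcf(x)}$ is a $2$-form on the three-dimensional space $\mcf(x)$, so it has a unique one-dimensional kernel $L_j(x)$, and the condition $d\omega_j|_E=0$ becomes $L_j(x)\subset E$, which cuts out a projective line in $Gr_2(\mcf(x))\cong\mathbb{P}^2$. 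An intersection of finitely many projective lines in $\mathbb{P}^2$ is always of exactly one of four types --- the whole $\mathbb{P}^2$, a single line, a single point, or empty --- so the same holds for $\mathcal{E}(x)$.

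Next I would globalize. Since the conditions $d\omega_j|_E=0$ are closed, $\mathcal{E}$ is a closed analytic subset of the proper $\mathbb{P}^2$-bundle $Gr_2(\mcf)|_{M^0_{reg}}$ (cf.\ Remark \ref{int-const}), and the induced projection $\pi\colon\mathcal{E}\to M^0_{reg}$ is proper. By the standard semi-continuity theorem for fiber dimensions of proper holomorphic maps, the function $d(x)=\dim\mathcal{E}(x)$ (with $\dim\varnothing=-1$) is upper semi-continuous, so each level set $U_k=\{x\in M^0_{reg}\mid d(x)\geq k\}$ is a closed analytic subset of $M^0_{reg}$. Because $M^0_{reg}$ is open and dense in the irreducible variety $M$ and is therefore itself irreducible, each $U_k$ is either equal to all of $M^0_{reg}$ or a proper analytic subset of it.

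The three scenarios produced by the pair $(U_0,U_1)$ match exactly the three conclusions of the proposition. If $U_0=\pi(\mathcal{E})$ is a proper subset of $M^0_{reg}$, I would let $\Sigma$ be the closure of $U_0$ in $M$ together with $M\setminus M^0_{reg}$; Remmert's Proper Mapping Theorem applied to $\pi$ inside the Grassmannian bundle over $M$, combined with analyticity of $M\setminus M^0_{reg}$, shows that $\Sigma$ is a proper analytic subset of $M$, and outside $\Sigma$ one has $\mathcal{E}(x)=\varnothing$, which is case~2(a). If $U_0=M^0_{reg}$ but $U_1$ is proper, set $\Sigma=U_1\cup(M\setminus M^0_{reg})$ extended analytically to $M$ in the same manner: outside $\Sigma$ the fiber $\mathcal{E}(x)$ has dimension $0$ and is thus a single point by the four-type classification, which is case~2(b). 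Otherwise $U_1=M^0_{reg}$, so at every $x\in M^0_{reg}$ the set $\mathcal{E}(x)$ is either a pencil of $2$-planes through a common line or all of $\mathbb{P}^2$; in both subcases the union of the $2$-planes it parametrizes is all of $\mcf(x)$, so every $1$-dimensional subspace of $\mcf(x)$ extends to a $2$-dimensional integral element. Combined with the trivial extension of a $0$-dimensional integral element to a $1$-dimensional one, this yields the involutivity of $\mcf_{3,2}$ on $M^0_{reg}$, which is case~1. The three scenarios are pairwise incompatible: involutivity forces $d(x)\geq 1$ everywhere on $M^0_{reg}$, whereas each of cases 2(a) and 2(b) forces $d(x)\leq 0$ on the dense open subset $M^0_{reg}\setminus\Sigma$.

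The main technical obstacle is the extension step: producing $\Sigma$ as an analytic subset of the whole $M$ rather than just of $M^0_{reg}$. This is handled by running the entire semi-continuity argument inside the ambient Grassmannian bundle $Gr_2(TV)|_M$ over the complex manifold $V$ in which $M$ sits, so that the relevant projections are proper on analytic subvarieties of $M$ itself and Remmert's Proper Mapping Theorem directly yields analytic subsets of $M$; analyticity across $M\setminus M^0_{reg}$ follows from the closure being analytic in the ambient bundle. Once this packaging is done, the remainder of the argument is a straightforward mix of elementary projective linear algebra on $\mcf(x)$ with semi-continuity on an irreducible analytic base.
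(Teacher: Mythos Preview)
Your proof is correct and follows essentially the same strategy as the paper's: establish the pointwise trichotomy (no integral plane, a unique one, or enough to fill all of $\mcf(x)$) and then stratify the irreducible base so that one alternative holds generically. The only differences are cosmetic---you unpack the trichotomy via the explicit linear algebra of $2$-forms on a three-dimensional space and projective lines in $\mathbb P^2$, whereas the paper invokes Remark~\ref{rpencil}, and you stratify via upper semi-continuity of fiber dimension where the paper uses Remark~\ref{int-const} together with the Chevalley--Remmert theorem on analytically constructible sets.
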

 
 \begin{proof} If $\mcf(x)$ contains some two distinct integral planes $P_1$, $P_2$, then it contains 
 a pencil of integral planes through the line $L=P_1\cap P_2$,   which saturate the whole 
 three-dimensional space $\mcf(x)$, by Remark \ref{rpencil}. Thus, each $\mcf(x)$ is either a union of integral planes, 
 or contains a unique integral plane, or contains no integral planes. 
 This together with Remark \ref{int-const} and Chevalley--Remmert Theorem easily implies that 
 $M_{reg}^0$ is a  disjoint union of  three analytically constructible subsets of points $x\in M$ where
 $\mcf(x)$ satisfy respectively 
  one of the three latter statements. One of them contains a complement to a proper analytic subset 
  $\Sigma\subset M$. Therefore, either $\mcf(x)$ is a union of integral planes for all $x\in M\setminus\Sigma$ 
  (and hence, for all $x\in M_{reg}^0$ by analyticity) 
  and the Pfaffian system $\mcf_{3,2}$ is involutive, or some of statements a) or 
  b) of Proposition \ref{altern} holds. This proves Proposition \ref{altern}. 
  \end{proof} 

\def\pp{\mathbb P(\mcd)}

\subsection{Case of three-dimensional distribution}

Here we consider that $dim \mcd_M=3$. The subcase, when the Pfaffian system $(\mcd_M)_{2,3}$ 
is non-involutive, is reduced to the two-dimensional case by Proposition \ref{altern}. Indeed, the image 
of the projection to $M$ of the set of  integral planes is analytically constructible, by Remark \ref{int-const}, and  contains the integral surface $S$. Therefore, it contains an open dense  complement to an analytic subset in $M$, since $M$ is the minimal 
analytic set containing $S$. Hence, if the  system is not involutive, then there exists a proper analytic subset $\Sigma\subset M$ 
such that for every $x\in M\setminus\Sigma$ the space $\mcf(x)$ contains a unique integral plane (Proposition \ref{altern}). 
The latter integral planes form a two-dimensional intrinsic singular analytic distribution on $M$,  by Definition \ref{defint} and Remark  
 \ref{int-const}. Then we apply the arguments from Subsection 4.1 to the latter  two-dimensional distribution instead of $\mcd_M$. 
Thus, without loss of generality  we consider that {\it the Pfaffian system $(\mcd_{M})_{2,3}$ is involutive.} 

Corollary \ref{corx} easily implies the next proposition and corollary, which state that there 
 exist a connected open subset $V\subset M^0_{reg}$,  a regular analytic hypersurface 
$V_a\subset V$, $\nu_B(V_a)\subset a$, and an analytic field $\mcl$ of  one-dimensional nonsingular integral elements in $\mcd_M$ on $V$ 
whose all complex orbits intersect $V_a$ transversely (plus a mild genericity condition (\ref{cond})). The germ through every $x\in V$ 
of complex orbit is included into an integral 
surface of the distribution $\mcd_M$, by Theorem \ref{ck}. Condition (\ref{cond}) implies that the integral surface is non-trivial, and hence, 
corresponds to an  open set of  quadrilateral orbits of a 4-reflective billiard $a$, $b(x)$, $c(x)$, $d(x)$.  If $x\in V_a$, then the 
mirrors $a$ and $b(x)$ intersect at $\nu_B(x)$, and we deduce that either $c(x)=a$, or $b(x)$ is a line (Corollary \ref{inters}). 
This easily implies that either $\nu_C(M)\subset a$, or the image under the projection $\nu_B$ of every analytic curve 
tangent to $\mcd_M$ is a line.  We show that none of the latter cases is possible. The contradiction thus obtained will prove 
Theorem \ref{tallalg}.

Let $M'\subset M^0_{reg}$ be the subset from (\ref{defmo}) defined for our three-dimensional distribution $\mcd_M$. 
It is open, dense and the complement $M\setminus M'$ is analytic, as at the same place. By definition, $d\nu_a,d\nu_G
\not\equiv 0$ on $\mcd_M(x)$ for every $x\in M'$.

\begin{proposition} There exist an $x\in M'$ and a one-dimensional nonsingular integral element $\mcl_x\subset\mcd_M(x)$ 
such that $\nu_B(x)\in a$ and 
\begin{equation}(d\nu_a(x))(\mcl_x)\neq0, \ (d\nu_G(x))(\mcl_x)\neq0 \text{ for every } G=B,C,D;\label{cond}\end{equation}
\begin{equation}\text{ the line } (d\nu_B(x))(\mcl_x) 
\text{ is transverse to } T_{\nu_B(x)}a.\label{cond1}\end{equation}
\end{proposition}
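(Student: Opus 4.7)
My approach will combine Corollary \ref{corx} with a projective genericity argument in $\mathbb P(\mcd_M(x))$, followed by a Proposition \ref{neighbmir}-based degeneracy argument. First, I apply Corollary \ref{corx} to the open dense subset $N = M' \subset M$, obtaining an open dense subset $\ha_{M'} \subset \ha$ such that for every $A \in \ha_{M'}$, the set $\nu_B(W_A \cap M') \cap a$ contains, up to a discrete subset, the $\pi_a$-image of an open dense subset of $\ha$. Consequently, the analytic subvariety $V_a := \nu_B^{-1}(a) \cap M$ meets $M'$ in a set of positive dimension whose projection $\nu_a$ is dense in $\ha$; any smooth point $x$ of $V_a$ lying in $M'$ serves as a candidate for the proposition.

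Next, at such a candidate $x$, I will select $\mcl_x$ by genericity in $\mathbb P(\mcd_M(x)) \simeq \mathbb P^2$. By Proposition \ref{propan2}, the nonsingular one-dimensional integral elements form an open dense subset of $\mathbb P(\mcd_M(x))$. Each of the four non-vanishing conditions in (\ref{cond}) excludes a proper projective hyperplane, because $x \in M'$ forces $d\nu_a|_{\mcd_M(x)}$ and $d\nu_G|_{\mcd_M(x)}$ for $G = B, C, D$ to be nonzero linear functionals on the three-dimensional space $\mcd_M(x)$. Thus a Zariski-open nonempty set of $\mcl_x$ is nonsingular and satisfies (\ref{cond}). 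For the remaining condition (\ref{cond1}), I will use that the involutivity of $(\mcd_M)_{2,3}$ together with Theorem \ref{ck} provide, through such an $\mcl_x$, a two-dimensional integral surface $T$ of $\mcd_M$ tangent to $\mcl_x$ at $x$. By Proposition \ref{birktang}, $T$ corresponds to a 4-reflective billiard $a, b(x), c(x), d(x)$, and $\nu_B(T)$ is an open arc of $b(x)$ with tangent $L_B(x) = T_{\nu_B(x)} b(x)$. Therefore $d\nu_B(\mcl_x) \subseteq L_B(x)$, and combined with $d\nu_B(\mcl_x) \neq 0$ from (\ref{cond}), this yields $d\nu_B(\mcl_x) = L_B(x)$; condition (\ref{cond1}) is then equivalent to the transversality $L_B(x) \neq T_{\nu_B(x)} a$.

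Finally, I must exhibit some $x \in V_a \cap M'$ satisfying this transversality. Suppose, for contradiction, that $L_B(x) = T_{\nu_B(x)} a$ holds identically on $V_a \cap M'$. Since every line in $\mcd_M(x)$ is contained in a two-dimensional integral element by involutivity (Remark \ref{rpencil}), and every such integral element projects via $d\nu_B$ onto $L_B(x)$ (by the argument of the previous paragraph), we have $d\nu_B(\mcd_M(x)) = L_B(x) = T_{\nu_B(x)} a$, so $\mcd_M(x) \subset T_x V_a$; any two-dimensional integral surface of $\mcd_M$ through a point of $V_a$ then remains locally in $V_a$, and Proposition \ref{birktang} forces the corresponding mirror $b(x)$ to satisfy $\nu_B(T) \subset a$, hence $b(x) = a$ as irreducible analytic curves. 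Each $x \in V_a \cap M'$ therefore yields a 4-reflective billiard $a, a, c(x), d(x)$ with a two-dimensional family of non-degenerate orbits $ABCD$, $A, B \in a$, $A \neq B$. Choosing a curve of such orbits along which $A_n, B_n \to A_\infty \in a$ and extracting subsequential limits $C_n \to C, D_n \to D$ in the compact space $\cp^2$, I obtain a limit orbit $A_\infty A_\infty C D$ in the 4-reflective set $U$. For a generic $A_\infty$, this orbit satisfies the non-degeneracy hypotheses of Proposition \ref{neighbmir} (with $k=4$, $s=1$, $r=2$, the mirror $a$ not being a line since $a$ is transcendental), which forbids such a limit orbit — a contradiction. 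Thus some $x \in V_a \cap M'$ satisfies (\ref{cond1}), completing the proof. The main obstacle is this final step: ruling out the identity $L_B \equiv T_{\nu_B} a$ on $V_a$ requires carefully combining involutivity, the billiard-interpretation of integral surfaces via Proposition \ref{birktang}, the analysis of limits of non-degenerate 4-reflective orbits, and the forbidden-degeneracy result of Proposition \ref{neighbmir}.
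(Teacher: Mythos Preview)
Your overall strategy matches the paper's: apply Corollary \ref{corx} to $N=M'$ to produce points $x\in M'$ with $\nu_B(x)\in a$, obtain a nonsingular $\mcl_x$ satisfying (\ref{cond}) by genericity in $\mathbb P(\mcd_M(x))$ (Proposition \ref{propan2} plus the hyperplane conditions from (\ref{defmo})), and then rule out the possibility that (\ref{cond1}) fails everywhere on $V_a$ by showing this would force $b(x)=a$. Your observation that $d\nu_B(\mcd_M(x))\subset L_B(x)$ always holds, so that (\ref{cond1}) reduces to the pointwise condition $L_B(x)\neq T_{\nu_B(x)}a$, is correct and makes the logic transparent; the paper phrases the same fact as $\mcd_M(x)\not\subset T_xV_a$.

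The difference lies in the final contradiction. The paper derives $b(x)=a$ exactly as you do and then invokes Corollary \ref{inters} in one line: a 4-reflective billiard with $a=b$ non-linear and a non-marked intersection point is forbidden there outright. You instead attempt to reprove the relevant part of Corollary \ref{inters} from scratch, via a limit $A_n,B_n\to A_\infty$ and Proposition \ref{neighbmir}. This is the route taken in Claim 1 of Subsection 3.5, but your version has two gaps. First, extracting limits $C_n\to C$, $D_n\to D$ in $\cp^2$ does not place the limit quadrilateral in the 4-reflective set $U\subset\ha\times\ha\times\widehat{c(x)}\times\widehat{d(x)}$; for that you must invoke properness of $U\to\ha\times\hb$ (Corollary \ref{cepi}), which gives the needed compact nonempty fibre over $(A_\infty,A_\infty)$ directly. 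Second, the assertion that ``for a generic $A_\infty$ this orbit satisfies the non-degeneracy hypotheses of Proposition \ref{neighbmir}'' requires showing that along the one-parameter family of degenerate orbits $AACD$ the vertices $C,D$ vary and stay away from $A$ and from marked/tangency configurations; this is not automatic and is exactly what the careful analysis in Subsection 3.5 establishes. Citing Corollary \ref{inters} avoids both issues and is the cleaner path.
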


\begin{proof} 
Let $\wt Q$ denote the set of one-dimensional 
nonsingular integral elements $\mcl_x\subset\mcd_M(x)$, $x\in M'$, satisfying (\ref{cond}). 
Let $Q\subset M'$ denote its projection to $M$. 
The sets $\wt Q$ and $Q$ are  analytically constructible subsets in $\mathbb P(T\mcp_a)$ and $M$.
 They are open and dense subsets in $\mathbb P(\mcd_M)$ and $M$ respectively. The two latter statements follow from definition, 
 Proposition \ref{propan2} and Chevalley--Remmert Theorem. The intersection $\nu_B(Q)\cap a$ contains 
a regularly embedded disk $\alpha\subset a$ without isotropic tangent lines, and $V_a=\nu_B^{-1}(\alpha)\cap Q$ is 
a hypersurface (Corollary \ref{corx}). 

\medskip

{\bf Claim.} {\it $\mcd_M(x)\not\subset T_xV_a$ for an open dense set of points $x\in V_a$.} 

\medskip

\begin{proof} Suppose the contrary: each germ of  integral curve (and hence, surface) of the distribution $\mcd_M$ through  each 
point in $V_a$ lies in $V_a$. Fix an $x\in V_a$, a nonsingular integral element $\mcl_x\subset \mcd_M(x)$ satisfying (\ref{cond}), a germ of 
integral curve $\Gamma$ tangent to $\mcl_x$ and a germ of integral surface $\hat S$ containing $\Gamma$ (given by Theorem \ref{ck}). 
The surface $\hat S$  lies in $V_a$, is non-trivial by (\ref{cond}), and hence, represents an open set of quadrilateral orbits of a 4-reflective billiard with two coinciding non-linear 
mirrors $a=b(x)$.  But the latter billiard cannot exist by Corollary \ref{inters}. This proves the claim.
\end{proof}

For every $x\in V_a$ such that $\mcd_M(x)\not\subset T_xV_a$ a generic one-dimensional integral element $\mcl_x\subset\mcd_M(x)$ 
is nonsingular and satisfies conditions (\ref{cond}), (\ref{cond1}). This proves the proposition.
\end{proof}


\begin{corollary} There exist an open subset $V\subset M'$, a regularly embedded disk $\alpha\subset a$ without isotropic tangent lines,  
an analytic hypersurface $V_a\subset V$ with 
$\nu_B(V_a)\subset \alpha$ and an analytic line field $\mcl$ on $V$ transverse to $V_a$ such that the lines  of the field
$\mcl$ are nonsingular integral elements satisfying  inequalities  (\ref{cond}) 
and each its complex orbit  in $V$ intersects $V_a$. 
\end{corollary}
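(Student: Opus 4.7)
The strategy is to promote the pointwise data $(x_0,\mcl_{x_0})$ supplied by the previous proposition to a local analytic family, exploiting that all the required properties are open conditions on the projective bundle $\mathbb P(\mcd_M)$, and then to install $V_a$ as a transverse cross section of a holomorphic flow box of the line field $\mcl$.

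First I would fix a regularly embedded disk $\alpha\subset a$ containing $\nu_B(x_0)$ and free of isotropic tangent lines; this is possible because the isotropic tangency points form a discrete subset of $\ha$ and $\nu_B(x_0)$ is a regular non-isotropic-tangent point of $a$ (needed already for $T_{\nu_B(x_0)}a$ to be meaningful in (\ref{cond1})). Next, I would extend $\mcl_{x_0}$ to an analytic line field $\mcl$ on a neighborhood $V\subset M'$ of $x_0$: since $M_{reg}^0$ is smooth and $\mcd_M$ is a regular rank-three analytic subbundle of $TM_{reg}^0$, one trivializes $\mcd_M$ near $x_0$ by three analytic vector fields $X_1,X_2,X_3$, chooses constants $c_i\in\cc$ with $\mcl_{x_0}=\cc\cdot\sum c_iX_i(x_0)$, and sets $\mcl(x)=\cc\cdot\sum c_iX_i(x)$.

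Then I would shrink $V$ so that all required properties of $\mcl$ hold on $V$ (respectively on $V_a:=V\cap\nu_B^{-1}(\alpha)$): nonsingularity of each $\mcl_x$ as a one-dimensional integral element is open in $\mathbb P(\mcd_M)$ by Proposition \ref{propan2}, the non-vanishing relations (\ref{cond}) are manifestly open, and the transversality (\ref{cond1}) is open where $\nu_B(x)\in\alpha$. I would then verify that $V_a$ is a smooth analytic hypersurface transverse to $\mcl$: writing $\alpha$ near $\nu_B(x_0)$ as $\{f=0\}$ with $df(T_{\nu_B(x_0)}a)=0$ and $df\neq 0$, condition (\ref{cond1}) yields $df(d\nu_B(\mcl_{x_0}))\neq 0$, so $d(f\circ\nu_B)(x_0)\neq 0$; this makes $V_a$ smooth near $x_0$ and simultaneously gives $\mcl_{x_0}\not\subset T_{x_0}V_a$, both statements persisting on a full neighborhood of $x_0$ in $V_a$ after shrinking.

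Finally, I would apply the holomorphic flow-box theorem to the analytic line field $\mcl$ with transverse cross section $V_a$: shrinking $V$ once more, there is a holomorphic chart $V\cong V_a\times D$ (with $D\subset\cc$ a disk centered at $0$) in which the orbits of $\mcl$ become the fibers $\{\ast\}\times D$; every complex orbit of $\mcl$ in $V$ then meets $V_a\times\{0\}=V_a$, which is the last assertion. I do not foresee a substantive obstacle beyond the preceding proposition: the remaining content is openness of the relevant conditions in $\mathbb P(\mcd_M)$ together with a local straightening of an analytic line field transverse to an analytic hypersurface.
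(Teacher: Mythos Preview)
Your proposal is correct and follows essentially the same approach as the paper: the paper's proof is the single sentence ``The corollary follows immediately from the proposition and openness of the set of nonsingular integral elements satisfying (\ref{cond}),'' and what you have written is a careful unpacking of precisely that one-line argument --- local trivialization of $\mcd_M$ to spread $\mcl_{x_0}$ into a line field, openness in $\mathbb P(\mcd_M)$ of the nonsingularity and non-vanishing conditions, and a flow-box chart to arrange that every orbit meets the cross section $V_a$. Your use of (\ref{cond1}) to certify simultaneously the smoothness of $V_a=\{f\circ\nu_B=0\}$ and its transversality to $\mcl$ is exactly the point, and is left implicit in the paper.
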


The corollary follows immediately from the proposition and openness of the set of nonsingular integral elements satisfying (\ref{cond}). 

\begin{proposition} Let $V$, $V_a$ and $\mcl$ be as in the above corollary. Then there are two possible cases:

Case 1): $\nu_C(V)\subset a$;

Case 2):  the projection $\nu_B$ sends complex orbits of the field $\mcl$ to lines. 
\end{proposition}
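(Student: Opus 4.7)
The plan is to apply Corollary \ref{inters} along every orbit of the line field $\mcl$, using Theorem \ref{ck} to attach a 4-reflective billiard to each orbit, and then to propagate the dichotomy across $V$ by analyticity.

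Fix $x\in V$ and let $\gamma_x$ denote the orbit of $\mcl$ through $x$. Since $\mcl$ consists of nonsingular one-dimensional integral elements of $\mcd_M$ and the Pfaffian system $(\mcd_M)_{2,3}$ is (assumed) involutive, Theorem \ref{ck} extends $\gamma_x$ to a germ of two-dimensional analytic integral surface $S_x\subset M$ containing $\gamma_x$. Condition (\ref{cond}), which holds along $\mcl$, forces $S_x$ to be a non-trivial integral surface in the sense of Subsection 2.7, so by Proposition \ref{birktang} the three projections $b_x=\nu_B(S_x)$, $c_x=\nu_C(S_x)$, $d_x=\nu_D(S_x)$ are the three remaining mirrors of a 4-reflective analytic billiard $a,b_x,c_x,d_x$ whose open set of quadrilateral orbits is $S_x$.

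Each orbit $\gamma_x$ hits the transversal $V_a$ at a point $x_0$ with $\nu_B(x_0)\in\alpha\subset a$. Thus the germs of $a$ and $b_x$ meet at $\nu_B(x_0)$, and this intersection is non-marked for both curves: non-marked for $a$ by the choice of the disk $\alpha$ (without isotropic tangent lines), and non-marked for $b_x$ by Remark \ref{lreg} applied to $x_0\in V\subset M'$. Because $a$ is transcendental and in particular not a line, Corollary \ref{inters} applies to $a,b_x,c_x,d_x$ at $\nu_B(x_0)$ and yields the dichotomy: either $b_x$ is a line, or $c_x=a$.

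To globalize, note that a coherent analytic choice of $S_x$ can be arranged on $V$ by using $V_a$ as a transversal initial data surface for the involutive system $(\mcd_M)_{2,3}$ and applying the analytic extension provided by Theorem \ref{ck}. Consequently $b_x,c_x,d_x$ depend analytically on $x$, and the subsets
$$A_1=\{x\in V : b_x\text{ is a line}\},\qquad A_2=\{x\in V : c_x=a\}$$
are $\mcl$-saturated analytic subsets of $V$ whose union is $V$. Passing to an irreducible connected component of $V$, one of $A_1$, $A_2$ fills $V$: if $A_1=V$ then each orbit of $\mcl$ satisfies $\nu_B(\gamma_x)\subset b_x\subset$ a line, which is Case 2; if $A_2=V$ then $\nu_C(V)\subset\bigcup_x c_x=a$, which is Case 1. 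The main technical obstacle is precisely the analyticity of $A_1$ and $A_2$, i.e.\ the analytic dependence of $S_x$ on $x$: the Cauchy--Kovalevskaya-type extension in Theorem \ref{ck} is not unique in a general Pfaffian problem, but by prescribing the initial data on the fixed analytic hypersurface $V_a$ (tranverse to $\mcl$ by construction of $V_a$ in the preceding corollary) the extension becomes canonical and depends analytically on the initial trace, which is what is needed.
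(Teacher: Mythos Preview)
Your argument is correct and follows essentially the same route as the paper: extend each $\mcl$-orbit to an integral surface via Theorem \ref{ck}, apply Corollary \ref{inters} at the point where the orbit meets $V_a$, and then use analyticity to make the dichotomy uniform over $V$. One minor correction: the non-markedness of $b_x$ at $\nu_B(x_0)$ should be justified directly from condition (\ref{cond}) and the definition of $M^0_a$ rather than Remark \ref{lreg}, which pertains to the different setting of $\mcd_{ab}$ in Section 3.
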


\begin{proof} For every $x\in V$ the germ of the orbit of the field $\mcl$ through $x$ lies in a germ of integral surface of the 
distribution $\mcd_M$ (Theorem \ref{ck}). 
The latter surface is non-trivial by the inequalities from (\ref{cond}), and hence, is given by an open set 
of quadrilateral orbits of a 4-reflective billiard $a$, $b(x)$, $c(x)$, $d(x)$. If $x\in V_a$, then the mirrors $a$ and $b(x)$ intersect at 
the point $B(x)=\nu_B(x)$, and the latter is not marked for their corresponding 
local branches. This follows from  construction and the inequalities from (\ref{cond}). Hence, for every $x\in V_a$ 
either $c(x)=a$, or $b(x)$ is a line (Corollary \ref{inters}). Each orbit of the field $\mcl$ intersects $V_a$, by 
assumption. Hence, either the projection $\nu_C$ sends it to $a$, or the projection $\nu_B$ sends it to a line. One of the two latter 
statements holds for all the orbits, by analyticity. This proves the proposition.
\end{proof}

Now for the proof of Theorem \ref{tallalg} it suffices to show that none of the cases from the above proposition is possible.

Case 1): $\nu_C(V)\subset a$. Then $\nu_C(M)\subset a$, and we get a contradiction, as at the end of Subsection 4.1. 
Hence, Case 1) is impossible. 

Case 2): $\nu_C(M)\not\subset a$. Let  $V$ and $\mcl$ be the same, as in the above corollary. Let us  deform $\mcl$. 
The set of  line fields $\mcl$  satisfying the conditions of the corollary is open in the space of line fields contained in the distribution 
$\mcd_M$. This together with the corollary implies that for every line field $\mcl$ contained in $\mcd_M$ the projection $\nu_B$ sends 
each its complex orbit to a line. This is equivalent to say that each analytic curve in $M^0_{reg}$  tangent to $\mcd_M$ is sent to a line by $\nu_B$. 
In particular, this holds for 
every one-parametric family of quadrilateral orbits lifted to $M$ of the initial billiard $a$, $b$, $c$, $d$ with  variable $B\in b$. 
This implies that the curve $b$ is a line. The contradiction thus obtained proves Theorem \ref{tallalg}. The proof of Theorem \ref{an-class} 
is complete.

\section{Applications to real pseudo-billiards}

In Subsection 5.1 we introduce and classify the germs of 4-reflective $C^4$-smooth real planar pseudo-billiards. 
The proof of the classification Theorem \ref{class} is presented in the same subsection (analytic case) and in Subsection 5.2 
(smooth case). In the same Subsection 5.2 we prove Theorem \ref{measure} showing that there are no $C^4$-smooth pseudo-billiards with 
only two skew  reflection laws 
at some neighbor mirrors and a positive measure set of 4-periodic orbits.  In Subsections 5.3, 5.4 we 
present applications of Theorems \ref{class} and \ref{measure} respectively to Tabachnikov's Commuting Billiard Conjecture and 
4-reflective Plakhov's Invisibility Conjecture.

\subsection{Classification of real planar 4-reflective pseudo-billiards}
\def\rpd{\mathbb{RP}^2}

 Here by {\it real smooth (analytic) curve}  in  $\rr^2$ or $\rpd$  we mean the image of either $\mathbb R$, or $S^1$ under a locally 
non-constant smooth (analytic) mapping to $\rr^2$ (respectively, $\rpd$). A smooth (analytic) {\it germ}  of curve is given by a 
smooth (analytic) germ of immersion $(\rr,0)\to\rr^2$ ($(\rr,0)\to\rpd$). 

  
 \begin{definition} \cite[remark 1.6]{alg} 
 Let a line $L\subset\rr^2$ and a triple of points  $A,B,C\in\rr^2$ be  such that $B\in L$, $A,C\notin L$ and 
 the lines $AB$, $BC$ are symmetric with respect to the line $L$. We say that the triple 
 $A$, $B$, $C$ and the line $L$ satisfy the {\it usual reflection law}, if the points $A$ and $C$ lie on the same side from the line $L$. 
 Otherwise, if they are on different sides from the line $L$, we say that the {\it skew reflection law} is satisfied.
 \end{definition}
 
 \begin{example} In every planar billiard orbit each triple of consequent vertices satisfies the usual reflection law with respect to the 
 tangent line to the boundary of the billiard at the middle vertex. 
 \end{example}
 
 \begin{definition} (cf. \cite[definition 6.1]{alg}) A {\it real planar  pseudo-billiard} is  a collection of $k$ curves $a_1,\dots,a_k\subset\rr^2$ 
 called {\it mirrors} with  
a prescribed reflection law on each curve $a_j$: either usual, or skew. Its {\it $k$-periodic   orbit}  
  is a $k$-gon $A_1\dots A_k$, $A_j\in a_j$, such that for every $j=1,\dots,k$ one has $A_j\neq A_{j\pm1}$,
 $A_jA_{j\pm1}\neq T_{A_j}a_j$ and the lines $A_jA_{j-1}$, $A_jA_{j+1}$ are symmetric with respect to the tangent line $T_{A_j}a_j$ so that 
 the triple $A_{j-1}$, $A_j$, $A_{j+1}$ and the line $T_{A_j}a_j$ satisfy the reflection law corresponding to $a_j$.  
 Here we set $a_{k+1}=a_1$, $A_{k+1}=A_1$, $a_0=a_k$, $A_0=A_k$. A real pseudo-billiard is called {\it (piecewise) analytic/smooth}, if so are its curves.  A {\it germ} of real pseudo-billiard is a collection of $k$ {\it germs} of  curves $(a_j,A_j)$ with prescribed reflection laws 
 for which the marked $k$-gon $A_1\dots A_k$ is a $k$-periodic orbit. A germ of pseudo-billiard is called 
 {\it $k$-reflective,} if the set of its $k$-periodic orbits has non-empty interior: contains a two-parameter family. 
 A (germ of) pseudo-billiard is called {\it measure $k$-reflective,} if the set of its $k$-periodic orbits has positive two-dimensional Lebesgue 
 measure.  This means that in the set of $k$-orbits $A_1\dots A_k$ that satisfy the corresponding reflection laws at $A_j$ for all $j\neq1,k$ 
 the set of $k$-periodic ones (i.e., those satisfying the reflection laws at $A_1$, $A_k$) has positive two-dimensional Lebesgue measure.
 \end{definition}
 
  \begin{remark} \label{rpseudo} A  $k$-reflective pseudo-billiard is automatically measure $k$-reflective. 
   In the analytic case measure $k$-reflectivity is 
 equivalent to $k$-reflectivity.   The  interior  points  of the set of $k$-periodic orbits will be called {\it $k$-reflective orbits}, cf. definition 6.1 in 
 loc. cit. 
  The complexification of each $k$-reflective planar analytic pseudo-billiard is a $k$-reflective complex billiard. 
 \end{remark}
 
 \begin{convention} Given two germs of smooth curves $(a,A)$, $(c,C)$ in  $\rr^2\subset\rpd$, 
 we say that $a=c$, if they lie in the same {\bf analytic} curve in $\rpd$. 
 \end{convention}
 
  \begin{theorem} \label{class} A   germ of $C^4$-smooth real planar pseudo-billiard   $(a,A)$, $(b,B)$, $(c,C)$, $(d,D)$  
  is  4-reflective, if and only if it has one of the following types:
 
 1) $a=c$ is a line,  the curves $b,d\neq a$ are symmetric with respect to it;
 
 2) $a$, $b$, $c$, $d$ are distinct lines  through the same point $O\in\mathbb{RP}^2$, 
the line pairs $(a,b)$, $(d,c)$ are transformed one 
 into the other by  rotation around $O$ (translation, if $O$ is an infinite point), see Fig.\ref{fig-lines}; 
 
 3) $a=c$, $b=d$, and they are distinct confocal conics: either ellipses, or hyperbolas, or ellipse and hyperbola, or parabolas. 
 
 In every 4-reflective orbit the  reflection law at each pair of opposite vertices is the same; it is skew for at least one opposite vertex pair. 
  \end{theorem}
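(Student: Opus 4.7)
The plan is to split the proof into two parts: first the real analytic case, handled by complexification and Theorem \ref{an-class}, then the $C^4$-smooth case, reduced to the analytic one by Cartan--Kuranishi--Rashevskii prolongation theory from Subsection 4.2, following the scheme indicated in the introduction.

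For the analytic case I would complexify each germ $(a_j,A_j)\subset\rr^2$ to an irreducible germ of complex analytic curve in $\cp^2$, noting that the usual/skew distinction disappears under complexification because the complex reflection law (Definition \ref{def-law}) is symmetric in the two emerging rays. By Remark \ref{rpseudo} the complexified germ is 4-reflective in the sense of Definition \ref{defref}, so Theorem \ref{an-class} identifies it with one of the types 1)--3). Descending to the real picture, the real mirrors must come from these complex shapes, and for type 1) the real line $a=c$ must be a real line with $b,d$ real-symmetric through it, while types 2) and 3) survive verbatim over $\rr$. The converse direction (that each listed real configuration is genuinely 4-reflective) follows exactly as in Remark \ref{remalg}.

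For the $C^4$-smooth case I would imitate Kudryashov's argument from \cite[section~2]{gk2}. One forms the real analog of the space $M_{ab}$ and of the restricted Birkhoff distribution $\mcd_{ab}$ in $C^4$-regularity; the open set of 4-periodic orbits lifts to a $C^3$ integral surface. Invoking involutivity of the associated Pfaffian system at a generic point and then the prolongation/Cauchy--Kovalevskaya machinery of Theorem \ref{ck}, one shows that the mirror germs admit analytic reparametrizations, i.e.\ they are $C^4$-equivalent to real analytic germs. The analytic case then applies. The main obstacle here is the regularity bookkeeping: one must verify that $C^4$ is precisely the threshold that makes the two differentiations entering $d\omega$ in Definition \ref{int-el} and the resulting involutivity test well-defined, exactly as in the 4-periodic case of Ivrii's conjecture treated in loc.\ cit.

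The reflection-law assertion is then a direct verification on each listed type. In type 1) with $a=c=\ell$ a line and $b,d$ symmetric through $\ell$, each orbit $A_1A_2A_3A_4$ crosses $\ell$ so the reflections at $A_1,A_3\in\ell$ are both skew, while the reflections at $A_2\in b$ and $A_4\in d$ are mutual images under the $\ell$-symmetry and hence share the same law. In type 2) an angle-chase at the common point $O$ using the rotation sending $(a,b)$ to $(d,c)$ shows opposite reflections agree, and the closing condition around $O$ forces skewness for at least one opposite pair (otherwise the orbit would wind around $O$ rather than close). In type 3) the classical focal-string/Poncelet description of confocal billiards delivers the same conclusion: opposite vertices lie on the same conic and therefore share reflection type, with at least one pair skew whenever the orbit is non-degenerate. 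Collecting these three verifications completes Theorem \ref{class}.
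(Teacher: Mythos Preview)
Your analytic case is correct and matches the paper: complexify, apply Theorem \ref{an-class}, descend back to $\rr^2$.

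The $C^4$ reduction, however, has a genuine gap in the setup. You propose to work on ``the real analog of $M_{ab}$ and $\mcd_{ab}$ in $C^4$-regularity'', but $M_{ab}$ is built \emph{from} the mirrors $a,b$; if those are only $C^4$, the space and the distribution are only $C^3$, and then Theorem \ref{ck} (Cauchy--Kovalevskaya) is unavailable---it needs an analytic Pfaffian system. The paper avoids this by working instead on the \emph{mirror-independent} algebraic variety $\mcr^+_{0,4}\subset\mcp^4$ with the algebraic Birkhoff distribution $\mcd^4_+$: the $C^4$ mirrors enter only through the $C^3$ integral surface $S$ that the orbit set lifts to. The mechanism is then \emph{not} ``involutivity of the system plus Cauchy--Kovalevskaya''. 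Rather, one takes two Cartan prolongations (Lemmas \ref{second} and \ref{first}): on the first-prolongation space $\mathcal J_2$ the integral planes of $\mcd^4_+$ are the values of a \emph{two-dimensional} analytic distribution $\wt E$, so the lifted $C^2$ surface $\wt S$ is tangent to $\wt E$, and then the minimal-analytic-set argument (Proposition \ref{anint}) forces $\wt S$---hence $S$---to sit inside an analytic leaf. In short, analyticity comes from ``prolongation cuts the distribution down to dimension two'', not from involutivity of the original system.

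You are also missing the exceptional locus. The prolongation argument only applies off the algebraic set $\Lambda=\{l_1l_3=l_2l_4\}$, and inside $\Lambda$ there are the two symmetric-quadrilateral strata $\Lambda_0,\Lambda_1$ on which the distribution genuinely \emph{is} involutive (these produce exactly the type~1 billiards, Remark \ref{type1}). The paper's $C^4$ proof therefore splits into three cases: $S$ meets the complement of $\Lambda_0\cup\Lambda_1$ (Corollary \ref{an-surf} gives a dense open set of analyticity points, and a separate boundary argument using the analytic classification excludes non-analytic points); $S\subset\Lambda_j$ (type~1 directly); and $S\subset\Lambda_0\cup\Lambda_1$ meeting both (excluded by a rhombus argument). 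Your outline does not see this trichotomy. Finally, the reflection-law statement for types 2) and 3) is not the quick angle-chase you sketch; the paper defers it to the detailed case analysis in \cite[section~6]{alg} (Addendum~2).
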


 {\bf Addendum 1.} {\it In every pseudo-billiard of type 1) from Theorem \ref{class}
  each quadrilateral orbit  $ABCD$ has the same type, as at Fig.\ref{fig-sym}. It 
 is symmetric with respect to the line $a$,  and the reflection law at $A$, $C$ is skew. The reflection law at $B$, $D$ is either usual at both, 
 or skew at both.}
 
{\bf Addendum 2} \cite[addendums 2, 3 to theorem 6.3]{alg}. {\it In  pseudo-billiards of types 2), 3) the 4-reflective orbits have the same types,  
as at Fig.\ref{fig-lines}--\ref{fig-par}.}

\begin{remark} The main result of paper \cite{gk2} (theorem 2) concerns  usual real planar billiards with piecewise $C^4$ boundary; 
the reflection law is usual. It implies that the  quadrilateral orbit set has empty interior. 
This statement also follows from the last statement of Theorem \ref{class}. 
\end{remark}

 \begin{figure}[ht]
  \begin{center}
   \epsfig{file=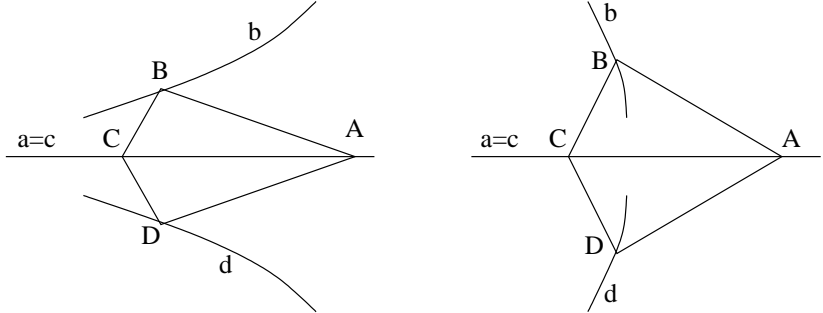}
   \vspace{-0.3cm}
    \caption{4-reflective pseudo-billiards symmetric with respect to a line mirror}
  \label{fig-sym}
  \end{center}
\end{figure}

\begin{figure}[ht]
  \begin{center}
   \epsfig{file=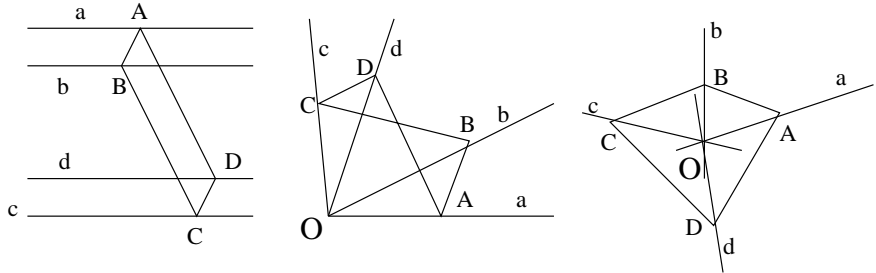}
   \vspace{-0.5cm}
    \caption{4-reflective pseudo-billiards on two positively-isometric line pairs}
\label{fig-lines}
  \end{center}
\end{figure}
\begin{figure}[ht]
  \begin{center}
   \epsfig{file=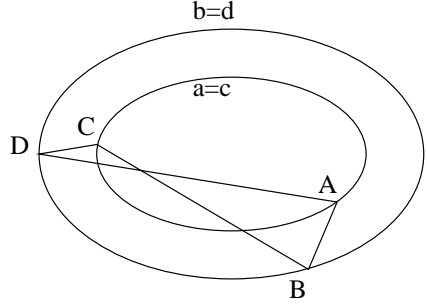}
    \caption{A 4-reflective pseudo-billiard on confocal ellipses}
    \label{fig-ellipses}
  \end{center}
\end{figure}

 \begin{figure}[ht]
  \begin{center}
   \epsfig{file=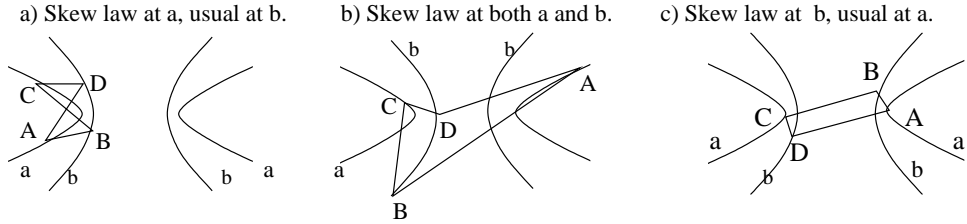}
   \vspace{-0.8cm}
    \caption{Open sets of  orbits on confocal hyperbolas: three types}
    \label{fig-hyp}
  \end{center}
\end{figure}

\begin{figure}[ht]
  \begin{center}
  \vspace{-0.3cm}
   \epsfig{file=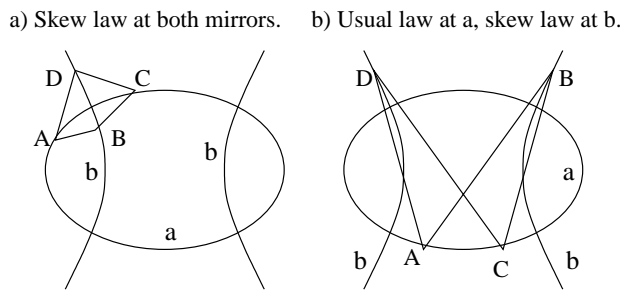}
   \vspace{-0.3cm}
    \caption{Open sets of  orbits on confocal ellipse and hyperbola: two types}
    \label{fig-elhyp}
  \end{center}
\end{figure}

\begin{figure}[ht] 
 \vspace{-0cm}
  \begin{center}
\epsfig{file=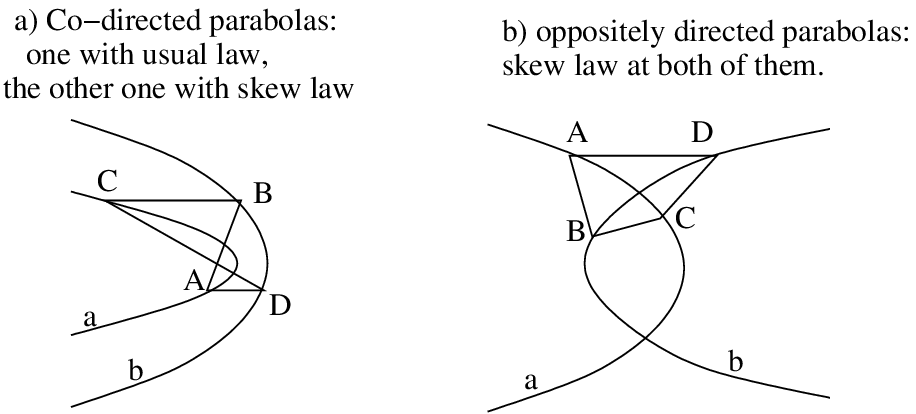}
   \vspace{-0.4cm}
    \caption{Open set of  orbits on confocal parabolas: one type}
    \label{fig-par}
  \end{center}
\end{figure}

Here we prove Theorem \ref{class} for analytic pseudo-billiards. The general smooth case will be treated in the next subsection. We also 
prove the following theorem there, which will be applied to Plakhov's Invisibility Conjecture.

\begin{theorem} \label{measure} There exist no measure 4-reflective $C^4$-smooth planar pseudo-billiard with   
exactly two skew reflection laws at a pair of neighbor mirrors.
\end{theorem}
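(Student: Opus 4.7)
The plan is to reduce Theorem \ref{measure} to Theorem \ref{class}. Suppose, for contradiction, that there exists a $C^4$-smooth planar pseudo-billiard $a$, $b$, $c$, $d$ with a positive measure set of 4-periodic orbits such that exactly two of the four reflection laws are skew, say at the mirrors $a$ and $b$. Pick a Lebesgue density point of the set of 4-periodic orbits and let $A_0B_0C_0D_0$ be the corresponding 4-periodic orbit. The conclusion will follow once the pseudo-billiard germ at $A_0B_0C_0D_0$ is upgraded to a 4-reflective \emph{analytic} germ, because in the analytic case the last statement of Theorem \ref{class} forces the skew reflections to occur at a pair of \emph{opposite} vertices, contradicting the hypothesis that they occur at the pair of neighbor mirrors $a$, $b$.

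The analytic case itself is immediate: by Remark \ref{rpseudo}, measure 4-reflectivity coincides with 4-reflectivity in the analytic category; hence the germ is of one of the three types 1)--3) of Theorem \ref{class}, and in each of these types (see Addendums 1 and 2, together with Figures \ref{fig-sym}--\ref{fig-par}) the two skew reflections are distributed symmetrically at an opposite pair of vertices, never at a pair of neighbor mirrors. The combinatorial verification is local and straightforward case-by-case: in type 1) the reflection at $A,C$ is necessarily skew and the reflection at $B,D$ is of a single common type; in types 2), 3) the symmetry of the configuration under the rotation or the confocal ellipse/hyperbola involution paires up opposite vertices.

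To make the smooth-to-analytic reduction, the plan is to follow the prolongation technique promised at the end of the introduction and used in the proof of the smooth part of Theorem \ref{class} in Subsection 5.2, which itself complexifies Yu.G.~Kudryashov's argument \cite[section 2]{gk2}. Concretely, one writes the 4-reflection condition as a Pfaffian system on the product of jet bundles of the four mirrors and considers its successive Cartan prolongations. Positive measure of the set of 4-periodic orbits allows, by a Lebesgue density argument on the jet images, to conclude that the prolonged system has integral elements of the correct dimension at every jet of a 4-periodic orbit in a full-measure subset, and then the involutivity criterion (Theorem \ref{ck}) produces, through such jets, real-analytic integral surfaces for the prolonged system. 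These integral surfaces correspond to germs of \emph{analytic} pseudo-billiards whose 4-reflective orbit set has non-empty interior, to which the analytic conclusion already established applies.

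The main obstacle is precisely this smooth-to-analytic reduction: unlike in Theorem \ref{class}, whose hypothesis of non-empty interior of the orbit set gives a full 2-parameter family of 4-periodic orbits from the outset, here one starts only with a positive-measure family. One must therefore bootstrap Lebesgue density of 4-periodicity, coupled with the $C^4$-smoothness of the mirrors, into formal integrability of the Pfaffian system to sufficient order, and then invoke involutivity to obtain genuine analytic solutions. Once this step is achieved, the geometric/combinatorial ruling-out of neighbor-skew configurations in types 1)--3) is elementary and already contained in Theorem \ref{class} and its Addendums.
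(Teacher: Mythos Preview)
Your overall strategy is correct and matches the paper: reduce to the analytic case, where the last statement of Theorem \ref{class} immediately rules out the neighbor-skew configuration. The gap is in the smooth-to-analytic reduction, which you explicitly flag as ``the main obstacle'' but do not actually resolve.

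Your sketch appeals to involutivity (Theorem \ref{ck}) to produce analytic integral surfaces, but the Pfaffian system $\mcd^{4,2}_+$ is \emph{not} involutive on the nose: that is precisely why the paper passes to the second prolongation (Lemma \ref{second}), where one obtains a genuine two-dimensional analytic distribution $\wt E$ only on the complement $\mathcal J_2 = \mathcal I_2^0\setminus\pi_{gr}^{-1}(\Lambda)$ of the exceptional hypersurface $\Lambda=\{l_1l_3=l_2l_4\}$. On $\Lambda$ the prolongation degenerates further (Lemma \ref{first}) and on $\Lambda_0\cup\Lambda_1$ the situation is worse still. A generic Lebesgue-density argument cannot by itself circumvent this: if the positive-measure set of 4-periodic orbits happened to accumulate on $\ja^{-1}(\Lambda)$, the prolongation machinery you invoke would break down exactly there.

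What makes the paper's proof go through is a concrete observation specific to the configuration $\alpha=(-1,-1,1,1)$: for such $\alpha$ one has $\ja^{-1}(\Lambda)=\emptyset$. The reason is a sign analysis of the concordant lengths (Definition \ref{concord}, Example \ref{escort}): with interior bisectors at $A_1,A_2$ and exterior bisectors at $A_3,A_4$, the lengths $l_2,l_3,l_4$ share a sign while $l_1$ has the opposite sign, so $l_1l_3=l_2l_4$ is impossible for real quadrilaterals. This places the entire pseudo-integral surface $S$ (the $C^3$ surface of 3-edge orbits carrying the positive-measure set of 4-periodic ones) inside the good region where Lemma \ref{second} applies, and then Kudryashov's theorem \cite[theorem 28, p.295]{gk2} converts it to an analytic non-trivial integral surface. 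This sign argument is the missing idea; once you supply it, your plan coincides with the paper's proof.
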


\begin{proof} {\bf of  Theorem \ref{class} (analytic case) and Addendum 1 (smooth case).} 
The analytic pseudo-billiard  under question being 4-reflective, its complexification is obviously 
4-reflective, by Remark \ref{rpseudo}. This together with Theorem \ref{an-class} 
implies that it has one of the above types 1)--3) (up to  cyclic renaming of the mirrors). The 4-reflectivity of  pseudo-billiards of types   
2), 3) and the classification of open sets of their quadrilateral orbits and reflection law configurations 
was proved in \cite[section 6]{alg}. The 4-reflectivity of pseudo-billiards of type 1) is obvious. 
Addendum 1 (symmetry of the vertices $B$ and $D$) follows from the fact that they are intersection points of symmetric pairs of lines, 
by definition.   
\end{proof}

\subsection{Analytic versus smooth: proofs of Theorems \ref{class} and \ref{measure}}

The proofs of Theorems \ref{class} and \ref{measure} in the smooth case 
given here are analogous to the arguments from \cite[section 2]{gk2} due to 
Yu.G.Kudryashov. We start them with the following simple fact.

\begin{proposition} \label{orient} For every $k\in\mathbb N$ there exist no measure $k$-reflective  
real $C^3$-germ of pseudo-billiard with odd number of skew reflection laws.
\end{proposition}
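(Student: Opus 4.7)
The plan is to introduce a local return map $T$ on a two-dimensional phase space of rays whose fixed points are the $k$-periodic orbits near the given one, and to show that $\det DT=(-1)^m$ at every such fixed point, where $m$ is the number of skew reflection laws. Positive Lebesgue measure of the fixed-point set would then force, by a density-point argument, $DT=\mathrm{id}$ there, whose determinant is $+1$, giving a contradiction for $m$ odd.

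For the setup, fix a reference $k$-periodic orbit $A_1^0\ldots A_k^0$, choose arclength coordinates $s_j$ on each mirror $a_j$ near $A_j^0$, and use phase-space coordinates $(s_1,\theta_1)$, where $\theta_1:=\cos\alpha_1^+$ and $\alpha_1^+\in[0,\pi]$ is the angle between the outgoing edge at $A_1$ and the oriented tangent $\tau_1$ to $a_1$. Iterating free flight and reflection produces a $C^2$ local return map $T\colon(s_1,\theta_1)\mapsto(s_1',\theta_1')$ defined near the initial state of the reference orbit. Via the local diffeomorphism $(s_1,s_2)\leftrightarrow(s_1,\theta_1)$ (the inverse of the generating-function assignment associated with the edge $A_1A_2$), the fixed-point set of $T$ is identified with the set of $k$-periodic orbits of the pseudo-billiard, and the two planar Lebesgue measures agree up to a nonvanishing smooth factor.

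Next, I decompose $T$ as a composition of $k$ free flights $(s_{j-1},\theta_{j-1})\mapsto(s_j,\psi_j)$, where $\psi_j:=\cos\alpha_j^-$ is the cosine of the incoming angle at $A_j$, and $k$ reflections $(s_j,\psi_j)\mapsto(s_j,\theta_j)$. Each free flight is generated by the Euclidean distance $L_{j-1}(s_{j-1},s_j):=|A_{j-1}A_j|$ via the relations $\theta_{j-1}=-\partial L_{j-1}/\partial s_{j-1}$ and $\psi_j=\partial L_{j-1}/\partial s_j$, and hence preserves the $2$-form $d\theta\wedge ds$. At a usual reflection $\cos\alpha_j^+=\cos\alpha_j^-$, so the reflection acts as the identity in $(s_j,\psi_j)$; at a skew reflection $\cos\alpha_j^+=-\cos\alpha_j^-$, so it acts as $(s_j,\psi_j)\mapsto(s_j,-\psi_j)$ with Jacobian determinant $-1$. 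Composing, $T^\ast(d\theta\wedge ds)=(-1)^m d\theta\wedge ds$, so $\det DT\equiv(-1)^m$, and in particular equals $-1$ at every fixed point when $m$ is odd.

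Finally, suppose for contradiction that $E:=\mathrm{Fix}(T)$ has positive planar Lebesgue measure. By the Lebesgue density theorem $E$ admits a density point $x_0$. The $C^1$ map $F:=T-\mathrm{id}$ vanishes on $E$, so if $DF(x_0)$ had positive rank, the implicit function theorem would confine $F^{-1}(0)$, hence $E$, to a $C^1$ submanifold of dimension at most one in a neighbourhood of $x_0$; such a set has zero planar Lebesgue measure locally, contradicting density one. Therefore $DT(x_0)=\mathrm{id}$ and $\det DT(x_0)=+1$, contradicting $\det DT(x_0)=(-1)^m=-1$ for $m$ odd. The main subtlety is the sign bookkeeping at each reflection, which reduces to the elementary observation that the cosine of the outgoing angle relative to the oriented tangent is left unchanged by a usual reflection and negated by a skew one.
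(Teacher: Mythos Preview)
Your proof is correct and follows essentially the same idea as the paper's: the return map reverses orientation when the number of skew reflections is odd, and hence cannot coincide with the identity on a set of positive measure. The paper states this more concisely by working directly on the space of oriented lines in $\mathbb{R}^2$ (a two-dimensional oriented manifold on which usual reflections act orientation-preservingly and skew reflections orientation-reversingly), rather than through explicit generating-function coordinates and a density-point computation, but the content is the same.
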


\begin{proof} Reflections from curves act on the space of oriented lines in $\rr^2$, which is a two-dimensional oriented manifold.  
Skew reflections change the orientation, and the usual ones don't. Therefore, a composition of odd number of skew reflections and a number of 
usual ones (in any order) is a local diffeomorphism changing the orientation. Hence, it cannot be equal to the identity on a set of positive measure.
This proves the proposition.
\end{proof}

The proposition implies that 
the only possible reflection law configurations for a potential measure 4-reflective $C^4$ pseudo-billiard are the following:

1) all the reflection laws are usual;

2) the reflection laws are skew only at some pair of neighbor mirrors;

3) all the reflection laws are skew;

4) the reflection laws are skew only at some pair of opposite mirrors.

\begin{remark} Configuration 1) is already forbidden: the proof of its impossibility is implicitly contained  in \cite{gk2}. The proof of 
Theorem \ref{class} given below does not use this result. 
 Configuration 2) is already forbidden in the analytic case, by the last statement of Theorem \ref{class}  proved in this case. 
 \end{remark}

\begin{definition} (\cite[definition 14]{gk2}, Yu.G.Kudryashov). Let $k\geq3$. A $k$-gon $A_1\dots A_k$ in $\rr^2$ is {\it non-degenerate,} if 
for every $j=1,\dots,k$ (we set $A_{k+1}=A_1$, $A_0=A_k$) one has $A_{j+1}\neq A_j$ and 
$A_{j-1}A_j\neq A_jA_{j+1}$.  The subset of degenerate $k$-gons in $\rr^{2k}=(\rr^2)^k$ will be denoted by $\Sigma=\Sigma_k$. 
\end{definition}

\begin{remark} A real non-degenerate $k$-gon is a complex non-degenerate $k$-gon in the sense of Definition \ref{defnondeg}. 
\end{remark}

To each configuration 1)--4) we put into correspondence a distribution $\mcd^4_{\alpha}$ on $\rr^8\setminus\Sigma$ constructed below: 
the Birkhoff distribution corresponding to the chosen reflection laws. To define it, set 
$$\Psi_k=\{\pm1\}^k, \ \Psi^{\pm}_k=\{\alpha=(\alpha_1,\dots,\alpha_k)\in\Psi_k\ | \ \prod_j\alpha_j=\pm1\}.$$
For every $k\in\mathbb N$, $\alpha\in\Psi_k$ 
and every non-degenerate $k$-gon $A_1\dots A_k$ in $\rr^2$ we consider the following collection of lines 
$L_{A_j}=L_{A_j}(\alpha)$ through $A_j$:

- the line $L_{A_j}$ is the exterior bisector of the angle $\angle A_{j-1}A_jA_{j+1}$ if $\alpha_j=1$;

- the line $L_{A_j}$ is its interior bisector  if $\alpha_j=-1$.

We will briefly call the configuration $((A_1,L_{A_1}),\dots,(A_k,L_{A_k}))$ {\it $\alpha$-framed.} We identify each line $L_{A_j}$ with the 
corresponding one-dimensional subspace in $T_{A_j}\rr^2$. For every $x=A_1\dots A_k\in\rr^{2k}\setminus\Sigma$ set
$$\mcd^k_{\alpha}(x) = \oplus_{j=1}^kL_{A_j}(\alpha)\subset T_{x}\rr^{2k}.$$
The latter subspaces define an analytic distribution $\mcd^k_{\alpha}$ on $\rr^{2k}\setminus\Sigma$. The above reflection law configurations 
1)--4) correspond to  $\mcd^4_{\alpha}$ with $\alpha\in\Psi^+_4$. 

We show that  all $\mcd^k_{\alpha}$ with $\alpha\in\Psi_4^+$ naturally embed to the complex 
Birkhoff distribution $\mcd^k$ on one and the same irreducible component $\mcr^+_{0,k}$ of the variety 
$\mcr_{0,k}\subset\mcp^k=(\mathbb P(T\cp^2))^k$ (the next proposition). 
We study the two-dimensional Pfaffian problem for the complex distribution $\mcd^4$ on $\mcr^+_{0,4}$: 
to find its two-dimensional integral surfaces. We study its first and second Cartan 
jet prolongations. (Basic theory  of Cartan prolongations may be found in \cite{cart, kuran, rash}; the background material 
used in the proofs will be recalled below.) 
We show  (the next two lemmas) that each second 
prolongation is a two-dimensional distribution outside a union of two subvarieties $\Lambda_0$, $\Lambda_1$. 
The latter union consists of the quadrilaterals 
symmetric with respect to some of the diagonals $A_1A_3$ or $A_2A_4$ with $L_{A_1}=L_{A_3}=A_1A_3$ (respectively, 
$L_{A_2}=L_{A_4}=A_2A_4$). We then deduce  
that each $C^4$-smooth non-trivial integral surface of the distribution $\mcd_{\alpha}^4$ should be either analytic or 
contained in the above 
set of symmetric quadrilaterals. Therefore, the pseudo-billiard under question should have one of the types 1)--3), by  
the analytic case of Theorem \ref{class} proved in the previous subsection. This will prove Theorem 
\ref{class} in the $C^4$ case. For the proof of Theorem \ref{measure} we show that the existence of a measure 4-reflective pseudo-billiard 
with reflection law configuration 2) implies the existence of a 4-reflective analytic one. This will be deduced from the above-mentioned lemmas and Kudryashov's 
results from \cite{gk2}. But 4-reflective analytic pseudo-billiards with reflection law configuration 2) are forbidden by the last statement of 
Theorem \ref{class}, analytic case. The contradiction thus obtained will prove Theorem \ref{measure}. 
 
 Now let us pass to the proofs. 
 
 \def\ja{j_{\alpha}}
 
Consider the $k$-dimensional complex Birkhoff distribution $\mcd^k$ on the $2k$-dimensional smooth quasiprojective variety 
$\mcr_{0,k}\subset\mcp^k=(\mathbb P(T\cp^2))^k$ introduced at the beginning of Subsection 2.7. For every $\alpha\in\Psi_k$ there is a natural 
embedding
$$j_{\alpha}:\rr^{2k}\setminus\Sigma\to\mcr_{0,k}: \ \ja(A_1\dots A_k)=((A_1,L_{A_1}(\alpha)),\dots,(A_k,L_{A_k}(\alpha))).$$
One has $(\ja)_*\mcd^k_{\alpha}\subset\mcd^k$, and the latter image is a field of totally real subspaces in the corresponding spaces 
of the complex distribution $\mcd^k$. 

\begin{proposition} \label{comp} For every $k\geq3$ the variety $\mcr_{0,k}$ consists of two irreducible components $\mcr^{\pm}_{0,k}$. Each 
$\mcr_{0,k}^{\pm}$ contains the union of all the images $\ja(\rr^{2k}\setminus\Sigma)$ with $\alpha\in\Psi^{\pm}_k$. The component 
$\mcr_{0,k}^+$ consists exactly of those configurations $((A_1,L_{A_1}),\dots,(A_k,L_{A_k}))$ for which the complex lengths $|A_jA_{j+1}|$, 
$j=1,\dots,k$ can be simultaneously normalized so that for every $j$ the lengths $|A_{j-1}A_j|$, $|A_jA_{j+1}|$ are $L_{A_j}$-concordant, 
see Definition \ref{concord}. 
\end{proposition}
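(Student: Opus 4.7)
The plan is to define a locally constant $\{\pm1\}$-valued invariant $\epsilon$ on $\mcr_{0,k}$ whose two level sets are the claimed components $\mcr^\pm_{0,k}$. For $x=((A_j,L_j))_{j=1}^k\in\mcr_{0,k}$ each edge line $l_j(x)=A_jA_{j+1}$ is non-isotropic, hence carries a complex affine distance function $\phi_j\colon l_j\to\cc$, well-defined up to sign. The symmetry $\sigma_{L_j}\colon l_{j-1}\to l_j$ is a complex isometry fixing $A_j$, so $\sigma_{L_j}^*\phi_j=\delta_j(x)\phi_{j-1}$ for a unique $\delta_j(x)\in\{\pm1\}$. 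Set $\epsilon(x)=\prod_{j=1}^k(-\delta_j(x))$. Flipping one $\phi_j$ to $-\phi_j$ reverses precisely $\delta_j$ and $\delta_{j+1}$, so $\epsilon$ is independent of the chosen distance functions and depends continuously, hence locally constantly, on $x$.

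Next I would identify $\epsilon$ with both characterisations of $\mcr^+_{0,k}$ in the statement. A simultaneous $L_j$-concordant normalisation of $|A_{j-1}A_j|$, $|A_jA_{j+1}|$ at every $j$ (Definition \ref{concord}) amounts to choosing the $\phi_j$ so that $\delta_j=-1$ for all $j$; the obstruction to such a cyclic choice is precisely the sign $\epsilon(x)$, so $\epsilon(x)=+1$ iff a simultaneous concordant normalisation exists. Moreover, by Example \ref{escort}, for a real configuration $\ja(A_1\dots A_k)$ with Euclidean distance functions, $\alpha_j=+1$ (exterior bisector) forces $\delta_j=-1$, while $\alpha_j=-1$ (interior bisector) forces $\delta_j=+1$; hence $\epsilon\circ\ja=\prod_j\alpha_j$, and $\ja(\rr^{2k}\setminus\Sigma)\subset\epsilon^{-1}(\prod_j\alpha_j)$. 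Defining $\mcr^\pm_{0,k}:=\epsilon^{-1}(\pm1)$ then gives a disjoint open-and-closed decomposition $\mcr_{0,k}=\mcr^+_{0,k}\sqcup\mcr^-_{0,k}$ with $\ja(\rr^{2k}\setminus\Sigma)\subset\mcr^\pm_{0,k}$ for $\alpha\in\Psi^\pm_k$.

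It remains to show that each $\mcr^\pm_{0,k}$ is irreducible, for which it suffices to prove connectedness, since $\mcr_{0,k}$ is smooth: it is cut out of $\mcp^k$ by the reflection conditions at pairwise distinct non-isotropic vertex data, and these conditions are independent and transverse at each vertex. I would argue via the projection $\pi\colon\mcr_{0,k}\to B$ to the Zariski open connected base $B\subset(\cc^2)^k$ of non-degenerate $k$-gons with pairwise non-isotropic edges. At every vertex $L_j$ is one of two roots of a bisector quadratic in $\mathbb{P}(T_{A_j}\cc^2)$, so $\pi$ is étale of degree $2^k$ away from the bisector-discriminant divisors. The monodromy group preserves $\epsilon$; a small loop around the $j$-th discriminant swaps the two choices of $L_j$ while preserving the others, and the $\epsilon$-invariance forces it to simultaneously toggle one neighbouring $\delta_i$. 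Products of such elementary monodromies generate the full subgroup $\ker(\prod\delta_j)\subset(\zz/2)^k$, which acts transitively on each $\epsilon$-level fiber; hence each $\mcr^\pm_{0,k}$ is connected, and thus an irreducible component.

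The main obstacle is this last irreducibility step: one must verify that loops around the bisector-discriminant divisors genuinely realise the asserted $\epsilon$-preserving elementary sign flips, and that these flips generate all of $\ker(\prod\delta_j)$. A more hands-on alternative is to build explicit continuous deformations inside $\mcr^\pm_{0,k}$ joining two arbitrary configurations by first degenerating one of the vertices to a chosen basepoint, reducing the problem to connectedness statements for the real loci $\ja(\rr^{2k}\setminus\Sigma)$ with fixed $\alpha\in\Psi^\pm_k$ and for transitions between different $\alpha$'s within the same $\Psi^\pm_k$ via complex paths; either route requires a careful local analysis near the discriminant.
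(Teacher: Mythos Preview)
Your overall architecture matches the paper's: define a locally constant $\{\pm1\}$ invariant (your $\epsilon$; the paper's is ``admits a concordant length collection''), verify via Example~\ref{escort} that it separates the real loci $\ja(\rr^{2k}\setminus\Sigma)$ according to $\prod_j\alpha_j$, and then prove each level set is connected. Your first two paragraphs are correct and are a clean repackaging of what the paper does with concordance.

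The gap is in your monodromy description. There is in fact no separate ``$j$-th bisector discriminant'' inside the base $B$: for non-isotropic edges $l_{j-1},l_j$ the two bisectors at $A_j$ are \emph{always} distinct. In the standard affine coordinate $z$ on $\oc_\infty$ with $z(I_1)=0$, $z(I_2)=\infty$, they sit at $\pm\sqrt{z_{j-1}z_{j+1}}$ and coincide only when an adjacent edge is isotropic. So the relevant divisors removed from $B$ are the edge-isotropy loci $\{A_jA_{j+1}\text{ isotropic}\}$, and a small loop around one of those affects the bisectors at \emph{both} endpoints $A_j$ and $A_{j+1}$, not just one. Your assertion that the elementary monodromy ``swaps the two choices of $L_j$ while preserving the others'' would flip $\epsilon$ and contradict the local constancy you already proved; the correct elementary monodromy flips $L_{A_j}$ and $L_{A_{j+1}}$ simultaneously.

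This is exactly what the paper establishes by an explicit closed path: move $A_{j+1}$ along $A_{j+1}A_{j+2}$ so that the edge direction $A_jA_{j+1}$ winds once around $I_1$ in $\oc_\infty$, and compute via $z_{j+1}=z_j^2/z_{j-1}$ that $L_{A_j}$ (and symmetrically $L_{A_{j+1}}$) makes a half-turn while all other $L_{A_i}$ return. Once you have these consecutive double flips, they visibly generate $\ker(\prod)\subset(\zz/2\zz)^k$, which acts transitively on each $\epsilon$-fibre, and connectedness of $\mcr^\pm_{0,k}$ follows. So your ``hands-on alternative'' in the last paragraph is precisely the paper's argument; you just need to replace the misidentified discriminant by the edge-isotropy locus and carry out this one computation.
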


\begin{remark} The above collection of lengths $|A_1A_2|,\dots,|A_kA_1|$ (if exists)  will be called {\it concordant.} 
It  is unique up to simultaneous change of sign.
\end{remark}

\begin{proof} The projection of the variety $\mcr_{0,k}$ to the space of $k$-gons has $2^k$ preimages: at each vertex $A_j$ we can choose a symmetry line $L_{A_j}$ between the lines $A_jA_{j\pm1}$ in two ways; the two symmetry lines are orthogonal.

\medskip

{\bf Claim.} {\it For every $j=1,\dots,k$ every  configuration $((A_1,L_{A_1}),\dots,(A_k,L_{A_k}))\in\mcr_{0,k}$ 
is connected by path in $\mcr_{0,k}$ to the same configuration with  simultaneously changed lines $L_{A_j}$, $L_{A_{j+1}}$.} 

\begin{proof} Let us identify the complex infinity line $\oc_{\infty}$ with the space of complex  lines through the point $A_j$. Its complement 
$\cc^*=\oc_{\infty}\setminus\{ I_1,I_2\}$ to the isotropic points has fundamental group $\zz$. Consider a closed path 
$\phi:[0,1]\to\cc^*$ starting at the line $A_jA_{j+1}$ and disjoint from the line $A_jA_{j-1}$ that represents its generator. This yields a closed path 
$\psi(t)=A_1\dots A_jA_{j+1}^tA_{j+2}\dots A_k$ in the space of non-degenerate complex $k$-gons 
(in the sense of Definition \ref{defnondeg}): $A_{j+1}^t=\phi(t)\cap A_{j+1}A_{j+2}$, 
$A_{j+1}^0=A_{j+1}$. The line $L_{A_j}=L_{A_j}(0)$ deforms into a family $L_{A_j}(t)$ of symmetry lines between the lines 
$A_jA_{j-1}$ and $\phi(t)$ that is analytic 
along the path $\psi$. One has $L_{A_j}(1)\neq L_{A_j}(0)$. Indeed, let $z_{j-1}$, $z_j(t)$, $z_{j+1}(t)$ denote  the points of intersection of the 
infinity line with the lines $A_jA_{j-1}$, $L_{A_j}(t)$, $A_jA^t_{j+1}=\phi(t)$ written in the standard affine coordinate $z$ on $\oc_{\infty}$: $z(I_1)=0$, 
$z(I_2)=\infty$. One has $z_{j+1}(t)=\frac{z_j^2(t)}{z_{j-1}}$, by symmetry and \cite[proposition 2.4, p.249]{alg}. The point $z_j(t)$ makes 
half-turn around zero, by the latter formula and since $z_{j+1}(t)$ makes one turn by assumption. Hence, $L_{A_j}(1)\neq L_{A_j}(0)$. 
We similarly consider the deformation $L_{A_{j+1}}(t)=L_{A_{j+1}^t}$ of the line $L_{A_{j+1}}$ and get  
$L_{A_{j+1}}(1)\neq L_{A_{j+1}}$. The claim is proved.
\end{proof}

The claim implies that any two configurations $((A_1,L_{A_1}),\dots,(A_k,L_{A_k}))$ and $((A_1,L_{A_1}'),\dots,(A_k,L_{A_k}'))$ 
in $\mcr_{0,k}$ with even number of pairs of different lines $L_{A_j}\neq L_{A_j}'$ are connected by path in $\mcr_{0,k}$. 
Therefore, the variety $\mcr_{0,k}$ consists of at most two irreducible components $\mcr_{0,k}^{\pm}$, each of them contains the 
images $\ja(\rr^{2k}\setminus\Sigma)$ for all $\alpha\in\Psi_k^{\pm}$.  Each real $(1,\dots,1)$-framed configuration 
has a concordant collection of real length (Example \ref{escort}). The existence of concordant complex length collection is invariant 
under continuous deformations in $\mcr_{0,k}$, which follows from definition. Therefore, each configuration from the component 
$\mcr^+_{0,k}$ has a concordant length collection. On the other hand, each real $(1,\dots,1,-1)$-framed configuration has no 
concordant length collection, which follows from Example \ref{escort}. Hence, this is true for 
every configuration from the component $\mcr^-_{0,k}$, and $\mcr^+_{0,k}\neq\mcr^-_{0,k}$. Proposition  \ref{comp} is proved.
\end{proof}

The arguments  below are analogous to Yu.G.Kudryashov's arguments from \cite[section 2]{gk2} on the Cartan prolongations of the Birkhoff distributions. 

In what follows the restriction of the distribution $\mcd^4$ to the component $\mcr^+_{0,4}$ will be denoted $\mcd^4_+$. 
Consider the complex Pfaffian system $\mcd^{4,2}_+$: the problem to find two-dimensional integral surfaces of the 
distribution $\mcd^4_+$. For every $x\in\mcr^+_{0,4}$ let $l_1(x),\dots,l_4(x)$ denote the corresponding 
concordant collection of lengths $l_j=|A_jA_{j+1}|$. The lengths being defined up to simultaneous change of sign, 
their ratios are single-valued holomorphic functions on $\mcr^+_{0,4}$. Set
$$\Lambda=\{ l_1l_3=l_2l_4\}\subset \mcr^+_{0,4},$$
$$\Lambda_0=\{ x\in\mcr^+_{0,4} \ | \ A_1(x)A_2(x)A_3(x)A_4(x) \text{ is symmetric with respect}$$
$$\text{to the line } A_1(x)A_3(x)=L_{A_1}(x)=L_{A_3}(x)\},$$
$$\Lambda_1=\{ x\in\mcr^+_{0,4} \ | \ A_1(x)A_2(x)A_3(x)A_4(x) \text{ is symmetric with respect}$$
$$\text{to the line } A_2(x)A_4(x)=L_{A_2}(x)=L_{A_4}(x)\}.$$

\begin{remark} \label{remcor} One has  $\Lambda_0,\Lambda_1\subset\Lambda$. Indeed, if $x\in\Lambda_0$, then $l_4(x)=-l_1(x)$, $l_2(x)=-l_3(x)$, by symmetry and length concordance (Definition \ref{concord}). Hence, $l_1(x)l_3(x)=l_2(x)l_4(x)$ and $x\in\Lambda$. For every  
$x\in\Lambda_0$ the lines $L_{A_2}(x)$, $L_{A_4}(x)$ are symmetric with respect to the line $L_{A_1}(x)$, since the symmetry 
respects concordance of lengths and transforms the $L_{A_2}(x)$-concordant lengths $l_1(x)$, $l_2(x)$ to the $L_{A_4}(x)$-concordant 
lengths $-l_4(x)$, $-l_3(x)$. Similar statement holds for $x\in\Lambda_1$ and $L_{A_1}(x)$, $L_{A_3}(x)$. 
\end{remark}

Recall that an integral plane $E$ (see Definition \ref{int-el}) 
of the distribution $\mcd^4_+$ is  {\it non-trivial}, if for every $j$ the restriction $dA_j|_{E}$ is not identically zero.

Let  $\mathcal I_2=\mathcal I_2(\mcd^4_+)\subset Gr_2(\mcd^4_+)$ denote the subset of integral planes; 
it is algebraic, since so is $\mcd^4_+$. 
By $\mathcal I_2^0\subset\mathcal I_2$ we denote the Zariski open  subset of the non-trivial integral planes. A point of the space 
$\mathcal I_2^0$ is a pair $(x,E)$, where $x\in\mcr^+_{0,4}$, $E\subset\mcd^4_+(x)$ is a non-trivial integral plane. 
Let $\pi_{gr}:\mathcal I_2^0\to\mcr^+_{0,4}$ denote the standard projection. The subspaces 
$$\mcf^3(x,E)=(d\pi_{gr})^{-1}(E)\subset T_{(x,E)}\mathcal I_2^0$$
form a (singular) analytic distribution $\mcf^3$ on $\mathcal I_2^0$. Set 
$$\mathcal J_2=\mathcal I_2^0\setminus\pi_{gr}^{-1}(\Lambda)\subset\mathcal I_2^0.$$ 

\begin{lemma} \label{second} The projection $\pi_{gr}:\mathcal J_2\to\mcr^+_{0,4}\setminus\Lambda$ is a regular fibration by 
holomorphic curves.  The restriction to $\mathcal J_2$ 
of the distribution $\mcf^3$  is regular and three-dimensional. For every $y\in\mathcal J_2$ the corresponding subspace $\mcf^3(y)$ 
contains a unique integral 2-plane $\wt E(y)$ of the distribution $\mcf^3$. The planes $\wt E(y)$ form a two-dimensional analytic 
distribution on $\mathcal J_2$. 
\end{lemma}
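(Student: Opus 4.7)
The plan is to reduce Lemma \ref{second} to an explicit coordinate computation modeled on Kudryashov's analysis of the real second prolongation in \cite[section 2]{gk2}. Fix $x_0\in\mcr^+_{0,4}\setminus\Lambda$. Parametrize a neighborhood in $\mcr^+_{0,4}$ by the vertex positions $(A_1,A_2,A_3,A_4)\in(\cc^2)^4$; the bisectors $L_j$ are then holomorphic functions of the vertices, uniquely determined by the condition $x\in\mcr^+_{0,4}$. In these coordinates $\mcd^4_+(x)\subset T_x\cc^8$ is the common kernel of four explicit 1-forms $\omega_j$ defining ``$\dot A_j\in L_j$''. A 2-plane $E\subset\mcd^4_+(x)$ is an integral plane iff $d\omega_j|_E=0$ for $j=1,\ldots,4$.

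First I would compute the space $\mathcal I_2^0(x)$ of non-trivial integral 2-planes in $\mcd^4_+(x)$. Using the non-triviality assumption, each $E$ can be coordinatized by its graph over $\dot A_1\in L_1$, viewed as a map $L_1\to L_2\oplus L_3\oplus L_4$ compatible with the reflection directions. The integrability equations $d\omega_j|_E=0$ translate into a small system of quadratic equations for the three ratios of these maps, where the key coefficient is a rational function of the concordant lengths $l_1,\ldots,l_4$. The expected computation, parallel to the real case, shows that the resulting system cuts out a smooth rational curve in the Grassmannian $Gr_2(\mcd^4_+(x))$, except precisely when the discriminant factor $l_1l_3-l_2l_4$ vanishes (i.e.\ on $\Lambda$) or when one of the degenerate planes in $\Lambda_0\cup\Lambda_1$ collapses the family. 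This gives the first two claims: $\pi_{gr}\colon \mathcal J_2\to\mcr^+_{0,4}\setminus\Lambda$ is an analytic fibration with one-dimensional fibres, and $\mcf^3=(d\pi_{gr})^{-1}(E)$ is a regular three-dimensional analytic distribution on $\mathcal J_2$.

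For the third and fourth claims I perform the second-order Cartan prolongation step. A 2-plane $\wt E\subset\mcf^3(y)$ at $y=(x,E)$ either projects onto $E$ or is contained in the one-dimensional fibre of $\pi_{gr}$; the latter fails the non-triviality of tangent directions, so one may parametrize $\wt E$ as the graph of a linear lift $E\to T_{y}\pi_{gr}^{-1}(x)\cong\cc$. The integrability conditions for $\wt E$ with respect to $\mcf^3$ are obtained by pulling back the 1-forms annihilating $\mcf^3$ (namely $\omega_j$ and the tautological forms on the Grassmannian bundle) and imposing $d|_{\wt E}=0$. The resulting system is linear in the lift parameter, and its coefficient is again a non-vanishing multiple of $l_1l_3-l_2l_4$ on $\mathcal J_2$. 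This both proves uniqueness of $\wt E(y)$ and gives an explicit holomorphic formula for it, whence the $\wt E(y)$ form a two-dimensional analytic distribution.

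The main obstacle is the explicit verification that the discriminant obstructing uniqueness of prolongation is exactly the equation of $\Lambda$, rather than a larger locus. In the real setting this is carried out by direct computation in \cite[section 2]{gk2}; here one must check that the complexification goes through, which amounts to replacing real signed lengths by the concordant complex lengths of Definition \ref{concord} and invoking Proposition \ref{comp} to ensure global single-valuedness of the ratios $l_i/l_j$ on the component $\mcr^+_{0,4}$. Once this is done, all four assertions of the lemma follow from the discriminant identification.
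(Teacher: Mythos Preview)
Your approach is essentially the paper's own: the paper proves the lemma by citing Kudryashov's explicit real computation in \cite[subsection 2.5.4]{gk2} and observing that the calculations extend analytically to $\mcr^+_{0,4}\setminus\Lambda$ without change, after first verifying (in the paragraph preceding the proof) that the forms $\nu_j$, $\theta_j$ and the functions $t_j$ from \cite{gk2} complexify to well-defined holomorphic objects on $\mcr^+_{0,4}$. Your sketch spells out more of what that computation entails and correctly identifies the key point---that the obstruction to a unique second prolongation is exactly $l_1l_3=l_2l_4$---so the two proofs match.
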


\begin{lemma} \label{first} The restriction to $\Lambda\setminus(\Lambda_0\cap\Lambda_1)$ of the 
distribution $\mcd^4_+$ has at most one non-trivial integral 2-plane at each point. 
\end{lemma}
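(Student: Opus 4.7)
The plan is to reformulate the Pfaffian problem for $\mcd^4_+$ at a point $x\in\mcr^+_{0,4}$ as a linear system on Pl\"ucker coordinates, and then identify $\Lambda$ as exactly the rank-jump locus of this system.

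First, I would introduce local coordinates $(u_1,\dots,u_4,\theta_1,\dots,\theta_4)$ on a neighborhood of $x$, where $u_j$ is a parameter along the rail $L_{A_j}$ and $\theta_j$ parametrizes the slope of $L_{A_j}$; the subvariety $\mcr^+_{0,4}$ is cut out by the four symmetry constraints at the vertices. The distribution $\mcd^4_+$ is defined by the 1-forms $\omega_j=\nu_j\cdot dA_j$, where $\nu_j$ is a complex normal to $L_{A_j}$, and one computes $d\omega_j=du_j\wedge d\theta_j$. Restricting to $\mcd^4_+(x)$, the angular differentials become $d\theta_j|_{\mcd^4_+}=\sum_k\sigma_k^j\,du_k$, where the coefficients $\sigma_k^j$ are obtained by differentiating the four symmetry constraints and can be expressed geometrically through the concordant lengths $l_j$ and the cosines of the vertex angles, much as in the real computation of Yu.\,G.\,Kudryashov \cite[Section 2]{gk2}.

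Second, I would translate the integrability condition into Pl\"ucker coordinates. A 2-plane $E\subset\mcd^4_+(x)\cong\mathbb{C}^4$ is represented by $P=v\wedge w\in\wedge^2\mathbb{C}^4\cong\mathbb{C}^6$, and $E$ is integral iff $P$ satisfies the Pl\"ucker relation $P_{12}P_{34}-P_{13}P_{24}+P_{14}P_{23}=0$ together with the four linear equations $\sum_{k\neq j}\sigma_k^j P_{jk}=0$, $j=1,\dots,4$, obtained from $d\omega_j|_E=0$. Non-triviality is the open condition that each ``row'' $(P_{jk})_{k\neq j}$ is non-zero.

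Third, I would perform the key rank analysis. Since Lemma \ref{second} gives a 1-parameter family of non-trivial integral 2-planes over $\mcr^+_{0,4}\setminus\Lambda$, the coefficient matrix $(\sigma_k^j)$ has generic rank $3$, the unique linear dependence among its four rows coming from $dL|_{\mcd^4_+}=0$ for the concordant perimeter $L=\sum l_j$. The crux is an explicit computation showing that the vanishing of a distinguished $4\times 4$ minor of $(\sigma_k^j)$ is precisely the equation $l_1l_3=l_2l_4$, so that on $\Lambda$ the rank jumps to $4$, the solution space in $\mathbb{P}^5$ collapses to a projective line, and its intersection with the Pl\"ucker quadric consists of two points. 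A case analysis then shows that on $\Lambda$ one of these two Pl\"ucker points has a vanishing row $(P_{jk})_{k\neq j}$ (hence is trivial) unless the quadrilateral is simultaneously symmetric through both diagonals in the sense of Remark \ref{remcor}, i.e.\ unless $x\in\Lambda_0\cap\Lambda_1$.

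The main obstacle is the explicit geometric computation of the coefficients $\sigma_k^j$ and the verification that the rank-jump locus is exactly $l_1l_3=l_2l_4$, with careful tracking of the concordant-length signs, the choice of bisector at each vertex, and the induced infinitesimal rotations of $L_{A_j}$ along the distribution. Once this is in place, the linear algebra on $\wedge^2\mathbb{C}^4$ and the Pl\"ucker quadric that concludes the proof is routine.
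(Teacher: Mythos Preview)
Your proposal has a genuine gap. The lemma concerns integral $2$-planes of the \emph{restriction} $\mcd^4_+|_{\Lambda}$, i.e.\ planes $E\subset\mcd^4_+(x)\cap T_x\Lambda$, but your rank analysis only treats the four conditions $d\omega_j|_E=0$ coming from $\mcd^4_+$ itself, with no tangency-to-$\Lambda$ constraint imposed. This is the source of the error in the ``rank jump'' claim. The four linear equations $\sum_{k\neq j}\sigma_k^j P_{jk}=0$ on Pl\"ucker coordinates have rank $3$ at \emph{every} point of $\mcr^+_{0,4}$, including on $\Lambda$: the perimeter relation you invoke is a global identity, not special to $\Lambda$. (Note also that ``generic rank $3$'' and ``rank jumps to $4$ on $\Lambda$'' are mutually inconsistent, since rank is upper-semicontinuous.) What actually changes on $\Lambda$ is that the conic of integral planes in the resulting $\mathbb P^2$ becomes reducible; in the paper's parametrization this is exactly $\eta\eta'=l_2l_4-l_1l_3=0$, so the family of integral planes of $\mcd^4_+$ is still one-dimensional there.

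The paper's proof proceeds differently and is where the missing ingredient appears. Using Kudryashov's explicit basis, every non-trivial integral plane is parametrized by $(\eta,\eta')$ with $\eta\eta'=l_2l_4-l_1l_3$; on $\Lambda$ one of $\eta$, $\eta'$ vanishes. Only then is the tangency condition $E\subset T_x\Lambda$ imposed, yielding an additional linear equation (equation~\eqref{eta'} in the branch $\eta'=0$, and its analogue in the branch $\eta=0$) in the surviving parameter. That equation has a unique solution unless its leading coefficient $t_3(l_1+l_4)$ vanishes, and one checks that $l_1+l_4=0$ (together with $l_1l_3=l_2l_4$) forces the symmetry placing $x\in\Lambda_0$; symmetrically the other branch forces $x\in\Lambda_1$. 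This is the mechanism that produces the exceptional locus $\Lambda_0\cup\Lambda_1$ (and ultimately $\Lambda_0\cap\Lambda_1$), not a rank jump of your $4\times 6$ system. Your Pl\"ucker framework could in principle be salvaged by adjoining the linear conditions encoding $E\subset T_x\Lambda$ and redoing the analysis, but as written the decisive step is absent.
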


The two lemmas are proved below. In their proofs we use the notations introduced in \cite[subsection 2.5.1, p.297]{gk2}\footnote{The edge lengths 
denoted by $L_j$ in \cite{gk2} are denoted here by $l_j$.}. The 
analytic extensions to $\mcr^+_{0,4}$ of the complexified 1-forms $\theta_j$, $\nu_j$ from loc. cit. 
are regular and double-valued: well-defined up to sign. 
The forms $\nu_1,\dots,\nu_4$ yield a coordinate system on the subspaces of the distribution $\mcd^4_+$, as in loc. cit. The tangent functions 
$t_j$ from loc. cit. are holomorphic single-valued on $\mcr^+_{0,4}$. Let us show this in more detail. Recall that for a real convex quadrilateral  
$A_1\dots A_4$ equipped with its exterior bisectors $L_{A_j}$ 
one has $t_j=\tan\angle(L_{A_j}^{\perp},A_jA_{j+1})$. Let $z$ be the above standard affine coordinate on $\oc_{\infty}$: 
$z(I_1)=0$, $z(I_2)=\infty$. Let $z_j$, $w_j$ denote respectively the $z$- coordinates of the intersection points $L_{A_j}^{\perp}\cap\oc_{\infty}$, 
$A_jA_{j+1}\cap\oc_{\infty}$. It follows from definition that 
$$t_j=\frac{i(z_j-w_j)}{z_j+w_j}.$$
The latter formula defines the analytic extension of the functions $t_j$ to $\mcr^+_{0,4}$. One has $z_j\neq \pm w_j$ on $\mcr^+_{0,4}$. Indeed,  
otherwise either $A_jA_{j+1}=L_{A_j}^{\perp}$, or $A_jA_{j+1}=L_{A_j}$; in both cases $A_jA_{j-1}=A_jA_{j+1}$, 
which is impossible by non-degeneracy.  Thus, {\it $t_j$ are non-vanishing holomorphic functions on $\mcr^+_{0,4}$. }

\begin{proof} {\bf of Lemma \ref{second}.} 
The real Pfaffian system $\mcd^{4,2}_{\alpha}$ corresponding to the usual real Birkhoff distribution $\mcd^4_{\alpha}$, $\alpha=(1,\dots,1)$, 
was studied by Yu.G.Kudryashov in \cite[subsection 2.5]{gk2}. The proof of the statement of the lemma for the integral planes of the system $\mcd^{4,2}_{\alpha}$ is 
presented in \cite[subsection 2.5.4, pp. 299-300]{gk2}. The calculations presented there extend analytically to all of 
$\mcr^+_{0,4}\setminus\Lambda$ and apply 
without changes. 
\end{proof}

\begin{proof} {\bf of Lemma \ref{first}.} 
Kudryashov's calculations
 in \cite[p.298]{gk2} extended analytically to complex domain show that for every 
$x\in\mcr^+_{0,4}$ each non-trivial integral plane $E_2\subset\mcd^4_+(x)$ 
has a basis of the following type: 
$$\left(\begin{matrix} & 0 & l_1 & \eta & -l_4 \\
& l_1 & 0 & -l_2 & \eta'\end{matrix}\right); \ \eta\eta'=l_2l_4-l_1l_3;$$
the two raws represent vectors in $\mcd^4_+(x)$ written in the coordinates given by the forms $\nu_j$ from loc. cit. 
Let $x\in\Lambda\setminus(\Lambda_0\cap\Lambda_1)$. Then $\eta\eta'=l_2l_4-l_1l_3=0$. Thus, either $\eta=0$, or $\eta'=0$. 

Case 1): $\eta'=0$. Then the inclusion $E_2\subset T_x\Lambda$ implies that 
\begin{equation} t_3(l_1+l_4)\eta=l_1(t_2-t_4)(l_3+l_4),\label{eta'}\end{equation}
see \cite[p.299, subsection 2.5.3]{gk2}. 
Les us suppose the contrary: there exist at least two different non-trivial 
integral planes in $T_x\Lambda$. Or equivalently, equation (\ref{eta'}) in $\eta$ 
has more than one solution. Recall that $t_j,l_j\neq0$. Therefore, $l_4=-l_1$. Hence, $l_3=-l_2$, since $l_1l_3=l_2l_4$. 
The first equality $l_4=|A_1A_4|=-l_1=-|A_1A_2|$ implies that the points $A_2$, $A_4$ are symmetric with respect to the 
line $L_{A_1}$, by length concordance, see Definition \ref{concord}. 
Similarly, the points $A_2$, $A_4$ are symmetric with respect to the line $L_{A_3}$. Finally, the quadrilateral 
$A_1A_2A_3A_4$ is symmetric with respect to the line $L_{A_1}=L_{A_3}$ and hence, $x\in\Lambda_0$, -- a contradiction.

Case 2): $\eta=0$. We similarly get that $x\in\Lambda_1$, -- a contradiction. Lemma \ref{first} is proved.
\end{proof}

\begin{remark} \label{subs} As it was shown in loc.cit., an integral  plane $E_2\subset T_x\Lambda$ may 
exist only for $x$ from an algebraic subset in $\Lambda$. For example, in the case, when $\eta'=0$, it exists only if $t_1(x)=t_3(x)$. 
\end{remark}

\begin{remark} \label{type1} The distributions $\mcd^4_+|_{\Lambda_j}$, $j=0,1$, are three-dimensional. One can easily show that the restriction to 
$\Lambda_j$ of the  Pfaffian system $\mcd^{4,2}_+$ is involutive. Each its complex integral surface is an open set of quadrilateral orbits of a 
4-reflective 
complex billiard of type 1): if, say, $j=0$, then $a_1=a_3$ is a line, the curves $a_2$ and $a_4$ are symmetric with respect to it. Similar 
statement holds for smooth integral surfaces. 
\end{remark}

\begin{corollary} \label{an-surf} Let $\alpha\in\Psi^+_4$, $S\subset\rr^8\setminus\Sigma$ be a non-trivial $C^3$-smooth integral surface of the distribution $\mcd^4_{\alpha}$. Let in addition $S\cap\ja^{-1}(\Lambda_0\cap\Lambda_1)=\emptyset$. 
Then the set of analyticity points of the surface $S$ is open and dense in $S$. 
\end{corollary}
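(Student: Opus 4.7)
The plan is to lift $S$ via its Gauss map into the first Cartan prolongation variety $\mathcal{I}_2^0$ and there use the uniqueness statements of Lemmas~\ref{second} and~\ref{first} to show that, on an open dense subset, the lift is tangent to a canonical analytic 2-distribution. A Frobenius argument on the real form then forces the lift, and hence $S$, to agree locally with a real-analytic surface.

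First I would define the Gauss lift
\[
\hat S=\{(x,\,T_xS\otimes_\rr\cc)\ :\ x\in S\}\subset\mathcal{I}_2^0.
\]
Since $S$ is a non-trivial $C^3$-integral surface of $\mcd^4_\alpha$, the complexified tangent plane $T_xS\otimes_\rr\cc$ is a non-trivial complex integral 2-plane of $\mcd^4_+$, so $\hat S$ is a well-defined $C^2$ real 2-surface. At each $y=(x,E)\in\hat S$ the plane $T_y\hat S\otimes_\rr\cc$ lies in $\mcf^3(y)=(d\pi_{gr})^{-1}(E)$ and is itself an integral 2-plane for $\mcf^3$ in the sense of Definition~\ref{int-el}.

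Next I would show that at most points of $\hat S$ this plane coincides with a canonically defined analytic 2-plane. For $y$ with $x\notin\Lambda$, Lemma~\ref{second} provides a unique integral 2-plane $\wt E(y)\subset\mcf^3(y)$ depending analytically on $y$. For $x\in\Lambda\setminus(\Lambda_0\cap\Lambda_1)$, Lemma~\ref{first} still forces the non-trivial integral 2-plane of $\mcd^4_+(x)$, and hence the corresponding integral 2-plane of $\mcf^3(y)$, to be unique; by Remark~\ref{subs} such an integral 2-plane exists only over a proper algebraic subset of $\Lambda$, whose preimage in $S$ is nowhere dense. The remaining possibility that $S$ enters $\ja^{-1}(\Lambda_0\cup\Lambda_1)\setminus\ja^{-1}(\Lambda_0\cap\Lambda_1)$ is handled by Remark~\ref{type1}: there $S$ is already contained in a real-analytic integral surface of $\mcd^4_+|_{\Lambda_j}$, so it is locally analytic on its own. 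Consequently there is an open dense subset $S_{reg}\subset S$ on whose lift one has $T_y\hat S\otimes_\rr\cc=\wt E(y)$, with $\wt E$ the analytic 2-distribution of Lemma~\ref{second}.

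The final step, which I expect to be the main obstacle, is to transfer the complex-analytic integrability of $\wt E$ into real-analyticity of the $C^2$ surface $\hat S_{reg}$. The distribution $\wt E$ is Frobenius integrable on $\mathcal{J}_2$: the existence of complex-analytic integral 2-surfaces, for example the lifts of open sets of quadrilateral orbits of complex 4-reflective billiards of type~2) or~3) from Theorem~\ref{an-class}, combined with Proposition~\ref{anint}, provides the integrability. Taking the real part of $\wt E$ along the totally real lift of the $\ja$-image yields a real-analytic involutive real 2-distribution $\wt E^{\rr}$ to which $\hat S_{reg}$ is tangent. By the standard uniqueness of Frobenius integral manifolds, $\hat S_{reg}$ locally coincides with the real-analytic leaves of $\wt E^{\rr}$; projecting by $\pi_{gr}$ proves that $S_{reg}$ is locally real-analytic, as required. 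This Frobenius-regularity step is analogous to Yu.G.Kudryashov's arguments in \cite[section 2]{gk2}.
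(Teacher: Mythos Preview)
Your final step contains a genuine error. You assert that $\wt E$ is Frobenius integrable on all of $\mathcal{J}_2$ and justify this by invoking Proposition~\ref{anint} together with the existence of some complex integral surfaces (lifts of orbit sets of billiards of types~2) or~3)). But Proposition~\ref{anint} yields integrability only on the \emph{minimal analytic subset} containing the given union of integral surfaces, not on the whole ambient space. In fact $\wt E$ is \emph{not} integrable on $\mathcal{J}_2$: if it were, then through every point of $\mcr^+_{0,4}\setminus\Lambda$ and every non-trivial integral plane of $\mcd^4_+$ there would pass a complex integral surface, hence a complex $4$-reflective billiard, contradicting the classification Theorem~\ref{an-class}. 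Consequently the real distribution $\wt E^{\rr}$ you construct is not involutive either, and the Frobenius uniqueness argument collapses.

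The remedy is exactly the step the paper supplies and you omit: replace $\mathcal{J}_2$ by the minimal complex-analytic subset $M\subset\mathcal{J}_2$ containing the lift $\hat S$. The Frobenius condition for $\wt E$ is an analytic condition and holds on $\hat S$ (since $T_y\hat S\otimes_\rr\cc=\wt E(y)$ there), hence on all of $M$; thus $\wt E|_M$ is integrable, $\hat S$ sits inside a complex-analytic leaf, and coincides with its real locus. Your treatment of the case $\ja(S)\subset\Lambda$ is also incomplete: you note via Remark~\ref{subs} that integral planes tangent to $\Lambda$ exist only on a proper subvariety, then effectively drop the case. The paper instead runs the same minimal-analytic-subset argument on $\Lambda\setminus(\Lambda_0\cap\Lambda_1)$, using the uniqueness from Lemma~\ref{first} in place of Lemma~\ref{second}.
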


\begin{proof} The image $\ja(S)\subset\mcr^+_{0,4}$ is a totally real integral surface of the complex distribution $\mcd^4_+$. 
The subset $(S\setminus\ja^{-1}(\Lambda))\cup Int(S\cap\ja^{-1}(\Lambda))\subset S$ is open and dense. Hence, 
it suffices to prove the corollary in each one of the two following separate cases: $\ja(S)\cap\Lambda=\emptyset$; $
\ja(S)\subset\Lambda\setminus(\Lambda_0\cup\Lambda_1)$.

Case 1): $\ja(S)\cap\Lambda=\emptyset$. 
The complex span of each tangent plane to $\ja(S)$ is a complex integral plane of the distribution $\mcd^4_+$. 
Therefore, $\ja(S)$ lifts to a totally real $C^2$-smooth integral surface 
$\wt S\subset\mathcal J_2$ of the distribution  $\wt E$ from Lemma \ref{second}. 
Let $M\subset\mathcal J_2$ denote the minimal complex analytic subset containing $\wt S$. The restriction to $M$ of the distribution $\wt E$ is 
two-dimensional and integrable (cf. Proposition \ref{anint}). Without loss of generality we can and will assume that $\wt S$ is contained in the 
regular part of the analytic set $M$. 
Then $\wt S$ is contained in a complex analytic integral surface of the distribution $\wt E$ on $M$ 
and coincides with its intersection with the real part of the variety $\mathcal I_2$.
Therefore, $\wt S$ is 
real analytic, and hence, so is $S$. 

Case 2): $\ja(S)\subset\Lambda\setminus(\Lambda_0\cap\Lambda_1)$. Let $M\subset\Lambda\setminus(\Lambda_0\cup\Lambda_1)$ 
denote the minimal complex analytic subset that contains $\ja(S)$. Recall that for every $x\in\Lambda\setminus(\Lambda_0\cup\Lambda_1)$ 
 there exists at most one non-trivial integral plane at $x$ of the 
restriction $\mcd^4_+|_{\Lambda}$ (Lemma \ref{first}). The union of the latter integral planes is an analytic subset in 
$Gr_2(T\mcrr^+_{0,4})|_{\Lambda\setminus(\Lambda_0\cup\Lambda_1)}$ with proper projection to 
$\Lambda\setminus(\Lambda_0\cup\Lambda_1)$. Its image under the latter projection contains $M$, since it contains $\ja(S)$ and 
is analytic (Proper Mapping Theorem). Therefore, for every $y\in M$ there exists a unique integral plane in $\mcd^4_+|_{\Lambda}(y)$. 
The restriction to $M$ of the singular distribution formed by the latter planes is two-dimensional and integrable 
(cf. Proposition \ref{anint}). Afterwards without loss of generality we assume that 
$\ja(S)$ lies in the regular part of the set $M$ and deduce that $S$ is analytic, as in the above case. Corollary \ref{an-surf} is proved.
\end{proof}

\begin{proof} {\bf of Theorem \ref{class}, $C^4$-smooth case.} Fix a germ of 
$C^4$-smooth 4-reflective pseudo-billiard $a_1$, $a_2$, $a_3$, 
$a_4$. It has an open set $S$ of 
quadrilateral orbits, which is a germ of $C^3$-smooth non-trivial integral surface of a real distribution $\mcd^4_{\alpha}$ 
at a point $p\in\rr^8\setminus\Sigma$. One has  $\alpha\in\Psi_4^+$, by Proposition \ref{orient}.

Case 1): the complement $S\setminus \ja^{-1}(\Lambda_0\cup\Lambda_1)$ is non-empty. 
Let us show that $S$ is analytic at $p$. This will imply that the above pseudo-billiard is 
of type either 2), or 3) (Theorem \ref{class}, analytic case). Suppose the contrary. Then the open subset of analyticity points in $S$ 
contains a connected component $W\not\subset\ja^{-1}(\Lambda_0\cup\Lambda_1)$ 
with non-empty boundary, by Corollary \ref{an-surf}. Fix a boundary 
point $x\in\partial W$ and a path $\psi\subset W$ going to $x$ such that for every $j$ one has $A_j(y)\neq A_j(x)$ for $y\in\psi$ 
arbitrarily close to $x$. The surface $S$ is not 
analytic at $x$. The subset $W$ is an open set of quadrilateral orbits of an analytic pseudo-billiard. The latter is not of type 1), since 
$W\not\subset\ja^{-1}(\Lambda_0\cup\Lambda_1)$. 
Hence, it is of type either 2) or 3): its mirrors are either all lines, or all confocal conics. Thus, the mirrors $a_j$ are analytic at $A_j(y)$, $y\in\psi$ and 
some of them, say $a_2$ is not analytic at $A_2(x)=\lim_{y\to x; y\in\psi}A_2(y)$. Let us show that this is impossible. To do this, let us fix a 
$y\in\psi$ close to $x$ with $A_1(y)\neq A_1(x)$. Consider 
the smooth deformation of a quadrilateral orbit $A_1(y)A_2(y)A_3(y)A_4(y)$ with fixed $A_1$. That is, the family of quadrilateral orbits 
$Q(s)=A_1^sA_2^sA_3^sA_4^s$ depending on the angle parameter $s=\angle A_2^sA_1^sA_4^s$,  
$$A_1^s=A_1(y), \ s_0=\angle A_2(y)A_1(y)A_4(y), \ A_j^{s_0}=A_j(y).$$
Let $(s_-,s_+)$ be the maximal interval of analyticity of the family of quadrilaterals $Q(s)$  as a function of $s$. For some of $s_\pm$, say $s_+$, 
some vertices $A_j^{s_+}$ should be singular points of the corresponding mirrors 
and  coincide with $A_j(x)$, by definition. At least two points $A_j^{s_+}$ should be singular, 
see \cite[lemma 41 and its proof, pp. 305--306]{gk2}. If two neighbor vertices are singular, e.g., $A_2^{s_+}=A_2(x)$, $A_3^{s_+}=A_3(x)$, 
then $A_1(y)=A_1(x)$: $A_1(y)$ is the point of intersection close to $A_1(x)$ of the curve $a_1$ with the line $A_1(x)A_2(x)$ 
symmetric to $A_2(x)A_3(x)$ with respect to the line $T_{A_2(x)}a_2$. This contradicts the 
assumption $A_1(y)\neq A_1(x)$. The contradiction thus obtained shows that $A_2^{s_+}=A_2(x)$ and $A_4^{s_+}=A_4(x)$ are singular. 

Let $a_j^0\subset a_j$ denote the arcs saturated by the vertices $A_j(y)$, $y\in\psi$. Recall that the arcs $a_j^0$ are analytic, and 
they are either all lines, or all confocal conics, see the above discussion.  
The above statement implies that 
for $A=A(y)\in a_1^0$ the lines $AA_2(x)$, $AA_4(x)$ are symmetric with respect to the line $T_Aa_1^0$. Hence, this is true for every 
$A\in a_1^0$, by analyticity. Therefore, 
$a_1^0$ is either the symmetry line between the points $A_2(x)$, $A_4(x)$, or a conic with 
foci at them \cite[proposition 2.32]{alg}. 
In the case, when $a_j^0$ are confocal conics, the conic $a_2^0$ would contain its own focus $A_2(x)$, -- a contradiction. In the case, when 
they are lines, they should intersect at one point (the pseudo-billiard is of type 2)).  This implies that  the lines $a_2^0$, $a_4^0$ are 
symmetric with respect to the line $a_1^0$, as are $A_2(x)$, $A_4(x)$, and hence, 
the pseudo-billiard is of type 1), -- a contradiction. Hence, $S$ is analytic. 

Case 2): $\ja(S)\subset\Lambda_j$ for some $j=0,1$. 
Then we obviously get a pseudo-billiard of type 1) (see Remark \ref{type1}).

Case 3): $\ja(S)\subset\Lambda_0\cup\Lambda_1$, $\ja(p)\in\Lambda_0\cap\Lambda_1$ and 
both complements $S\setminus\ja^{-1}(\Lambda_j)$, $j=0,1$, are non-empty. Let us show that this case is impossible. 
Suppose the contrary: the latter assumptions hold. Then $p$ corresponds to a rhombus with interior bisectors. Some of germs of mirrors 
$a_j$, $j=2,4$ at the points $A_j(p)$ is not a line, since otherwise, $a_2=a_4$ and we obviously get that 
$\ja(S)\in\Lambda_1$, -- a contradiction. The similar statement holds for the mirrors $a_1$ and $a_3$. Finally, some two germs of neighbor 
mirrors, say $a_1$ and $a_2$ are not lines. The billiard under question being 4-reflective, the surface $S$ contains a point $q$ 
corresponding to a 4-reflective orbit 
$A_1(q)A_2(q)A_3(q)A_4(q)$ with $A_1(q)\in a_1$, $A_2(q)\in a_2$ being points of non-zero curvature with non-orthogonal tangent lines. 
Hence, the latter quadrilateral is not a rhombus and thus,  
$\ja(q)$ is contained   in only one set $\Lambda_j$, $j=0,1$. Hence, the germ at $q$ of the surface $S$ should consist of quadrilateral 
orbits of a pseudo-billiard of type 1) (see Case 2)). Thus, at least one of the neighbor mirror germs $(a_1,A_1(q))$, $(a_2,A_2(q))$ 
should be a line, while they both have non-zero curvature. The contradiction thus obtained proves Theorem \ref{class}.
\end{proof}

\begin{proof} {\bf of Theorem \ref{measure}.} Suppose the contrary: there exists a  $C^4$-smooth pseudo-billiard where 
only some two neighbor mirrors, say $a_1$, $a_2$  
have skew reflection law and the set of 4-periodic orbits has positive Lebesgue measure. In more detail, 
consider the set $S$ of its 3-edge orbits $A_1A_2A_3A_4$: the reflection law is required only at $A_2$, $A_3$ but 
not necessarily at $A_1$, $A_4$. This is a two-dimensional non-trivial $C^3$-smooth surface, which contains a positive measure set of 4-periodic 
orbits. Thus, $S$ is a $C^3$-smooth pseudo-integral surface (in terms of \cite[definition 13, p.291]{gk2})
of the analytic distribution $\mcd^4_{\alpha}$, $\alpha=(-1,-1,1,1)$. Note that $\ja^{-1}(\Lambda)=\emptyset$. Indeed, for every quadrilateral 
$A_1A_2A_3A_4\in\rr^8\setminus\Sigma$ consider the concordant lengths $l_j=|A_jA_{j+1}|$ with respect to the interior bisectors at $A_1$, 
$A_2$ and the exterior bisectors at $A_3$, $A_4$. The lengths $l_j$, $j=2,3,4$ have the same signs, while $l_1$ has opposite sign, 
see Example \ref{escort}.  
Therefore, the equality $l_1l_3=l_2l_4$ defining $\Lambda$ is impossible. Hence, $S\cap\ja^{-1}(\Lambda)=\emptyset$. 
This together with Lemma \ref{second} and  \cite[theorem 28, p.295]{gk2} implies that $\mcd^4_{\alpha}$ has an 
analytic non-trivial integral surface. Therefore, there exists an analytic 4-reflective pseudo-billiard with exactly two skew reflection laws at 
some neighbor mirrors, -- a contradiction to the last statement of Theorem \ref{class}. Theorem \ref{measure} is proved.
\end{proof}

\subsection{Application 1: Tabachnikov's Commuting Billiard Conjecture}

The following theorem solves the piecewise $C^4$-smooth case of S.Tabachnikov's conjecture on commuting convex planar billiards 
\cite[p.58]{tabcom}. 
It deals with two billiards in nested convex compact domains $\Omega_1\Subset\Omega_2\Subset\rr^2$, set $a=\partial\Omega_1$, 
$b=\partial\Omega_2$. We consider that both $a$ and $b$ are piecewise $C^4$-smooth. For every $\Omega_j$ consider the corresponding 
billiard transformation acting  on the space of oriented lines in the plane. It acts as identity on the lines disjoint from $\Omega_j$. 
Each oriented line $l$ intersecting $\Omega_j$  is 
sent to its image under the reflection from the boundary $\partial\Omega_j$ at its 
last intersection point $x$ with $\partial\Omega_j$ in the sense of orientation:  the orienting arrow of the line $l$ at $x$ 
is directed outside $\Omega_j$.  The reflected line is oriented by a tangent  vector at $x$ directed inside $\Omega_j$.  This is a continuous 
dynamical system if the boundary $\partial\Omega_j$ is smooth and piecewise-continuous (measurable) otherwise. 

It is known that confocal elliptic billiards  commute \cite[p.49, corollary 4.6]{tab}. The next theorem shows that the converse is also true.

\begin{theorem} \label{tab} Let two  nested planar convex  piecewise $C^4$-smooth 
Jordan curves be such that the corresponding billiard transformations commute. 
Then they are confocal ellipses.
\end{theorem}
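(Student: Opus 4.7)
The plan is to reduce Theorem \ref{tab} to the classification of $C^4$-smooth 4-reflective pseudo-billiards (Theorem \ref{class}) by extracting from commutativity an open family of 4-periodic orbits of a suitable pseudo-billiard with mirror sequence $(b,a,b,a)$.

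Let $T_a$, $T_b$ denote the billiard transformations on oriented lines attached to the boundaries $a=\partial\Omega_1$, $b=\partial\Omega_2$. For a generic oriented line $l$ that transversally meets both $\partial\Omega_1$ and $\partial\Omega_2$ away from their singular points, set $l_1=T_b(l)$, $l_2=T_aT_b(l)$, $l_3=T_b^{-1}T_aT_b(l)$. The hypothesis $T_aT_b=T_bT_a$ then rewrites as $l_3=T_a(l)$, and hence $T_a^{-1}(l_3)=l$, so that $l,l_1,l_2,l_3$ form a closed 4-cycle of oriented lines. Consecutive members meet at a vertex $B_1\in b$, $A_1\in a$, $B_2\in b$, $A_2\in a$, and at each vertex the two adjacent edges are symmetric with respect to the tangent line of the corresponding boundary curve (either usual or skew reflection). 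Letting $l$ vary over the open set of admissible lines yields a two-parameter family of 4-periodic orbits of the pseudo-billiard $(b,a,b,a)$, so this pseudo-billiard is 4-reflective in the sense of Theorem \ref{class}.

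Second, I would localize the classification. Choose a 4-periodic orbit in the above open family whose four vertices $B_1,A_1,B_2,A_2$ are $C^4$-smooth points of the corresponding boundary (such an orbit exists, since the boundaries are piecewise $C^4$-smooth and hence have only finitely many non-smooth points, so the orbits avoiding them still form an open set). Apply Theorem \ref{class} to this germ. Types 1) and 2) of the theorem require at least one of the mirrors to be a straight line, which is impossible for the closed strictly convex Jordan curves $a$ and $b$. Hence the germ is of type 3): the local arcs of $b$ and $a$ through these vertices lie on two distinct confocal conics (note that in our sequence $(b,a,b,a)$ the opposite-mirror equalities $c=a$ and $d=b$ of Theorem \ref{class} are automatic).

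Third, I would globalize. Running the same argument at every 4-periodic orbit of the open two-parameter family shows that every $C^4$-smooth arc of $a$ lies on an arc of a conic and similarly for $b$, and by connectedness of the respective families of vertices the foci are the same for all such arcs. Since $a$ and $b$ are closed convex Jordan curves which coincide with a real-analytic conic on an open dense set, each of $a$, $b$ is globally a smooth closed conic, hence an ellipse, and the two ellipses are confocal. Finally the piecewise-$C^4$ hypothesis forces these ellipses to have no singular points, completing the argument.

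The main obstacle will be the first step: one must verify carefully that the 4-cycle $l,l_1,l_2,l_3$ produces a genuinely \emph{non-degenerate} quadrilateral with distinct vertices and non-tangential adjacent edges (so that Theorem \ref{class} is applicable), and that $l$ can indeed be chosen in an open set of lines so as to produce a two-parameter family of such orbits -- in particular, one must rule out the possibility that one of the vertices escapes to infinity or becomes tangential for a full open subfamily. Once this transversality is verified, the remaining steps are an essentially direct invocation of Theorem \ref{class} followed by analytic continuation of the conic structure along each boundary.
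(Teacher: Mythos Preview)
Your overall strategy --- extract a two-parameter family of 4-periodic pseudo-billiard orbits from commutativity, then invoke Theorem \ref{class} --- matches the paper's. But two of your shortcuts are unjustified and skip exactly the work the paper has to do.

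First, your elimination of types 1) and 2) relies on the claim that a ``closed strictly convex Jordan curve'' cannot have a straight-line germ. The hypothesis of Theorem \ref{tab} says \emph{convex}, not strictly convex; a convex piecewise $C^4$ Jordan curve can perfectly well contain straight segments, and could a priori even be a polygon. So neither type 1) nor type 2) is excluded by your argument. The paper uses a different, geometric reason (convexity together with $ABCD\subset\overline{\Omega_2}$) to exclude the subcase of type 1) in which the \emph{outer} curve $b$ is the line; it does \emph{not} exclude the subcase in which the inner curve $a$ is locally a line, and in fact carries that possibility through to a separate ``$a$ is a polygon'' case which it then rules out by a focus argument.

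Second, your globalization step is too optimistic for a \emph{piecewise} $C^4$ boundary. You obtain that every smooth arc of $a$ lies on a conic, but you have not shown that a non-smooth point of $a$ cannot separate two arcs lying on \emph{different} (even confocal) conics, nor that the curve is actually smooth. The paper devotes its Subcase 1b) precisely to showing that a singular boundary point of $a$ adjacent to a conical arc is impossible: it tracks the commuting reflections through such a corner and deduces that the conic $a'$ would contain its own focus, a contradiction. Your ``connectedness of the families of vertices'' and ``analytic continuation'' do not supply this argument.

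A smaller point: you should identify the reflection-law configuration explicitly (usual at $b$, skew at $a$, as in the paper's Proposition \ref{commpseudo}), rather than leaving it as ``either usual or skew''; the elimination of the type 1) subcase with $b$ the line uses this.
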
 


In the proof of Theorem \ref{tab} we use the next commutativity criterion.
%
%
%

\begin{proposition} \label{commpseudo} Let $a,b\subset\rr^2$ be  nested convex Jordan curves, as at the  beginning of the subsection. 
The corresponding billiard transformations commute, if and only if each pair $(A,B)\in a\times b$ extends to a quadrilateral orbit $ABCD$ 
of the pseudo-billiard $a$, $b$, $a$, $b$ as at Fig.\ref{fig-ellipses}: the reflection law is usual at $b$ and skew at $a$; only one of the segments 
$AB$, $BC$ intersects the domain bounded by the curve $a$, if both ambient lines intersect it. 
\end{proposition}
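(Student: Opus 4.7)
The strategy is to match the equation $T_a T_b(l) = T_b T_a(l)$ on a single oriented line $l$ with the closing-up of a quadrilateral $ABCD$ of the prescribed pseudo-billiard. Every oriented line meeting $\Omega_1$ can be written uniquely as $(l_{AB}, A \to B)$, where $A \in a$ and $B \in b$ are its successive exit points from $\Omega_1$ and $\Omega_2$ in the direction of orientation; the segment $AB$ then lies in $\Omega_2 \setminus \Omega_1$, and this correspondence identifies such oriented lines with pairs $(A,B) \in a \times b$ whose joining segment avoids $\Omega_1$.

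Using the four reflection laws prescribed by the pseudo-billiard, I would read off each single-step billiard action on the edges of the putative quadrilateral: the usual reflection at $B \in b$ gives $T_b(l_{AB}, A\to B) = (l_{BC}, B\to C)$, with $C \in a$ the exit from $\Omega_1$ of the reflected line (so segment $BC$ crosses $\Omega_1$); the skew reflection at $C$ gives $T_a(l_{BC}, B\to C) = (l_{CD}, D\to C)$, because ``inward at $C$'' with respect to $\Omega_1$ points away from $D$ when segment $CD$ lies outside $\Omega_1$; symmetrically $T_a(l_{AB}, A\to B) = (l_{AD}, A\to D)$ (``inward at $A$'' points toward $D$ since segment $DA$ crosses $\Omega_1$), and $T_b(l_{AD}, A\to D) = (l_{CD}, D\to C)$ by the usual reflection at $D$. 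Both compositions $T_a T_b$ and $T_b T_a$ therefore produce the same oriented line $(l_{CD}, D\to C)$ precisely when the quadrilateral $ABCD$ satisfies all four pseudo-billiard reflection laws.

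This chain of equalities yields both directions of the proposition on oriented lines meeting $\Omega_1$: the extension property supplies the quadrilateral used in the computation, hence commutativity; conversely, given commutativity, one defines $C$ as the exit from $\Omega_1$ of $T_b(l_{AB})$ and $D$ as the exit from $\Omega_2$ of $T_a(l_{AB})$, and the equation $T_a T_b(l_{AB}) = T_b T_a(l_{AB})$ forces the two a priori distinct reflected lines (one through $C$, one through $D$) to coincide, so that $ABCD$ is a well-defined quadrilateral with the required reflection laws built in. For oriented lines meeting $\Omega_2$ but not $\Omega_1$ the map $T_a$ is the identity, so commutativity reduces to $T_b$-invariance of this set, i.e., to $a$ being a caustic of the billiard in $\Omega_2$; in the forward direction this is derived from the extension by a limiting argument in which $l_{AB}$ degenerates to a tangent line of $a$. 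The main technical obstacle is the orientation bookkeeping at the skew reflections: $T_a$ reverses the ``natural'' traversal of the edge $CD$ (its image is oriented $D\to C$, not $C\to D$), and exactly this reversal is what makes $T_aT_b$ and $T_bT_a$ coincide as oriented lines rather than have opposite orientations on $l_{CD}$. A secondary subtlety, handled by the parenthetical ``if both ambient lines intersect it'' in the statement, is the treatment of tangencies of segments to $a$ as limits of the generic computation above.
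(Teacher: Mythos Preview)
Your approach is correct and is essentially the same as the paper's, only far more explicit: the paper's entire proof of this proposition is the single sentence ``The proposition follows from definition.'' What you have written is precisely the unpacking of that sentence---tracking the exit points, the inward orientations prescribed by the definition of $\sigma_a$, $\sigma_b$, and verifying that the skew reflection law at $a$ appears exactly because ``inward at $A$ (or $C$) with respect to $\Omega_1$'' points across the tangent line from the outgoing edge. Your orientation bookkeeping is right: the key observation that both $T_aT_b$ and $T_bT_a$ land on $(l_{CD}, D\to C)$ (not $C\to D$) is what makes the two compositions agree as \emph{oriented} lines rather than merely as unoriented ones.

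One remark: your treatment of lines meeting $\Omega_2$ but not $\Omega_1$ (the caustic condition) is more careful than the paper, which does not address this case at all in the one-line proof. For the application to Theorem~\ref{tab} only the generic case matters---an open set of pairs $(A,B)$ extending to quadrilateral orbits already gives 4-reflectivity of the pseudo-billiard---so the limiting argument you sketch, while correct, is not strictly needed for the paper's purposes. The paper is implicitly reading ``each pair'' as ``each pair in an open dense set,'' and you are right to flag that a literal reading requires the additional caustic observation.
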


The proposition follows from definition. 

\begin{proof} {\bf of Theorem \ref{tab}.} The interior of the set of quadrilateral orbits  from the proposition contains an open and dense 
subset  of those quadrilaterals $ABCD$ for which the germs of curves $a$, $b$ at the vertices are $C^4$-smooth and 
form a 4-reflective pseudo-billiard. The latter has thus some of types 1)--3) from Theorem \ref{class}. 
It cannot be of type 1) with germs $(b,B)$, $(b,D)$ lying on the same line and $(a,A)$, $(a,C)$ being symmetric with respect to it 
(see Addendum 1). This is impossible by convexity and the obvious 
inclusion $ABCD\subset\overline{\Omega_2}$. Therefore, either $(a,A)$, $(a,C)$ lie in the same line and the pseudo-billiard is of type 1), 
or its has type 2) or 3). This implies that the curve $a$ contains an open dense union of analytic pieces, each of them being either 
a line segment, or an arc of  conic. In what follows the reflections from the curves $a$, $b$ acting on the space of oriented lines will be 
denoted by $\sigma_a$ and $\sigma_b$ respectively. 

Case 1): some analytic arc $a'$ of the curve $a$ is a conic. Then  the above germs of pseudo-billiards with $A\in a'$ 
have type 3). Thus, the curve $b$ contains an open dense union of analytic arcs lying in conics confocal to $a'$. 

Subcase 1a): the curve  $a=a'$  is analytic, and hence, is an ellipse. 
Note that the tangent lines to $b$ are disjoint from $a$ and hence, from the focal segment, by convexity. 
This implies that each conical piece of the curve $b$ is not a hyperbola, and thus, is 
an ellipse. Recall that $b$ is a finite union of $C^4$-smooth arcs. Each smooth arc $b'$ is elliptic. Indeed, consider an auxiliary function 
on $\rr^2$: the sum of distances of a variable point to the foci of $a'$. It  is constant on $b'$: it is smooth on $b'$ 
and locally constant on an open dense union of confocal elliptic subarcs; 
hence it has zero derivative on $b'$, and $b'$ is elliptic.  
Thus, $b$ is  a finite chain of adjacent confocal elliptic arcs. Therefore $b$ is an ellipse: the 
above sum of distances is continuous on $b$ and constant on each arc, and hence, on all of $b$. 

Subcase 1b): some boundary point $A$ of the analytic arc $a'$ is singular, the curve $a$ is not analytic at $A$. 
Fix $A$ and a subarc $b'$ of a conical arc of the curve $b$ such that for every $B\in b'$ the interval $(A,B)$ intersects $a$, the line 
$AB$ is distinct from the lines $T_Aa'$, $T_A^{\perp}a'$, and the line $\Lambda=\Lambda(B)$ symmetric to $AB$ with respect to $T_Aa$ 
intersects the  curve $b$  at its  analyticity points. Let us orient the line $L=AB$ from $B$ to $A$. Consider the one-parametric 
analytic family of lines  $L^*=L^*(B)=\sigma_b^{-1}(L)$. 
We orient the line $\Lambda$ so that $\Lambda=\sigma_a(L)$. We claim that all the lines $L^*$ pass through the 
same point of the curve $a$. Indeed,  the mapping $\sigma_a\circ\sigma_b=\sigma_b\circ\sigma_a$ is singular at each $L^*$, since 
$\Lambda=\sigma_a(L)=\sigma_a\circ\sigma_b(L^*)$, $\sigma_b$ is regular (that is, a local analytic diffeomorphism) at $L^*$ and 
$\sigma_a$ is singular at 
$L=\sigma_b(L^*)$, as is $A$. 
 On the other hand, $\sigma_b$ is regular at $\sigma_a(L^*)$, since $\Lambda=\sigma_b\circ\sigma_a(L^*)$ and 
 $\sigma_b^{-1}$ is a local analytic diffeomorphism 
 at $\Lambda$: the points of intersection $\Lambda\cap b$ are regular and transversal (by convexity). 
Therefore, $\sigma_a$ is singular at  $L^*$, as is $\sigma_b\circ\sigma_a$. 
Hence, either all the lines $L^*$ 
pass through the same singular point $A'$ of the curve $a$ (the set of its singular points is totally disconnected), 
or they are tangent to $a$. In the latter case 
one has $\sigma_b\circ\sigma_a(L^*)=\sigma_b(L^*)=L$, while $\sigma_a\circ\sigma_b(L^*)=\sigma_a(L)\neq L$, since $L\neq T_Aa',(T_Aa')^{\perp}$, 
-- a contradiction to commutativity. Hence, the  tangency case is impossible, and the lines $L^*$ pass through the same point 
$A'$. Therefore, $b'$ is a conical arc confocal to $a'$ with foci $A$, $A'$. 
Thus, the conic $a'$ contains its own focus $A$, -- a contradiction. 
%

Case 2): the curve $a$ is a convex polygon. For every its vertex $A$  each line through $A$ intersecting a $C^4$-smooth arc 
$b'$ of the curve $b$ is reflected from $b'$  
to a line through another vertex  $A'$ of the polygon $a$, as in the above discussion. This implies that each $C^4$-smooth 
arc $b'$ is a conic with foci $A$, $A'$. Finally,  each vertex of the polygon $a$ is a focus of each $C^4$-smooth arc of the curve $b$. 
Therefore, $a$ has at most two vertices and cannot be a convex polygon. The contradiction thus obtained proves Theorem \ref{tab}.
\end{proof}


 \subsection{Application 2: planar  Plakhov's Invisibility Conjecture with four reflections} 
 
 This subsection is devoted to Plakhov's Invisibility Conjecture: the analogue of Ivrii's conjecture in the invisibility theory 
\cite[conjecture 8.2]{pl}. We recall it below and show that in planar case it follows from a conjecture saying that no finite collection of germs of 
smooth curves can form a measure $k$-reflective billiard for appropriate ``invisibility'' reflection  law. Both Plakhov's and 
Ivrii's conjectures have the same complexification, see 
 \cite[subsection 5.2, proposition 8]{odd}. We prove the $C^4$-smooth case of  planar Plakhov's Invisibility Conjecture for four reflections as an immediate 
corollary of Theorem \ref{measure}.

\begin{definition} \cite[chapter 8]{pl} 
Consider a perfectly reflecting (may be disconnected) closed bounded body $B$ in a Euclidean space.  For every oriented line 
(ray) $R$ 
take its first intersection point $A_1$ with the boundary $\partial B$ and reflect $R$ from the tangent hyperplane $T_{A_1}\partial B$. The 
reflected ray goes from the point $A_1$ and defines a new oriented line, as in  the previous subsection. 
Then we repeat this procedure. Let us assume that after a finite 
number $k$ of reflections the output oriented line coincides with the input line $R$ and will not hit the body any more. 
Then we say that the body $B$ {\it is invisible for the ray 
$R$,} see Fig.\ref{fig-inv1}. We call $R$ a {\it ray of invisibility} with $k$ reflections.  
 \end{definition}

 \begin{figure}[ht]
  \begin{center}
   \epsfig{file=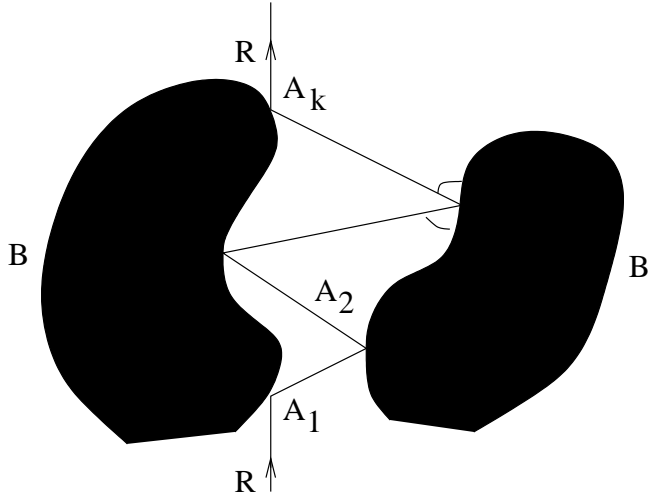}
    \caption{A body invisible for one ray.}
    \label{fig-inv1}
  \end{center}
\end{figure}

{\bf Invisibility Conjecture} (A.Plakhov, \cite[conjecture 8.2, p.274]{pl}.) 
{\it There exist no body with piecewise $C^{\infty}$ boundary for which the set of rays of invisibility  has positive measure.}

\begin{remark} As is shown by A.Plakhov in his book \cite[section 8]{pl}, there exist no body invisible for all rays. The same book 
contains a very nice survey on invisibility, including examples of bodies invisible in a finite number of (one-dimensional families of) 
rays. See also papers \cite{ply, pl2, pl3, pl4} for more results. 
The Invisibility Conjecture is equivalent to the statement saying that  for every $k\in\mathbb N$ there are no measure 
$k$-reflective bodies, see the next definition. It is open even in dimension 2.  
\end{remark}

\begin{definition} (cf. \cite[subsection 5.2, definition 12]{odd})
A body $B$  with piecewise-smooth boundary is called {\it measure $k$-reflective,} if the set of invisibility rays 
with $k$ reflections has positive measure. 
\end{definition}

\begin{definition} (cf. \cite[subsection 5.2, definition 13]{odd}) A (germ of) real planar smooth pseudo-billiard 
 $a_1,\dots,a_k$ is called {\it measure $k$-invisible},  if it is measure $k$-reflective for skew reflection law at $a_1$, $a_2$ and 
 usual  law at the other mirrors $a_j$: the set of its $k$-periodic orbits for the above reflection law (called {\it $k$-invisible} orbits, 
 see Fig.\ref{fig-inv2}) 
 has positive Lebesgue measure. 
\end{definition}

 \begin{figure}[ht]
  \begin{center}
   \epsfig{file=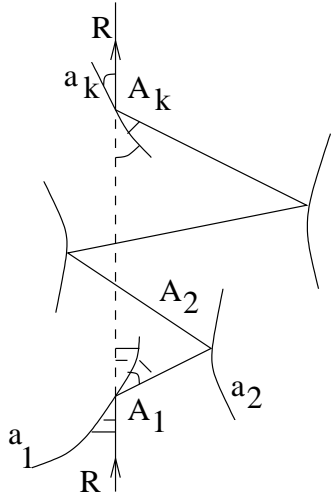}
    \caption{A $k$-invisible $k$-gon: skew reflection law at $A_1$ and $A_k$.}
    \label{fig-inv2}
  \end{center}
\end{figure}

\begin{proposition} \label{inv-ivr} Let $k\in\mathbb N$ and $B\subset\rr^2$ be a body with piecewise-smooth  
boundary,  and  no collection of $k$ germs of its boundary 
 form a measure $k$-invisible smooth pseudo-billiard. Then $B$ is not measure $k$-reflective. 
 \end{proposition}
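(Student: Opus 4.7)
The plan is to prove the contrapositive: assuming $B$ is measure $k$-reflective, we exhibit $k$ germs $(a_j,A_j^0)$ of its boundary forming a measure $k$-invisible pseudo-billiard. By hypothesis, the set $\mathcal I$ of invisibility rays with exactly $k$ reflections has positive $2$-dimensional Lebesgue measure inside the $2$-dimensional manifold of oriented lines in $\rr^2$. Since $\partial B$ is piecewise smooth, it is a finite union of maximal smooth arcs $b_1,\dots,b_N$ joined at finitely many corners, and reflection points necessarily avoid the corner set. Recording for each $R\in\mathcal I$ the ordered sequence of arcs hit at the successive $k$ reflections partitions $\mathcal I$ into finitely many subsets; by pigeonhole, some itinerary $(a_1,\dots,a_k)$ yields a subset $\mathcal I_a\subset\mathcal I$ of positive measure.

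The map $\mathcal I_a\to a_1\times a_2$, $R\mapsto (A_1,A_2)$, sending a ray to its first two reflection points is a local diffeomorphism between open subsets of $2$-dimensional manifolds, since the inverse sends $(A_1,A_2)$ to the unique oriented line through $A_1$ obtained by reflecting the line $A_1A_2$ in $T_{A_1}a_1$ and orienting it towards $A_1$ from outside the body. Hence its image $S\subset a_1\times a_2$ has positive $2$-dimensional Lebesgue measure. Each $(A_1,A_2)\in S$ extends by the usual physical reflection law at successive vertices to a $k$-tuple $(A_1,\dots,A_k)\in a_1\times\dots\times a_k$ that closes up into a $k$-gon whose ``closing edge'' $A_kA_1$ lies on the invisibility line $R$.

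Next we identify the resulting reflection pattern of the \emph{cyclic} polygon $A_1A_2\dots A_k$ in the sense of the paper. At each interior vertex $A_j$, $2\le j\le k-1$, the physical billiard reflection gives the usual reflection law: $A_{j-1}$ and $A_{j+1}$ lie on the same side of $T_{A_j}a_j$. At $A_1$ the cyclic neighbors are $A_k$ and $A_2$; the incoming ray along $R$ comes from a source $A_0$ lying on the opposite side of $A_1$ from $A_k$ along $R$, and physical reflection puts $A_0$ and $A_2$ on the same side of $T_{A_1}a_1$, so $A_k$ and $A_2$ lie on opposite sides of $T_{A_1}a_1$; i.e., the cyclic reflection law at $A_1$ is \emph{skew}. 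The same argument gives skew reflection at $A_k$. After cyclically relabeling the mirrors so that the two skew ones become $a_1,a_2$, this matches exactly the reflection pattern in the definition of a measure $k$-invisible pseudo-billiard.

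To conclude, pick a Lebesgue density point $(A_1^0,A_2^0)\in S$ and let $A_3^0,\dots,A_k^0$ be the points determined from it by the reflection law at the interior vertices. The germs $(a_j,A_j^0)$ form a germ of smooth pseudo-billiard whose set of $k$-periodic orbits under the prescribed ``two-adjacent-skew, rest-usual'' reflection law has positive $2$-dimensional Lebesgue measure in every neighborhood of $(A_1^0,A_2^0)$, hence this germ is measure $k$-invisible, contradicting the hypothesis. The only nontrivial step is the side-of-tangent analysis yielding the skew/usual dichotomy at each vertex; the remaining steps are pigeonhole, the inverse function theorem, and Lebesgue density.
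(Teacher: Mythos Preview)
Your argument is correct. The paper itself does not prove this proposition; it simply records that the statement ``is implicitly contained in \cite[section 8]{pl}'' and moves on. What you have written is the natural way to make that citation explicit: pass to the contrapositive, apply pigeonhole over the finitely many itineraries of smooth boundary arcs, transport positive measure via the local diffeomorphism between oriented lines and the pair $(A_1,A_2)$ of first two reflection points, and then carry out the side-of-tangent analysis. The key observation is exactly the one you isolate: in the cyclic polygon $A_1\cdots A_k$ the reflection law is skew precisely at the two extreme vertices $A_1$ and $A_k$, because both lie on the invisibility line $R$ and the closing edge $A_kA_1$ points \emph{backwards} along $R$ relative to the physical incoming and outgoing rays. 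Localizing at a Lebesgue density point then yields the germ of measure $k$-invisible pseudo-billiard. There is nothing to add; your write-up is more detailed than what the paper (or, presumably, the reference) provides.
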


Proposition \ref{inv-ivr} is implicitly contained in \cite[section 8]{pl}. 

 \begin{theorem} \label{tinvis} There are no measure 4-reflective bodies in $\rr^2$ with piecewise $C^4$-smooth boundary. 
 \end{theorem}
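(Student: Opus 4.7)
The plan is to reduce Theorem \ref{tinvis} immediately to Theorem \ref{measure} via Proposition \ref{inv-ivr}. The key observation is that a measure $4$-invisible pseudo-billiard $a_1, a_2, a_3, a_4$ is by definition one equipped with the skew reflection law at $a_1, a_2$ and the usual reflection law at $a_3, a_4$; that is, it has \emph{exactly two} skew reflection laws, placed at a pair of \emph{neighbor} mirrors. This is precisely the configuration ruled out by Theorem \ref{measure}.

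First, I would suppose for contradiction that there exists a measure $4$-reflective body $B \subset \rr^2$ with piecewise $C^4$-smooth boundary. By Proposition \ref{inv-ivr} (with $k=4$), the contrapositive gives that some collection of four germs of the boundary $\partial B$ forms a measure $4$-invisible $C^4$-smooth pseudo-billiard, say $(a_1, A_1), \dots, (a_4, A_4)$. By the very definition of measure $4$-invisible, this pseudo-billiard has skew reflection law at $a_1$ and $a_2$ and usual reflection law at $a_3$ and $a_4$, and its set of $4$-periodic orbits (i.e.\ $4$-invisible orbits as in Fig.\ref{fig-inv2}) has positive two-dimensional Lebesgue measure.

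Thus we have produced a measure $4$-reflective germ of $C^4$-smooth real planar pseudo-billiard whose reflection-law configuration consists of exactly two skew reflections situated at the neighbor mirrors $a_1, a_2$. This directly contradicts Theorem \ref{measure}. The contradiction proves Theorem \ref{tinvis}.

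There is essentially no obstacle beyond verifying that the labeling of mirrors in the definition of invisibility matches the hypothesis of Theorem \ref{measure}: the skew reflections occur at the first and last mirror $a_1, a_k$ along the invisibility ray, and these are neighbors in the cyclic ordering $a_1, a_2, \dots, a_k, a_1$ of the pseudo-billiard, so the ``two skew laws at a pair of neighbor mirrors'' hypothesis is met. Everything else is simply the combination of the already-established Proposition \ref{inv-ivr} (reducing Plakhov-type invisibility statements to pseudo-billiard statements) with Theorem \ref{measure} (the pseudo-billiard non-existence statement proved by complex methods in Subsection 5.2).
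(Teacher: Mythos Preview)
Your proposal is correct and follows essentially the same route as the paper: contrapositive of Proposition \ref{inv-ivr} yields a measure $4$-invisible $C^4$-smooth pseudo-billiard, which by definition has exactly two skew reflection laws at a neighbor pair, contradicting Theorem \ref{measure}. The only minor slip is in your last paragraph, where you switch from ``skew at $a_1,a_2$'' (the definition in the text) to ``skew at $a_1,a_k$'' (the figure caption); either way these are cyclically adjacent, so the hypothesis of Theorem \ref{measure} is met and the argument is unaffected.
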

 
 \begin{proof} The existence of a measure 4-reflective body as above implies the existence of a measure 4-invisible planar $C^4$-smooth 
 pseudo-billiard (Proposition \ref{inv-ivr}). This is 
 a measure 4-reflective planar $C^4$-smooth pseudo-billiard with skew reflection law at some pair of neighbor 
 vertices and usual reflection law at the other vertices. This contradicts Theorem \ref{measure}.
 \end{proof}

 \section{General case of complex $k$-reflective billiards: state of art}
 
 First let us  recall the next conjecture and partial positive results from \cite{odd}.
 
 {\bf Conjecture A \cite[p.295]{odd}.} {\it There are no $k$-reflective complex analytic (algebraic) planar billiards for odd $k$.}
  
  \begin{theorem} \cite[p.295]{odd}. There are no 3-reflective complex analytic planar billiards.
  \end{theorem}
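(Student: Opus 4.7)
The plan is to assume, for contradiction, the existence of a 3-reflective complex analytic planar billiard $a_1,a_2,a_3$. Let $U\subset\hat a_1\times\hat a_2\times\hat a_3$ denote its 3-reflective set; by Proposition \ref{comp-set}, $U$ is a 2-dimensional analytic subset, each projection $U\to\hat a_j\times\hat a_{j+1}$ is a submersion on an open dense subset, and each triple $A_1A_2A_3\in U$ satisfies the complex reflection law at every vertex.

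Once a neighbor pair $(A_1,A_2)$ is chosen in an open subset of $\hat a_1\times\hat a_2$, the line $A_2A_3$ is determined as the reflection of $A_1A_2$ in $T_{A_2}a_2$, and $A_3$ is then determined at most discretely as an intersection point with $a_3$. Consequently, the remaining reflection conditions at $A_1$ and at $A_3$ must be automatically satisfied on this 2-parameter family. Interpreting the three complex symmetries $\sigma_j$ in the tangent lines $T_{A_j}a_j$ as complex isometries of $\cc^2$, the 3-periodicity condition is precisely that $\sigma_3\circ\sigma_2\circ\sigma_1$ preserves the line $A_3A_1$ setwise. I would exploit the fact that $\sigma_3\circ\sigma_2\circ\sigma_1$ lies in the complex isometry group of $\cc^2$ and analyse its induced action on the infinity line (a M\"obius transformation preserving the pair $\{I_1,I_2\}$ of isotropic points), translating the rigidity imposed by the 2-parameter family into linear-algebraic constraints on the triple of tangent lines.

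Next, I would treat the degenerate orbit set $U\setminus U_0$ by means of the tangency and coincidence results of Subsection 2.5 (Propositions \ref{tang}, \ref{neighbmir} and Corollary \ref{connect}) together with the triangular spiral analysis of Subsection 2.4. Taking limits in $U$ along which two neighbor vertices collide, Proposition \ref{intspir} implies that, outside the single-point degeneracy, the remaining mirror must be a triangular spiral centered at the coincidence point. By Proposition \ref{pnonalg}, any triangular spiral with two distinct centers is algebraic, which together with Proposition \ref{twoalg} reduces the situation to a completely algebraic setting, where Remmert's Proper Mapping and Chow's theorems together with the classification of isometric families of lines (used e.g. in Subsection 3.3) become available.

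The main obstacle I anticipate is the final case analysis: after the algebraic reduction and the cyclic-reflection rigidity, one must exhaust the remaining candidate configurations (three lines through a common point, three confocal conics, mixed line/conic/spiral configurations) and verify in each case that the apparent 2-parameter family of triangular orbits necessarily violates either the non-coincidence of adjacent edges demanded by Definition \ref{deforb} or the non-isotropicity of some tangent line, or forces one of the $a_j$ to be an isotropic line (excluded by assumption). This combinatorial exhaustion is where the odd parity $k=3$ plays against the richer list of even-periodic configurations of Theorem \ref{an-class}, and is the step I expect to require the most delicate bookkeeping.
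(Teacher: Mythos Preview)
The paper does not prove this theorem: it is quoted from \cite{odd} as part of the survey in Section~6, so there is no proof here to compare against. What you have written is a plan rather than a proof, and it has two concrete problems.

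First, several of the tools you invoke are stated and proved in this paper specifically for $k=4$. Proposition~\ref{twoalg}, Proposition~\ref{intspir}, and Corollary~\ref{connect} all carry the 4-reflective hypothesis in their statements. The triangular-spiral mechanism of Proposition~\ref{intspir} in particular relies on a degeneracy $A=D$ in which a \emph{fourth} vertex collapses onto the first while the remaining pair $B,C$ stays away; for $k=3$ there is no fourth vertex, and a collision of two neighbor vertices in a triangle leaves only one remaining vertex, so the argument producing a one-parameter family of triangles $AB_tC_t$ with fixed $A$ and moving $B_t,C_t$ simply does not arise in the same way. You cannot cite these results; you would have to formulate and prove $k=3$ analogues, and it is not clear they exist in the form you need.

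Second, the step you flag as ``the most delicate bookkeeping'' is in fact the entire content of the argument, and you give no indication of how it goes. You correctly observe that the composition $\sigma_3\circ\sigma_2\circ\sigma_1$ acts on $\oc_{\infty}$ as a M\"obius map preserving $\{I_1,I_2\}$ setwise (in fact swapping them, since three reflections compose to an orientation-reversing complex isometry), but you do not explain how this, combined with the two-parameter freedom in $(A_1,A_2)$, forces a contradiction. The actual proof in \cite{odd} is short and self-contained; it does not proceed by reduction to the 4-reflective classification machinery developed in the present paper.
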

  
  \begin{theorem}  \cite[p.295]{odd}. For every odd $k$ there are no $k$-reflective complex algebraic planar billiards whose  mirrors 
avoid isotropic points at infinity.
\end{theorem}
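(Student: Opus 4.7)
The plan is to analyze the composition of complex reflections restricted to the infinity line $\oc_\infty \cong \cp^1$, using crucially that the mirrors avoid the isotropic points $I_1, I_2$. Fix the affine coordinate $z$ on $\oc_\infty$ in which $z(I_1) = 0$ and $z(I_2) = \infty$. A direct computation from the complex reflection law shows that a reflection in a non-isotropic line whose direction at infinity is $p \in \cc^*$ acts on $\oc_\infty$ as the M\"obius involution $z \mapsto p^2/z$, and the complex rotation group $SO(2,\cc)$ acts by $z \mapsto \lambda z$. In particular, each reflection swaps $I_1$ and $I_2$, so the composition of an odd number of reflections acts on $\oc_\infty$ as a non-trivial involution of the form $z \mapsto C/z$.

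For the hypothetical $k$-reflective algebraic billiard $a_1,\dots,a_k$ let $U$ denote the $2$-dimensional algebraic variety of $k$-periodic orbits. On $U$ introduce the rational ``angle'' functions
\[
\psi_j(A_1\dots A_k) \;=\; \text{direction of } T_{A_j}a_j, \qquad \phi_j(A_1\dots A_k) \;=\; \text{direction of } A_jA_{j+1},
\]
both valued in $\cc^*$ on a Zariski open subset of $U$ because the mirrors avoid $\{I_1,I_2\}$ and the tangent lines and edges along orbits are non-isotropic. The reflection law at each vertex translates to the multiplicative identity $\phi_j\,\phi_{j-1} = \psi_j^2$. Telescoping these $k$ relations around the orbit and imposing the closure $\phi_k = \phi_0$, for odd $k$ one obtains the rigid identity
\[
\phi_0^2 \;=\; \prod_{j=1}^{k} \psi_j^{\,2(-1)^{j+1}},
\]
and therefore $\phi_0 = \varepsilon \prod_{j=1}^{k}\psi_j^{(-1)^{j+1}}$ with $\varepsilon \in \{\pm 1\}$ constant on each irreducible component of $U$.

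The crux is to convert this scalar identity into a contradiction. The right-hand side is a product of pullbacks of the (algebraic) Gauss maps of the individual mirrors $a_j$, and hence depends on the single-vertex positions $A_j$ only; the left-hand side $\phi_0$ is the direction of the secant $A_kA_1$ and is an intrinsically two-vertex quantity. I would fix a generic $A_1$, consider the induced one-parameter subfamily of orbits through $A_1$, and vary $A_2$ along $a_2$ while tracking the dependence of the right-hand side through the chain $A_3,\dots,A_k$. By Corollary \ref{cepi} (or, in the algebraic setting, Remmert's Proper Mapping and Chow's Theorems) both projections $U \to \ha_k \times \ha_1$ and $U \to \ha_1 \times \ha_2$ are proper and surjective onto two-dimensional subsets, which gives a handle on the generic fiber structure.

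The main obstacle will be carrying out the degree comparison: one must control the zero/pole divisors of $\phi_0$ and of each $\psi_j$ on $U$, all of which avoid $\{0,\infty\}$ generically by the isotropic-avoidance hypothesis on the mirrors, and derive an incompatibility for odd $k$. The natural tactic is an induction on $k$: ``cut'' a generic $k$-orbit along a diagonal to split it into a triangle and a $(k-1)$-gon, apply Proposition \ref{twoalg} to ensure that any auxiliary curve arising as the locus of the diagonal is algebraic, and thereby reduce the odd-$k$ case either to the already-established non-existence of $3$-reflective complex analytic billiards (the first quoted theorem of \cite{odd}) or to an odd case with strictly fewer mirrors. The hard part is verifying that the ``diagonal curve'' genuinely defines an auxiliary algebraic mirror avoiding the isotropic points, so that the inductive hypothesis applies; this will require a careful analysis of the degeneracy loci of the projections from $U$ and a refinement of the telescoping identity along sub-chains of vertices.
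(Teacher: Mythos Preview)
First, a structural remark: the paper does not prove this theorem. It is quoted in Section~6 as a result of \cite{odd}, with no argument supplied. So there is no ``paper's own proof'' to compare against; your proposal must stand on its own.

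Your setup is correct and natural. The action of complex reflections on $\oc_{\infty}$ in the coordinate $z$ with $z(I_1)=0$, $z(I_2)=\infty$ is exactly as you describe (cf.\ \cite[proposition 2.4]{alg}), and the telescoping identity
\[
\phi_0^{2}\;=\;\prod_{j=1}^{k}\psi_j^{\,2(-1)^{j+1}}
\]
on the $k$-reflective set is valid for odd $k$. The hypothesis that the mirrors avoid $I_1,I_2$ is what makes the $\psi_j$ land in $\cc^*$, so the identity is well-posed on a Zariski open set of $U$.

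However, the identity by itself is not a contradiction: it is simply a relation among rational functions on the two-dimensional variety $U$, and such relations abound. Everything after this point in your proposal is programmatic rather than a proof. The degree/divisor comparison you allude to is not carried out, and it is not clear what incompatibility you expect to find, since both sides are legitimate $\cc^*$-valued rational functions on $U$.

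The proposed induction has a genuine structural flaw. If you cut an odd $k$-gon along a single diagonal $A_1A_m$, the two pieces have $m$ and $k-m+2$ vertices, and for odd $k$ these two numbers have opposite parity: one of them is necessarily even. Since $k$-reflective complex billiards \emph{do} exist for even $k$ (Theorem~\ref{an-class} supplies many for $k=4$, and combinations give higher even $k$), the inductive hypothesis gives you nothing on the even piece. Thus the reduction ``to an odd case with strictly fewer mirrors'' cannot be realized by a diagonal splitting. Moreover, even for the odd sub-polygon, the diagonal edge $A_1A_m$ carries no reflection law, so you would have to manufacture a new algebraic mirror curve along which the two adjacent edges reflect; you recognize this (``the hard part is verifying that the `diagonal curve' \dots'') but do not address it, and in fact there is no reason to expect such a curve to exist or to avoid $I_1,I_2$.

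In short: the multiplicative identity on $\oc_\infty$ is the right opening move and is presumably close to what \cite{odd} does, but the route from it to a contradiction is missing, and the specific induction you sketch cannot work for parity reasons.
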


{\bf Conjecture B.} {\it For every $k\geq3$ there are no $k$-reflective complex analytic planar billiards $a,\dots,a$ with all the mirrors 
coinciding with the same irreducible analytic curve $a\subset\cp^2$.}

Recently a positive result was proved by the author (paper in preparation) for every irreducible algebraic curve $a$ that either is smooth, or 
 satisfies a mild condition on either singularities, or tangential correspondence. 
 
\begin{definition} \label{defcom}  A {\it combination} of complex analytic  
billiards $\alpha=(a_1,\dots,a_l)$, $\beta=(b_1,\dots,b_m)$ is a billiard $\alpha\circ_s\beta=(a_1,\dots,a_s,b_1,\dots,b_m,a_{s+1}, 
\dots,a_l)$, $s\in\{1,\dots,l\}$. For every collection 
of analytic curves $\delta=(d_1,\dots,d_t)$ in $\cp^2$ distinct from isotropic lines 
the billiard 
$$\alpha\circ_{s,\delta}\beta=(a_1,\dots,a_s,d_1,\dots,d_t,b_1,\dots,b_m,d_t,\dots,d_1,a_{s+1},\dots,a_l)$$
 will be called 
a {\it combination with mirror adding} of the billiards $\alpha$ and $\beta$. (The previous combination corresponds to $\delta=\emptyset$.) 
\end{definition}

\begin{definition} Let $\alpha=(a_1,\dots,a_l)$, $\beta=(b_1,\dots,b_m)$ be complex planar billiards such that for some 
$1\leq s< \min\{ l, m\}$, one has 
$a_j=b_{m-j+1}$ for $j=1,\dots,s$.  
Then the billiard $\alpha\circ_{[1,s]}\beta=(a_{s+1},\dots,a_{l}, b_{1},\dots,b_{m-s})$ is called a {\it combination with 
mirror erasing} of the billiards $\alpha$, $\beta$. 
\end{definition}

\begin{remark} For every $l$- and $m$-reflective  billiards $\alpha$,  $\beta$ and $s$, $t$, $\delta$ as in Definition \ref{defcom} 
 the billiard $\alpha\circ_{s,\delta}\beta$ is  $(l+m+2t)$-reflective, provided that 

{\bf (E)} it has a periodic orbit 
$A_1\dots A_s D_1\dots D_tB_1 \dots B_mD_t\dots D_1 A_{s+1}\dots A_{l}$ 
 for which $A_1\dots A_l$ and $B_1\dots B_m$ are respectively $l$- and $m$-reflective orbits of the billiards $\alpha$ and $\beta$: 
 interior points of the set of $l$- ($m$-) periodic orbits. 

For every $l$- and $m$-reflective billiards $\alpha=(a_1,\dots,a_l)$, $\beta=(b_1,\dots,b_m)$ and $1\leq s<\min\{ l,m\}$ the billiard $\alpha\circ_{[1,s]}\beta$ is $(l+m-2s)$-reflective, if  

{\bf (E')} there exist $l$- and $m$-reflective orbits $A_1\dots A_l$, $B_1\dots B_m$ of billiards $\alpha$ and $\beta$ respectively such that 
$A_j=B_{m-j+1} \text{ for } 1\leq j\leq s$ and $A_{s+1}\dots A_{l}B_{1}\dots B_{m-s}$ is a periodic orbit of the combination 
$\alpha\circ_{[1,s]}\beta$.
\end{remark}

{\bf Known $k$-reflective complex analytic planar billiards.}


I. 4-reflective billiards of types 1)--3) from Theorem \ref{an-class}.

II. Billiards that are obtained from  them by subsequent combinations (with mirror adding or erasing); 
each subsequent combination should satisfy the above condition (E) (respectively,  (E')).

\begin{example} The billiards of type II include the following ones:

- Every billiard $\alpha_l=(a, b_{l-1},\dots,b_1, a, b_1^*,\dots,b_{l-1}^*)$, where $a$ is a line, $b_j$, $b_j^*$ are symmetric 
with respect to the line $a$, that has at least one symmetric $2l$-periodic orbit. 
Its $2l$-reflectivity is obvious. It is  obtained 
by subsequent combinations with mirror erasing of 4-reflective billiards of type 1): on each step we combine billiards $\alpha_j$ and 
 $\beta_j=(b_j^*, a, b_j, a)$ erasing one mirror a, which is the last mirror $a$ in $\beta_j$ identified 
with the first one in $\alpha_j$. 

- Every billiard formed by an even number of complex confocal conics,  
some of them coincide and each conic is taken even number of times; no two neighbor mirrors coincide. It  
is obtained by subsequent combinations (usual ones and those with mirror erasing) of 4-reflective billiards of type 3). 

- Every billiard formed by an even number of non-isotropic complex 
lines such that the product of the corresponding symmetries is the identity. It is 
obtained by subsequent combinations (usual ones and those with mirror erasing) of 4-reflective billiards of type 2). This easily 
follows from the complexification of \cite[theorem 1.B, p.3]{vi}. 
\end{example}

The next small technical  results, which generalize  Corollary \ref{cepi}, might be useful in studying the general case. They 
 are immediate consequence of the results of Section 3.  To state them, consider a complex 
 $k$-reflective billiard $a_1,\dots,a_k$. For every 
 $j=1,\dots,k$ let us introduce the corresponding space of  $(k-2)$- orbits:  collections 
 $A_1\dots A_{j-1} A_{j+2}\dots A_k\in\prod_{i\neq j, j+1}\ha_i$, $A_i\in\ha_i$, 
 such that for every $i\neq j-1, j, j+1, j+2$ one has $A_i\neq A_{i\pm1}$, 
the lines $A_iA_{i\pm1}$ are symmetric with respect to the line $T_{A_i}a_i$ and the three latter lines are distinct and non-isotropic. 
(In the case, when $j=k$, we replace $j+s$ by $s$, $s=1,2$.) The  
closure of set of the latter $(k-2)$- orbits  is an analytic subset $V_j\subset\prod_{i\neq j,j+1}\ha_i$ 
that has only two-dimensional irreducible components. 
Let $U\subset\ha_1\times\dots\times\ha_k$ denote the $k$-reflective set, see Subsection 2.3. 
For every $j=1,\dots,k$ let $P_j$ denote the product projection  
 $$P_j:\ha_1\times\dots\times\ha_k\to\prod_{r\neq j, j+1}\ha_r, \ U_j=P_j(U)\subset V_j.$$

\begin{theorem} \label{kmer} 
Let $a_1,\dots,a_k$ be a $k$-reflective complex analytic planar billiard. Let $U\subset\ha_1\times\dots\times \ha_k$ be its 
$k$-reflective set.  For every $j=1,\dots,k$ the set $U_j=P_j(U)$ is analytic:  a union of  irreducible components of the set $V_j$.
 The projection $P_j:U\to U_j$ is proper and  bimeromorphic.
 \end{theorem}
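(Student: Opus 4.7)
The approach is to extend the proof of Corollary~\ref{cepi} from $k=4$ to arbitrary $k$. Fix $j$ and introduce the ``framed'' ambient variety
\[
\mcp_j = \ha_1\times\cdots\times\ha_{j-1}\times\mcp\times\mcp\times\ha_{j+2}\times\cdots\times\ha_k,
\]
together with the tangent-lifting map $\Psi_j\colon U_1 \to \mcp_j$ that replaces $A_j, A_{j+1}$ in a $k$-orbit $(A_1,\ldots,A_k)$ by the framed points $(A_j, T_{A_j}a_j)$, $(A_{j+1}, T_{A_{j+1}}a_{j+1})$; here $U_1\subset U$ is the open dense subset of non-degenerate orbits with non-marked vertices (cf.\ Remark~\ref{thebove}). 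Let $M_j\subset\mcp_j$ be the closure of $\Psi_j(U_1)$ cut out, along the middle $\mcp\times\mcp$ slots, by the four reflection-law relations at positions $j-1, j, j+1, j+2$; it carries a natural two-dimensional singular Birkhoff-type distribution $\mcd_j$ pulled back from $\mcd^k$ via the obvious embedding $\mcp_j\hookrightarrow\mcp^k$, and $\Psi_j(U_1)$ is a union of non-trivial integral surfaces of $\mcd_j$, analogously to Proposition~\ref{birktang}.

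The first substantive step is to verify that Lemma~\ref{lnonint} extends to $\mcd_j$: the analytic closure $\mcs_j = \overline{\Psi_j(U_1)}$ has only two-dimensional irreducible components. The argument of Section~3 should adapt essentially verbatim; the two fixed mirrors $a, b$ used in Subsections~3.1--3.3 are now replaced by the framed data $(A_{j-1}, T_{A_{j-1}}a_{j-1})$ and $(A_{j+2}, T_{A_{j+2}}a_{j+2})$ at the external neighbors of the ``free'' positions, and these depend holomorphically on the parameters ranging in the remaining $\ha_i$ factors. In particular, applied to a hypothetical three- or four-dimensional invariant analytic set in $M_j$ on which $\mcd_j$ would be two-dimensional and integrable, the dichotomy of Subsections~3.2--3.3 produces either triangular-spiral algebraicity conclusions via Propositions~\ref{intspir} and \ref{pnonalg} that force algebraicity of some neighbor mirrors (and hence, by propagation through Proposition~\ref{twoalg}, of all the $a_i$, contradicting our freedom to treat the situation near transcendental vertices), or it forces the mirrors at the free positions to be lines, again producing a contradiction with $k$-reflectivity of the original billiard via Remark~\ref{remlines}.

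With $\mcs_j$ analytic and two-dimensional, Corollary~\ref{clift} applied to the normalization $\hat{\mcs}_j$ lifts the $\cp^2$-projections at the $A_j$ and $A_{j+1}$ slots to holomorphic maps into $\ha_j$ and $\ha_{j+1}$, producing an inverse $\hat{\mcs}_j \to U$ that lands in $U$ by Proposition~\ref{comp-set} (analyticity of reflection), so $\Psi_j(U)=\mcs_j$. Since the bundle projection $\mcp_j\to\prod_{i\neq j, j+1}\ha_i$ has compact $\mcp\times\mcp$ fibers, Remmert's Proper Mapping Theorem gives at once that $U_j = P_j(U)$ is analytic and that $P_j\colon U\to U_j$ is proper; the inclusion $U_j\subset V_j$ together with the fact that all irreducible components of $V_j$ are two-dimensional then forces $U_j$ to be a union of irreducible components of $V_j$. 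For bimeromorphicity, the map $\Psi_j\colon U \to \mcs_j$ is tautologically bijective (it only adjoins tangent-line data), so it suffices to observe that at a generic $p\in U_j$ the external tangent lines $T_{A_{j-1}}a_{j-1}$ and $T_{A_{j+2}}a_{j+2}$ determine the incoming rays uniquely, and the reflection compatibility at the free vertices then selects a single point of the fiber. The main obstacle is the first step---checking that the non-integrability argument of Section~3 really transfers to $\mcd_j$. I expect this to be essentially formal because the analysis of Subsections~3.2--3.3 is local around the free vertices and treats the external data as varying analytic parameters, but the bookkeeping (in particular the precise exceptional-set $\Sigma$ from \eqref{defsig} and the interplay with the additional factors $\ha_i$) must be carried out carefully.
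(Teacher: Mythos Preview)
Your overall strategy---construct the framed ambient variety $\mcp_j$, show the closure of $\Psi_j(U_1)$ is two-dimensional, and then invoke Remmert---matches the paper's setup exactly (the paper calls this space $\mcp_{1,k}$). But at the two key steps your argument diverges from the paper's, and in both places the paper's route is significantly simpler and your route has gaps.

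\textbf{Properness.} You propose to re-run the entire non-integrability analysis of Section~3 (triangular spirals, the bimeromorphic/non-bimeromorphic dichotomy) for the generalized distribution $\mcd_j$. The paper avoids this entirely. Instead it argues: if the minimal analytic set $M\supset\Psi_j(U_1)$ had $\dim M\ge 3$, then take two integral surfaces $S$ (the original billiard) and $\tilde S(x)$ (through some $x\in M^0\setminus S$) projecting onto the same open set of $(k-2)$-orbits; for generic $y$ in this set the two lifts give $k$-periodic orbits $A_1\cdots A_k$ and $A_1'A_2\cdots A_{k-1}A_k'$, and the quadrilaterals $A_1A_1'A_k'A_k$ form an open family of $4$-periodic orbits of the billiard $a_1,a_1(x),a_k(x),a_k$. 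Since $x$ varies in at least a one-parameter family, this produces a one-parameter family of $4$-reflective billiards with two fixed neighbor mirrors $a_1,a_k$ (not both algebraic, by the case assumption), contradicting the already-established Corollary~\ref{cnonint}. This reduction to $k=4$ is the whole point: you never need to touch Propositions~\ref{intspir} or~\ref{pnonalg} again. Your proposed transfer of Subsections~3.2--3.3 may well be workable, but the ``contradicting our freedom to treat the situation near transcendental vertices'' step is vague (you have not singled out which mirror is assumed transcendental), and the appeal to Remark~\ref{remlines} for the line case is wrong: lines at the free positions do not give coinciding neighbor mirrors.

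\textbf{Bimeromorphicity.} Your argument here has a genuine gap. You write that ``reflection compatibility at the free vertices then selects a single point of the fiber,'' but this is exactly what needs to be proved: the incoming ray from $A_{j-1}$ meets $a_j$ in many points in general, and there is no a priori reason only one of them closes up. The paper's argument is again a reduction to $k=4$: if two distinct $k$-orbits $A_1\cdots A_k$ and $A_1'A_2\cdots A_{k-1}A_k'$ sit over the same $(k-2)$-orbit, then the quadrilaterals $A_1A_1'A_k'A_k$ make the billiard $a_1,a_1,a_k,a_k$ four-reflective---but this billiard has coinciding neighbor mirrors, which is forbidden by Corollary~\ref{inters} (or directly by Theorem~\ref{an-class}).
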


\begin{proof} Without loss of generality we prove the theorem for the pair of neighbor indices 1, $k$: $j=k$.  

{\bf 1) Properness and analyticity.} Let us show that the mapping $P_k:U\to V_k$ is proper: then $U_k$ is analytic by 
Proper Mapping Theorem.

1a)  Case, when  $a_1$, $a_k$ are algebraic:  the above statements are obvious.

1b) Case, when some of them, say $a_k$ is not algebraic.  The  properness will be deduced from Corollary \ref{cnonint}. To do this, 
we consider the space 
$$\mcp_{1,k}=\ha_2\times\dots\ha_{k-1}\times\mcp^2$$
equipped with the distribution 
$$\mch_{1,k}=T\ha_2\oplus\dots\oplus T\ha_{k-1}\oplus\mch\oplus\mch,$$
where $\mcp=\mathbb P(T\cp^2)$, $\mch$ is the standard contact plane field on $\mcp$, see Subsection 2.7. A point of the space 
$\mcp_{1,k}$ is a triple $A_2\dots A_{k-1}, (A_1,L_{A_1}), (A_k,L_{A_k})$, where $L_{A_j}\subset T_{A_j}\cp^2$ is a one-dimensional 
subspace. Let $\mcr_{1,k}\subset\mcp_{1,k}$ 
denote the analytic variety defined by the conditions that for every $j=1,\dots,k$ one has $A_j\in\cc^2=\cp^2\setminus\oc_{\infty}$, 
$A_j\neq A_{j\pm1}$, the lines $A_jA_{j\pm1}$ are 
symmetric with respect to the line $T_{A_j}a_j$ if $2\leq j\leq k-1$ (the line $L_{A_j}$ if $j\in\{1,k\}$), the three lines $A_jA_{j\pm1}$, 
$T_{A_j}a_j$ (respectively, $L_{A_j}$) are distinct 
and non-isotropic. In addition, it is required that $A_j$ be not cusps of the curves  $a_j$ for $j\neq1,k$. The set $\mcr_{1,k}$ is an analytically constructible smooth variety, and its closure $\overline{\mcr_{1,k}}$ is an analytic set. The distribution $\mch_{1,k}$ 
induces a two-dimensional analytic distribution $\mcd_{1,k}$ on $\mcr_{1,k}$ that extends to a singular analytic distribution 
on $\overline{\mcr_{1,k}}$. An open dense subset $U_1\subset U$  
lifts to the union of non-trivial integral surfaces of the distribution $\mcd_{1,k}$, as in Proposition \ref{birktang}. 
Let $M\subset\mcp_{1,k}$ denote the minimal analytic subset containing all the non-trivial integral surfaces. 

Suppose the contrary: the mapping $P_k:U\to V_k$ is not proper. Then  $dim M\geq3$: if $dim M=2$, then $P_k$ is 
proper, as in the proof of Corollary \ref{cepi}.  In what follows we fix some at least three-dimensional irreducible component 
of the set $M$ and denote  by $M$ the latter component. The restriction $\mcd_M$  
 to $M$ of the distribution $\mcd_{1,k}$ is two-dimensional and integrable, 
by Proposition \ref{anint}. As in Subsection 3.1,  there exists an analytic subset $\Sigma\subset M$ with dense complement 
$M^0=M\setminus\Sigma\subset M$ such that $\mcd_M$ is analytic on $M^0$ and its integral surface through each
$x\in M^0$ represents an open set of $k$-periodic orbits of a $k$-reflective billiard $a_1(x)$, $a_2,\dots,a_{k-1}$, $a_k(x)$. 
Some integral surface, which we will denote by $S$, represents a family of $k$-periodic orbits of the initial billiard. 
The projection $\mu_{1,k}:M\to\ha_2\times\dots\times\ha_{k-1}$ is proper, and the image $N=\mu_{1,k}(M^0)\subset V_k$ is a 
purely two-dimensional analytically constructible subset, by Chevalley--Remmert Theorem.

{\bf Claim 1.} {\it There exists an open subset of points $x\in M^0$ such that the billiard $a_1$, $a_1(x)$, $a_k(x)$, $a_k$ is 4-reflective.}

\begin{proof} Fix  $x_1\in S$ and $x_2\in M^0\setminus\{ x_1\}$ with $p=\mu_{1,k}(x_1)=\mu_{1,k}(x_2)$. 
There exist neighborhoods $Y=Y(p)\subset N$, $X_1=X_1(p)\subset S$, $X_2=X_2(x_2)\subset M^0\setminus X_1$ 
such that $\mu_{1,k}$ projects 
$X_1$ and each integral surface of the distribution $\mcd_M|_{X_2}$ diffeomorphically onto $Y$ (let us fix them).  Fix an arbitrary 
$x\in X_2$,  let $\wt S(x)$ denote the integral surface of the distribution $\mcd_M|_{X_2}$ through $x$. 
Each $y=A_2\dots A_{k-1}\in Y$ lifts to two points in $X_1$ and $\wt S(x)$, 
which correspond to $k$-periodic orbits $A_1\dots A_k$ and $A_1'A_2\dots A_{k-1}A_k'$ of the billiards $a_1,\dots,a_k$ and $a_1(x),a_2,\dots,a_{k-1}, 
 a_k(x)$ respectively. The quadrilaterals $A_1A_1'A_k'A_k$ corresponding to generic $y\in Y$ form a two-parametric 
  family of 4-periodic orbits of the billiard $a_1,a_1(x),a_k(x),a_k$, and the latter is 4-reflective, as in the proof of Proposition \ref{bil4refl}.
\end{proof}

The above claim yields at least one-dimensional family of 4-reflective billiards with two fixed neighbor 
mirrors $a_1$, $a_k$, since $dim M\geq3$. The 
curves $a_1$, $a_k$  are not both algebraic, by assumption. This contradicts Corollary \ref{cnonint} and thus, 
proves properness of the projection $P_k:U\to V_k$.

{\bf 2) Bimeromorphicity.} Suppose the contrary: the proper analytic set projection $P_k:U\to U_k=P_k(U)$ is not bimeromorphic. 
This means that its inverse has at least two distinct holomorphic 
branches on an open subset in the  analytic set $U_k$. In other words, each $A_2\dots A_{k-1}$ from an open subset in $V_k$ 
extends to two distinct $k$-periodic orbits $A_1A_2\dots A_{k-1}A_k$, $A_1'A_2\dots A_{k-1}A_k'$. Then the two-dimensional 
family of quadrilaterals $A_1A_1'A_k'A_k$ are 4-periodic orbits of the billiard $a_1$, $a_1$, $a_k$, $a_k$ with coinciding neighbor mirrors, 
as in \cite[proof of lemma 3.1]{alg}. Hence, the latter billiard is 4-reflective, -- a contradiction to Corollary \ref{inters} (or Theorem \ref{an-class}) 
forbidding 4-reflective billiards with coinciding neighbor mirrors. Theorem \ref{kmer} is proved.
\end{proof}

\begin{corollary} \label{ckhmer} Let $a_1,\dots,a_k$ be a $k$-reflective complex analytic planar billiard. The subsequent 
$(k-2)$- orbit correspondence 
$P_{j+1}\circ P_j^{-1}:U_j\to U_{j+1}$, $A_1\dots A_{j-1}A_{j+2}\dots A_k\mapsto A_1\dots A_jA_{j+3}\dots A_k$ is bimeromorphic. 
Its graph is projected epimorphically onto both $U_j$ and $U_{j+1}$. If it contracts a curve, then the latter is compact and the mirror 
$a_{j+2}$ is algebraic.
\end{corollary}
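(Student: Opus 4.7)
The plan is to extract everything from Theorem \ref{kmer}, whose assertion that each projection $P_i : U \to U_i$ is proper and bimeromorphic already does essentially all of the work. First, the bimeromorphicity of $P_{j+1} \circ P_j^{-1}$ is immediate, as it is the composition of two bimeromorphic maps. To realize the graph $\Gamma \subset U_j \times U_{j+1}$ of this correspondence as an analytic subset, I would consider the combined mapping $\Phi = (P_j, P_{j+1}) : U \to U_j \times U_{j+1}$. Properness of $\Phi$ is straightforward: for any compact $K \subset U_j \times U_{j+1}$, the preimage $\Phi^{-1}(K)$ is contained in $P_j^{-1}(\pi_{U_j}(K))$, which is compact because $P_j$ is proper. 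Hence, by Remmert's Proper Mapping Theorem, $\Gamma = \Phi(U)$ is an analytic subset of $U_j \times U_{j+1}$. Its projections onto $U_j$ and $U_{j+1}$ coincide with $P_j$ and $P_{j+1}$ respectively (via the factorization through $U$), each of which is epimorphic by Theorem \ref{kmer}.

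For the final statement, suppose $C \subset \Gamma$ is a curve collapsed to a single point $p \in U_{j+1}$ by the second projection. Lifting $C$ via the bimeromorphism $\Phi^{-1} : \Gamma \dashrightarrow U$, I obtain a curve $\widetilde{C} \subset U$ with $P_{j+1}(\widetilde{C}) = \{p\}$. Thus $\widetilde{C}$ lies in the fiber $P_{j+1}^{-1}(p)$, which is compact by properness of $P_{j+1}$; so $\widetilde{C}$ is compact, and hence so is $C$.

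To deduce that $a_{j+2}$ is algebraic, I observe that along $\widetilde{C}$ the coordinates $A_1, \ldots, A_j, A_{j+3}, \ldots, A_k$ are determined by $p$, hence constant; only $A_{j+1}$ and $A_{j+2}$ can vary. Consequently, the image $P_j(\widetilde{C}) = C \subset U_j$ has $A_1, \ldots, A_{j-1}, A_{j+3}, \ldots, A_k$ fixed, so only the $A_{j+2}$ coordinate varies. Since $C$ is a curve, the projection $C \to \ha_{j+2}$ is non-constant, and its image is a non-empty compact one-dimensional analytic subset of the connected Riemann surface $\ha_{j+2}$. This forces $\ha_{j+2}$ itself to be compact, and then Chow's Theorem implies that the image curve $a_{j+2} \subset \cp^2$ is algebraic.

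The main obstacle is essentially absent once Theorem \ref{kmer} is in hand: the argument amounts to careful bookkeeping of which coordinates of a $k$-orbit are determined by its projection into $U_{j+1}$, together with two applications of Remmert's/Chow's theorems. The only point requiring attention is that the forward direction of the correspondence ``loses'' the coordinate $A_{j+2}$ (and symmetrically, the inverse direction would lose $A_j$, explaining why the statement singles out $a_{j+2}$).
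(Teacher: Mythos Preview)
Your proof is correct and follows exactly the approach the paper intends: the corollary is stated in the paper without proof precisely because it is a routine consequence of Theorem~\ref{kmer}, and your argument extracts it in the natural way. The only places worth tightening are cosmetic: the lifting of $C$ through $\Phi^{-1}$ is better phrased by taking a one-dimensional irreducible component of the compact set $\Phi^{-1}(C)\subset P_{j+1}^{-1}(p)$ (avoiding any fuss about indeterminacies), and the conclusion that $\ha_{j+2}$ is compact can be stated simply as ``a non-constant holomorphic map from a compact curve to a connected Riemann surface is open with compact image, hence surjective.''
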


\section{Acknowledgements} 
I am grateful to Yu.S.Ilyashenko, Yu.G.Kudryashov, A.Yu.Plakhov and S.L.Tabachnikov 
 for attracting my attention to Ivrii's Conjecture, invisibility and Commuting Billiard Conjecture. 
 I wish to thank Yu.G.Kudryashov, to whom this paper owes much: some his arguments from 
 our joint paper \cite{gk2} were used here in a crucial way. I  am grateful to them 
 and to  E.M.Chirka, E.Ghys, A.L.Gorodentsev, Z.Hajto,  B.Sevennec, V.V.Shevchishin, J.-C.Sikorav, 
 M.S.Verbitsky, O.Ya.Viro, V.Zharnitsky  for helpful discussions.

\end{document}